\documentclass[leqno,a4paper]{article}
\usepackage{amsmath,amsthm,amssymb}
\usepackage[utf8]{inputenc}
\usepackage[T1]{fontenc}

\usepackage{enumerate}
\usepackage{bbm}
\usepackage{todonotes}
\usepackage{mathrsfs}
\usepackage{mathabx}
\usepackage{hyperref}
\usepackage{nicefrac}
\usepackage{tikz}
\usetikzlibrary{matrix}

\usepackage{url}
\makeatletter
\g@addto@macro{\UrlBreaks}{\UrlOrds}
\makeatother

\usepackage[sort,numbers]{natbib}

\providecommand{\noopsort}[1]{} 

\hbadness 10000

\def\qed{\unskip\quad \hbox{\vrule\vbox
to 6pt {\hrule width 4pt\vfill\hrule}\vrule} }

\newtheorem{Th}{Theorem}[section]
\newtheorem{Prop}[Th]{Proposition}
\newtheorem{Lemma}[Th]{Lemma}

\theoremstyle{definition}
\newtheorem{Remark}[Th]{Remark}
\newtheorem{Def}{Definition}[section]
\newtheorem{Cor}[Th]{Corollary}

\newcommand{\beq}{\begin{equation}}
\newcommand{\eeq}{\end{equation}}

\def\scalar(#1,#2){(#1\mid#2)}

\newcommand{\raz}{\mathbbm{1}}

\newcommand{\cs}{{\cal S}}
\newcommand{\ca}{{\cal A}}
\newcommand{\cb}{{\cal B}}
\newcommand{\cc}{{\cal C}}
\newcommand{\cd}{{\cal D}}

\newcommand{\cj}{{\cal J}}
\newcommand{\cp}{{\cal P}}

\newcommand{\ch}{{\cal H}}
\newcommand{\cm}{{\cal M}}

\newcommand{\cw}{{\cal W}}
\newcommand{\xbm}{(X,{\cal B},\mu)}
\newcommand{\zdr}{(Z,{\cal D},\rho)}
\newcommand{\ycn}{(Y,{\cal C},\nu)}
\newcommand{\ot}{\otimes}
\newcommand{\ov}{\overline}
\newcommand{\la}{\lambda}

\newcommand{\bs}{\mathbb{S}}

\newcommand{\Q}{\mathbb{Q}}
\newcommand{\R}{{\mathbb{R}}}
\newcommand{\T}{{\mathbb{T}}}
\newcommand{\C}{{\mathbb{C}}}
\newcommand{\Z}{{\mathbb{Z}}}
\newcommand{\N}{{\mathbb{N}}}

\newcommand{\vep}{\varepsilon}
\newcommand{\va}{\varphi}

\newcommand{\mob}{\boldsymbol{\mu}}
\newcommand{\lio}{\boldsymbol{\lambda}}

\newcommand{\tfs}{T_{\va,\cs}}
\newcommand{\tf}{T_{\va}}
\newcommand{\bfu}{\boldsymbol{u}}

\title{M\"obius disjointness along ergodic sequences for uniquely ergodic actions}
\author{Joanna Ku\l aga-Przymus* \and Mariusz Lema\'nczyk\thanks{Research supported by Narodowe Centrum Nauki grant  UMO-2014/15/B/ST1/03736.}}

\begin{document}
\bibliographystyle{siam}

\maketitle

\begin{abstract} We show that there are an irrational rotation $Tx=x+\alpha$ on the circle $\T$ and a continuous $\va\colon\T\to\R$ such that
for each (continuous) uniquely ergodic flow $\cs=(S_t)_{t\in\R}$ acting on a compact metric space $Y$, the automorphism $\tfs$ acting on $(X\times Y,\mu\ot\nu)$ by the formula
$\tfs(x,y)=(Tx,S_{\va(x)}(y))$, where $\mu$ stands for Lebesgue measure on $\T$ and $\nu$ denotes the unique $\cs$-invariant measure, has the property of asymptotically orthogonal powers.  This gives a class of relatively weakly mixing extensions of irrational rotations for which Sarnak's conjecture on M\"obius disjointness holds for all uniquely ergodic models of $\tfs$. Moreover, we obtain a class of ``random''  ergodic sequences $(c_n)\subset\Z$ such that if $\mob$ denotes the M\"obius function, then
$$
\lim_{N\to\infty}\frac1N\sum_{n\leq N}g(S_{c_n}y)\mob(n)=0$$
for all (continuous) uniquely ergodic flows $\cs$, all $g\in C(Y)$ and $y\in Y$.

\end{abstract}

\tableofcontents

\section{Introduction} 
In 2010, P.\ Sarnak \cite{Sa} stated the following conjecture: {\em Given a zero (topological) entropy homeomorphism $S$ of a compact metric space $Y$, we have} 
\beq\label{mob00}
\lim_{N\to\infty}\frac1N\sum_{n\leq N}g(S^ny)\mob(n)=0
\eeq 
{\em  for each $g\in C(Y)$ and $y\in Y$}.
Here $\mob\colon\N\to\C$ denotes the M\"obius function: $\mob(1)=1$, $\mob(p_1\ldots p_k)=(-1)^k$ for any distinct prime numbers $p_1,\ldots, p_k$ and $\mob(n)=0$ otherwise. Given a dynamical system $(Y,S)$, we will say that $(Y,S)$, or simply $S$, is {\em M\"obius disjoint} when~\eqref{mob00} holds for all $g\in C(Y)$ and $y\in Y$.
M\"obius disjointness has been proved in numerous cases of zero entropy dynamical systems, see e.g.\ \cite{Ab-Ka-Le,Bo-Sa-Zi,Do-Ka,Dr,Fe-Ku-Le-Ma,Gr-Ta,Karagulyan,Pe}.

One of motivations of the present paper is to consider a ``randomized'' version of M\"obius disjointness in which we are given another compact metric space $X$, a homeomorphism $T\colon X\to X$  and a family $(S_x)_{x\in X}$ of homeomorphisms of $Y$ (the map $(x,y)\mapsto S_xy$ is assumed to be continuous) and  we aim at proving that
\beq\label{mobran1}
\lim_{N\to\infty}\frac1N\sum_{n\leq N}g(S_x^{(n)}y)\mob(n)=0\eeq
for each $x\in X$, $g\in C(Y)$ and $y\in Y$. Here $S^{(n)}_x=S_{T^{n-1}x}\circ\ldots\circ S_{Tx}\circ S_x$ for $n\geq 0$ ($S^{(0)}_x=Id$), so when $X$ is the one-point space, clearly, \eqref{mobran1} is equivalent to~\eqref{mob00}. An instance of such ``randomized'' version arises when we choose
a continuous function $\va\colon X\to \R$ and consider $\cs=(S_t)_{t\in\R}$  a continuous flow on $Y$. Now, \eqref{mobran1} takes the form
\beq\label{mobran1prime}
\lim_{N\to\infty}\frac1N\sum_{n\leq N}g(S_{\va^{(n)}(x)}y)\mob(n)=0\eeq
for each $x\in X$, $g\in C(Y)$ and $y\in Y$ ($\va^{(n)}(x)=\sum_{j=0}^{n-1}\va(T^jx)$).
We can ask now if we can find ``parameters'' $(T,\va)$ so that \eqref{mobran1prime} holds for ``all'' $\cs$.
Somewhat surprisingly, the answer to such a question will be positive, whenever  by ``all'' we mean  the class of continuous uniquely ergodic flows, see Theorem~A below.

To explain better our approach, suppose that $S$ acting on $Y$ is M\"obius disjoint and, additionally, it is uniquely ergodic (such are most of examples from the aforementioned papers). Assume that $S'$ is another uniquely ergodic homeomorphism acting on $Y'$. If $S$ and $S'$ are topologically conjugate then also $S'$ is M\"obius disjoint. But both dynamical systems $(Y,S)$ and $(Y',S')$ have unique invariant measures, $\nu$ and $\nu'$, respectively, and it is natural to ask whether already a metric isomorphism of measure-theoretic dynamical systems $(Y,\cb(Y),\nu,S)$ and $(Y',\cb(Y'),\nu',S')$ implies that $S'$ is M\"obius disjoint ($\cb(Y)$ and $\cb(Y')$ stand  for the $\sigma$-algebras of Borel sets on $Y$ and $Y'$, respectively).  This   ergodic approach to study convergence~\eqref{mob00}, in fact, allows us to ask under which measure-theoretic properties an ergodic automorphism $W$ of a probability standard Borel space $\zdr$ has the property that the M\"obius disjointness holds for each homeomorphism $S$ of a compact metric space $Y$ which is a uniquely ergodic model of $(Z,\cd,\rho,W)$. The latter means that the measure-theoretic dynamical systems $(Z,\cd,\rho,W)$ and $(Y,\cb(Y),\nu,S)$, where $\nu$ is the unique $S$-invariant probability measure,  are measure-theoretically isomorphic (we recall that by the Jewett-Krieger theorem, each ergodic $W$ has a uniquely ergodic model).  As noticed in \cite{Bo-Sa-Zi},  this question has the  positive answer if the different prime powers of $W$ are disjoint in Furstenberg's sense. A generalization of the disjointness of powers proposed in \cite{Ab-Le-Ru} (called AOP, see  Subsection~\ref{SectAOP}), allows one  to cover the case when such powers are not disjoint and, in the extremal case, can even be isomorphic. 

Each AOP automorphism has the property that all its uniquely ergodic models are M\"obius disjoint.
For example, the M\"obius disjointness is known to hold for each irrational rotation (e.g.\ \cite{Da}) but by \cite{Ab-Le-Ru} it follows that irrational rotations  enjoy the AOP property. Hence, the M\"obius disjointness holds  in each ergodic  model of an irrational rotation, e.g.\ in Sturmian models \cite{Arnoux} or topologically mixing models \cite{Leh}. Similarly, M\"obius disjointness for zero entropy affine automorphisms has been proved in \cite{Li-Sa}, while
the main result of \cite{Ab-Le-Ru} asserts that all quasi-discrete spectrum automorphisms \cite{Ab}  enjoy the AOP property; for M\"obius disjointness itself of zero entropy affine automorphisms (which are examples of automorphisms with quasi-discrete spectrum), see \cite{Li-Sa}. It follows that M\"obius disjointness holds in all uniquely ergodic models of zero entropy affine automorphisms.
Recently, in \cite{Fl-Fr-Ku-Le}, the AOP property has been proved for all unipotent diffeomorphisms of compact nil-manifolds. It can also be shown that some examples from \cite{Ku-Le} and \cite{Wang} enjoy the AOP property. The classes of automorphisms we listed in this paragraph are examples of distal automorphisms \cite{Fu}, in fact, these are distal extensions of some totally ergodic rotations.
From that point of view, in the present paper, we will deal with the notion complementary to distality. Namely, we will mainly consider (uniquely ergodic) relatively weakly mixing extensions \cite{Fu} of irrational rotations, and the paper is aimed at a study of the AOP property for them.
Once the AOP property is established, such relatively weakly mixing extensions of rotations  will constitute a new class of dynamical systems for which the M\"obius disjointness holds  in all their uniquely ergodic models.

Let us describe more precisely the class of systems we intend to study. They are of the following form:
\beq\label{RokExt}\begin{array}{c} \tfs:(X\times Y,\cb\ot\cc,\mu\ot\nu)\to(X\times Y,\cb\ot\cc,\mu\ot\nu),\\ \tfs(x,y)=(Tx,S_{\va(x)}(y)),\end{array}\eeq
where $X=\T=[0,1)$ (mod~1), $Tx=x+\alpha$ is an irrational rotation, $\va\colon X\to\R$ is measurable  and $\cs=(S_t)_{t\in\R}$ is an ergodic flow acting on a
probability standard Borel space $\ycn$. Under a weak assumption on $\va$, the skew products~\eqref{RokExt} are ergodic \cite{Le-Pa1}. Furthermore, if additionally $\va$ is continuous  then $\tfs$ is a homeomorphism which is uniquely ergodic for each  uniquely ergodic flow $\cs$. Under some further assumptions on $\va$ and $\cs$ such extensions are relatively weakly mixing extensions of $T$ \cite{Le-Pa1,Le-Pa2}. For example, these assumptions are
satisfied when $\tf$ is ergodic and $\cs$ is weakly mixing. Recall that $\tf$ is defined on $X\times\R$ by the formula $\tf(x,t)=(Tx,\va(x)+t)$ and it preserves $\mu\ot\la_{\R}$;  we assume ergodicity of $\tf$ with respect to this infinite measure. Furthermore, since the cocycles $\va$ we use are recurrent, the entropy of $\tfs$ will also be zero regardless the entropy of $\cs$ itself \cite{Da} (recall also that the AOP property implies zero entropy \cite{Ab-Le-Ru}). Finally, we note that given $g\in C(Y)$ if we set $F(x,y)=g(y)$ then
$$
\frac1N\sum_{n\leq N} F(\tfs^n(x,y))\mob(n)=\frac1N\sum_{n\leq N}g(S^{(n)}_{\va(x)}y)\mob(n)=\frac1N\sum_{n\leq N}g(S_{\va^{(n)}(x)}y)\mob(n),$$
where $\va^{(n)}(x)=\va(x)+\va(Tx)+\ldots+\va(T^{n-1}x)$, which explains why the M\"obius disjointness~\eqref{mob00} for $\tfs$ implies~\eqref{mobran1prime}.

One of the main results of the paper is the following.

\vspace{1ex}

{\bf Theorem~A.} {\em There are an irrational $\alpha\in\T$ and a measurable (even smooth) $\va\colon X\to\R$ such that $\tfs$ has the AOP property for each ergodic flow $\cs$.
In particular, there are an irrational rotation $T$ and a continuous $\va\colon\T\to\R$ such that ``randomized'' M\"obius disjointness~\eqref{mobran1} holds for all continuous, uniquely ergodic flows $\cs$, i.e.
$$
\lim_{N\to\infty}\frac1N\sum_{n\leq N}g(S_{\va^{(n)}(x)}y)\mob(n)=0$$
for all $x\in X$, $g\in C(Y)$ and $y\in Y$.}

\vspace{1ex}

\noindent (For the proof, see Theorem~\ref{jm}, Corollary~\ref{c:jm}, Proposition~\ref{wn:zalozenia} below.) We also consider the affine case: $\va(x)=x-\frac12$ and prove that $\tfs$ has the AOP property for each ergodic flow $\cs$ acting on $\ycn$ whose spectrum on $L^2_0\ycn$ is disjoint with $\Q$, see Theorem~\ref{t:aopAFF} below.

Assume now that $\va\colon X\to\R$ is continuous and the homeomorphisms $\tfs$ enjoy the AOP property for each (continuous) uniquely ergodic $\cs$.  It follows from Theorem~A that if we fix $x_0\in X$ and set \beq\label{cn}
c_n:=\va^{(n)}(x_0)=\va(x_0)+\va(Tx_0)+\ldots+\va(T^{n-1}x_0),\eeq then for each uniquely ergodic flow $\cs$ on a compact metric space $Y$,
$g\in C(Y)$ and $y\in Y$, we obtain
\beq\label{SarCon1}
\lim_{N\to\infty}\frac1N\sum_{n\leq N}g(S_{c_n}y)\mob(n)= 0.\eeq
Thus, $(c_n)$ is an example of a sequence along which the M\"obius disjointness holds for each uniquely ergodic flow $\cs$.
By taking
\beq\label{an}
a_n:=[c_n], \;n\geq1,\eeq
and applying the standard suspension construction (see Subsection~\ref{SecSusp}), we obtain
\beq\label{SarCon2}
\lim_{N\to\infty}\frac1N\sum_{n\leq N}b(R^{a_n}z)\mob(n)=0\eeq
for each uniquely ergodic homeomorphism $R$ of a compact metric space $Z$ ($b\in C(Z)$, $z\in Z$). In fact, we prove the $\bfu$-{\em disjointness}, i.e.\ we prove~\eqref{SarCon2} in which $\mob$ is replaced by a multiplicative function\footnote{The function $\mob$ is multiplicative, that is, $\mob(mn)=\mob(n)\mob(m)$ for each $m,n$ coprime.} or being more precise: for each multiplicative function $\bfu\colon\N\to\C$, $|\bfu|\leq1$, we have
\beq\label{SarCon3}
\lim_{N\to\infty}\frac1N\sum_{n\leq N}b(R^{a_n}z)\bfu(n)= 0\eeq
for each uniquely ergodic homeomorphism $R$ of a compact metric space $Z$, each $b\in C(Z)$, $\int_Z b\,d\rho=0$ ($\rho$ stands for the unique $R$-invariant measure) and $z\in Z$.

Notice however that the existence of a sequence $(a_n)\subset\Z$ for which the subsequence version~\eqref{SarCon2} of M\"obius disjointness holds for each uniquely ergodic homeomorphisms $R$ is not surprising. Indeed, each slowly increasing sequence $(a_n)$ of integers will do the same because the sequence $(b(R^{a_n}z))_{n\geq1}$ will be a bounded sequence which behaves like a constant sequence and~\eqref{SarCon2} will follow from the fact that
$\frac1N\sum_{n\leq N}\mob(n)\to0$.\footnote{We would like to thank N.\ Frantzikinakis and B.\ Weiss for some fruitful discussions on the subject.
N.\ Frantzikinakis noticed additionally that  the sequence $([n^c])$ with $0<c<1$, satisfies~\eqref{SarCon2} for each uniquely ergodic $R$.} On the other hand, the existence of $(a_n)$ for which~\eqref{SarCon3} holds for each multiplicative $\bfu\colon\N\to\C$, $|\bfu|\leq1$, and each uniquely ergodic $R$ does not seem to be clear as there are multiplicative functions  whose averages do not converge:  $n\mapsto n^{it}=e^{it\log n}$ for $t\neq0$, are examples of such. Moreover, whenever $\int_Zb\,d\rho=0$, the sequence $(b(R^{[a_n]}z))$ displays an orthogonality behaviour along different subsequences $(b(R^{[a_{pn}]}z)$ and $b(R^{[a_{qn}]}z)$ for different prime numbers $p,q$, namely, it will satisfy the assumptions of K\'atai-Bourgain-Sarnak-Ziegler criterion (see Proposition~\ref{kbszcrit} below). Finally, as proved recently in \cite{Ab-Ku-Le-Ru1}, the AOP property implies
a property similar to~\eqref{SarCon3} on a typical short interval. More precisely, it follows from \cite{Ab-Ku-Le-Ru1} that the following holds:

\vspace{1ex}

{\bf Corollary~B.} {\em There is a Poincar\'e sequence $(a_n)_{n\in\N}\subset\Z$~\footnote{A sequence $(a_n)$ is called Poincar\'e if for each ergodic automorphism $R$ of a  probability standard Borel space $\zdr$ and each $C\in{\cal D}$, $\rho(C)>0$, we have $\rho(R^{-a_n}C\cap C)>0$ for infinitely many $n$. Each ergodic sequence is Poincar\'e.

We provide quite explicit sequences satisfying the above (see Section~\ref{s:AOPaf}). For example, if $\alpha$ is irrational with bounded partial quotients and $\alpha,\beta,1$ are rationally independent then we can take $$a_n=\left[n\beta+\frac{n(n-1)}2\alpha-\frac n2-\sum_{j=1}^{n-1}[\beta+j\alpha]\right],\,n\geq1.$$}  such that for each multiplicative function
$\bfu\colon\N\to\C$, $|\bfu|\leq1$, we have
\beq\label{short1}
\frac1M\sum_{M\leq m<2M}\left|\frac1H\sum_{m\leq h<m+H}
b(R^{a_h}z)\bfu(h)\right|\to 0, \;H\to\infty,\;H/M\to0
\eeq
for each uniquely ergodic homeomorphism $R$ of a compact metric space $Z$,  $b\in C(Z)\cap L^2_0\zdr$  and $z\in Z$.}

\vspace{1ex}

\noindent

Proceeding as in \cite{Ab-Le-Ru}, it follows that
for each  degree $d>0$ polynomial $P\in\R[x]$ with the leading coefficient $\alpha_d$ irrational, we have
\beq\label{short2} \frac1M\sum_{M\leq m<2M}\left|\frac1H\sum_{m\leq h<m+H}
b(W^{a_h}z)e^{2\pi iP(a_h)} \bfu(h)\right|\to 0\eeq
when  $H\to\infty,\;H/M\to0$, for $W$ a uniquely ergodic homeomorphism such that $(Z,\cd,\rho,W)$ has no $e^{2\pi ik\alpha}\neq1$ as its eigenvalue, $\bfu$ as above and arbitrary $b\in C(Z)$ and $z\in Z$.\footnote{We only need to show  that the transformation $R'$ which is behind the sequence $e^{2\pi iP(n)}$, $n\geq0$,  is disjoint from $W$ as then $R:=W\times R'$ is also uniquely ergodic and we can use Corollary~B. As a matter of fact the transformation $R'$ is a several step affine extension of the irrational rotation by $\alpha:=d!\alpha_d$. Since $W$ is disjoint with this rotation, it is also disjoint with its compact group extension (which does not add new eigenvalues) \cite{Fu}.}

To illustrate~\eqref{short1}, consider  $R$ being the rotation on $Z=\Z/2\Z=\{0,1\}$, $Ri=i+1$, $b(i)=(-1)^i$, and $z=0$. The validity of~\eqref{short1} for these parameters yields the following:

\vspace{1ex}

{\bf Corollary~C.} {\em There is a Poincar\'e sequence $(a_n)\subset\Z$ such that for each multiplicative function
$\bfu\colon\N\to\C$, $|\bfu|\leq1$, we have
\beq\label{short3}
\frac1M\sum_{M\leq m<2M}\left|\frac1H\sum_{m\leq h<m+H}
(-1)^{a_h} \bfu(h)\right|\to 0, \;H\to\infty,\;H/M\to0.
\eeq}

\vspace{1ex}

\noindent Note that no assumption on the convergence of the averages of $\bfu$  is made. (If we assume that the averages of $\bfu$ are going to zero and $\bfu$ is real valued, then \eqref{short3} holds for any constant sequence $(a_n)$ \cite{Ma-Ra}.)

\section{Basic notions: cocycle, Mackey action, Rokhlin extension, joining}
\subsection{Mackey action associated to a cocycle} Let $\xbm$ be a probability standard Borel  space. Let ${\rm Aut}\xbm$ denote the group of (measure-preserving) automorphisms of $\xbm$.
Assume that $T\in {\rm Aut}\xbm$ is ergodic. Let $G$ be a locally compact second countable (lcsc) Abelian group and let  $\va\colon X\to G$ be measurable (we say that $\va$ is a \emph{cocycle}). By $\tf$ we denote the corresponding {\em group extension}:
$$
T_\va:(X\times G,\cb(X\times G),\mu\ot\la_G)\to (X\times G,\cb(X\times G),\mu\ot\la_G),
$$
$$
\tf(x,g)=(Tx,\va(x)+g);
$$
here $\la_G$ stands for a Haar measure of $G$; hence,$T_\va$ preserves the  measure $\mu\ot \la_G$ which is infinite if $G$ is not compact.
Note that $(T_\va)^k(x,g)=(T^kx,\va^{(k)}(x)+g)$, where
$$
\va^{(k)}(x)=\left\{\begin{array}{ccc}
\va(x)+\va(Tx)+\ldots+\va(T^{k-1}x) &\text{if}& k\geq1,\\
0&\text{if}&k=0,\\
-(\va(T^kx)+\ldots+f(T^{-1}x))&\text{if}&k<0.\end{array}\right.$$
Let $\tau=(\tau_g)_{g\in G}$ be the natural $G$-action on $(X\times G,\mu\ot\la_G)$:
\beq\label{act1}
\tau_g(x,g')=(x,g+g')\text{ for }(x,g')\in X\times G.\eeq
Then $\tau$ preserves the measure $\mu\ot\la_G$ and
for each $g\in G$, we have
\beq\label{act2}
\tf\circ \tau_g=\tau_g\circ \tf.\eeq
We say that $\va$ is {\em ergodic} if $\tf$ is ergodic. In general, $\va$ is not ergodic. For example, if
$$
\va(x)=\theta(Tx)-\theta(x)$$
for a measurable $\theta\colon X\to G$, i.e.\ when $\va$ is a {\em coboundary}, then clearly $\tf$ is not ergodic as every set \beq\label{sklerg}\{\tau_g(x,\theta(x)) : x\in X\},\;g\in G,\eeq is $\tf$-invariant. Fix a probability measure $\la$ equivalent to $\la_G$ and note that now $\tf$ and $\tau$ become non-singular actions on the  probability (standard Borel) space $(X\times G,\mu\ot\la)$. Let $\cj_\va=\cj(\tf)$ denote the $\sigma$-algebra of $\tf$-invariant sets and $(C_\va,\cj_\va,\kappa_\va)$ denote the corresponding probability (standard Borel)  quotient space, called the {\em space of ergodic components of} $\tf$. Since~\eqref{act2} holds, $\tau$ also acts on the space of ergodic components. This measurable and non-singular\footnote{Given a $G$-action $\cw=(W_g)_{g\in G}$ on a probability standard Borel space $\zdr$, we say that it is {\em measurable} if
the map $(z,g)\mapsto W_gz$ is measurable and it is {\em non-singular} if for each $g\in G$, the measure $\rho\circ W_g=(W_g)_\ast(\rho)$ given by $(W_g)_\ast(\rho)(A):=\rho(W_g^{-1}A)$ is equivalent to $\rho$. Implicitly, all actions under consideration are measurable.} $G$-action is called the {\em Mackey action} of $\tf$ (or of $\va$) and is denoted by $\cw(\va)$ or $\cw(\va,T)$ or even $\cw(\va,T,\mu)$ if not expliciting the parameters may lead to a confusion. The Mackey action is always ergodic. Note that the Mackey action of an ergodic cocycle is trivial (the action on the one point space), while it is the action of $G$ on itself (by translations) when $\va$ is a coboundary, cf.~\eqref{sklerg}.

\subsection{Essential values of a cocycle}
Let $T\in{\rm Aut}\xbm$ be ergodic and $\va\colon X\to G$ a cocycle with values in an lcsc Abelian  group $G$. Following \cite{Sch}, an element $g\in G$ is called an {\em essential value} of $\va$ if for every $C\in\cb$, $\mu(C)>0$, and every open $V\subset G$, $g\in V$,  there exists $N\in\Z$ such that
$
\mu(C \cap T^{-N}C \cap [\va^{(N)}\in  V]) > 0$.
By $E(\va)$ we denote the set of all essential values. In fact, it is a closed subgroup of $G$.

\begin{Prop}[\cite{Sch}]\label{p:essv} $\tf$ is ergodic if and only if $E(\va) = G$.\end{Prop}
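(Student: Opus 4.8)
This is Schmidt's ergodicity criterion, and I would deduce it from the identity $E(\va)=P$, where $P$ denotes the group of $g\in G$ for which $\tau_g$ preserves every $\tf$-invariant set (mod $\mu\ot\la_G$); equivalently, the group of elements acting trivially on the space of ergodic components. Granting $E(\va)=P$, the proposition is immediate. If $\tf$ is ergodic, its only invariant sets are null or co-null, and both are preserved by every $\tau_g$, so $P=G$. Conversely, if $P=G$ then every invariant $A$ satisfies $\tau_gA=A$ for all $g$, so each fibre $A_x:=\{g:(x,g)\in A\}$ is invariant under all translations, hence equals $\emptyset$ or $G$; thus $A=A'\times G$ with $A'=\{x:A_x=G\}$, and the fibre relation below forces $A'$ to be $T$-invariant, so $A'$ is trivial by ergodicity of $T$. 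Hence $\tf$ ergodic $\iff P=G$, and combined with $E(\va)=P$ this is exactly $\tf$ ergodic $\iff E(\va)=G$. It remains to prove the two inclusions $E(\va)\subseteq P$ and $P\subseteq E(\va)$.

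The inclusion $E(\va)\subseteq P$ is the one I would carry out in full. Let $A$ be $\tf$-invariant with fibres $A_x$. Invariance is equivalent to the cocycle relation $A_{Tx}=A_x+\va(x)$ for a.e.\ $x$, read off directly from $(x,g)\in A\iff\tf(x,g)=(Tx,\va(x)+g)\in A$. Consequently the map sending $x$ to the translation-class of $A_x$ in the measure algebra of $(G,\la_G)$ is $T$-invariant, so by ergodicity of $T$ and a measurable selection it is a.e.\ constant: there are a fixed measurable $B\subseteq G$ and a measurable $\beta\colon X\to G$ with $A_x=B+\beta(x)$ mod $\la_G$. Feeding this into $A_{Tx}=A_x+\va(x)$ gives $\va(x)-\big(\beta(Tx)-\beta(x)\big)\in H_0$ a.e., where $H_0:=\{h\in G:\la_G\big(B\triangle(B+h)\big)=0\}$ is a closed subgroup. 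Thus $\va$ is cohomologous to a cocycle taking values in $H_0$; since the essential-value group is a cohomology invariant and is contained in the closed group of values, $E(\va)\subseteq H_0$. Finally, for $g_0\in E(\va)\subseteq H_0$ we have $B+g_0=B$ mod $\la_G$, whence $A_x+g_0=A_x$ and $\tau_{g_0}A=A$; as $A$ was arbitrary, $g_0\in P$.

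For the reverse inclusion $P\subseteq E(\va)$ I would argue by contraposition. If $g_0\notin E(\va)$, the definition furnishes $C_0$ with $\mu(C_0)>0$ and an open $V\ni g_0$ such that $\mu\big(C_0\inter T^{-N}C_0\inter[\va^{(N)}\in V]\big)=0$ for every $N$; this says the $\tf$-orbits started in a thin tube over $C_0$ never return to its $g_0$-translate, and I would use this to manufacture a $\tf$-invariant set $A$ with $\tau_{g_0}A\neq A$, so that $g_0\notin P$. Concretely, one passes to the reduced cocycle $\bar\va\colon X\to G/E(\va)$, whose essential values are trivial, and invokes Schmidt's reduction lemma to realize the space of ergodic components together with its $G$-action (the Mackey action) in such a way that translation by $g_0$, having nonzero image in $G/E(\va)$, acts non-trivially; pulling back a set separated by this translation yields the required $A$.

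I expect this reverse inclusion to be the main obstacle. The forward inclusion reduces, as above, to the transparent fibre relation $A_{Tx}=A_x+\va(x)$ together with ergodicity of $T$ and a routine measurable selection; but producing, from the purely negative information $g_0\notin E(\va)$, an actual invariant set on which $\tau_{g_0}$ acts non-trivially requires the full reduction theory of cocycles, in particular the case of non-regular $\va$ where the reduced cocycle need not be a coboundary and the Mackey action is not a homogeneous space. This construction, which is the technical heart of Schmidt's essential-value machinery, is where the argument goes beyond formal manipulation.
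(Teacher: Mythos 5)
The paper itself does not prove this proposition --- it is quoted from Schmidt [Sch] --- so your proposal can only be measured against the standard proof there. Your reduction of the statement to the identity $E(\va)=P$ is the right frame, but the inclusion $E(\va)\subseteq P$, the one you present as complete and ``transparent'', contains a genuine gap. The passage from $T$-invariance of the map $x\mapsto[A_x]$ (translation class of the fibre in the measure algebra of $(G,\la_G)$) to its a.e.\ constancy is not licensed by ergodicity: that implication requires the target space to be countably separated, and the quotient of the measure algebra by the translation $G$-action is not countably separated --- for $G=\Z$ it is $2^{\Z}$ modulo the shift, the prototype of a non-smooth quotient. Worse, the intermediate claim $A_x=B+\beta(x)$ is not merely unjustified but false in general. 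Take a recurrent, non-regular cocycle $\va\colon X\to\Z$ with $E(\va)=\{0\}$ which is not a coboundary (such cocycles exist); then $\tf$ is non-ergodic, and if every $\tf$-invariant set had all its fibres in a single translation class, then either some $B$ would have trivial stabilizer, forcing $\va$ to be a coboundary, or every invariant set would be $\tau_c$-invariant for some $c\geq1$, which (by a Baire-category argument in the measure algebra of invariant sets) forces one fixed $c$ to work for all of them, making the Mackey action finite cyclic and hence $\va$ regular with $E(\va)=c'\Z\neq\{0\}$ --- a contradiction either way. Schmidt's actual proof of this inclusion is a quantitative approximation argument: assuming $(\mu\ot\la)(A\triangle\tau_g A)>0$, one uses Lusin's theorem to find a positive-measure set $C$ and a compact $K\subset G$ on which $x\mapsto A_x\cap K$ is nearly constant, and then applies the definition of essential value to $C$ and a small neighbourhood of $g$ to reach a contradiction; no selection or cocycle-reduction lemma can replace this.

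You also have the relative difficulty of the two inclusions inverted. The inclusion $P\subseteq E(\va)$ --- equivalently the implication ``$\tf$ ergodic $\Rightarrow E(\va)=G$'', which you defer to ``the full reduction theory of cocycles'' and call the main obstacle --- is the easy half and needs none of that machinery: given $g\in G$, an open $V\ni g$ and $C$ with $\mu(C)>0$, choose a small open $V_0$ with $g+V_0-V_0\subset V$; the sets $C\times V_0$ and $C\times(g+V_0)$ have positive $\mu\ot\la_G$-measure, so ergodicity of $\tf$ gives $N$ with $(\mu\ot\la_G)\bigl(\tf^N(C\times V_0)\cap(C\times(g+V_0))\bigr)>0$, and unwinding $\tf^N(x,h)=(T^Nx,\va^{(N)}(x)+h)$ yields $\mu\bigl(C\cap T^{-N}C\cap[\va^{(N)}\in V]\bigr)>0$, i.e.\ $g\in E(\va)$. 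Note also that your contrapositive formulation of this half --- producing a non-trivial invariant set from $g_0\notin E(\va)$ --- already presupposes non-ergodicity of $\tf$, i.e.\ it secretly contains the other direction of the proposition. In short: the half you treat as hard is routine, and the half you treat as routine is exactly where the real content of Schmidt's theorem lies.
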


We will later need the following fact from \cite{Sch}.

\begin{Prop}[\cite{Sch}]\label{p:disjess} Let $K\subset G$ be compact and $K\cap E(\va)=\emptyset$. Then there exists
$B\in\cb$, $\mu(B) > 0$, such that for each integer $m \geq1$, we have
$$
\mu(B \cap T^{-m}B \cap [\va^{(m)}\in  K]) = 0.$$
\end{Prop}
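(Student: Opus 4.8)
\emph{The plan} is to reduce the statement to the construction of a single ``$K$-wandering'' set inside the Mackey action, by means of a universal transfer function. Let $\pi\colon X\times G\to C_\va$ be the factor map onto the space of ergodic components $(C_\va,\cj_\va,\kappa_\va)$ of $\tf$; by construction $\pi\circ\tf=\pi$, and the $\tau$-action descends to the Mackey action $\cw(\va)=(W_g)_{g\in G}$, so that $\pi(x,g+g')=W_g\,\pi(x,g')$ for a.e.\ $(x,g')$. Disintegrating along the fibres one gets a measurable $\beta\colon X\to C_\va$ with $\pi(x,t)=W_t\,\beta(x)$ for a.e.\ $(x,t)$ (put $\beta(x)=W_{-t}\pi(x,t)$, which by Fubini is $t$-independent). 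Since $\pi$ is $\tf$-invariant, $W_t\beta(x)=\pi(x,t)=\pi\big(Tx,\va(x)+t\big)=W_{\va(x)+t}\,\beta(Tx)$ for a.e.\ $(x,t)$; cancelling $W_t$ and iterating yields the transfer identity
\beq\label{e:transfer}
\beta(x)=W_{\va^{(n)}(x)}\,\beta(T^nx)\qquad\text{for a.e.\ }x\ (n\in\Z).
\eeq
Now suppose we can find $P\subseteq C_\va$ with $\beta_\ast\mu(P)>0$ and $W_gP\cap P=\emptyset$ for all $g\in K$, and set $B:=\beta^{-1}(P)$. If $x\in B\cap T^{-m}B$ with $\va^{(m)}(x)\in K$, then $\beta(x),\beta(T^mx)\in P$, while \eqref{e:transfer} gives $\beta(x)=W_{\va^{(m)}(x)}\beta(T^mx)\in W_{\va^{(m)}(x)}P$, contradicting $W_{\va^{(m)}(x)}P\cap P=\emptyset$. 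Hence $\mu(B\cap T^{-m}B\cap[\va^{(m)}\in K])=0$ for every $m\geq1$, and $\mu(B)=\beta_\ast\mu(P)>0$, as required.

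\emph{The wandering set.} It remains to produce such a $P$, and this is where $K\cap E(\va)=\emptyset$ enters, through the fact that $E(\va)$ is exactly the ineffective kernel of the Mackey action, $E(\va)=\{g:W_g=\mathrm{id}\ \kappa_\va\text{-a.e.}\}$. Realize $\cw(\va)$ as a continuous $G$-action on a compact metric model of $(C_\va,\kappa_\va)$, and call $p$ \emph{$K$-free} if $W_gp\neq p$ for all $g\in K$. Around a $K$-free $p_0$ the continuous map $(g,p)\mapsto d(W_gp,p)$ is bounded below by some $2\rho>0$ on the compact set $K\times\{p_0\}$, hence stays $>\rho$ on $K\times Q'$ for a small ball $Q'\ni p_0$ of diameter $<\rho$; consequently $W_gQ'\cap Q'=\emptyset$ for every $g\in K$. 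Thus any ball around a $K$-free point of $\mathrm{supp}(\beta_\ast\mu)$ does the job, its $\beta_\ast\mu$-measure being positive by definition of the support.

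\emph{The main obstacle} is therefore the existence of a $K$-free point charging $\beta_\ast\mu$, i.e.\ handling the possible non-freeness of the Mackey action. Because $G$ is abelian, $\mathrm{Stab}(W_hp)=\mathrm{Stab}(p)$, so the set $\mathcal F$ of $K$-free points (those $p$ with $\mathrm{Stab}(p)\cap K=\emptyset$) is $\cw(\va)$-invariant; since $\pi(x,t)=W_t\beta(x)$ gives $\kappa_\va=\int_G (W_t)_\ast(\beta_\ast\mu)\,d\la(t)$, invariance yields $\kappa_\va(\mathcal F)=\beta_\ast\mu(\mathcal F)$, and ergodicity of the Mackey action forces this common value to be $0$ or $1$. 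If it is $1$, then $\mathrm{supp}(\beta_\ast\mu)\cap\mathcal F\neq\emptyset$ and we are done. If it is $0$, then $\kappa_\va$-a.e.\ $p$ satisfies $\mathrm{Stab}(p)\cap K\neq\emptyset$; but, again by abelianness and ergodicity, $p\mapsto\mathrm{Stab}(p)$ is $\kappa_\va$-a.e.\ equal to a fixed closed subgroup $H\subseteq\{g:W_g=\mathrm{id}\}=E(\va)$, whence $H\cap K\neq\emptyset$, contradicting $K\cap E(\va)=\emptyset$. I expect the genuinely delicate points to be the rigorous construction of $\beta$ (the slice $X\times\{0\}$ is null, so one must argue through the disintegration) and the identification of $E(\va)$ with the ineffective kernel of $\cw(\va)$ — both standard in Schmidt's framework — together with the measurability needed to pass from ``a.e.\ point is $K$-periodic'' to a single stabilizer subgroup $H$.
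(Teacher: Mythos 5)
The paper offers no proof of this proposition at all: it is quoted verbatim from Schmidt's work \cite{Sch}, so your attempt can only be compared against the classical argument from that source. Your route through the Mackey action is genuinely different from it, and in outline it is correct: the transfer map $\beta$ with $\beta(x)=W_{\va^{(n)}(x)}\beta(T^nx)$, the reduction of the proposition to finding $P$ with $\beta_\ast\mu(P)>0$ and $W_gP\cap P=\emptyset$ for all $g\in K$, the production of such a $P$ as a small ball around a $K$-free point of ${\rm supp}(\beta_\ast\mu)$ in a continuous compact model (the finite-subcover estimate on $(g,p)\mapsto d(W_gp,p)$ is fine), and the dichotomy for the open, strictly invariant set $\mathcal{F}$ of $K$-free points, with a.e.-constancy of the stabilizer map in the non-free case, all go through; the technicalities you flag (point realization, measurability of $\beta$ and of $p\mapsto{\rm Stab}(p)$) are indeed routine.

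The place to press, however, is the assertion $E(\va)=\{g: W_g=\mathrm{id}\ \kappa_\va\text{-a.e.}\}$, of which you actually use only the inclusion $\{g:W_g=\mathrm{id}\}\subseteq E(\va)$ (to turn $H\cap K\neq\emptyset$ into $E(\va)\cap K\neq\emptyset$). This is a true theorem of Schmidt, but it carries essentially all the cocycle-theoretic content of your argument and is of the same depth as the proposition itself; invoking it as ``standard'' makes your text a reduction of one result of \cite{Sch} to another rather than a proof. It is at least not circular, since that inclusion has a short self-contained proof: if $a\notin E(\va)$, $a\neq0$, pick $C$ with $\mu(C)>0$ and open $V\ni -a$ such that $\mu(C\cap T^{-N}C\cap[\va^{(N)}\in V])=0$ for all $N\in\Z$ (possible because $E(\va)$ is a group and the paper's definition quantifies over $N\in\Z$), choose $U\ni0$ with $U-U-a\subset V$, and let $A$ be the $T_\va$-saturation of $C\times U$; if $W_a=\mathrm{id}$, then $\tau_aA=A$ mod $0$, so almost every point of $C\times(U+a)$ is carried by some $T_\va^N$ into $C\times U$, and Fubini yields an $N$ with $\mu(C\cap T^{-N}C\cap[\va^{(N)}\in V])>0$, a contradiction. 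With that lemma supplied, your proof is complete. For contrast, Schmidt's own proof is far more elementary: cover $K$ by finitely many neighborhoods witnessing non-essentiality, upgrade each witness set via a relativization lemma (ergodicity of $T$ lets one transport a witness inside any prescribed set of positive measure at the cost of slightly enlarging the neighborhood), and pass to a nested refinement, using that the witness property is inherited by subsets. Your approach buys a conceptual picture --- the proposition says exactly that $K$ acts ``freely enough'' on the space of ergodic components --- at the price of considerably heavier machinery.
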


A cocycle $\va\colon X\to G$ is called {\em regular} if there exist a closed subgroup $H\subset G$, a cocycle $\psi\colon X\to H$ and $f\colon X\to G$ measurable such that
$\va=\psi+f\circ T-f$ and $\psi$ is ergodic as a cocycle taking values in $H$ (in fact, $H$ must be equal to $E(\va)$). In particular, when $H=\{0\}$, i.e.\ when $\va$ is a {\em coboundary}, then $\va$ is regular. Moreover, each cocycle $\va\colon X\to G$ for which $E(\va)$ is cocompact is regular.

A method to compute essential values is contained in the following.

\begin{Prop}[\cite{Le-Pa-Vo}] \label{RigEssVal} Assume that $T\in {\rm Aut}\xbm$ is ergodic and rigid, that is, $T^{q_n}\to Id$ (in $L^2\xbm$) for some increasing sequence $(q_n)\subset\N$. Let $\va\colon X\to G$ be a cocycle and suppose that the sequence  $(\va^{(q_n)})_\ast(\mu)$ of probability measures on $G$ weakly converges to a probability measure $P$ (on $G$). Then each point $g$ in the topological support of $P$ belongs to $E(\va)$.\end{Prop}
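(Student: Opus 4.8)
The statement to prove is Proposition~\ref{RigEssVal}, which asserts that when $T$ is rigid along $(q_n)$ and the push-forward measures $(\va^{(q_n)})_\ast(\mu)$ converge weakly to $P$, then $\mathrm{supp}(P)\subset E(\va)$.

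\medskip

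The plan is to verify the definition of essential value directly. Fix $g\in\mathrm{supp}(P)$, an arbitrary open neighbourhood $V\ni g$ in $G$, and an arbitrary $C\in\cb$ with $\mu(C)>0$; I must produce some integer $N$ with $\mu(C\cap T^{-N}C\cap[\va^{(N)}\in V])>0$, and the natural candidate is $N=q_n$ for large $n$. First I would shrink $V$ to a smaller open set $V_0$ with $g\in V_0$ and $\ov{V_0}\subset V$; since $g\in\mathrm{supp}(P)$ we have $P(V_0)>0$, and by the portmanteau characterization of weak convergence applied to the open set $V_0$,
$$
\liminf_{n\to\infty}(\va^{(q_n)})_\ast(\mu)(V_0)\geq P(V_0)>0,
$$
so $\mu([\va^{(q_n)}\in V_0])$ is bounded below by some $\delta>0$ for all large $n$.

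\medskip

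The remaining ingredient is rigidity, which I would use to force the three events $C$, $T^{-q_n}C$, and $[\va^{(q_n)}\in V_0]$ to overlap. Because $T^{q_n}\to\mathrm{Id}$ in $L^2\xbm$, for any fixed measurable set the symmetric difference $\mu(C\triangle T^{-q_n}C)\to0$; in particular $\mu(C\cap T^{-q_n}C)\to\mu(C)>0$. Thus for large $n$ both $\mu(C\cap T^{-q_n}C)$ and $\mu([\va^{(q_n)}\in V_0])$ are bounded away from $0$. The point I must then confirm is that these two lower bounds are not ``disjoint'' in the sense of failing to intersect. This is exactly the step where one must be careful: a union bound of the form $\mu(A\cap B)\geq\mu(A)+\mu(B)-1$ only helps if $\mu(C)+\delta>1$, which need not hold, so a crude measure-theoretic estimate is insufficient. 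The honest way to get the overlap is to choose $C$ first and prove the statement for that fixed $C$, exploiting that rigidity makes $T^{-q_n}$ act almost like the identity on the indicator $\raz_C$, so that on the set $[\va^{(q_n)}\in V_0]$ (which has measure $\geq\delta$) a definite proportion of points also lie in $C\cap T^{-q_n}C$.

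\medskip

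Concretely, I would estimate
$$
\mu(C\cap T^{-q_n}C\cap[\va^{(q_n)}\in V_0])\geq \mu(C\cap[\va^{(q_n)}\in V_0])-\mu(C\triangle T^{-q_n}C),
$$
where the subtracted term tends to $0$ by rigidity. This reduces everything to showing that $\liminf_n\mu(C\cap[\va^{(q_n)}\in V_0])>0$ for \emph{some} choice of $V_0$ near $g$ and \emph{some} positive-measure $C$. The hard part, and the genuine content of the argument, is that the weak convergence of the unconditional push-forwards only controls $\mu([\va^{(q_n)}\in V_0])$, not the conditional behaviour on a prescribed set $C$. I expect this to be the main obstacle, and I would resolve it by a conditioning or exhaustion argument: decomposing $X$ into finitely many small pieces and using that on a positive-measure piece the values of $\va^{(q_n)}$ must accumulate near $g$ with positive frequency, so that intersecting with the relevant piece as $C$ produces the required positive measure. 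Once the overlap is secured for this $C$ and all small $V_0\subset V$, letting $V_0\downarrow$ around $g$ and invoking Proposition~\ref{p:essv}'s framework shows $g\in E(\va)$, completing the proof.
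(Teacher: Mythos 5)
The paper does not actually prove this proposition---it is quoted from \cite{Le-Pa-Vo} with no argument given---so your proposal has to be judged on its own merits. The routine parts are correct: the portmanteau bound $\liminf_n(\va^{(q_n)})_\ast(\mu)(V_0)\geq P(V_0)>0$, the rigidity estimate $\mu(C\triangle T^{-q_n}C)\to0$, and the inequality $\mu(C\cap T^{-q_n}C\cap[\va^{(q_n)}\in V_0])\geq\mu(C\cap[\va^{(q_n)}\in V_0])-\mu(C\triangle T^{-q_n}C)$ all hold, and you correctly isolate the crux: bounding $\mu(C\cap[\va^{(q_n)}\in V_0])$ from below. But your resolution of that crux has a genuine gap, in fact two linked ones. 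First, a quantifier error: the definition of essential value requires the triple-intersection property for \emph{every} $C\in\cb$ of positive measure, whereas your exhaustion/pigeonhole argument (``decomposing $X$ into finitely many small pieces'') can only produce \emph{some} positive-measure piece on which the values of $\va^{(q_n)}$ accumulate near $g$---and you say so explicitly when you reduce to ``\emph{some} positive-measure $C$''. Establishing the property for one convenient $C$ does not yield $g\in E(\va)$. Second, and symptomatically, your argument never uses the ergodicity of $T$, yet the statement is false without it: take $X=X_1\sqcup X_2$ with both halves $T$-invariant of measure $\frac12$, $\va\equiv0$ on $X_1$, and $\va$ on $X_2$ such that the distributions of $\va^{(q_n)}$ restricted to $X_2$ converge to $\delta_{g_0}$ with $g_0\neq0$; then $P=\frac12\delta_0+\frac12\delta_{g_0}$ has $g_0$ in its support, but any $C\subset X_1$ witnesses $g_0\notin E(\va)$. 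Any correct proof must therefore make ergodicity do real work, which your sketch does not.

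The missing idea is to show that for \emph{every} set $C$ of positive measure the conditional distributions of $\va^{(q_n)}$ on $C$ converge (along a subsequence) to the \emph{same} measure $P$. This is where ergodicity and the cocycle identity enter: $\va^{(q_n)}(Tx)-\va^{(q_n)}(x)=\va(T^{q_n}x)-\va(x)$, and the right-hand side tends to $0$ in measure by rigidity. Hence, for any bounded uniformly continuous $h$ on $G$, any weak-$\ast$ limit $\Phi(h)\in L^\infty\xbm$ of the bounded sequence $(h\circ\va^{(q_n)})_n$ satisfies $\Phi(h)\circ T=\Phi(h)$, so by ergodicity $\Phi(h)$ is the constant $\int_G h\,dP$. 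Now choose $h$ continuous with compact support, $0\leq h\leq\raz_{V_0}$ and $h=1$ on a neighbourhood $W$ of $g$ with $\ov{W}\subset V_0$, so that $\int h\,dP\geq P(W)>0$ since $g\in{\rm supp}\,P$. Integrating $\Phi(h)$ over the \emph{given} set $C$ yields, along the subsequence,
$$
\liminf_{n\to\infty}\mu\big(C\cap[\va^{(q_n)}\in V_0]\big)\geq\int_C\Phi(h)\,d\mu=\mu(C)\int_G h\,dP>0,
$$
and your rigidity estimate then finishes the proof. Without this equivariance-plus-ergodicity step (or an equivalent one), the proposal proves only a strictly weaker statement.
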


We will need to apply the above result when $\va,\psi\colon X\to G$ are given and we want to obtain some essential values  for the cocycle $\va+\psi$ out of $E(\va)$ and $E(\psi)$. The lemma below will be applied when the assumptions of Proposition~\ref{RigEssVal} are satisfied and $\va_n=\va^{(q_n)}$, $\psi_n=\psi^{(q_n)}$.

\begin{Lemma}\label{suma} Let $\xbm$ be a probability space, $G$ an lcsc Abelian group with an invariant metric $d$. Assume that $\va_n,\psi_n\colon X\to G$ are measurable and taking values in a compact set $C\subset G$. Moreover, assume that $\psi_n$ takes values in a finite set $F\subset C$, $n\geq1$.
Assume that $(\va_n)_\ast(\mu)\to\nu$, $(\psi_n)_\ast(\mu)\to\kappa$ and $(\va_n+\psi_n)_\ast(\mu)\to\rho$. Then for each $h_0\in{\rm supp}\,\kappa$ there exists $g_0\in{\rm supp}\,\nu$ such that $h_0+g_0\in{\rm supp}\,\rho$. Moreover,  for each $g_0\in{\rm supp}\,\nu$ there exists $h_0\in{\rm supp}\,\kappa$ such that $h_0+g_0\in{\rm supp}\,\rho$.
\end{Lemma}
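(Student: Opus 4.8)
The plan is to exploit the finiteness of $F$ to decompose all three limit measures simultaneously. Write $F=\{f_1,\dots,f_r\}$ and, for each $n$ and each $i\in\{1,\dots,r\}$, set $A_n^{(i)}:=\{x\in X:\psi_n(x)=f_i\}$, so that $(A_n^{(i)})_{i}$ is a measurable partition of $X$. Let $\nu_n^{(i)}:=(\va_n)_\ast(\mu|_{A_n^{(i)}})$ be the push-forward of the restricted measure; each $\nu_n^{(i)}$ is a sub-probability measure carried by the compact set $C$, of total mass $\mu(A_n^{(i)})$. Since $(\psi_n)_\ast(\mu)=\sum_i\mu(A_n^{(i)})\delta_{f_i}$ lives on the finite set $F$, the convergence $(\psi_n)_\ast(\mu)\to\kappa$ forces $\mu(A_n^{(i)})\to\kappa_i$ for each $i$, where $\kappa=\sum_i\kappa_i\delta_{f_i}$; in particular ${\rm supp}\,\kappa=\{f_i:\kappa_i>0\}$.

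First I would pass to a subsequence. Because $C$ is compact, the set of measures on $C$ of mass at most $1$ is weak-$\ast$ compact, and as there are only finitely many indices $i$ I can extract a single subsequence along which $\nu_n^{(i)}\to\nu^{(i)}$ for every $i$ at once; no mass escapes ($C$ being compact), so $|\nu^{(i)}|=\kappa_i$. Since a subsequence of a convergent sequence has the same limit, the three limits $\nu,\kappa,\rho$ are unchanged. Summing over $i$ and using $\mu=\sum_i\mu|_{A_n^{(i)}}$ gives $\nu=\sum_i\nu^{(i)}$. The crucial point is the third measure: on $A_n^{(i)}$ one has $\va_n+\psi_n=\va_n+f_i$, so for every bounded continuous $\phi$,
\[
\int\phi\,d\big((\va_n+\psi_n)_\ast\mu\big)=\sum_i\int\phi(g+f_i)\,d\nu_n^{(i)}(g)\xrightarrow[n\to\infty]{}\sum_i\int\phi(g+f_i)\,d\nu^{(i)}(g),
\]
whence $\rho=\sum_i(f_i+\cdot\,)_\ast\nu^{(i)}$, i.e.\ $\rho$ is the sum of the translates of the $\nu^{(i)}$ by $f_i$.

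With these three decompositions both assertions follow from elementary support facts: for nonnegative measures $\lambda\le\rho$ one has ${\rm supp}\,\lambda\subseteq{\rm supp}\,\rho$; the support of a finite sum of nonnegative measures is the union (closed, hence exact) of the supports; and translation by $f_i$ sends ${\rm supp}\,\nu^{(i)}$ to $f_i+{\rm supp}\,\nu^{(i)}={\rm supp}\big((f_i+\cdot\,)_\ast\nu^{(i)}\big)$. For the first assertion, given $h_0=f_{i_0}\in{\rm supp}\,\kappa$ we have $\kappa_{i_0}>0$, so $\nu^{(i_0)}\neq0$; picking $g_0\in{\rm supp}\,\nu^{(i_0)}\subseteq{\rm supp}\,\nu$ yields $h_0+g_0\in f_{i_0}+{\rm supp}\,\nu^{(i_0)}\subseteq{\rm supp}\,\rho$. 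For the second, given $g_0\in{\rm supp}\,\nu=\bigcup_i{\rm supp}\,\nu^{(i)}$ choose $i_0$ with $g_0\in{\rm supp}\,\nu^{(i_0)}$; then $\nu^{(i_0)}\neq0$ forces $h_0:=f_{i_0}\in{\rm supp}\,\kappa$, and the same computation gives $h_0+g_0\in{\rm supp}\,\rho$. The only genuinely delicate step is the simultaneous weak-$\ast$ extraction together with the conditional translation identity $\rho=\sum_i(f_i+\cdot\,)_\ast\nu^{(i)}$; once this is in place, everything reduces to the monotonicity of supports, which is why the finiteness hypothesis on $F$ (guaranteeing $\va_n+\psi_n$ is literally $\va_n+f_i$ on each piece) is exactly what makes the argument work.
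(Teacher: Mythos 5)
Your proof is correct, and it reaches the conclusion by a genuinely different (more global) route than the paper's. The paper uses the same core idea of conditioning on the level sets of $\psi_n$, but executes it locally and metrically: for the first assertion it fixes $h_0$ and $\vep>0$, passes to a weak sub-limit $\nu'$ of $\bigl(\va_n|_{\{\psi_n=h_0\}}\bigr)_\ast(\mu)$, and uses the invariance of $d$ via $B(g_0,\vep/2)+B(h_0,\vep/2)\subset B(g_0+h_0,\vep)$ to put positive $\rho$-mass near $g_0+h_0$; for the second assertion it first locates a value $f_m$ of $\psi_n$ carrying mass at least $c/k$ inside $\{\va_n\in B(g_0,\vep)\}$, reruns the first argument there, and then appeals to the closedness of ${\rm supp}\,\rho$ --- a step that tacitly requires fixing $f_m$ as $\vep\to0$, which again needs the finiteness of $F$. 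You instead condition on all of $F$ at once, extract one subsequence along which every conditional push-forward $\nu_n^{(i)}$ converges, and establish the exact identities $\kappa=\sum_i\kappa_i\delta_{f_i}$, $\nu=\sum_i\nu^{(i)}$ with $|\nu^{(i)}|=\kappa_i$, and $\rho=\sum_i(f_i+\cdot\,)_\ast\nu^{(i)}$; both assertions then follow formally from monotonicity of supports, the union formula for the support of a finite sum, and the fact that translation maps supports to supports. Your route buys three things: the two assertions come out symmetrically from a single decomposition rather than from two separate arguments; you in fact prove the stronger statement ${\rm supp}\,\rho=\bigcup_{i:\,\kappa_i>0}\bigl(f_i+{\rm supp}\,\nu^{(i)}\bigr)$; and you never use invariance of the metric, since continuity of translation in the topological group suffices, so that hypothesis could be dropped. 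What the paper's pointwise argument buys in exchange is only that part (a) is a short self-contained estimate requiring no simultaneous bookkeeping of all the conditional limits.
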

\begin{proof}
(a) Let $h_0\in{\rm supp}\,\kappa$. Given $n\geq1$, let $A_n:=\{x\in X : \psi_n(x)=h_0\}$. By taking subsets of $A_n$ if necessary, we can assume that there exists $c>0$ such that $\mu(A_n)=c$ for all $n\geq1$. Again, by passing to a subsequence if necessary, we can assume that
$$
\left(\va_n|_{A_n}\right)_\ast(\mu)\to \nu',$$
where $\nu'$ is concentrated on $C$ and $\nu'(C)=c$. Take any $g_0\in {\rm supp}\,\nu'$ (note that ${\rm supp}\,\nu'\subset{\rm supp}\,\nu$). Fix $\vep>0$. Then (by the invariance of $d$ under translations)
$$
B(g_0,\frac{\vep}2)+B(h_0,\frac{\vep}2)\subset B(g_0+h_0,\vep).$$
Since $g_0\in{\rm supp}\, \nu'$, there exist $c'>0$ such that
$$
\liminf_{n\to\inf}(\varphi_n|_{A_n})_\ast \mu (B(g_0,\vep/2))\geq c',
$$
so for $n$ large enough there exists $A_n'\subset A_n$ such that $\mu(A_n')\geq c'/2$ and $\va_n(A_n')\subset B(g_0,\frac{\vep}2)$.
But $\psi_n(A_n')=\{h_0\}$, whence $(\va_n+\psi_n)(A_n')\subset B(g_0+h_0,\vep)$.

(b) Suppose now that $g_0\in{\rm supp}\,\nu$. Fix $\vep>0$. Then there exist $c>0$ and $A_n\subset X$ such that $\mu(A_n)=c$ and $\va_n(A_n)\subset B(g_0,\vep)$ for each $n\geq1$. Now, we have a partition
$$
A_n=A_{n,f_1}\cup\ldots\cup A_{n,f_k},$$
where $f_j$ is the (unique) value of $\psi_n$ on $A_{n,f_j}$, $j=1,\ldots,k$. Then, for some $1\leq m\leq k$, we have $\mu(A_{n,f_m})\geq c/k$
(passing to a subsequence if necessary).  Now, the sets $A_{n,f_m}$, $n\geq1$ ``realize'' $h_0:=f_m$. Repeat the proof of (a) to find
$A_n'\subset A_{n,f_m}$ so that for $g_0^{\prime}\in{\rm supp}\,\nu^{\prime\prime}$ (with $\left(\va_n|_{A_{n,f_m}}\right)_\ast(\mu)\to\nu^{\prime\prime}$), we have $g_0'+h_0\in {\rm supp}\,\rho$. But $d(g_0,g_0')<\vep$ whence $g_0+h_0\in{\rm supp}\,\rho$ (since the support is closed).\end{proof}

\subsection{The group of $L^\infty$-eigenvalues}
If $\cw=(W_g)_{g\in G}$ is a non-singular $G$-action on a  probability standard Borel space $\zdr$ then a character $\chi\in\widehat{G}$ is called an $L^\infty$-{\em eigenvalue} of $\cw$ if for some $0\neq f\in L^\infty\zdr$ and each $g\in G$,
$$
f\circ W_g=\chi(g)\cdot f \;\;\mbox{$\rho$-a.e.}
$$
We will denote this group by $e(\cw)$.\footnote{In contrast with the finite measure-preserving case, in the non-singular case, it may happen that $e(\cw)$ is uncountable. It is however always a Borel subgroup of $\widehat{G}$.}
When $\cw$ is ergodic, we can assume that $|f|=1$ and such an action is called {\em weakly mixing} if $e(\cw)$ consists of the trivial character solely.

It follows that for automorphisms ($\Z$-actions), say for $\tf$ considered above, the group $e(\tf)$ consists of
$c\in\bs^1$ for which $F\circ \tf=cF$ for some $F\in L^\infty(X\times G,\mu\ot\la)$. If $\va$ is ergodic then
$c\in e(\tf)$ if and only if
for some $\chi\in\widehat{G}$,
$$
\chi\circ \va=c\cdot \xi\circ T/\xi$$
for some measurable $\xi\colon X\to\bs^1$, e.g.\ \cite{Le-Pa1} (note that $F(x,g)=\ov{\xi(x)}\chi(g)$ is the corresponding eigenfunction).

Recall that for an arbitrary cocycle $\va\colon X\to G$ (over an ergodic $T\in {\rm Aut}\xbm$) the Mackey action $\cw(\va)=(W_g)_{g\in G}$
is ergodic. Moreover, we have (e.g.\ \cite{Le-Pa1})
\begin{multline}\label{warMa}
e(\cw(\va))=\Lambda(\va)\\
:=\{\chi\in\widehat{G} :  \chi\circ\va=\xi\circ T/\xi\text{ for a measurable }\xi\colon X\to\bs^1\}.
\end{multline}
Note that when $\va$ is ergodic then $\Lambda(\va)=\{\raz\}$.

\begin{Lemma}\label{l:wanih}
If $\chi\in\widehat{G}$ is an $L^\infty$-eigenvalue of the Mackey action $\cw(\va,T)$, then $\chi\in E(\va)^\perp$, i.e.\ $\chi(g)=1$ for each $g\in E(\varphi)$.
\end{Lemma}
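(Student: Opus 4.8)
The plan is to reduce the statement to the transfer-function description of the eigenvalues of the Mackey action and then confront it with the recurrence built into the definition of an essential value. First I would invoke~\eqref{warMa}: since $\chi$ is an $L^\infty$-eigenvalue of $\cw(\va,T)$, we have $\chi\in\Lambda(\va)$, so there is a measurable $\xi\colon X\to\bs^1$ with $\chi\circ\va=\xi\circ T/\xi$ holding $\mu$-a.e. Because $T$ preserves $\mu$, this identity also holds with $\va$ replaced by $\va(T^j\cdot)$ and the right-hand side by $\xi\circ T^{j+1}/\xi\circ T^{j}$, for each fixed $j$; multiplying these over $j=0,\dots,N-1$ and telescoping yields, for each fixed $N\in\Z$, the cocycle identity $\chi\circ\va^{(N)}=\xi\circ T^{N}/\xi$ $\mu$-a.e. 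This is the bridge between the eigenvalue data $(\chi,\xi)$ and the increments $\va^{(N)}$ that appear in the essential-value condition.

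Next I would exploit that $\xi$ is nearly constant on a set of positive measure. Fix $\delta>0$ and partition $\bs^1$ into finitely many arcs of diameter $<\delta$; since $\mu$ is a probability measure, the $\xi$-preimage $C$ of at least one such arc has $\mu(C)>0$, and then $|\xi(x')-\xi(x'')|<\delta$ for all $x',x''\in C$. Now fix $g\in E(\va)$ and, using the (uniform) continuity of the character $\chi$, choose an open $V\ni g$ with $|\chi(g')-\chi(g)|<\vep$ on $V$. Applying the definition of essential value to this $C$ and this $V$ produces an integer $N$ with $\mu(C\cap T^{-N}C\cap[\va^{(N)}\in V])>0$; intersecting with the full-measure set on which the identity of the previous paragraph holds, I may pick a point $x$ in this positive-measure set.

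For such $x$ we have $x,T^{N}x\in C$, hence $|\xi(T^{N}x)/\xi(x)-1|=|\xi(T^{N}x)-\xi(x)|<\delta$ (recall $|\xi|\equiv 1$), while $\va^{(N)}(x)\in V$ gives $|\chi(\va^{(N)}(x))-\chi(g)|<\vep$. Combining these with $\chi(\va^{(N)}(x))=\xi(T^{N}x)/\xi(x)$ yields $|\chi(g)-1|<\vep+\delta$. Since $\vep$ and $\delta$ are arbitrary, $\chi(g)=1$; as $g\in E(\va)$ was arbitrary, $\chi\in E(\va)^\perp$. The one point that needs care — and which I regard as the main (though mild) obstacle — is the order of quantifiers: the integer $N$ furnished by the essential-value condition depends on the already-fixed $C$ and $V$, so the near-constancy set $C$ must be chosen before invoking essential values, and one must check that the a.e.\ telescoping identity survives on the positive-measure recurrence set, which it does since removing a null set leaves positive measure.
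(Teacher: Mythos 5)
Your proof is correct. It starts from the same place as the paper's proof — identity~\eqref{warMa}, which converts the eigenvalue hypothesis into the coboundary equation $\chi\circ\va=\xi\circ T/\xi$ — but from there the two arguments diverge. The paper's proof is a two-line citation: it invokes the inclusion $\ov{\chi(E(\va))}\subset E(\chi\circ\va)$ from \cite{Le-Pa-Vo} together with the fact that a coboundary has trivial essential-value group, so that $\chi(E(\va))\subset E(\chi\circ\va)=\{1\}$. You instead prove the needed consequence directly from Schmidt's definition of essential value: telescoping the coboundary equation to $\chi\circ\va^{(N)}=\xi\circ T^{N}/\xi$, localizing on a positive-measure set $C$ where $\xi$ is $\delta$-constant, and feeding $C$ and a small neighbourhood $V\ni g$ into the definition to extract a time $N$ at which simultaneously $\chi(\va^{(N)}(x))\approx\chi(g)$ and $\xi(T^{N}x)/\xi(x)\approx 1$. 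In effect you have merged and re-proved, in this special case, the two facts the paper cites: your argument is exactly the standard proof that a continuous character maps essential values to essential values of the pushed-forward cocycle, fused with the ``nearly constant transfer function'' proof that coboundaries have trivial essential values. What the paper's route buys is brevity and reuse of a general statement (the inclusion, with equality in the regular case); what your route buys is a self-contained argument requiring nothing beyond the definitions already present in the paper. Your attention to the quantifier order — fixing $C$ and $V$ before invoking the essential-value property, and noting that discarding the null set where the telescoped identity fails leaves positive measure — is precisely the point that needed care, and you handled it correctly.
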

\begin{proof} Since $\ov{\chi(E(\va))}\subset E(\chi\circ\va)$ (e.g.\ \cite{Le-Pa-Vo}; the equality holds if $\va$ is additionally regular), and $E(\chi\circ\va)=\{1\}$ since $\chi\circ\va$ is a coboundary, the result follows.\end{proof}

\subsection{Rokhlin extensions}\label{RoEx}
Assume that $T\in{\rm Aut}\xbm$ is ergodic and let $\va\colon X\to G$ be a cocycle.
Assume moreover that $G\ni g\mapsto S_g\in {\rm Aut}\ycn$ is a (measurable) $G$-representation, which we denote by $\cs$.\footnote{Measurability can also be expressed by the fact that for each $h\in L^2\ycn$, the map $G\ni g\mapsto  h\circ S_g$ is continuous (equivalently, it is weakly continuous).} We will always assume that $\cs$ is ergodic.
The $G$-action $\cs$ induces a unitary $G$-representation (Koopman representation, we will use the same notation $\cs$ to denote this representation) on $L^2\ycn$ given by $f\mapsto f\circ S_g$. We denote by $\sigma_\cs$ the maximal spectral type of $\cs$ on the subspace $L^2_0\xbm$ of $L^2\xbm$ of zero mean functions. Then, the automorphism $T_{\va,\cs}\in {\rm Aut}(X\times Y,\cb\ot\cc,\mu\ot\nu)$ given by
$$
T_{\va,\cs}(x,y)=(Tx,S_{\va(x)}(y))\text{ for }(x,y)\in X\times Y
$$
is called a \emph{Rokhlin extension} of $T$.\footnote{The map $X\ni x\mapsto S_x:=S_{\va(x)}\in {\rm Aut}(X\times Y,\mu\ot\nu)$ is a particular case of so called  Rokhlin cocycle, e.g.\ \cite{Da-Le,Le-Le,Le-Pa1,Le-Pa2}.}
Note that for each $k\in\Z$, we have
$(\tf)^k(x,y)=(T^kx,S_{\va^{(k)}(x)}(y))$,
hence
\beq\label{iteracja}
(\tfs)^k=(T^k)_{\va^{(k)},\cs}.\eeq

\begin{Prop}[\cite{Le-Le,Le-Pa2}] \label{p:ergRE}
Assume that $\cs$ is ergodic. Then $\tfs$ is ergodic if and only if $\sigma_{\cs}(\Lambda_\va)=0$.
\end{Prop}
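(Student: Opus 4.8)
The plan is to diagonalise the $G$-action in the second coordinate and read off the $\tfs$-invariant functions fibrewise. By the SNAG theorem the Koopman representation $(S_g)_{g\in G}$ on $L^2\ycn$ carries a projection-valued measure on $\widehat{G}$; choosing a measure $\sigma$ in its class I write $L^2\ycn\cong\int^{\oplus}_{\widehat{G}}H_\chi\,d\sigma(\chi)$, with $S_g$ acting as multiplication by $\chi(g)$ in the fibre over $\chi$. Since $\cs$ is ergodic the trivial character $\raz$ occurs with multiplicity one (its fibre is the constants), and the restriction of $\sigma$ to $\widehat{G}\setminus\{\raz\}$ is exactly the maximal spectral type $\sigma_{\cs}$ on $L^2_0\ycn$. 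Tensoring with $L^2\xbm$ gives $L^2(X\times Y,\mu\ot\nu)\cong\int^{\oplus}_{\widehat{G}}\big(L^2\xbm\ot H_\chi\big)\,d\sigma(\chi)$.

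First I would show that the Koopman operator $U$ of $\tfs$ is decomposable with respect to this field. The key point is that $G$ is abelian: for every $g\in G$ one checks $U\circ(\mathrm{Id}\ot S_g)=(\mathrm{Id}\ot S_g)\circ U$, because $S_gS_{\va(x)}=S_{\va(x)}S_g$. Hence $U$ commutes with all operators that are diagonal in the above decomposition, so $U=\int^{\oplus}V_\chi\,d\sigma(\chi)$; evaluating on product vectors $f\ot h$ identifies the fibre operator on $L^2(X,H_\chi)$ as $(V_\chi G)(x)=\chi(\va(x))\,G(Tx)$. Consequently $\tfs$ is ergodic iff $\mathrm{Fix}(U)=\int^{\oplus}\mathrm{Fix}(V_\chi)\,d\sigma$ is one-dimensional.

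The heart of the matter is the fibrewise dichotomy. For $\chi=\raz$ we have $(V_{\raz}G)(x)=G(Tx)$, so by ergodicity of $T$ the fixed space is the $X$-constants tensored with $H_{\raz}$, which by ergodicity of $\cs$ is one-dimensional; this accounts for the constant functions. For $\chi\neq\raz$, suppose $0\neq G\in\mathrm{Fix}(V_\chi)$, i.e.\ $G(x)=\chi(\va(x))G(Tx)$. Pairing with a vector $w\in H_\chi$ for which $g(x):=\langle G(x),w\rangle\not\equiv0$, the scalar relation $g(x)=\chi(\va(x))g(Tx)$ together with $|\chi|=1$ and ergodicity of $T$ forces $|g|$ to be a positive constant; then $\zeta:=\overline{g/|g|}$ is unimodular and satisfies $\chi\circ\va=\zeta\circ T/\zeta$, that is $\chi\in\Lambda_\va$. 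Conversely, if $\chi\in\Lambda_\va$ with transfer function $\zeta$, then $G(x)=\overline{\zeta(x)}\,w$ lies in $\mathrm{Fix}(V_\chi)$ for any $0\neq w\in H_\chi$. Thus for $\chi\neq\raz$ one has $\mathrm{Fix}(V_\chi)\neq 0$ exactly when $\chi\in\Lambda_\va$, so $\int^{\oplus}_{\chi\neq\raz}\mathrm{Fix}(V_\chi)\,d\sigma\neq 0$ precisely when $\sigma_{\cs}(\Lambda_\va)>0$. Combining with the $\raz$-fibre, $\tfs$ is ergodic iff $\sigma_{\cs}(\Lambda_\va)=0$.

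I expect the main obstacle to be the measure-theoretic bookkeeping of the direct integral rather than the algebra above: one must know that $\chi\mapsto\mathrm{Fix}(V_\chi)$ is a measurable field of subspaces (so that its integral vanishes iff $\sigma_{\cs}\{\chi:\mathrm{Fix}(V_\chi)\neq 0\}=0$), that $\Lambda_\va$ is Borel, and — for the direction where $\sigma_{\cs}(\Lambda_\va)>0$ — that the transfer functions $\zeta_\chi$ and fibre vectors $w_\chi$ can be selected measurably in $\chi$ so as to assemble a genuine nonzero invariant $F\in L^2(X\times Y,\mu\ot\nu)$. These points are handled by standard measurable-selection theorems for relations with analytic graph, and this is precisely the technical content underlying \cite{Le-Le,Le-Pa2}.
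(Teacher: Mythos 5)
Your proof is correct, but it cannot be compared against an in-paper argument for the simple reason that the paper does not prove Proposition~\ref{p:ergRE}: it is quoted verbatim from \cite{Le-Le,Le-Pa2}. The route implicit in the paper's own machinery (and in those references) is, however, genuinely different from yours. There one identifies the simplex of $\tfs$-invariant measures projecting onto $\mu$ with $\cp(\cw(\va)\times\cs,\cj_\va,\kappa_\va)$ (this is Lemma~\ref{dlp}), so that ergodicity of $\mu\ot\nu$ under $\tfs$ becomes ergodicity of the product of the measure-preserving $G$-action $\cs$ with the non-singular Mackey action $\cw(\va)$; Keane's criterion (the one invoked in the proof of Lemma~\ref{l4}) then says this product is ergodic if and only if $\sigma_{\cs}(e(\cw(\va)))=0$, and $e(\cw(\va))=\Lambda_\va$ by~\eqref{warMa}. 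Your argument instead diagonalises $\cs$ via SNAG and analyses the twisted fibre operators $(V_\chi G)(x)=\chi(\va(x))G(Tx)$ directly, showing $\mathrm{Fix}(V_\chi)\neq0$ exactly when $\chi\in\Lambda_\va$; in effect you re-prove the relevant case of Keane's criterion in this concrete situation. What each approach buys: yours is self-contained and elementary modulo direct-integral theory; the Mackey-action route is more conceptual, yields the full description of invariant measures (not merely the ergodicity criterion), and is what feeds into Proposition~\ref{p:warwlaRE}, Lemma~\ref{l8} and the joining arguments later in the paper. One remark on the technical caveats you flag: the measurable-selection issue in the direction $\sigma_{\cs}(\Lambda_\va)>0$ can be bypassed entirely by the mean ergodic theorem. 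The projection onto $\mathrm{Fix}(U)$ is the strong limit of $\frac1N\sum_{n\leq N}U^n$, hence is itself decomposable with fibres the projections onto $\mathrm{Fix}(V_\chi)$; measurability of the field of fixed subspaces is then automatic, and a nonzero invariant vector orthogonal to the constants is produced by cutting a fundamental sequence of measurable vector fields with these fibre projections over the positive-measure set $\{\chi\neq\raz:\mathrm{Fix}(V_\chi)\neq0\}=\Lambda_\va\setminus\{\raz\}$ (mod $\sigma_{\cs}$). The converse direction, as you note, needs no selection at all, so your proof is complete once this standard bookkeeping is inserted.
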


\begin{Remark}\label{r:ergRE} It follows that if $\va$ is ergodic then $\tfs$ is ergodic whenever  $\cs$ is ergodic.\end{Remark}
We will also need the following.

\begin{Prop}[\cite{Le-Pa2}] \label{p:warwlaRE}
Assume that $\tfs$ is ergodic.  Then $c\in e(\tfs)$ if and only if for some
$\chi\in e(\cs)$, we have $\chi\circ \va=c\cdot \xi\circ T/\xi$ for some measurable $\xi\colon X\to\bs^1$.\end{Prop}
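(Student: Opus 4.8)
The plan is to analyse the eigenfunctions of $\tfs$ through the spectral decomposition of the fibre action $\cs$: split $L^2\ycn$ into its Kronecker (discrete spectrum) part and its weakly mixing part, show that every eigenfunction is supported on the former, and read off the character $\chi$ and the transfer function $\xi$ from there. The easy implication ($\Leftarrow$) I would do by hand. Assume $\chi\in e(\cs)$; since $\cs$ is ergodic and finite measure-preserving we may take a unimodular eigenfunction $h\in L^\infty\ycn$ with $h\circ S_g=\chi(g)h$, and assume $\chi\circ\va=c\cdot\xi\circ T/\xi$ with $\xi\colon X\to\bs^1$ measurable. Setting $F(x,y)=\ov{\xi(x)}\,h(y)$, a direct computation using $h(S_{\va(x)}y)=\chi(\va(x))h(y)$ and $\chi(\va(x))=c\,\xi(Tx)/\xi(x)$ gives $F\circ\tfs=cF$; as $0\neq F\in L^\infty(X\times Y)$, we conclude $c\in e(\tfs)$.

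For the converse, fix $c\in e(\tfs)$ with $F\circ\tfs=cF$, $F\neq0$; since $\tfs$ is ergodic, $|F|$ is constant and I normalise $|F|=1$. Decompose $L^2\ycn=\bigoplus_{\chi\in e(\cs)}\C h_\chi\oplus H_{wm}$, where the $h_\chi$ are unimodular eigenfunctions, $H_{wm}$ is the weakly mixing part on which $\cs$ acts with continuous spectral type, and the sum over $e(\cs)$ is countable because $L^2\ycn$ is separable. Each summand is $\cs$-invariant, hence preserved by every $S_{\va(x)}$, so the corresponding fibrewise projections commute with the Koopman operator of $\tfs$. Writing $F(x,y)=\sum_{\chi}a_\chi(x)h_\chi(y)+F_{wm}(x,y)$ with $a_\chi(x)=\int_Y F(x,y)\ov{h_\chi(y)}\,d\nu(y)$, each term therefore satisfies the eigenvalue equation separately.

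Projecting $F\circ\tfs=cF$ onto $h_\chi$ and using $h_\chi\circ S_{\va(x)}=\chi(\va(x))h_\chi$ yields $a_\chi(Tx)\,\chi(\va(x))=c\,a_\chi(x)$. Taking moduli shows $|a_\chi|$ is $T$-invariant, hence constant by ergodicity of $T$; if $a_\chi\not\equiv0$ I write $a_\chi=r\,\ov{\xi}$ with $r>0$ and $|\xi|=1$, which turns the relation into $\chi\circ\va=c\,\xi\circ T/\xi$ with $\chi\in e(\cs)$ — exactly the desired conclusion. Thus it only remains to show that some $a_\chi\neq0$, equivalently that the weakly mixing part $F_{wm}$ vanishes.

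The step requiring the most care — and the main obstacle — is ruling out a nonzero $F_{wm}$; this is a relative weak mixing statement for the weakly mixing fibre direction over the base. I would pass to the self-joining: form $G(x,y,y')=F_{wm}(x,y)\,\ov{F_{wm}(x,y')}$ on $(X\times Y\times Y,\mu\ot\nu\ot\nu)$, which is invariant under $T_{\va,\cs\times\cs}$ (the factor $c$ cancels with $\bar c$) and lies fibrewise in $H_{wm}\ot\ov{H_{wm}}$. On this subspace the product action $\cs\times\cs$ has continuous spectral type, since the spectral measure of $F_{wm}(x,\cdot)\ot\ov{F_{wm}(x,\cdot)}$ is a convolution of the continuous spectral measure of $F_{wm}(x,\cdot)$ with its reflection, and convolution with a continuous measure is continuous. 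I would then invoke the mechanism behind Proposition~\ref{p:ergRE} — a nonzero invariant function of a Rokhlin extension forces the fibre spectral type to charge $\Lambda_\va$ — together with the hypothesis that $\tfs$ is ergodic (equivalently $\sigma_\cs(\Lambda_\va)=0$), to conclude that $G\equiv0$ and hence $F_{wm}=0$. The delicate point is precisely controlling how this continuous spectral type meets $\Lambda_\va$; once it is settled, the previous paragraph supplies the character $\chi\in e(\cs)$ and the transfer function $\xi$, completing the equivalence.
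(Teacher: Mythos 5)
Your ``$\Leftarrow$'' direction is correct, and so is the reduction of ``$\Rightarrow$'' to showing that the weakly mixing component of $F$ vanishes (note, for the record, that the paper gives no proof of this proposition --- it is quoted from [Le-Pa2] --- so the comparison below is with the standard argument from that source). The gap is in your last step, and it is not a removable technicality: it breaks exactly at the point you flag as ``delicate''. The mechanism behind Proposition~\ref{p:ergRE}, applied to the Rokhlin extension $T_{\va,\cs^{\Delta}}$ (where $\cs^{\Delta}=(S_g\times S_g)_{g\in G}$ is the diagonal action) and to your invariant function $G$, gives: if $G\neq 0$, then the spectral type of $\cs^{\Delta}$ on the subspace where $G$ lives fibrewise --- a measure dominated by the convolution $\sigma_{wm}\ast\widetilde{\sigma_{wm}}$, where $\widetilde{\sigma}$ denotes the image of $\sigma$ under $\chi\mapsto\ov{\chi}$ --- charges $\Lambda_\va$. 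So to conclude $G\equiv 0$ you need $(\sigma_{wm}\ast\widetilde{\sigma_{wm}})(\Lambda_\va)=0$, whereas ergodicity of $\tfs$ only gives $\sigma_\cs(\Lambda_\va)=0$. These are genuinely different conditions: since $\Lambda_\va$ is a \emph{group}, a measure $\sigma$ carried by a single coset $\chi_0\Lambda_\va$ with $\chi_0\notin\Lambda_\va$ satisfies $\sigma(\Lambda_\va)=0$ while $\sigma\ast\widetilde{\sigma}$ is carried by $\Lambda_\va$; nothing in the hypotheses forbids $\sigma_{wm}$ from charging a nontrivial coset of $\Lambda_\va$. Your appeal to continuity of the convolution does not close this either: a continuous measure can charge $\Lambda_\va$ because $\Lambda_\va=e(\cw(\va))$ is the $L^\infty$-point spectrum of a \emph{non-singular} action, which may be uncountable (see the paper's own footnote in Subsection 2.3). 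So the spectral data you allow yourself cannot force $G\equiv0$.

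The proposition is nevertheless true, and the proof in [Le-Pa2] is both simpler and close in spirit to your pairing idea; the key is to pair $F$ with its translates along the fibre $G$-action rather than with an independent copy of itself. Since $G$ is Abelian, $\mathrm{Id}\times S_h$ commutes with $\tfs$ for every $h\in G$. Normalizing $|F|=1$ (ergodicity of $\tfs$), the function $\Phi_h(x,y):=F(x,y)\,\ov{F(x,S_hy)}$ satisfies $\Phi_h\circ\tfs=cF\cdot\ov{c\,F\circ(\mathrm{Id}\times S_h)}=\Phi_h$, so by ergodicity of $\tfs$ it is a constant, i.e.\ $F\circ(\mathrm{Id}\times S_h)=\chi(h)F$ for some $\chi(h)\in\bs^1$. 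One checks $\chi(h+h')=\chi(h)\chi(h')$, and $h\mapsto \chi(h)=\int F\circ(\mathrm{Id}\times S_h)\cdot\ov{F}\,d(\mu\ot\nu)$ is continuous by continuity of the Koopman representation, so $\chi\in\widehat{G}$. By a Fubini--Steinhaus argument, for a.e.\ $x$ the function $F(x,\cdot)$ is an eigenfunction of $\cs$ with eigenvalue $\chi$; in particular $\chi\in e(\cs)$, and since eigenvalues of the ergodic action $\cs$ are simple, $F(x,y)=\eta(x)h_\chi(y)$ with $\eta$ measurable, $|\eta|=1$. Plugging this into $F\circ\tfs=cF$ yields $\chi\circ\va=c\cdot\xi\circ T/\xi$ with $\xi=\ov{\eta}$, which is the assertion --- and which shows a posteriori that $F_{wm}=0$, the very fact your route could not reach. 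In short: ergodicity of $\tfs$ must be used directly on the invariant functions $\Phi_h$, not only through the spectral criterion $\sigma_\cs(\Lambda_\va)=0$.
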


\begin{Remark}\label{r:warWL} Assume that $G=\Z$ and let $Y=\Z/2\Z=\{0,1\}$ with $S(i)=i+1$ mod~2 for $i=0,1$. Then the only non-trivial eigenvalue of $S$ is -1 (more precisely, it is the character $\chi(n)=(-1)^n$) and it follows that (assuming that $\tfs$ is ergodic) $c$ is an eigenvalue of $\tfs$ different from an eigenvalue of $T$ if and only if
$$
(-1)^{\va(\cdot)}=c\cdot\xi\circ T/\xi$$ for some measurable $\xi\colon X\to\bs^1$. Similarly, if $Sy=y+\alpha$ is an irrational rotation on $\T=[0,1)$ then $c\in e(\tfs)\setminus e(T)$ for an ergodic $\tfs$ if and only if for some $k\neq0$, we have
$$
e^{2\pi ik\va(\cdot)\alpha}=c\cdot\xi\circ T/\xi$$
for a measurable $\xi\colon X\to\bs^1$.

Clearly, if $\cs$ is weakly mixing, then the ergodicity of $\tfs$ implies $e(\tfs)=e(T)$.\end{Remark}

\begin{Prop}[\cite{Le-Le}]\label{p:uniqueE} If $T$ is uniquely ergodic, $\va\colon X\to G$ is continuous and $\tf$ is ergodic then  $\tfs$ is  uniquely ergodic whenever $\cs$ is uniquely ergodic.\end{Prop}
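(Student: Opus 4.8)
The plan is to prove unique ergodicity in its measure-theoretic form. Since $\tfs$ is a homeomorphism of the compact metric space $X\times Y$, it has at least one invariant Borel probability measure, and $\mu\ot\nu$ is one of them (the fibrewise relation $\nu=(S_{\va(x)})_\ast\nu$ holds because $\nu$ is $\cs$-invariant, and $T_\ast\mu=\mu$). Thus it suffices to show that \emph{any} $\tfs$-invariant Borel probability measure $\eta$ on $X\times Y$ must coincide with $\mu\ot\nu$.

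First I would pin down the structure of an arbitrary invariant $\eta$. Projecting onto $X$ via $\pi_X$ and using $\pi_X\circ\tfs=T\circ\pi_X$, the image $(\pi_X)_\ast\eta$ is $T$-invariant, hence equals $\mu$ by unique ergodicity of $T$. Disintegrating over this marginal gives $\eta=\int_X\delta_x\ot\eta_x\,d\mu(x)$ with $x\mapsto\eta_x\in M(Y)$ measurable, where $M(Y)$ denotes the (standard Borel, weak-$\ast$ compact) space of Borel probability measures on $Y$. Writing out $\tfs_\ast\eta=\eta$ and matching disintegrations (uniqueness of disintegration) yields the equivariance relation
\beq
\eta_{Tx}=(S_{\va(x)})_\ast\eta_x\quad\text{for $\mu$-a.e.\ }x.
\eeq

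The heart of the argument is to feed this relation into the ergodicity of the group extension $\tf$. I would define $\Psi\colon X\times G\to M(Y)$ by $\Psi(x,g):=(S_{-g})_\ast\eta_x$; this is measurable, by measurability of $\cs$ and of the disintegration. A direct computation using $S_{a+b}=S_a\circ S_b$ and the equivariance relation shows $\Psi(\tf(x,g))=(S_{-\va(x)-g})_\ast(S_{\va(x)})_\ast\eta_x=(S_{-g})_\ast\eta_x=\Psi(x,g)$, so $\Psi$ is $\tf$-invariant $(\mu\ot\la_G)$-a.e. Since $\tf$ is ergodic with respect to the $\sigma$-finite measure $\mu\ot\la_G$, any invariant measurable map into a standard Borel space is a.e.\ constant; hence there is $m_0\in M(Y)$ with $(S_{-g})_\ast\eta_x=m_0$, i.e.\ $\eta_x=(S_g)_\ast m_0$, for $(\mu\ot\la_G)$-a.e.\ $(x,g)$.

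Finally I would extract a genuine $\cs$-invariant measure. As $\eta_x$ does not depend on $g$, Fubini gives, for a.e.\ fixed $x$, that $(S_g)_\ast m_0=\eta_x=:m$ for a.e.\ $g\in G$. The set $\{g:(S_g)_\ast m_0=m\}$ then has full Haar measure, and since Haar measure is translation invariant, for every $g_0\in G$ one can pick $g$ in this set with $g_0+g$ also in it, whence $(S_{g_0})_\ast m=(S_{g_0+g})_\ast m_0=m$. Thus $m$ is $\cs$-invariant, so $m=\nu$ by unique ergodicity of $\cs$; consequently $\eta_x=\nu$ for $\mu$-a.e.\ $x$ and $\eta=\mu\ot\nu$. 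I expect the delicate points to be only routine: the joint measurability of $(g,m)\mapsto(S_g)_\ast m$ on $G\times M(Y)$ needed to make $\Psi$ measurable, and the standard fact that an a.e.-invariant function of an ergodic $\sigma$-finite system is a.e.\ constant. The conceptual crux — and the sole place where the ergodicity hypothesis on $\tf$ is used — is the construction of the $\tf$-invariant $M(Y)$-valued function $\Psi$, which is exactly what converts the fibrewise equivariance into a single $\cs$-invariant measure.
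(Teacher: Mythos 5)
Your proof is correct. It is, however, not the route the paper takes: for this proposition the paper simply cites \cite{Le-Le}, and the argument it actually runs in-house (for the nearly identical Proposition~\ref{p:ueRE}) goes through the general machinery of Lemma~\ref{dlp} — the simplex of $\tfs$-invariant measures is affinely isomorphic to $\cp(\cw(\va)\times\cs,\cj_\va,\kappa_\va)$, and since ergodicity of $\va$ makes the Mackey action $\cw(\va)$ trivial, this space collapses to the set of $\cs$-invariant measures, i.e.\ to $\{\nu\}$. What you do instead is a hands-on unpacking of exactly the special case needed: disintegrate an arbitrary invariant $\eta$ over $\mu$, turn the fibrewise equivariance $\eta_{Tx}=(S_{\va(x)})_\ast\eta_x$ into the $\tf$-invariant map $\Psi(x,g)=(S_{-g})_\ast\eta_x$ with values in $M(Y)$, invoke ergodicity of the infinite measure-preserving system $\tf$ to make $\Psi$ a.e.\ constant, and then use the translation trick in $G$ to upgrade ``$(S_g)_\ast m_0=m$ for a.e.\ $g$'' to genuine $\cs$-invariance of $m$. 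Your approach buys self-containedness and transparency: no Mackey actions, no rfmp extensions, no simplex isomorphism, only disintegration and ergodicity of $\tf$ (used precisely once, as you note). The paper's approach buys generality: Lemma~\ref{dlp} describes the \emph{whole} simplex of invariant measures of $\tfs$ even when $\va$ is not ergodic (the Mackey action then being nontrivial), which the paper needs elsewhere. The points you flag as routine indeed are: joint weak-$\ast$ continuity of $(g,m)\mapsto (S_g)_\ast m$ holds because the action of $G$ on the compact space $Y$ is continuous; a.e.-invariant maps into standard Borel spaces are a.e.\ constant for non-singular ergodic systems; and the Fubini step does produce a single common value $m$, since any two admissible base points $x,x'$ share a full-Haar-measure set of $g$'s with $\eta_x=(S_g)_\ast m_0=\eta_{x'}$ — this last remark is worth making explicit, as your ``$m:=\eta_x$ for a.e.\ fixed $x$'' silently uses it before you conclude $\eta_x=\nu$ for a.e.\ $x$.
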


\subsection{Joinings} Let $T\in{\rm Aut}\xbm$ and $R\in{\rm Aut}\zdr$ be ergodic. By a {\em joining}  of $T$ and $R$ we mean any $T\times R$-invariant measure $\kappa$ on $(X\times Z,\cb\ot\cd)$ whose projections on the coordinates are $\mu$ and $\rho$, respectively. The set of joinings between $T$ and $R$ is denoted by $J(T,R)$. By $J^e(T,R)$ we denote the subset of {\em ergodic joinings}, i.e.\ the subset of those $\kappa\in J(T,R)$ for which the automorphism $T\times R\in{\rm Aut}(X\times Z,\cb\ot\cd,\kappa)$ is ergodic. Following \cite{Fu}, $T$ and $R$ are called {\em disjoint} if $J(T,R)=\{\mu\ot\rho\}$.

An obvious example of a joining is just the product measure $\mu\ot\rho$.
Assume now that $T$ and $R$ are factors of $\ov{T}\in{\rm Aut}(\ov{X},\ov{\cb},\ov{\mu})$ and $\ov{R}\in{\rm Aut}(\ov{Z},\ov{\cd},\ov{\rho})$, respectively. By abuse of notation, we can assume that $\cb\subset\ov{\cb}$ and $\cd\subset\ov{\cd}$. Assume that $\kappa\in J(T,R)$. Then the measure
$\widehat{\kappa}$ determined by
$$
\int_{\ov{X}\times\ov{Z}}F\ot G\,d\widehat{\kappa}=\int_{X\times Z}{\mathbb E}(F|\cb){\mathbb E}(G|\cd)\,d\kappa$$
is a joining of $\ov{T}$ and $\ov{R}$, i.e.\ $\widehat{\kappa}\in J(\ov{T},\ov{R})$, called the {\em relatively independent extension} of $\kappa$ \cite{Gl}. Even if $\kappa$ is ergodic, the relatively independent extension of it need not be ergodic.
Note that
\beq\label{rieCALKA}
\int_{\ov{X}\times\ov{Z}}F\ot G\,d\widehat{\kappa}=0\text{ whenever }{\mathbb E}(F|\cb)=0.\eeq

If $U\in {\rm Aut}\xbm$, $U\circ T=T\circ U$, i.e.\ if $U$ belongs to the {\em centralizer} $C(T)$ of $T$, then $U$ determines a self-joining $\mu_U\in J(T,T)$ (in fact, $\mu_U\in J^e(T,T)$) given by the formula $\mu_U(A\times B)=\mu(A\cap U^{-1}B)$ for $A,B\in\cb$.
Consider, as above, an ergodic extension $\ov{T}\in{\rm Aut}(\ov{X},\ov{\cb},\ov{\mu})$ of $T$. If the relatively independent extension of $\mu_{Id}$ is ergodic, i.e.\ if
$\widehat{\mu_{Id}}\in J^e(\ov{T},\ov{T})$ then we say that the extension
$$
(\ov{X},\ov{B},\ov{\mu},\ov{T})\to (X,\cb,\mu,T)$$
is {\em relatively weakly mixing}. We also say  that $\ov{T}$ is relatively weakly mixing over~$T$.

In what follows, we will study automorphisms of the form $\tfs$ (as in Subsection~\ref{RoEx}). They are clearly extensions of $T$.
There is a simple criterion for the relative weak mixing in this situation:

\begin{Prop} [\cite{Le-Pa1,Le-Pa2}] \label{p:rwm}
$\tfs$ is relatively weakly mixing over $T$ if and only if $\tfs$ is ergodic and $\cs\subset{\rm Aut}\ycn$ is weakly mixing. Moreover, if $T$ is disjoint from all weakly mixing automorphisms, $\va$ is ergodic and $\cs$ is mildly mixing\footnote{Mild mixing means that for no set $A\in\cb$, $0<\mu(A)<1$, we have $\liminf_{n\to\infty}\mu(T^{-n}A\triangle A)=0$.}  then $\tfs$ is disjoint from all weakly mixing automorphisms.
\end{Prop}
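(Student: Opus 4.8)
My plan is to recognise the relatively independent self-joining of $\tfs$ over $T$ as a concrete Rokhlin extension and then invoke Proposition~\ref{p:ergRE}. By the product structure of the fibres, the relatively independent self-joining of $\tfs$ over its base $T$ is carried by $(X\times Y\times Y,\mu\ot\nu\ot\nu)$, and the acting automorphism is $(x,y_1,y_2)\mapsto(Tx,S_{\va(x)}y_1,S_{\va(x)}y_2)$; that is, it is the Rokhlin extension $T_{\va,\cs\times\cs}$ associated with the diagonal $G$-action $g\mapsto S_g\times S_g$ on $(Y\times Y,\nu\ot\nu)$, which I denote $\cs\times\cs$. Hence $\tfs$ is relatively weakly mixing over $T$ exactly when $T_{\va,\cs\times\cs}$ is ergodic; since ergodicity of $T_{\va,\cs\times\cs}$ forces ergodicity of its factor $\tfs$, the requirement that $\tfs$ be ergodic is in any case necessary.

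For the forward implication it then remains to see that relative weak mixing forces $\cs$ to be weakly mixing, and I would argue by contraposition. If $\cs$ is not weakly mixing, choose a nonconstant $f\in L^\infty\ycn$, $|f|=1$, with $f\circ S_g=\chi(g)f$ for a nontrivial $\chi\in e(\cs)$, and set $H(x,y_1,y_2)=f(y_1)\ov{f(y_2)}$. A direct computation gives $H\circ T_{\va,\cs\times\cs}=\chi(\va(x))\ov{\chi(\va(x))}\,H=H$, so $H$ is a nonconstant $T_{\va,\cs\times\cs}$-invariant function and $T_{\va,\cs\times\cs}$ is not ergodic. Thus relative weak mixing yields both ergodicity of $\tfs$ and weak mixing of $\cs$.

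For the converse I assume $\tfs$ ergodic and $\cs$ weakly mixing. Then $\cs\times\cs$ is again weakly mixing, in particular ergodic, so Proposition~\ref{p:ergRE} applies to the fibre action $\cs\times\cs$ and reduces ergodicity of $T_{\va,\cs\times\cs}$ to the single spectral condition $\sigma_{\cs\times\cs}(\Lambda_\va)=0$. Using $L^2_0(Y\times Y)=(L^2_0\ycn\ot\C)\oplus(\C\ot L^2_0\ycn)\oplus(L^2_0\ycn\ot L^2_0\ycn)$ one has $\sigma_{\cs\times\cs}\ll\sigma_\cs+\sigma_\cs\ast\sigma_\cs$; the two copies of $\sigma_\cs$ give no mass to $\Lambda_\va$, since ergodicity of $\tfs$ is, by Proposition~\ref{p:ergRE}, precisely $\sigma_\cs(\Lambda_\va)=0$, while $\sigma_\cs\ast\sigma_\cs$ is continuous because $\sigma_\cs$ is (weak mixing of $\cs$). \textbf{This last step is the main obstacle.} Continuity of $\sigma_\cs\ast\sigma_\cs$ annihilates $\Lambda_\va$ at once when $\Lambda_\va$ is countable, in particular in the case $\va$ ergodic, where $\Lambda_\va=\{\raz\}$ by~\eqref{warMa}; but in general $\Lambda_\va=e(\cw(\va))$ may be uncountable, and one must check that $\sigma_\cs\ast\sigma_\cs$ does not concentrate on it. Here I would exploit that $\Lambda_\va$ is a Borel subgroup contained in $E(\va)^\perp$ (Lemma~\ref{l:wanih}) and analyse the $\sigma_\cs$-mass of its cosets: the convolution can reach $\Lambda_\va$ only through cosets $c\Lambda_\va$ of positive $\sigma_\cs$-measure with $c^2\in\Lambda_\va$, and excluding this configuration is exactly what demands care.

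For the \emph{Moreover} part I assume $T$ disjoint from all weakly mixing automorphisms, $\va$ ergodic and $\cs$ mildly mixing. Mild mixing implies weak mixing, so by the first part together with Remark~\ref{r:ergRE} the extension $\tfs$ is ergodic and relatively weakly mixing over $T$. Let $R$ be weakly mixing and take an ergodic joining $\lambda\in J^e(\tfs,R)$; it suffices to treat ergodic joinings, the product $\mu\ot\nu\ot\rho_R$ being ergodic because $\tfs$ is ergodic and $R$ weakly mixing. Restricting $\lambda$ to the base produces a joining of $T$ and $R$, which by disjointness of $T$ from weakly mixing systems equals $\mu\ot\rho_R$, so the coordinates $X$ and $Z$ are $\lambda$-independent. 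Disintegrating $\lambda$ over $X$ yields a measurable family $x\mapsto\theta_x$ of couplings of $(Y,\nu)$ and $(Z,\rho_R)$ with the equivariance $\theta_{Tx}=(S_{\va(x)}\times R)_\ast\theta_x$, equivalently a family of Markov operators $\Phi_x\colon L^2_0(Z,\rho_R)\to L^2_0\ycn$ satisfying $\Phi_{Tx}=U_{S_{\va(x)}}\Phi_x U_R$; it remains to prove $\Phi_x$ trivial, which forces $\lambda=\mu\ot\nu\ot\rho_R$. Since unitaries preserve the relevant norm, the norm of $\Phi_x$ is $T$-invariant, hence constant, and a nonzero value would give a nontrivial equivariant coupling between the fibre action $\cs$ (seen through the ergodic cocycle $\va$, whose Mackey action is therefore trivial) and the weakly mixing $R$. \textbf{Excluding such a coupling is the crux here}: relative weak mixing of $\tfs$ over $T$ forbids any finite-dimensional, relatively almost periodic component, while mild mixing of $\cs$ combined with weak mixing of $R$ is what eliminates the remaining, spectrally continuous intertwiners — this is precisely where the mild-mixing hypothesis is genuinely used. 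Together these force $\Phi_x\equiv0$, giving disjointness of $\tfs$ from every weakly mixing automorphism.
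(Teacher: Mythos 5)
This proposition is not proved in the paper at all --- it is quoted from \cite{Le-Pa1,Le-Pa2} --- so your attempt has to be measured against those papers, and the two places you yourself flag as ``the main obstacle'' and ``the crux'' are precisely the mathematical content of the cited results: they are genuine gaps, not presentational ones. Your framework is the natural one (the relatively independent self-joining of $\tfs$ over $T$ is the Rokhlin extension with fibre the diagonal $G$-action $\cs_\Delta=(S_g\times S_g)_{g\in G}$ on $(Y\times Y,\nu\ot\nu)$, and Proposition~\ref{p:ergRE} turns its ergodicity into $\sigma_{\cs_\Delta}(\Lambda_\va)=0$ with $\sigma_{\cs_\Delta}\sim\sigma_\cs+\sigma_\cs\ast\sigma_\cs$), and your forward implication is complete and correct. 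But in the converse, your reduction of $(\sigma_\cs\ast\sigma_\cs)(\Lambda_\va)=0$ to ``cosets $c\Lambda_\va$ of positive mass with $c^2\in\Lambda_\va$'' is already inaccurate: the maximal spectral type of a measure-preserving action is symmetric under $\chi\mapsto\ov{\chi}$, so if \emph{any} coset $\chi\Lambda_\va$ has positive $\sigma_\cs$-mass then so does $\chi^{-1}\Lambda_\va$, and then $(\sigma_\cs\ast\sigma_\cs)(\Lambda_\va)\geq\sigma_\cs(\chi\Lambda_\va)\,\sigma_\cs(\chi^{-1}\Lambda_\va)>0$. Hence what must be proved is that \emph{every} coset of $\Lambda_\va$ is $\sigma_\cs$-null, knowing only that $\Lambda_\va$ itself is null and that $\sigma_\cs$ is continuous. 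For an arbitrary Borel subgroup this implication is simply false (take a continuous measure carried by a single non-trivial coset of an uncountable Borel subgroup), so the proof must exploit the special nature of $\Lambda_\va$ as the $L^\infty$-eigenvalue group of an ergodic non-singular (Mackey) action, cf.~\eqref{warMa}; this is the Host--M\'ela--Parreau theory of saturated subgroups on which \cite{Le-Pa1,Le-Pa2} rest, and nothing in your proposal, nor in the toolkit internal to the present paper, replaces it.

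The gap in the ``Moreover'' part is of the same kind. The reduction to a measurable family of Markov operators with $\Phi_{Tx}=U_{S_{\va(x)}}\Phi_x U_R$ is fine, but your concluding sentences are assertions, not arguments; note also that, because the equivariance involves the cocycle $\va$, a non-zero $\Phi_x$ does not directly produce a coupling of the $G$-action $\cs$ with $R$, so even the object you propose to ``exclude'' is not well identified. The actual mechanism is the following: by disjointness of $T$ from the weakly mixing $R$, any $\lambda\in J^e(\tfs,R)$ has base marginal $\mu\ot\rho$ ($\rho$ the $R$-invariant measure), so $\lambda$ is an invariant measure of the Rokhlin extension $(T\times R)_{\tilde\va,\cs}$, $\tilde\va(x,z):=\va(x)$, over the \emph{ergodic} base $(X\times Z,\mu\ot\rho,T\times R)$; Lemma~\ref{dlp} transports $\lambda$ to a measure, rfmp over the Mackey space, for the product of the ergodic non-singular Mackey action $\cw(\tilde\va,T\times R)$ with $\cs$; and one then needs this product to be ergodic. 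That is exactly where mild mixing enters, via the Furstenberg--Weiss/Schmidt--Walters characterization: $\cs$ is mildly mixing if and only if its product with every ergodic non-singular action is ergodic. Weak mixing of $\cs$ would not suffice here, because the Mackey action may have an uncountable $L^\infty$-eigenvalue group which a merely continuous spectral type can charge (cf.\ the criterion in Lemma~\ref{l4}); and once this ergodicity is available, a Lemma~\ref{l5}/Lemma~\ref{l6}-type rfmp argument forces $\lambda=\mu\ot\nu\ot\rho$. Since your sketch never invokes the mild-mixing characterization, the step in which you claim mild mixing ``eliminates the remaining intertwiners'' is exactly the missing proof.
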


\subsection{Joinings between Rokhlin extensions}
Let $G$ be an lcsc Abelian group. Following \cite{Le-Me-Na}, we will consider {\em algebraic couplings} of $G$ which are subgroups $\ch\subset G\times G$ whose projections on both coordinates are dense. Assume that $\cs=(S_g)_{g\in G}$ is an ergodic $G$-action in ${\rm Aut}\ycn$. By $\cs\ot\cs$ we denote the product $G\times G$-action: $\cs\ot\cs=(S_g\times S_{g'})_{(g,g')\in G\times G}$. It is considered with product measure $\nu\ot\nu$.\footnote{In fact, by ergodicity, $\cm(Y\times Y;G\times G)=\{\nu\ot\nu\}$ \cite{Le-Me-Na}.} By $(\cs\ot\cs)|_{\ch}$ we denote the restriction of the product action to $\ch$.
Let
$$\begin{array}{c}
\cm(Y\times Y;\ch):=\\
\{\rho\in\cp(Y\times Y) : \rho\text{ is }(\cs\ot\cs)|_{\ch}-\text{invariant and both projections of }\rho\text{ are }\nu\}.\end{array}$$
Then $\cm(Y\times Y;\ch)$ is a simplex whose set of extremal points is the set $\cm^e(Y\times Y;\ch)$ of ergodic measures in $\cm(Y\times Y;\ch)$.
Denote by $\sigma_{\cs}$ the maximal spectral type of $\cs$ on $L^2_0\ycn$.

\begin{Prop}[cf.\ \cite{Le-Me-Na}, Proposition~8]\label{p:cherg} Assume that $\cs$ is ergodic and $\ch\subset G\times G$ is an algebraic coupling of $G$.
Consider $((\cs\ot\cs)|_{\ch},\nu\ot\nu)$. The following conditions are equivalent:
\begin{enumerate}
\item
$(\cs\ot\cs)|_{\ch}$  is ergodic.
\item
$\left(\sigma_\cs\ot\sigma_\cs+\sigma_\cs\ot\delta_\raz+\delta_\raz\ot\sigma_{\cs}\right)(\ch^\perp)=0$.
\item
If $\chi,\eta\in\widehat{G}$ are eigenvalues for $\cs$ then
$$(\chi,\eta)\in\ch^{\perp}\; \Leftrightarrow\; \chi=\raz=\eta.$$
\end{enumerate}
\end{Prop}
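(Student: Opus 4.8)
The plan is to read off all three conditions from the spectral decomposition of the Koopman representation of $\cs\ot\cs$, restricted to $\ch$, on the orthocomplement of the constants. First I would record that invariance under $\ch$ and under its closure $\ov\ch$ in $G\times G$ coincide: since $(g,g')\mapsto S_g\times S_{g'}$ is weakly continuous and the projections of $\ch$ are dense, any function fixed by all of $\ch$ is fixed by $\ov\ch$. As $\ov\ch$ is a closed, hence lcsc, subgroup of $G\times G$, the SNAG theorem applies and $(\cs\ot\cs)|_\ch$ is ergodic exactly when the only $\ov\ch$-invariant vectors in $L^2(Y\times Y,\nu\ot\nu)$ are constants.

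For $1\Leftrightarrow 2$ I would decompose
\[
L^2(Y\times Y)\ominus\C=\big(L^2_0\ot\C\raz\big)\oplus\big(\C\raz\ot L^2_0\big)\oplus\big(L^2_0\ot L^2_0\big),
\]
each summand being $(\cs\ot\cs)$-invariant. As a $G\times G$-representation, the Koopman action on these three pieces has maximal spectral type $\sigma_\cs\ot\delta_\raz$, $\delta_\raz\ot\sigma_\cs$ and $\sigma_\cs\ot\sigma_\cs$ on $\widehat{G}\times\widehat{G}=\widehat{G\times G}$ respectively (the spectral type of an outer tensor product being the product of the spectral types). Hence on $L^2(Y\times Y)\ominus\C$ the maximal spectral type is the sum $\sigma_\cs\ot\sigma_\cs+\sigma_\cs\ot\delta_\raz+\delta_\raz\ot\sigma_\cs$. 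Restricting the representation to $\ov\ch$, a vector is $\ov\ch$-invariant iff its spectral measure sits on the annihilator $\ov\ch^\perp=\ch^\perp$; thus a nonzero invariant vector exists iff this maximal spectral type charges $\ch^\perp$, which is precisely the failure of condition~2. This gives $1\Leftrightarrow 2$.

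For $2\Leftrightarrow 3$ I would analyze $\ch^\perp$ using that both projections of $\ch$ are dense. If $(\raz,\zeta)\in\ch^\perp$ then $\zeta\equiv 1$ on $\pi_2(\ch)$, so by density $\zeta=\raz$, and symmetrically in the first coordinate. Consequently $\ch^\perp$ is the graph of an \emph{injective} homomorphism $\theta$ on a subgroup $D\subset\widehat{G}$, with $\theta(\raz)=\raz$ and $\theta(\chi)\neq\raz$ for $\chi\neq\raz$; in particular $\ch^\perp$ meets $\widehat{G}\times\{\raz\}$ and $\{\raz\}\times\widehat{G}$ only at $(\raz,\raz)$. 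Since ergodicity of $\cs$ forces $\sigma_\cs(\{\raz\})=0$, the two mixed terms give no mass to $\ch^\perp$, so condition~2 reduces to $(\sigma_\cs\ot\sigma_\cs)(\ch^\perp)=0$. I would then compute this by Fubini along the graph,
\[
(\sigma_\cs\ot\sigma_\cs)(\ch^\perp)=\int_{D}\sigma_\cs(\{\theta(\chi)\})\,d\sigma_\cs(\chi)=\sum_{\chi}\sigma_\cs(\{\chi\})\,\sigma_\cs(\{\theta(\chi)\}),
\]
the last sum effectively extending over the countably many atoms of $\sigma_\cs$; injectivity of $\theta$ is what collapses each fibre to a single point and turns the integral into this atomic sum. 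Since the atoms of $\sigma_\cs$ are exactly the eigenvalues of $\cs$, the sum is positive iff some pair $(\chi,\theta(\chi))\in\ch^\perp$ consists of two eigenvalues, necessarily both nontrivial; this is precisely the negation of condition~3, giving $2\Leftrightarrow 3$.

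The main obstacle is this last step: it is the density of the projections of $\ch$ that makes $\ch^\perp$ the graph of an injective map, and only injectivity guarantees that $(\sigma_\cs\ot\sigma_\cs)(\ch^\perp)$ is detected by \emph{atoms} (eigenvalues) rather than by possibly non-atomic spectral mass sitting on a larger fibre; without it, conditions~2 and~3 would genuinely differ. The remaining care is purely measure-theoretic (Borel measurability of $\ch^\perp$ and of the graph, and legitimacy of Fubini for the product measure $\sigma_\cs\ot\sigma_\cs$), which is routine.
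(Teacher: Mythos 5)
Your proof is correct and takes essentially the same spectral approach as the paper: both compute the maximal spectral type of the $G\times G$-Koopman representation on $L^2_0(Y\times Y,\nu\ot\nu)$ as $\sigma_\cs\ot\sigma_\cs+\sigma_\cs\ot\delta_\raz+\delta_\raz\ot\sigma_\cs$ and characterize ergodicity of the restriction to $\ch$ by this measure assigning zero mass to $\ch^\perp$ (the paper expresses this via the pushforward under the dual map $p$ with ${\rm ker}\,p=\ch^\perp$, you via concentration of spectral measures on the annihilator, which is the same thing). Your graph-of-an-injective-homomorphism/Fubini argument is a fleshed-out justification of the paper's terse closing sentence that mass on $\ch^\perp$ can only come from pairs of eigenvalues of $\cs$.
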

\begin{proof} Let $p:\widehat{G}\times\widehat{G}\to\widehat{\ch}$ be the dual map corresponding to the embedding
of $\ch$ into $G\times G$. The kernel of this homomorphism
is
$$
{\rm ker}\, p=\ch^{\perp}=\{\xi\in\widehat{G\times G} : 
\xi(\ch)=\{1\}\}.
$$
The maximal spectral type of $(\cs\otimes\cs)|_{\ch}$ on $L^2_0(Y\times Y,\nu\ot\nu)$ is equal to $$
p_\ast\left(\sigma_\cs\ot\sigma_\cs+\sigma_\cs\ot\delta_\raz+\delta_\raz\ot\sigma_{\cs}\right).$$
Hence we can obtain the trivial character in the image of $p$ only for an eigenvalue for $\cs\otimes\cs$, belonging to $\ch^\perp$ (and such are only products of eigenvalues for $\cs$).
\end{proof}

\begin{Cor}\label{c:cherg1}
If, additionally, $\ch$ is cocompact and
$$
\left(\sigma_\cs\ot\sigma_\cs+\sigma_\cs\ot\delta_\raz+
\delta_\raz\ot\sigma_{\cs}\right)(\ch^\perp)=0$$
then $\cm(Y\times Y;\ch)=\{\nu\ot\nu\}$.\end{Cor}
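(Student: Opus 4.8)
The plan is to deduce from the spectral hypothesis, via the equivalence already proved in Proposition~\ref{p:cherg}, that $\nu\ot\nu$ is itself an ergodic element of the simplex $\cm(Y\times Y;\ch)$, and then to use cocompactness to show it is the \emph{only} ergodic element, hence the only element at all. First I would note that the assumed vanishing is precisely condition~(2) of Proposition~\ref{p:cherg}, so by the equivalence $(2)\Leftrightarrow(1)$ the restricted action $(\cs\ot\cs)|_\ch$ is ergodic with respect to $\nu\ot\nu$; thus $\nu\ot\nu$ is an extreme point of $\cm(Y\times Y;\ch)$. Since the simplex is the closed convex hull of its extreme points, the ergodic measures, it suffices to prove that every ergodic $\rho\in\cm(Y\times Y;\ch)$ equals $\nu\ot\nu$.

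The key device is an averaging over the compact quotient $(G\times G)/\ch$, which is where cocompactness enters. Given an ergodic $\rho$, the $\ch$-invariance of $\rho$ makes the pushforward $(S_g\times S_{g'})_\ast\rho$ depend only on the coset of $(g,g')$ modulo $\ch$; writing $\bar m$ for the Haar probability measure of this compact group, I would set
\[
\til\rho:=\int_{(G\times G)/\ch}(S_g\times S_{g'})_\ast\rho\,d\bar m([g,g']).
\]
Translation invariance of $\bar m$ gives that $\til\rho$ is invariant under the full $(G\times G)$-action $\cs\ot\cs$, and since each marginal of $(S_g\times S_{g'})_\ast\rho$ equals $(S_g)_\ast\nu=\nu$ (as $\cs$ preserves $\nu$), both marginals of $\til\rho$ are $\nu$. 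Hence $\til\rho\in\cm(Y\times Y;G\times G)$, which by ergodicity of $\cs$ is the single point $\{\nu\ot\nu\}$, so $\til\rho=\nu\ot\nu$.

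Finally I would exploit extremality. Because $G\times G$ is abelian, each $S_g\times S_{g'}$ commutes with the $\ch$-action, so every translate $(S_g\times S_{g'})_\ast\rho$ is again ergodic, i.e.\ an extreme point of $\cm(Y\times Y;\ch)$. Thus $\nu\ot\nu$ is realized as the barycenter of a probability measure supported on extreme points; as $\nu\ot\nu$ is itself an extreme point, uniqueness of the barycentric (ergodic-decomposition) representation forces $(S_g\times S_{g'})_\ast\rho=\nu\ot\nu$ for $\bar m$-almost every coset. The set of $(g,g')\in G\times G$ for which this holds is a subgroup: closure under addition follows because $\cs$ preserves $\nu$, and it contains $\ch$ because $\rho$ is $\ch$-invariant. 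Having full $\bar m$-measure in the compact quotient, this measurable subgroup must be all of $G\times G$; in particular the identity lies in it, which yields $\rho=\nu\ot\nu$, as desired.

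I expect the main obstacle to be the rigorous handling of the averaging step: verifying that $[g,g']\mapsto(S_g\times S_{g'})_\ast\rho$ is a well-defined, measurable family on $(G\times G)/\ch$ so that $\til\rho$ is a genuine probability measure, and justifying that an extreme point written as such an integral of extreme points must agree with them almost everywhere (the uniqueness-of-ergodic-decomposition, or Choquet, step). Cocompactness is essential precisely here, since it is what makes $\bar m$ a probability measure; without it the averaged measure need not be finite and the argument collapses.
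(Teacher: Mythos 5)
Your proof is essentially the paper's own argument: average the translates $(S_g\times S_{g'})_\ast\rho$ of an ergodic $\rho$ over the compact quotient $(G\times G)/\ch$, identify the average with $\nu\ot\nu$ via $\cm(Y\times Y;G\times G)=\{\nu\ot\nu\}$, invoke Proposition~\ref{p:cherg} to get ergodicity of $\nu\ot\nu$ under $(\cs\ot\cs)|_{\ch}$, and use extremality (uniqueness of the ergodic decomposition) to force almost every translate to equal $\nu\ot\nu$. The only blemish is your closing subgroup argument: the claim that the set $\{(g,g') : (S_g\times S_{g'})_\ast\rho=\nu\ot\nu\}$ ``contains $\ch$ because $\rho$ is $\ch$-invariant'' is circular (that containment is literally equivalent to $\rho=\nu\ot\nu$), but it is also unnecessary, since full $\bar m$-measure already gives one $(g,g')$ with $(S_g\times S_{g'})_\ast\rho=\nu\ot\nu$, and then the paper's one-line finish applies: $\rho=(S_{-g}\times S_{-g'})_\ast(\nu\ot\nu)=\nu\ot\nu$.
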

\begin{proof}
Let $\rho\in \cm^e(Y\times Y;\ch)$. Then also $(S_{g}\times S_{g'})_\ast(\rho)\in\cm^e(Y\times Y;\ch)$ for each $g,g'\in G$.
Define
\beq\label{defkoz}
\widetilde{\rho}:=\int_{(G\times G)/\ch}(S_g\times S_{g'})_\ast\rho\,d((g,g')\ch).\eeq
Then $\widetilde{\rho}$ is an $\cs\otimes\cs$-invariant measure, whence $\widetilde{\rho}=\nu\ot\nu$.
But by our assumption and Proposition~\ref{p:cherg}, it follows that
$$
((\cs\ot\cs)|_{\ch},\nu\ot\nu)\text{ is ergodic}.$$
It follows that in the integral representation~\eqref{defkoz} of $\tilde{\rho}=\nu\ot\nu$ all the measures are $\ch$-invariant and ergodic and therefore they must be a.e.\  equal to $\nu\ot\nu$ (since $\nu\ot\nu\in\cm^e(Y\times Y;\ch)$). Finally, if $\left(S_g\times S_{g'}\right)_\ast(\rho)=\nu\ot\nu$ for some $(g,g')\in G\times G$ then $\rho=\nu\ot\nu$ and the result follows.\end{proof}

Assume now that $T\in{\rm Aut}\xbm$ is ergodic and let $\va,\psi\colon X\to G$ be cocycles. Fix $\la\in J^e_2(T)$ and consider
$$
\va\times\psi:(X\times X,\la)\to G\times G.$$
To underline that we have dependence on $\la$, we will also write $(\va\times\psi)_\la$.
Denote by $J(\tfs,T_{\psi,\cs};\la)$ the subset of $J(\tfs,T_{\psi,\cs})$ consisting of these joinings whose restriction to $X\times X$ equals $\la$.
 Let
\beq\label{Hlambda}
\ch_\la:=E((\va\times\psi)_\la).\eeq
Proceeding now as in \cite{Le-Me-Na}, we obtain the following result.

\begin{Th}\label{t:isoLambda} Under the above assumptions,
suppose that $\va,\psi\colon X\to G$ are ergodic and let $(\va\times\psi)_\la$ be regular. Then there exists an affine isomorphism
$$
\Lambda_\la:J(\tfs,T_{\psi,\cs};\la)\to \cm(Y\times Y;\ch_\la).$$\end{Th}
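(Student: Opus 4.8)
The plan is to realize every joining through its disintegration over the common base $X\times X$ and to convert the resulting cocycle relation into a single fibre measure. Reorganising coordinates, I view a measure on $(X\times Y)\times(X\times Y)$ as a measure on $(X\times X)\times(Y\times Y)$; under this identification $\tfs\times T_{\psi,\cs}$ becomes the Rokhlin-type transformation
$$\big((x,x'),(y,y')\big)\mapsto\big((Tx,Tx'),(S_{\va(x)}y,S_{\psi(x')}y')\big)$$
over the base $(T\times T,\la)$, with fibre action $\cs\ot\cs$. For $\gamma=(g,g')\in G\times G$ write $\Sigma_\gamma:=S_g\times S_{g'}\in{\rm Aut}(Y\times Y)$, so that $\cs\ot\cs=(\Sigma_\gamma)_{\gamma}$ and $\Sigma_{\gamma}^{-1}=\Sigma_{-\gamma}$. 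Given $\rho\in J(\tfs,T_{\psi,\cs};\la)$, its projection onto $X\times X$ is $\la$, so I disintegrate $\rho=\int_{X\times X}\rho_{(x,x')}\,d\la(x,x')$ with $\rho_{(x,x')}\in\cp(Y\times Y)$. Invariance of $\rho$, invariance of $\la$ under $T\times T$, and uniqueness of the disintegration give the equivariance relation
$$\big(\Sigma_{(\va(x),\psi(x'))}\big)_\ast\,\rho_{(x,x')}=\rho_{(Tx,Tx')}\qquad\text{for }\la\text{-a.e. }(x,x').$$

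Next I use the regularity hypothesis to straighten this relation. Since $(\va\times\psi)_\la$ is regular with $E((\va\times\psi)_\la)=\ch_\la$, I write $(\va\times\psi)_\la=\eta+F\circ(T\times T)-F$, where $\eta\colon X\times X\to\ch_\la$ is ergodic (so $E(\eta)=\ch_\la$ by Proposition~\ref{p:essv}) and $F\colon X\times X\to G\times G$ is measurable. Setting $\theta(x,x'):=(\Sigma_{-F(x,x')})_\ast\rho_{(x,x')}$ and substituting the cocycle identity into the equivariance relation removes the coboundary part and yields $\theta\circ(T\times T)=(\Sigma_{\eta})_\ast\,\theta$ $\la$-a.e. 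I then lift $\theta$ to the skew product $V:=(T\times T)_{\eta}$ on $(X\times X)\times\ch_\la$, which is ergodic because $\eta$ is an ergodic $\ch_\la$-valued cocycle. Concretely, the $\cp(Y\times Y)$-valued function $\Theta(x,x',h):=(\Sigma_{-h})_\ast\theta(x,x')$ is $V$-invariant, hence constant almost everywhere, say equal to $\rho_0$. Unwinding this equality shows simultaneously that $\theta\equiv\rho_0$ $\la$-a.e., that $(\Sigma_h)_\ast\rho_0=\rho_0$ for all $h\in\ch_\la$ (the set of such $h$ is a closed full-measure subgroup of $\ch_\la$, hence all of it), i.e.\ $\rho_0$ is $(\cs\ot\cs)|_{\ch_\la}$-invariant, and that $\rho_{(x,x')}=(\Sigma_{F(x,x')})_\ast\rho_0$ $\la$-a.e.

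It remains to identify the marginals of $\rho_0$ and to invert the construction. Because $\va$ and $\psi$ are ergodic and $(\va\times\psi)_\la$ is regular, the coordinate projections of $\ch_\la=E((\va\times\psi)_\la)$ are dense in $G$ (this is established as in \cite{Le-Me-Na}, via the behaviour of essential values under the homomorphisms $\pi_1,\pi_2\colon G\times G\to G$, for which regularity gives $\overline{\pi_i(\ch_\la)}=E(\pi_i\circ(\va\times\psi)_\la)=G$ \cite{Le-Pa-Vo}); thus $\ch_\la$ is an algebraic coupling. Consequently $(\pi_i)_\ast\rho_0$ is invariant under a dense subgroup of $G$, hence $\cs$-invariant; combined with the marginal identity $\int_{X\times X}(S_{F_i(x,x')})_\ast(\pi_i)_\ast\rho_0\,d\la=\nu$ coming from $\rho$ having marginals $\mu\ot\nu$, this forces $(\pi_i)_\ast\rho_0=\nu$. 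Hence $\rho_0\in\cm(Y\times Y;\ch_\la)$, and I set $\Lambda_\la(\rho):=\rho_0$.

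Conversely, given $\rho_0\in\cm(Y\times Y;\ch_\la)$, I define $\rho:=\int_{X\times X}(\Sigma_{F(x,x')})_\ast\rho_0\,d\la(x,x')$. The $\ch_\la$-invariance of $\rho_0$ together with $(\va,\psi)+F-F\circ(T\times T)=\eta\in\ch_\la$ shows that the fibres $(\Sigma_{F})_\ast\rho_0$ satisfy the equivariance relation, so $\rho$ is $\tfs\times T_{\psi,\cs}$-invariant; each fibre has $Y$-marginals $(S_{F_i})_\ast\nu=\nu$, so $\rho$ has marginals $\mu\ot\nu$ and projects to $\la$, i.e.\ $\rho\in J(\tfs,T_{\psi,\cs};\la)$. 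These two assignments are mutually inverse, and all operations involved (disintegration over the fixed base $\la$, pushforward, integration) are affine in the measures, so $\Lambda_\la$ is the desired affine isomorphism. The crux of the argument, and the step I expect to be the main obstacle, is the middle one: passing from the fibrewise cocycle relation to a single $\ch_\la$-invariant measure $\rho_0$ through the ergodicity of $(T\times T)_\eta$ (which is exactly where regularity is used), together with pinning the marginals of $\rho_0$ to $\nu$, which relies on $\ch_\la$ being an algebraic coupling and hence on the ergodicity of $\va$ and $\psi$.
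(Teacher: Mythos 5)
Your argument reconstructs precisely the method the paper has in mind: the paper gives no self-contained proof of this theorem (it says only ``Proceeding now as in \cite{Le-Me-Na}''), and the \cite{Le-Me-Na} strategy is exactly what you carry out --- disintegrate the joining over $(X\times X,\la)$, straighten the equivariance relation using the regular decomposition $(\va\times\psi)_\la=\eta+F\circ(T\times T)-F$, observe that $(x,x',h)\mapsto(\Sigma_{-h})_\ast\theta(x,x')$ is invariant under the ergodic skew product $(T\times T)_\eta$ and hence constant, recover the $\ch_\la$-invariance of the constant $\rho_0$ by the Steinhaus-type argument, and invert by $\rho_0\mapsto\int(\Sigma_{F})_\ast\rho_0\,d\la$. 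All of these steps, including affinity and the verification that the two maps are mutually inverse, are correct.

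The gap is in the step you flag yourself: pinning the marginals of $\rho_0$, i.e.\ the claim that $\ch_\la$ is an algebraic coupling. The equality $\ov{\pi_1(\ch_\la)}=E\bigl(\pi_1\circ(\va\times\psi)_\la\bigr)$ is indeed valid for regular cocycles by \cite{Le-Pa-Vo}, but the second equality $E\bigl(\pi_1\circ(\va\times\psi)_\la\bigr)=G$ concerns the cocycle $(x,x')\mapsto\va(x)$ \emph{over the joining} $(X\times X,\la,T\times T)$, and it does not follow from ergodicity of $\va$ over $(X,\mu,T)$. Ergodicity of $\va$ over $(X,\mu,T)$ rules out coboundary relations $\chi\circ\va=\xi\circ T/\xi$ for $\raz\neq\chi\in\widehat{G}$, but it does not rule out quasi-coboundary relations $\chi\circ\va=c\cdot\xi\circ T/\xi$ with $c\neq1$ (non-trivial $L^\infty$-eigenvalues of $T_\va$, which --- as the paper notes --- can even form an uncountable group). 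Now nothing in your hypotheses excludes the following mechanism: if $\la\in J^e_2(T)$ is an ergodic self-joining for which the extension $(X\times X,\la)\to X$ (first coordinate) is a compact group extension with the fibre circle rotated by a constant $c_0$ (such joinings exist, e.g.\ over Anzai skew products, though never over rotations), then every constant $c_0^{k}$ is a coboundary of the fibre coordinate over $\la$; consequently any quasi-coboundary relation for $\chi\circ\va$ whose multiplier is a power of $c_0$ turns $\chi\circ\va\circ\pi_1$ into a genuine $(T\times T,\la)$-coboundary, i.e.\ $(\chi,\raz)\in\ch_\la^\perp$, and $\pi_1(\ch_\la)$ fails to be dense even though $\va$ is ergodic and $(\va\times\psi)_\la$ is regular. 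In that situation your concluding identity yields only the averaging relation $\int_{X\times X}(S_{F_1(x,x')})_\ast(\pi_1)_\ast\rho_0\,d\la=\nu$, which does not force $(\pi_1)_\ast\rho_0=\nu$, so $\Lambda_\la$ need not take values in $\cm(Y\times Y;\ch_\la)$. That ergodicity over the joining is genuinely an extra hypothesis, not a consequence of ergodicity over the marginal, is confirmed by the paper itself: Lemma~\ref{l8} and condition~\eqref{E4} of Theorem~\ref{jm} assume ergodicity of the lifted cocycles over $(T\times T',\eta)$ separately.

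The gap is invisible in \cite{Le-Me-Na} and in every application of the theorem in this paper, because there $T$ is an irrational rotation and $\la$ is a graph-type joining, so $(X\times X,\la)\cong(X,\mu)$ as an extension of the first coordinate and $\va\circ\pi_1$ is a copy of the ergodic cocycle $\va$; with that reading your density argument closes and the whole proof is complete. At the stated level of generality, however, you must either assume that $\va\circ\pi_1$ and $\psi\circ\pi_2$ are ergodic over $(X\times X,\la)$ (equivalently, that $\ch_\la$ is an algebraic coupling) or supply an argument excluding the quasi-coboundary mechanism above; the rest of your proof can then stand unchanged.
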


Directly from Proposition~\ref{p:cherg}, Corollary~\ref{c:cherg1} and  Theorem~\ref{t:isoLambda}, we obtain the following.

\begin{Cor}\label{c:jedynosc}
If additionally $\ch_\la$ is cocompact and
$$
\left(\sigma_\cs\ot\sigma_\cs+\sigma_\cs\ot\delta_\raz+\delta_\raz\ot\sigma_{\cs}\right)(\ch^\perp)=0$$
then
$
J(\tfs,T_{\psi,\cs};\la)=\{\la\ot\nu\ot\nu\}$.
\end{Cor}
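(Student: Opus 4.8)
The plan is to read off the statement as a formal consequence of the affine isomorphism of Theorem~\ref{t:isoLambda} combined with the collapse of the fibre simplex established in Corollary~\ref{c:cherg1}, so that essentially no new analysis is required. First I would verify that Theorem~\ref{t:isoLambda} is applicable. We retain from it the hypothesis that $\va$ and $\psi$ are ergodic; the remaining requirement there is that $(\va\times\psi)_\la$ be regular. This is now automatic, since the present corollary assumes that $\ch_\la=E((\va\times\psi)_\la)$ is cocompact, and every cocycle whose group of essential values is cocompact is regular (as recorded in the paragraph following the definition of regularity, before Proposition~\ref{RigEssVal}). Thus Theorem~\ref{t:isoLambda} furnishes an affine isomorphism $\Lambda_\la\colon J(\tfs,T_{\psi,\cs};\la)\to\cm(Y\times Y;\ch_\la)$.

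Next I would apply Corollary~\ref{c:cherg1} with $\ch:=\ch_\la$: its two hypotheses are exactly the standing assumptions here, namely that $\ch_\la$ is cocompact and that $(\sigma_\cs\ot\sigma_\cs+\sigma_\cs\ot\delta_\raz+\delta_\raz\ot\sigma_{\cs})(\ch_\la^\perp)=0$. Hence $\cm(Y\times Y;\ch_\la)=\{\nu\ot\nu\}$. Since an affine isomorphism carries a one-point target onto a one-point domain, $J(\tfs,T_{\psi,\cs};\la)$ consists of a single joining. To name it, I would exhibit a concrete member of this set: the product-over-the-base measure $\la\ot\nu\ot\nu$ on $X\times X\times Y\times Y$ restricts to $\la$ on $X\times X$ and is $\tfs\times T_{\psi,\cs}$-invariant, because $\la$ is $T\times T$-invariant while each $S_{\va(x)}$ and each $S_{\psi(x')}$ preserves $\nu$; a one-line computation against product test functions $a(x_1,x_2)b(y_1)c(y_2)$ confirms invariance. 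Therefore $\la\ot\nu\ot\nu\in J(\tfs,T_{\psi,\cs};\la)$, and being the unique element it equals the whole set.

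This proof is pure assembly, so there is no genuinely hard step; the only two points that demand attention are the bookkeeping ones just indicated — deducing regularity of $(\va\times\psi)_\la$ from cocompactness of $\ch_\la$ so that Theorem~\ref{t:isoLambda} applies, and pinning down the surviving joining as the explicit measure $\la\ot\nu\ot\nu$ (equivalently, noting that $\Lambda_\la$ sends this product joining to the product measure $\nu\ot\nu$) rather than leaving it as an abstract fixed point.
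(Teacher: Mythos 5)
Your proposal is correct and follows exactly the route the paper intends: the paper derives this corollary ``directly from Proposition~\ref{p:cherg}, Corollary~\ref{c:cherg1} and Theorem~\ref{t:isoLambda}'', which is precisely your assembly. Your two added bookkeeping observations --- that cocompactness of $\ch_\la$ already yields the regularity needed for Theorem~\ref{t:isoLambda}, and that the unique joining is identified as $\la\ot\nu\ot\nu$ by checking this measure lies in $J(\tfs,T_{\psi,\cs};\la)$ --- are exactly the details the paper leaves implicit, and both are valid.
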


\subsection{Ergodic sequences and suspensions}\label{SecSusp}
Let $G$ be an lcsc  Abelian group.
\begin{Def}[see e.g.\ \cite{Be-Bo-Bo}]
A sequence $(a_n)\subset G$   is called {\em ergodic}  if for each ergodic $G$-action  $\cs=(S_g)_{g\in G}\subset{\rm Aut}\ycn$, we have $$\frac1N\sum_{n\leq N}f\circ S_{a_n}\to \int_Y f\,d\nu \;\;\mbox{in}\;\; L^2\ycn$$ for each $f\in L^2\ycn$.
\end{Def}
To obtain ergodic sequence, one can use the following result.\footnote{For the ergodicity of sequences $([n^c])$ ($0<c<1$), see \cite{Be-Bo-Bo}, and for the case of slowly increasing sequence, see \cite{Bo-Wi}.}
\begin{Prop}[see e.g.\  \cite{Le-Le-Pa-Vo-Wi}] \label{ergseq}
If  $T$ is uniquely ergodic, $\va\colon X\to G$ is continuous and $\Lambda(\va)=\{\raz\}$, i.e.\ the Mackey action $\cw(\va)$ is weakly mixing, then for each $x\in X$, the sequence $(\va^{(n)}(x))$ is ergodic. In particular, the result holds if $\va$ is ergodic.
\end{Prop}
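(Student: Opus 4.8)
The plan is to reduce the assertion, via the spectral theorem, to a one-dimensional (circle-valued) convergence statement about the cocycle $\va$, and then to deduce that statement from the unique ergodicity of a suitable compact group extension supplied by Proposition~\ref{p:uniqueE}. Fix an ergodic $G$-action $\cs=(S_g)_{g\in G}$ on $\ycn$ and $f\in L^2\ycn$; subtracting $\int f\,d\nu$, I may assume $f$ has zero mean and aim to show that the averages tend to $0$ in $L^2\ycn$. Writing $a_n=\va^{(n)}(x)$ and letting $\si_f$ be the spectral measure of $f$ on $\widehat G$ (so that $\langle f\circ S_g,f\rangle=\int_{\widehat G}\chi(g)\,d\si_f(\chi)$), I would expand the diagonal sum to obtain
\[
\left\|\frac1N\sum_{n\le N}f\circ S_{a_n}\right\|^2=\int_{\widehat G}\left|\frac1N\sum_{n\le N}\chi(a_n)\right|^2 d\si_f(\chi).
\]
Since $\cs$ is ergodic and $f$ has zero mean, $\si_f(\{\raz\})=0$, and the integrand is bounded by $1$; so by dominated convergence it suffices to prove that for every non-trivial $\chi\in\widehat G$ and every $x\in X$ one has $\frac1N\sum_{n\le N}\chi(\va^{(n)}(x))\to 0$.

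To handle a fixed non-trivial $\chi$, I would set $K:=\ov{\chi(G)}$, a non-trivial compact subgroup of $\bs^1$ (either all of $\bs^1$ or a finite cyclic group), and $g:=\chi\circ\va\colon X\to K$, which is continuous. Then $\chi(\va^{(n)}(x))$ is exactly the $n$-th $K$-cocycle sum of $g$, so that $\chi(\va^{(n)}(x))=\Phi(T_g^n(x,1))$, where $T_g(x,k)=(Tx,g(x)+k)$ is the $K$-extension of $T$ and $\Phi(x,k)=k$. The crucial point is that $T_g$ is ergodic with respect to $\mu\ot\la_K$: decomposing $L^2(\mu\ot\la_K)$ along the characters of $K$, ergodicity is equivalent to the requirement that $\psi\circ g$ be a non-coboundary for every non-trivial $\psi\in\widehat K$. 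Here $\psi\circ g=(\psi\circ\chi)\circ\va$, and $\psi\circ\chi$ is a non-trivial character of $G$ because $\chi(G)$ is dense in $K$; hence $\psi\circ\chi\notin\Lambda(\va)=\{\raz\}$, which is precisely the statement that $\psi\circ g$ is not a coboundary.

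With ergodicity of $T_g$ in hand, I would invoke Proposition~\ref{p:uniqueE} with acting group $K$: since $T$ is uniquely ergodic, $g$ is continuous, $T_g$ is ergodic, and the translation action of $K$ on itself is uniquely ergodic, it follows that $T_g$ is uniquely ergodic with unique invariant measure $\mu\ot\la_K$. Applying unique ergodicity to the continuous function $\Phi$ then gives, for every $x\in X$,
\[
\frac1N\sum_{n\le N}\chi(\va^{(n)}(x))=\frac1N\sum_{n\le N}\Phi(T_g^n(x,1))\longrightarrow\int_{X\times K}\Phi\,d(\mu\ot\la_K)=\int_K k\,d\la_K(k)=0,
\]
the last integral vanishing since $k\mapsto k$ is a non-trivial character of the non-trivial group $K$. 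This yields the claim of the first step, hence the ergodicity of $(\va^{(n)}(x))$. The final ``in particular'' assertion is immediate, since $\Lambda(\va)=\{\raz\}$ whenever $\va$ is ergodic.

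The step I expect to be the main obstacle is the correct treatment of torsion characters $\chi$ (those of finite order in $\widehat G$). Passing to $K=\ov{\chi(G)}$ rather than to the full circle $\bs^1$ is exactly what makes the reduction uniform: it guarantees that every non-trivial character $\psi$ of $K$ pulls back to a \emph{non-trivial} character of $G$, so that the hypothesis $\Lambda(\va)=\{\raz\}$ can be applied to conclude non-cobounding. Over $\bs^1$ this argument would break down for torsion $\chi$, because then some power $\chi^k$ is trivial and $\chi^k\circ\va$ is a (trivial) coboundary, so the full circle extension fails to be ergodic; working over the genuinely realized group $K$ circumvents this precisely.
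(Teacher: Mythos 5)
Your proof is correct and follows essentially the same route as the paper's: reduce, via the spectral theorem and the fact that $\sigma_f(\{\raz\})=0$ for a zero-mean $f$ under an ergodic action, to the Weyl-type statement $\frac1N\sum_{n\leq N}\chi(\va^{(n)}(x))\to0$ for every $\raz\neq\chi\in\widehat G$, and then deduce that statement from Furstenberg's unique ergodicity theorem for compact group extensions applied to the extension by $\chi\circ\va$. The one place where you diverge is the treatment of torsion characters, and there your version is in fact more careful than the paper's. The paper asserts that $T_{\chi\circ\va}$ is uniquely ergodic on $(X\times\bs^1,\mu\ot\la_{\bs^1})$ for every non-trivial $\chi$; taken literally this fails when $\chi$ has finite order $k$, since then $(x,z)\mapsto z^k$ is a non-constant invariant function, so the $\bs^1$-extension is not even ergodic (of course, for $G=\R$, the case most used in the paper, $\widehat G$ is torsion-free and the issue does not arise). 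Your passage to $K=\ov{\chi(G)}$ -- where every non-trivial $\psi\in\widehat K$ pulls back to a non-trivial character $\psi\circ\chi$ of $G$, so that $\Lambda(\va)=\{\raz\}$ yields ergodicity of the $K$-extension, hence unique ergodicity by Furstenberg, and $\int_K k\,d\la_K(k)=0$ because the identity embedding is a non-trivial character of the non-trivial closed subgroup $K\subset\bs^1$ -- is exactly the repair needed for general lcsc Abelian $G$, and is what the paper's citation of \cite{Fu1} implicitly presupposes.
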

\begin{proof} Since $\va$ is continuous, so is $\chi\circ\va$ for each $\chi\in\widehat{G}$. Moreover, since $\Lambda(\va)=\{1\}$, for each $1\neq\chi\in\widehat{G}$ the (compact) group extension $T_{\chi\circ\va}$ defined on $(X\times\bs^1,\mu\ot\la_{\bs^1})$ is uniquely ergodic \cite{Fu1}. It follows that for $G(x,z)=z$ and every $(x_0,z_0)$, we have
$$
\frac1N\sum_{n\leq N}G((T_{\chi\circ\va})^n(x_0,z_0))\to\int G\,d(\mu\ot\la_{\bs^1})=0.$$
Therefore, whenever $\raz\neq\chi\in\widehat{G}$,  for all $x\in X$, we have
\beq\label{punktowo}
\frac1N\sum_{n\leq N}\chi(\va^{(n)}(x))\to 0.\eeq
Assume that $f\in L^2_0\ycn$. By the spectral theorem, for each $x\in X$, we have
\beq\label{spectral}
\left\|\frac1N\sum_{n\leq N}f\circ S_{\va^{(n)}(x)}\right\|_{L^2\ycn}=\left\|\frac1N\sum_{n\leq N}\chi(\va^{(n)}(x))\right\|_{L^2(\widehat{G},\sigma_f)},\eeq
where $\sigma_f$ stands for the {\em spectral measure} of $f$ (with respect to the Koopman representation of $\cs$), i.e.\ the only measure on $\widehat{G}$ determined by
$$
\widehat{\sigma_f}(g):=\int_{\widehat{G}}\chi(g)\,d\sigma_f(\chi)=\int_Y f\circ S_g\cdot\ov{f}\,d\nu,\;\;g\in G.$$
Since $f\in L^2_0\ycn$ and $\cs$ is ergodic, $\sigma_f$ has no atom at the trivial character~$\raz$, and the result follows from~\eqref{punktowo}.
\end{proof}

\begin{Def}[\cite{Fu1}, p.\ 72]
A sequence $(a_n)\subset G$ is a  {\em Poincar\'e sequence} if for each ergodic $G$-action $\cs=(S_g)_{g\in G}\subset{\rm Aut}\ycn$ and $A\subset Y$ of positive measure there exists $n\geq1$ such that $\nu(S_{a_n}A\cap A)$ is positive.
\end{Def}

\begin{Remark}
Each ergodic sequence is a Poincar\'e sequence.
\end{Remark}

In what follows, we will be particularly interested in ergodic sequences for flows ($\R$-actions) and automorphisms ($\Z$-actions).
If $(k_n)\subset\Z$ is an ergodic sequence for automorphisms then clearly it will be an ergodic sequence for all flows whose time~1 automorphism is ergodic. 

To see some relation in the opposite direction  consider the following construction.
Assume that $R\in{\rm Aut}\zdr$ is ergodic.
Consider $\va:Z\to\R$,
$\va(z)=1$ and let $\la$ be a probability measure equivalent to $\la_{\R}$. It $F:Z\times\R\to\C$ is measurable and $F\circ R_\va=F$ $\mu\ot\la$-a.e.\ then $F$ is entirely determined by $F|_{Z\times [0,1)}$; indeed $F(z,s):=F(R^{-n}z,s-n)$, where $n\in\Z$ is unique so that $0\leq s-n<1$. It easily follows that the Mackey action of $R_\va$ is the {\em suspension flow} \cite{Co-Fo-Si} $\widetilde{R}=(\widetilde{R}_t)_{t\in\R}$ of $R$, which  acts on the space
 $\widetilde{Z}:=Z\times[0,1]/\sim$, where
$(z,1)\sim(Rz,0)$, by the formula
\beq\label{zaw1}
\widetilde{R}_t(z,s)=(R^{[t+s]}z,\{s+t\}).\eeq
This flow preserves the product measure denoted by $\widetilde{\mu}$.

\begin{Remark}\label{r:ude} Note that if $(a_n)\subset\R$ is ergodic then $(a_n)$ is uniformly distributed modulo~1. Indeed, a sequence $(a_n)$ is ergodic then $\frac1N\sum_{n\leq N}e^{2\pi i\theta a_n}\to0$ for each $\theta\in\R\setminus\{0\}$ (cf.\ the proof of Proposition~\ref{ergseq}). Hence $(\{a_n\})$ is uniformly distributed on $\T$ by the Weyl criterion.\end{Remark}

\begin{Remark}\label{krz} Assume that $(a_n)\subset\R$ is an ergodic sequence. Then $([a_n])$ is a Poincar\'e sequence for each ergodic automorphism $R$. Indeed, if we take a set $A\subset Z$ of positive measure then for some $a_n$ (arbitrarily large)
$$\widetilde{\mu}\left(\widetilde{R}_{-a_n}(A\times [0,1/2))\cap
(A\times[1/2,1))\right)>0.$$ It follows that $\mu(R^{-[a_n]}A\cap A)>0$.

As shown in \cite{Les}, it is however possible that $(a_n)\subset\R$ is an ergodic sequence for flows but $([a_n])$ is not an ergodic sequence for automorphisms.\end{Remark}

\section{The AOP property and multiplicative functions}

\subsection{Automorphisms with the AOP property}\label{SectAOP}
Let $\mathscr{P}$ denote the set of prime numbers. Following \cite{Ab-Le-Ru}, $T\in{\rm Aut}\xbm$ is said to have {\em asymptotically orthogonal powers} (AOP) if
for each $f,g\in L^2_0\xbm$, we have
\beq\label{aop2016}
\limsup_{r\neq s,r,s\in\mathscr{P},r,s\to\infty} \sup_{\kappa\in J^e(T^r,T^s)}\left|\int_{X\times X}f\ot g \,d\kappa\right|=0.\eeq
The condition above means that, intuitively, the prime powers of an automorphism become more and more disjoint. Clearly, the AOP property is inherited by factors, therefore the AOP property of $T$ implies its total ergodicity.\footnote{Ergodic rotations of finite non-trivial cyclic groups do not have the AOP property.}
As shown in \cite{Ab-Le-Ru}, all totally ergodic rotations enjoy the AOP property.\footnote{Note that such rotations can have all non-zero powers isomorphic. However, when $r,s$ are growing, the graphs of such isomorphisms which yield ergodic joinings between $T^r$ and $T^s$ are more and more ``mixing''.} For larger classes of AOP automorphisms, see \cite{Ab-Le-Ru} and \cite{Fl-Fr-Ku-Le}.

\subsection{Orthogonality of AOP observables with multiplicative functions}
We will consider {\em arithmetic functions}, i.e.\ $\bfu\colon\N\to\C$, $\bfu=(\bfu(n))_{n\in\N}$. Recall that such a function is {\em multiplicative} if $\bfu(mn)=\bfu(m)\bfu(n)$ whenever $(m,n)=1$. Many basic arithmetic functions as the M\"obius function $\mob$ or the Liouville $\lio$ function are multiplicative.\footnote{Dirichlet characters are other examples of multiplicative functions. Also, for each $t\in\R$, the function $n\mapsto n^{it}$ is multiplicative.}
It is a general problem in number theory to show that a given sequence $(a_n)\subset\C$ is {\em orthogonal} to a multiplicative function $\bfu$, i.e.
\beq\label{defort}
\frac1N\sum_{n\leq N}a_n\bfu(n)\to0\text{ when }N\to\infty.
\eeq
The basic criterion to show such an orthogonality  is  the following version (see \cite{Ab-Le-Ru}) of the K\'atai-Bourgain-Sarnak-Ziegler (KBSZ) criterion.

\begin{Prop}[\cite{Bo-Sa-Zi,Ka}] \label{kbszcrit}
Assume that $(a_n)\subset \C$ is bounded and
$$
\limsup_{r\neq s,r,s\in\mathscr{P},r,s\to\infty}\left(\limsup_{N\to\infty}\left|\frac1N\sum_{n\leq N}a_{rn}\ov{a}_{sn}\right|\right)=0.
$$
Then $(a_n)$ is orthogonal to any multiplicative function $\bfu$, $|\bfu|\leq1$.
\end{Prop}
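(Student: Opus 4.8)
The plan is to prove the criterion by the Turán--Kubilius/Bourgain--Sarnak--Ziegler scheme. After rescaling we may assume $|a_n|\le1$, and the goal is to verify~\eqref{defort} for $(a_n)$ against an arbitrary multiplicative $\bfu$ with $|\bfu|\le1$. The core is an inequality, valid for every finite set $\mathcal{Q}\subset\mathscr{P}$, of the shape
$$\limsup_{N\to\infty}\Big|\frac1N\sum_{n\le N}a_n\bfu(n)\Big|^2\le\frac{C}{|\mathcal{Q}|}+\frac1{|\mathcal{Q}|^2}\sum_{\substack{p,q\in\mathcal{Q}\\ p\ne q}}\limsup_{N\to\infty}\Big|\frac1N\sum_{n\le N}a_{pn}\ov{a_{qn}}\Big|,$$
with $C$ absolute. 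Granting it, the statement follows immediately: given $\vep>0$, the hypothesis furnishes a threshold $D$ such that every pairwise correlation with distinct $p,q\ge D$ is $\le\vep$; choosing $\mathcal{Q}$ to consist of at least $C/\vep$ primes, all exceeding $D$, bounds the right-hand side by $2\vep$, and letting $\vep\to0$ gives $\frac1N\sum_{n\le N}a_n\bfu(n)\to0$.

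The device that manufactures the pairwise correlations is the averaged square
$$\Theta_N:=\frac1N\sum_{n\le N}\Big|\frac1{|\mathcal{Q}|}\sum_{p\in\mathcal{Q}}\ov{\bfu(p)}\,a_{pn}\Big|^2=\frac1{|\mathcal{Q}|^2}\sum_{p,q\in\mathcal{Q}}\ov{\bfu(p)}\bfu(q)\,\frac1N\sum_{n\le N}a_{pn}\ov{a_{qn}}.$$
Its diagonal part ($p=q$) is at most $\tfrac1{|\mathcal{Q}|}$ since $|a_{pn}|\le1$, while its off-diagonal part is exactly the averaged pairwise correlation appearing above, the unimodular factors $\ov{\bfu(p)}\bfu(q)$ being harmless after passing to absolute values. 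Thus $\Theta_N$ is controlled from above precisely by the hypothesis. The decisive remaining step is to dominate $\big|\tfrac1N\sum_{n\le N}a_n\bfu(n)\big|^2$ by $\Theta_N$ up to an $O(|\mathcal{Q}|^{-1})$ error; this is where the multiplicativity of $\bfu$ enters, through $\bfu(pn)=\bfu(p)\bfu(n)$ for $p\nmid n$, which is what allows $\ov{\bfu(p)}a_{pn}$ to be reassembled (over the dilations $n\mapsto pn$) into a quantity comparable to $a_n\bfu(n)$, a Turán--Kubilius second-moment estimate showing that the relevant averages concentrate. The finitely many exceptional $n$ (non-squarefree, or divisible by $p^2$ for some $p\in\mathcal{Q}$) contribute negligibly and are discarded at the outset.

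The step I expect to be the main obstacle is securing the correct normalization, namely that the governing weight is the cardinality $|\mathcal{Q}|$ rather than $\sum_{p\in\mathcal{Q}}1/p$. A naive first-moment computation — multiplying $\tfrac1N\sum_{n\le N}a_n\bfu(n)$ by $\sum_{p\in\mathcal{Q}}1/p$ and comparing with $\sum_{p\in\mathcal{Q}}\tfrac1N\sum_{m\le N/p}a_{pm}\bfu(pm)$ — produces the weight $\sum_{p\in\mathcal{Q}}1/p$, which grows only like $\log\log$ of the largest prime; with that weight one cannot simultaneously make the main error small and keep all the primes large enough for the correlation hypothesis to bite, because the number of off-diagonal pairs is of order $|\mathcal{Q}|^2$. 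The resolution is combinatorial: take $\mathcal{Q}$ to be a set of primes of comparable size, so that a given $n\le N$ is divisible by at most one element of $\mathcal{Q}$ and the relevant count concentrates at scale $|\mathcal{Q}|$ after rescaling. With this arrangement the off-diagonal term is an honest \emph{average} of pairwise correlations, and the whole estimate closes using only the qualitative vanishing of the $\limsup$ as $p,q\to\infty$ — no quantitative rate of decay of the correlations is needed, which is exactly what makes the argument applicable under the bare hypothesis as stated.
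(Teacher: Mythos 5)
You have identified the right scheme (Kátai/Bourgain--Sarnak--Ziegler), and your reduction of the proposition to your ``key inequality'' would indeed be valid if that inequality held; note also that the paper itself offers no proof of this proposition (it is quoted from K\'atai and Bourgain--Sarnak--Ziegler), so your argument has to stand on its own. It does not: the key inequality is false as stated. Take $\bfu\equiv1$ (multiplicative, bounded by $1$) and, for a finite set $\mathcal{Q}$ of primes, let $a_n:=1$ if $\gcd\bigl(n,\prod_{p\in\mathcal{Q}}p\bigr)=1$ and $a_n:=0$ otherwise. Then $a_{pn}=0$ for every $p\in\mathcal{Q}$ and every $n$, so all dilated correlations $\frac1N\sum_{n\leq N}a_{pn}\ov{a}_{qn}$ vanish identically, while
$$
\frac1N\sum_{n\leq N}a_n\bfu(n)\longrightarrow\prod_{p\in\mathcal{Q}}\Bigl(1-\frac1p\Bigr)\geq 1-\sum_{p\in\mathcal{Q}}\frac1p.
$$
If $\mathcal{Q}$ consists of $k$ primes of comparable size $M$ (your preferred choice), this limit is $\geq 1-k/M$, so for $M$ large the left-hand side of your inequality exceeds $1/2$ while the right-hand side equals $C/k$, a contradiction once $k>2C$. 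The same example kills the $\Theta_N$-domination step: here $\Theta_N\equiv0$, yet $\bigl|\frac1N\sum_{n\leq N}a_n\bfu(n)\bigr|^2$ is close to $1$. The underlying reason is that the dilations $n\mapsto pn$, $p\in\mathcal{Q}$, only see the integers having a prime factor in $\mathcal{Q}$, a set of density about $E_{\mathcal{Q}}:=\sum_{p\in\mathcal{Q}}1/p$, and nothing about the remaining bulk of integers can be reassembled from them. (A small additional slip: the factors $\ov{\bfu(p)}\bfu(q)$ need not be unimodular since $|\bfu(p)|$ may be less than $1$, but that part is harmless.)

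This pinpoints where your final paragraph goes wrong: the weight governing the Tur\'an--Kubilius first-moment step is unavoidably $E_{\mathcal{Q}}$, not $|\mathcal{Q}|$, and the concentration estimate $\sum_{n\leq N}\bigl(\omega_{\mathcal{Q}}(n)-E_{\mathcal{Q}}\bigr)^2\ll NE_{\mathcal{Q}}$ yields a \emph{relative} error $E_{\mathcal{Q}}^{-1/2}$, which is only useful when $E_{\mathcal{Q}}$ is large; primes of comparable size $M$ force $E_{\mathcal{Q}}\leq \pi(2M)/M\to0$, which is the worst possible regime. The actual Bourgain--Sarnak--Ziegler argument resolves the tension in exactly the opposite way: one takes primes spanning a long range $[P_0,P_1]$, with $P_0$ larger than the threshold $D(\vep)$ coming from the hypothesis and $\log\log P_1-\log\log P_0$ large (so that $E_{\mathcal{Q}}\to\infty$ and the Tur\'an--Kubilius error is negligible), and then recovers your ``average of correlations'' structure by applying Cauchy--Schwarz \emph{separately inside each dyadic block} of comparable primes and summing over blocks: the block of primes near $2^j$, of cardinality $|P_j|$, contributes at most about $N\cdot\frac{|P_j|}{2^j}\bigl(|P_j|^{-1/2}+\sqrt{\vep}\,\bigr)$, so after dividing by $NE_{\mathcal{Q}}$ one gets a bound of order $\sqrt{\vep}+E_{\mathcal{Q}}^{-1/2}$, and letting $P_1\to\infty$ and then $\vep\to0$ proves the proposition. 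The blocked, multi-scale structure is thus not a technicality: it is precisely what allows a purely qualitative hypothesis, with no decay rate in $p,q$, to suffice, and no single block of comparable primes can do the job.
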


Given a homeomorphism $T$ of a compact metric space $X$, $f\in C(X)$ and $x\in X$, we are interested in the orthogonality of the observable $(f(T^nx))_{n\geq0}$ with $\bfu$. For example, Sarnak's conjecture (cf.\ \eqref{mob00}) claims that whenever $T$ has topological entropy zero then all observables as above are orthogonal to the M\"obius function.

To see a relationship of the KBSZ criterion with joinings, assume that $T$ is a uniquely ergodic homeomorphism of a compact metric space $X$ (the unique $T$-invariant measure is denoted by $\mu$). Assume that $(X,\mu,T)$ is totally ergodic and let $x\in X$. Consider the sequence
\beq\label{sequence}
\frac1N\sum_{n\leq N}\delta_{(T^r\times T^s)^n(x,x)},\;N\geq1,
\eeq
of probability measures on $X\times X$. Any accumulation point $\kappa$ of such  measures is $T^r\times T^s$-invariant, and by the unique ergodicity of $T$ (and the total ergodicity assumption), $\kappa\in J(T^r,T^s)$. It follows that if we are able to show that $T^r$ and $T^s$ are disjoint for $r\neq s$ sufficiently large then
$$
\frac1N\sum_{n\leq N}\delta_{(T^r\times T^s)^n(x,x)}\to\mu\ot\mu,
$$
whence
\begin{multline*}
\frac1N\sum_{n\leq N}f(T^{rn}x)\ov{f(T^{sn}x)}\\
=\int f\ot \ov{f}\,d\left(\frac1N\sum_{n\leq N}\delta_{(T^r\times T^s)^n(x,x)}\right)\to\int f\ot\ov{f}\,d(\mu\ot\mu)=0
\end{multline*}
whenever $f\in L^2_0\xbm$. It follows that the KBSZ criterion applies.

It turns out however that to apply Proposition~\ref{kbszcrit} we do not need to obtain that the only accumulation point  of the sequence~\eqref{sequence} is product measure. We just need to show  that when $r\neq s$ are large, such accumulation points are close (in the weak topology) to product measure. This is guaranteed if the AOP property holds (see \cite{Ab-Le-Ru} for details). In fact, the AOP property implies even more:

\begin{Prop} [\cite{Ab-Ku-Le-Ru1}] \label{AOPtoMOMO} Assume that
$T$ is a uniquely ergodic homeomorphism of a compact metric space
$X$ such that $(X,\mu,T)$ enjoys the AOP property. Then
for each multiplicative $\bfu$, $|\bfu|\leq1$, each  sequence $(b_k)$ of natural numbers with $b_{k+1}-b_k\to\infty$ and each choice of $x_k\in X$, $k\geq1$, and $f\in C(X)$, $\int f\,d\mu=0$, we have
$$
\frac1{b_{K+1}}\sum_{k\leq K}\left|\sum_{b_k\leq n<b_{k+1}}f(T^nx_k)\bfu(n)\right|\to0\text{ when }K\to\infty.$$\end{Prop}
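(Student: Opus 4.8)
The plan is to combine the AOP property with the K\'atai--Bourgain--Sarnak--Ziegler criterion (Proposition~\ref{kbszcrit}), applied to a single auxiliary sequence manufactured from the moving orbits. First I linearize the absolute values: for each block index $k$ choose a unimodular scalar $\omega_k$ so that
\[
\omega_k\sum_{b_k\le n<b_{k+1}}f(T^nx_k)\bfu(n)=\Bigl|\sum_{b_k\le n<b_{k+1}}f(T^nx_k)\bfu(n)\Bigr|,
\]
and set $F(n):=\omega_{k(n)}\,f(T^nx_{k(n)})$, where $k(n)$ is the block containing $n$. Then $F$ is bounded by $\|f\|_\infty$ and the quantity to be estimated equals $\frac1{b_{K+1}}\sum_{n<b_{K+1}}F(n)\bfu(n)$, so it suffices to prove that the fixed sequence $(F(n))$ is orthogonal to $\bfu$ via Proposition~\ref{kbszcrit}. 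The point of this reduction is that on each block $F$ is a \emph{single} continuous observable $\omega_kf$ sampled along the $T$-orbit of a \emph{single} point $x_k$, and that the phase $\omega_k$ and the base point $x_k$ change only at the block boundaries, which are sparse because $b_{k+1}-b_k\to\infty$.

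The dynamical core, where I use AOP, is a uniform two-base-point decay of prime-dilate correlations. Since AOP implies total ergodicity and $T$ is uniquely ergodic, an extreme-point argument (any $\nu$ with $T^p\nu=\nu$ averages under $T,T^2,\dots,T^{p-1}$ to the unique $T$-invariant $\mu$, and $\mu$ is $T^p$-ergodic, hence extreme) shows each power $T^p$ is uniquely ergodic. Therefore, for fixed distinct primes $p,q$ and any $z,z'$, the empirical measures $\frac1N\sum_{n<N}\delta_{(T^{pn}z,T^{qn}z')}$ have both marginals tending to $\mu$, so every accumulation point lies in $J(T^p,T^q)$; integrating $f\otimes\bar f$ over its ergodic components, each of which lies in $J^e(T^p,T^q)$, and invoking \eqref{aop2016}, gives
\[
\limsup_{N\to\infty}\ \sup_{z,z'\in X}\Bigl|\frac1N\sum_{n<N}f(T^{pn}z)\,\overline{f(T^{qn}z')}\Bigr|\le\varepsilon(p,q),\qquad \varepsilon(p,q)\xrightarrow[p\neq q\to\infty]{}0,
\]
the uniformity in $z,z'$ following by compactness together with the \emph{uniform} unique ergodicity of $T^p$ for the marginals. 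The crucial feature is that the two base points are allowed to differ, which is exactly what the moving orbits demand.

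It remains to verify the hypothesis of Proposition~\ref{kbszcrit} for $F$, namely
\[
\limsup_{\substack{p\neq q\\ p,q\to\infty}}\ \limsup_{N\to\infty}\Bigl|\frac1N\sum_{n\le N}F(pn)\,\overline{F(qn)}\Bigr|=0.
\]
This is the step I expect to be the main obstacle, because $F(pn)\overline{F(qn)}=\omega_{k(pn)}\overline{\omega_{k(qn)}}\,f(T^{pn}x_{k(pn)})\,\overline{f(T^{qn}x_{k(qn)})}$ carries phases and base points that both vary with $n$. The resolution exploits that $p,q$ are \emph{fixed} while $N\to\infty$: partition $[1,N)$ into the maximal intervals on which both $n\mapsto k(pn)$ and $n\mapsto k(qn)$ are constant. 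On such an interval $[M,M+L)$ the sum is a fixed unimodular multiple of $\sum_{n<L}f(T^{pn}w)\overline{f(T^{qn}w')}$ with $w=T^{pM}z$, $w'=T^{qM}z'$ fixed (the shift absorbs the starting point), so by the uniform bound of the previous paragraph its contribution is at most $(\varepsilon(p,q)+o(1))L$ once $L$ is large. Because block lengths tend to infinity, the number of partition intervals below $N$ is $o(N)$, so the short boundary intervals contribute $o(N)$ in total; summing the long intervals yields the displayed correlation bound.

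Thus the apparent difficulty --- varying base points and phases inside the dilate correlation --- dissolves through the shift identity that turns each sub-interval correlation into an initial-segment correlation, to which the single uniform estimate derived from AOP applies; block-length growth then disposes of the boundary terms. With the correlation hypothesis verified, Proposition~\ref{kbszcrit} gives $\frac1N\sum_{n<N}F(n)\bfu(n)\to0$, in particular along $N=b_{K+1}$, which is the assertion. The genuinely substantive input is the second paragraph, i.e.\ extracting the uniform two-base-point correlation decay from the ergodic-theoretic AOP property together with unique ergodicity of all powers of $T$; everything else is bookkeeping.
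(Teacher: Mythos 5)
This proposition is not proved in the paper at all---it is quoted from \cite{Ab-Ku-Le-Ru1}---so the only meaningful comparison is with that reference, and your argument is correct and follows essentially the same route as the proof given there: linearize the moduli by unimodular block constants, glue the blocks into a single bounded sequence, and verify the hypothesis of Proposition~\ref{kbszcrit} by cutting $[1,N)$ into the maximal intervals on which both block indices $k(pn)$ and $k(qn)$ are constant, bounding each long interval via the uniform two-base-point correlation decay that AOP together with unique ergodicity of every power $T^p$ provides, and absorbing the $o(N)$ many short intervals using $b_{k+1}-b_k\to\infty$. Your second paragraph (unique ergodicity of powers by the extreme-point argument, empirical measures, ergodic decomposition into elements of $J^e(T^p,T^q)$, compactness giving uniformity over the two starting points) is precisely the mechanism of \cite{Ab-Le-Ru} and \cite{Ab-Ku-Le-Ru1}, and your shift identity absorbing the left endpoint of each interval is exactly the step that makes the two-point (rather than one-point) uniformity indispensable.
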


The above property is referred to as the {\em strong} $\bfu$-MOMO property (in other words AOP implies the strong MOMO property).
As noticed already in \cite{Ab-Le-Ru}, from the strong $\bfu$-MOMO property, we obtain the following:
\beq\label{shortM}
\frac1{M}\sum_{M\leq m<2M}\left|\frac1H\sum_{m\leq h<m+H}f(T^hx)\bfu(n)\right|\to0\eeq
when $H\to\infty$, $H/M\to0$,
for each $x\in X$ and $f,\bfu$ as above.

We can extend the notion of strong $\bfu$-MOMO property to arbitrary lcsc Abelian groups by considering sequences of group elements.
Fix a sequence $(a_n)\subset G$.

\begin{Def}\label{d:momoG} A uniquely ergodic $G$-action $\cs$ of  a compact metric space $Y$ (the unique invariant measure is denoted by $\nu$) is said to satisfy {\em the strong $\bfu$-MOMO property along} $(a_n)$ if
for each  sequence $(b_k)$ of natural numbers with $b_{k+1}-b_k\to\infty$ and each choice of $y_k\in Y$, $k\geq1$, and $f\in C(Y)$, $\int f\,d\nu=0$, we have
$$
\frac1{b_{K+1}}\sum_{k\leq K}\left|\sum_{b_k\leq n<b_{k+1}}f(S_{a_n}y_k)\bfu(n)\right|\to0\text{ when }K\to\infty.$$
\end{Def}

\section{Lifting the AOP property by Rokhlin cocycles}\label{se:li}
\subsection{The main result}
Assume that $T\in{\rm Aut}\xbm$ has the AOP property. Assume that $G$ is an lcsc Abelian group and let $\va\colon X\to G$ be an ergodic cocycle. We are interested in a sufficient condition on $T$ and $\va$ to ensure that  the Rokhlin extensions $\tfs$ have the AOP property for all ergodic $G$-actions $\cs=(S_g)_{g\in G}$.

\begin{Th}\label{jm} Assume that $T$ has the AOP property.
Assume moreover that for each $r\neq s$, $r,s\in\cp$ and arbitrary $\eta\in J^e(T^r,T^s)$:
\begin{align}
&\text{the group extension }(T_\va)^r\times T^s\text{ is ergodic }\mbox{(over $(T^r\times T^s,\eta)$)};\label{E4}\\
&\text{the Mackey action }\cw((\va^{(r)}\times \va^{(s)},T^r\times T^s,\eta)\text{ is weakly mixing.}\label{E5}
\end{align}
Let $\cs=(S_g)_{g\in G}$ be an ergodic $G$-action on $\ycn$.
Then $\tfs$ has the AOP property.
\end{Th}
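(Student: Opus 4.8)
The plan is to verify the AOP condition \eqref{aop2016} for $W:=\tfs$ directly, by analysing the ergodic self-joinings of its prime powers. By \eqref{iteracja} we have $W^r=(T^r)_{\va^{(r)},\cs}$ and $W^s=(T^s)_{\va^{(s)},\cs}$, so a joining $\kappa\in J^e(W^r,W^s)$ lives on $(X\times Y)\times(X\times Y)$ and, after reorganizing coordinates as $\bigl((x_1,x_2),(y_1,y_2)\bigr)$, is an invariant measure for the Rokhlin extension of the base automorphism $R:=T^r\times T^s$ by the cocycle $\Phi:=\va^{(r)}\times\va^{(s)}\colon X\times X\to G\times G$ acting through $\cs\ot\cs$. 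Its projection onto the base is some $\eta\in J(T^r,T^s)$; since $\kappa$ is ergodic and $\eta$ is a factor, in fact $\eta\in J^e(T^r,T^s)$, and the two $Y$-marginals of $\kappa$ are $\nu$. First I would fix $r\neq s$ and $\eta$ and describe all such $\kappa$.

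For this I would invoke the (two-base form of the) classification in Theorem~\ref{t:isoLambda}: the set $J(W^r,W^s;\eta)$ of joinings with base $\eta$ is affinely isomorphic to $\cm(Y\times Y;\ch_\eta)$, where $\ch_\eta:=E(\Phi)$ is the group of essential values of $\Phi$ over $(X\times X,\eta,R)$. The aim is to show that, under \eqref{E4}--\eqref{E5}, this simplex collapses to a single point. Condition \eqref{E4} says exactly that the first-coordinate cocycle $(x_1,x_2)\mapsto\va^{(r)}(x_1)$ is ergodic over $(X\times X,\eta,R)$; using this together with its symmetric counterpart (the hypothesis for the ordered pair $(s,r)$ applied to the coordinate flip of $\eta$), one obtains that both projections of $\ch_\eta$ are dense, so $\ch_\eta$ is an algebraic coupling of $G$, and, more delicately, that $\Phi$ is regular. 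Regularity makes the duality between essential values and $L^\infty$-eigenvalues exact, i.e.\ $\ch_\eta^{\perp}=\Lambda(\Phi)=e(\cw(\Phi))$ by \eqref{warMa} and Lemma~\ref{l:wanih}. Condition \eqref{E5} then says $e(\cw(\Phi))=\{\raz\}$, whence $\ch_\eta^{\perp}=\{\raz\}$ and therefore $\ch_\eta=G\times G$. Since $\cs\ot\cs$ is ergodic, $\cm(Y\times Y;G\times G)=\{\nu\ot\nu\}$, so $J(W^r,W^s;\eta)=\{\eta\ot\nu\ot\nu\}$; that is, $\kappa=\eta\ot\nu\ot\nu$. (Equivalently one reaches the same conclusion through Corollary~\ref{c:jedynosc}, noting that $\ch_\eta=G\times G$ is cocompact and that the spectral hypothesis there is then vacuous by Proposition~\ref{p:cherg}.)

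With the shape of $\kappa$ pinned down, the final step is a direct computation. Given $F,G\in L^2_0(X\times Y,\mu\ot\nu)$, put $f(x):=\int_Y F(x,y)\,d\nu(y)$ and $g(x):=\int_Y G(x,y)\,d\nu(y)$; since $F,G$ have zero mean, $f,g\in L^2_0\xbm$. Because under $\kappa=\eta\ot\nu\ot\nu$ the coordinates $y_1,y_2$ are independent of each other and of $(x_1,x_2)$, integrating out $y_1$ and $y_2$ gives
$$
\int F\ot G\,d\kappa=\int_{X\times X} f(x_1)\,g(x_2)\,d\eta(x_1,x_2)=\int f\ot g\,d\eta .
$$
Hence $\sup_{\kappa\in J^e(W^r,W^s)}\bigl|\int F\ot G\,d\kappa\bigr|\le\sup_{\eta\in J^e(T^r,T^s)}\bigl|\int f\ot g\,d\eta\bigr|$, and letting $r\neq s$, $r,s\in\cp$ tend to infinity the right-hand side tends to $0$ by the AOP property of $T$ applied to $f,g\in L^2_0\xbm$. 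This is precisely \eqref{aop2016} for $\tfs$.

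The main obstacle is the middle step: extracting from the dynamical hypotheses \eqref{E4}--\eqref{E5} the purely algebraic conclusion $\ch_\eta=G\times G$. The delicate part is the regularity of the product cocycle $\Phi=\va^{(r)}\times\va^{(s)}$: ergodicity of a single coordinate (\eqref{E4}) must be leveraged to control $E(\Phi)$, and here the assumption $r\neq s$ is essential, since for $r=s$ one would meet diagonal couplings $\ch_\eta\subset G\times G$ which are regular but far from all of $G\times G$ (their Mackey actions are translation actions on a non-compact quotient, hence not weakly mixing---exactly what \eqref{E5} forbids). A secondary technical point is to check that Theorem~\ref{t:isoLambda}, stated for two cocycles over a single base $T$ and a self-joining $\la$, carries over to the present situation of the two distinct bases $T^r,T^s$ joined by $\eta$; this should be routine, since the construction uses only the common fibre group $G$ and the single ergodic base $(X\times X,\eta,R)$.
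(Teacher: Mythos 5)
Your global architecture is the same as the paper's (fix $r\neq s$ and $\eta\in J^e(T^r,T^s)$, show $J\bigl((\tfs)^r,(\tfs)^s;\eta\bigr)=\{\eta\ot\nu\ot\nu\}$, integrate out the fibres, and invoke the AOP property of $T$), and your closing computation is correct. The gap is exactly at the step you yourself flag as ``delicate'': the claim that \eqref{E4} and its coordinate-flipped counterpart yield \emph{regularity} of $\Phi:=\va^{(r)}\times\va^{(s)}$ over $(X\times X,\eta)$. Nothing in the hypotheses gives this, and in fact under \eqref{E5} regularity of $\Phi$ is \emph{equivalent} to ergodicity of $\Phi$: for a regular cocycle the Mackey action is the translation action of $G\times G$ on $(G\times G)/E(\Phi)$, whose group of $L^\infty$-eigenvalues is precisely $E(\Phi)^\perp$, so it is weakly mixing only when $E(\Phi)^\perp=\{\raz\}$, i.e.\ $E(\Phi)=G\times G$. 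Thus your middle step silently assumes that the product cocycle is ergodic over $\eta$ --- a hypothesis far stronger than \eqref{E4}--\eqref{E5}, under which the whole theorem would follow at once from Remark~\ref{lele}. Ergodicity of the two coordinate cocycles does not propagate to the pair: weak mixing of $\cw(\Phi)$ (equivalently, by Lemma~\ref{l3}(ii), that no non-trivial character of $G\times G$ composed with $\Phi$ is a multiplicative coboundary) is perfectly compatible with $E(\Phi)$ being a proper, even trivial, closed subgroup; in that case $\Phi$ is non-regular, Theorem~\ref{t:isoLambda} is not applicable, and Lemma~\ref{l:wanih} gives only the inclusion $\Lambda(\Phi)\subset E(\Phi)^\perp$, not the equality $E(\Phi)^\perp=\Lambda(\Phi)=\{\raz\}$ that your argument needs.

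This is precisely why the paper's hypotheses and proof are shaped the way they are. The paper establishes the singleton property (Lemma~\ref{l8}) without ever computing $E(\Phi)$: by Lemma~\ref{dlp}, any joining with base $\eta$ is transported, via an affine isomorphism, to a measure for the product of the (possibly properly ergodic, non-transitive) Mackey action $\cw(\Phi)$ with $\cs\ot\cs$, rfmp over the Mackey space, and Lemma~\ref{l6} (fed by Lemmas~\ref{l2}, \ref{l4}, \ref{l5} and~\ref{l3}(i), the latter being where \eqref{E4} enters) collapses this set to a point using only weak mixing of $\cw(\Phi)$ and ergodicity of its $G$-subactions; regularity plays no role. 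The contrast with Section~\ref{s:AOPaf} is instructive: for the affine cocycle the pair $(\va^{(r)}(r\cdot),\va^{(s)}(s\cdot))$ \emph{is} regular but \emph{not} ergodic (Proposition~\ref{p:regularityRS}), so \eqref{E5} fails there --- its Mackey action has the non-trivial eigenvalue $\widetilde{\chi}$ of \eqref{nieerg} --- and the paper must abandon Theorem~\ref{jm} in favour of the essential-value route (Theorem~\ref{t:isoLambda}, Corollary~\ref{c:jedynosc}) with an additional spectral hypothesis on $\cs$ (Theorem~\ref{t:aopAFF}). Your proposal, as written, proves the theorem only in the case where $\Phi$ is ergodic; to repair it in the generality stated you would have to replace the middle step by the Mackey/rfmp argument of Lemma~\ref{l8}.
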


\subsection{Remarks on total ergodicity}
Recall that the AOP property implies total ergodicity. We will show now that each of~\eqref{E4} and~\eqref{E5} also implies total ergodicity of $\tfs$ for each ergodic $\cs$.

\begin{Prop}
Condition~\eqref{E4} implies the total ergodicity of $\tfs$ for each ergodic $\cs$.
\end{Prop}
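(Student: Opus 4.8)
The plan is to deduce total ergodicity from the absence of non-trivial root-of-unity eigenvalues, having first distilled from~\eqref{E4} a weak-mixing statement for the Mackey actions of the prime iterates $\va^{(p)}$.

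First I would note that $\tfs$ is already ergodic: since $\va$ is an ergodic cocycle and $\cs$ is ergodic, this is Remark~\ref{r:ergRE}. For an ergodic automorphism, total ergodicity is equivalent to saying that its eigenvalue group $e(\tfs)$ contains no root of unity other than $\raz$, because $\tfs^k$ is non-ergodic for some $k\neq0$ exactly when $\tfs$ has an eigenvalue $c\neq\raz$ with $c^k=1$. I would also record that $T$, having the AOP property, is totally ergodic, so $e(T)$ contains no non-trivial root of unity.

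The key preliminary step is to show that~\eqref{E4} forces $\Lambda(\va^{(p)},T^p)=\{\raz\}$ for every prime $p$. Fixing $p$, choosing any prime $s\neq p$ and any $\eta\in J^e(T^p,T^s)$, I would view $(\tf)^p\times T^s$ over $(T^p\times T^s,\eta)$ as the group extension of the ergodic base $(X\times X,\eta,T^p\times T^s)$ by the $G$-valued cocycle $\Phi(x,x'):=\va^{(p)}(x)$. Condition~\eqref{E4} says this extension is ergodic, i.e.\ $\Phi$ is an ergodic cocycle, so by~\eqref{warMa} (and the remark there that an ergodic cocycle has trivial $\Lambda$) we get $\Lambda(\Phi)=\{\raz\}$. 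Since any $\chi\in\Lambda(\va^{(p)},T^p)$, witnessed by $\chi\circ\va^{(p)}=\zeta\circ T^p/\zeta$ with $\zeta\colon X\to\bs^1$, also satisfies $\chi\circ\Phi=\Theta\circ(T^p\times T^s)/\Theta$ with $\Theta(x,x'):=\zeta(x)$, we have $\Lambda(\va^{(p)},T^p)\subseteq\Lambda(\Phi)=\{\raz\}$.

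Finally I would argue by contradiction: if $\tfs$ were not totally ergodic it would carry an eigenvalue $c$ that is a root of unity of some order $n>1$; picking a prime $p\mid n$ and setting $d:=c^{n/p}\in e(\tfs)$ produces a root of unity of order exactly $p$, so $d\neq1$. By Proposition~\ref{p:warwlaRE} there is $\chi\in e(\cs)$ with $\chi\circ\va=d\cdot\xi\circ T/\xi$ for some measurable $\xi\colon X\to\bs^1$, and necessarily $\chi\neq\raz$, since otherwise the non-trivial root of unity $d=\xi\circ T/\xi$ would lie in $e(T)$. Telescoping and using $d^p=c^n=1$ gives $\chi\circ\va^{(p)}=\xi\circ T^p/\xi$, so $\raz\neq\chi\in\Lambda(\va^{(p)},T^p)$, contradicting the previous step. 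The main obstacle is precisely this passage from a general root-of-unity eigenvalue to a prime-order one: a single pair of primes in~\eqref{E4} only controls the prime iterates $\va^{(p)}$, and the reduction $c\mapsto c^{n/p}$ is what lets the prime-indexed hypothesis kill eigenvalues of arbitrary (possibly composite) order; one also has to make sure the eigenfunction furnished by Proposition~\ref{p:warwlaRE} is genuinely non-trivial, which is exactly where total ergodicity of $T$ enters.
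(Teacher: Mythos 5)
Your proof is correct, but it takes a genuinely different route from the paper's. The paper argues structurally: if $\tfs$ is not totally ergodic, then some prime power $(\tfs)^r$ is non-ergodic (this reduction to prime powers is exactly what the lemma following the proposition, proved by a Rokhlin-tower argument, supplies), and since $(\tfs)^r=(T^r)_{\va^{(r)},\cs}$ is a factor of the Rokhlin extension $(T^r\times T^s,\eta)_{\va^{(r)},\cs}$ of the ergodic system $(T^r\times T^s,\eta)$, whose cocycle is ergodic by~\eqref{E4}, Remark~\ref{r:ergRE} makes that extension ergodic --- a contradiction. You instead work at the level of eigenvalues: you distill from~\eqref{E4} the statement $\Lambda(\va^{(p)},T^p)=\{\raz\}$ for every prime $p$ (correct: a transfer function $\zeta$ for $\chi\circ\va^{(p)}$ over $T^p$ pulls back to $\Theta(x,x'):=\zeta(x)$ over $(T^p\times T^s,\eta)$, and ergodicity of that group extension trivializes $\Lambda$), and then destroy a hypothetical root-of-unity eigenvalue using Proposition~\ref{p:warwlaRE}, the reduction $c\mapsto c^{n/p}$ to an eigenvalue of exact prime order, telescoping with $d^p=1$, and the total ergodicity of $T$ (from AOP) to guarantee $\chi\neq\raz$. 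This is essentially the strategy the paper itself uses for the companion proposition about~\eqref{E5}, so your argument has the merit of treating~\eqref{E4} and~\eqref{E5} uniformly, and it isolates a reusable intermediate fact; on the other hand, it needs the classical spectral fact that an ergodic finite measure-preserving automorphism is totally ergodic iff its eigenvalue group contains no non-trivial root of unity, plus Proposition~\ref{p:warwlaRE} and the total ergodicity of $T$, whereas the paper's factor argument needs none of this eigenvalue machinery (and its auxiliary prime-power lemma is robust enough to hold even for non-singular automorphisms, where spectral arguments are unavailable). One cosmetic slip: from $\chi=\raz$ you get $d^{-1}=\xi\circ T/\xi$, i.e.\ $d^{-1}\in e(T)$ rather than $d$ itself, but since $e(T)$ is a group this changes nothing.
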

\begin{proof}
Suppose that for some ergodic $\cs$ acting on $\ycn$, $\tfs$ is not totally ergodic. Then for some prime number $r$, $(\tfs)^r$ is not ergodic. Since $(\tfs)^r=(T^r)_{\va^{(r)},\cs}$ and $(\tfs)^r$ is a factor of $(T^r\times T^s,\eta)_{\va^{(r)},\cs}$, we obtain a contradiction to~\eqref{E4}, cf.\ Remark~\ref{r:ergRE}.
\end{proof}
Note also that~\eqref{E4} implies the total ergodicity of $\tfs$ itself. Indeed, the following holds:
\begin{Lemma} Suppose that $R$ is a non-singular  automorphism of a  probability standard Borel space $\zdr$.
Suppose that $R^k$ is ergodic for all $k\in\mathscr{P}$. Then $R$ is totally ergodic.
\end{Lemma}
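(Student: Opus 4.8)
The plan is to show that $R^n$ is ergodic for every $n\ge1$. I would first record the elementary observation that if $g\circ R^d=g$ then $g\circ R^m=g$ whenever $d\mid m$ (write $m=dk$ and iterate), so every $R^d$-invariant set is $R^m$-invariant. In particular every $R$-invariant set is $R^p$-invariant for each prime $p$; thus, were $R=R^1$ non-ergodic, we would obtain a nontrivial $R^p$-invariant set, contradicting the hypothesis. Hence $R$ is ergodic, a fact I will use freely below.

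The crux is the following structural claim: for an ergodic non-singular $R$, if $R^n$ fails to be ergodic then $R$ admits a nontrivial $L^\infty$-eigenvalue $c\in e(R)$ which is a root of unity of \emph{exact} order $d$ with $1<d\mid n$. To prove it, consider the sub-$\sigma$-algebra $\ce_n\subset\cd$ of ($\rho$-mod $0$) $R^n$-invariant sets; non-ergodicity of $R^n$ makes $\ce_n$ nontrivial. Since $R$ commutes with $R^n$, it acts non-singularly on the quotient measure algebra $\ce_n$, on which $R^n$ acts as the identity, while $R$ still acts ergodically (an $R$-invariant set lying in $\ce_n$ is $R$-invariant in $Z$, hence trivial by the previous paragraph). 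An ergodic non-singular action of the finite cyclic group generated by $R|_{\ce_n}$ must be carried by a single finite orbit, since otherwise the orbit space would supply an invariant set of intermediate measure; thus the quotient is measure-theoretically the rotation $j\mapsto j+1$ on $\Z/d\Z$ for some $1<d\mid n$ (with $d$ the orbit length). The function $j\mapsto e^{2\pi i j/d}$ then pulls back to a nonzero $F\in L^\infty\zdr$ with $F\circ R=cF$ for $c=e^{2\pi i/d}$, of exact order $d$; positivity of the $d$ atomic masses (non-singularity) guarantees $F$ is genuinely non-constant. This step, where the merely non-singular setting must be controlled, is the one I expect to be the main obstacle: one has to check that the $\ce_n$-factor is purely atomic finite cyclic and that the eigenfunction survives in $L^\infty$ without any $R$-invariance of $\rho$.

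Finally I would close by a short group-theoretic step. Suppose, for contradiction, that $R^n$ is non-ergodic for some $n$. The claim produces $c\in e(R)$ of exact order $d>1$ with $d\mid n$; choosing a prime $p\mid d$, the power $c^{d/p}$ has exact order $p$ (its $p$-th power is $c^d=1$, yet $c^{d/p}\neq1$). Hence $G:=F^{d/p}\in L^\infty\zdr$ satisfies $G\circ R=c^{d/p}G$ and is non-constant because $R$ is ergodic, so $G\circ R^p=(c^{d/p})^pG=c^dG=G$ exhibits a nontrivial $R^p$-invariant function, contradicting the ergodicity of $R^p$. Therefore no such $n$ exists and $R$ is totally ergodic. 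Equivalently, the torsion subgroup of $e(R)$ contains no element of prime order, hence is trivial, and no power $R^n$ can carry a root-of-unity eigenvalue.
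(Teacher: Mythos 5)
Your proof is correct, but it follows a genuinely different route from the paper's. The paper argues combinatorially: it takes the \emph{smallest} $n$ with $R^n$ non-ergodic, picks a non-trivial $R^n$-invariant set $A$, and uses a maximal subset $J\subset\{0,\dots,n-1\}$ for which $B=\bigcap_{j\in J}R^jA$ has positive measure to show that $\{R^iB\}_{i=0}^{n-1}$ is a Rokhlin tower (maximality forces $\mu(B\cap R^iB)=0$ for $1\le i<n$, since otherwise $J=J+i\bmod n$ and $R^iB=B$ would contradict ergodicity of $R^i$); then for a prime $p\mid n$ the union $B\cup R^pB\cup\dots\cup R^{n-p}B$ of every $p$-th floor is a non-trivial $R^p$-invariant set, a contradiction. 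You instead pass to the factor determined by the $\sigma$-algebra $\ce_n$ of $R^n$-invariant sets, identify it as the cyclic rotation on $\Z/d\Z$ with $1<d\mid n$ (using that an ergodic non-singular action of a finite group on a standard probability space lives on a single finite orbit), pull back the character to get a root-of-unity $L^\infty$-eigenvalue $c\in e(R)$ of exact order $d$, and then raise the eigenfunction to the power $d/p$ to manufacture a non-constant $R^p$-invariant function. Both arguments handle the non-singular setting correctly. The paper's proof is more elementary and self-contained: it manipulates sets only and needs no point realization of abstract factors nor the smoothness/atomicity facts your structural claim rests on (these are standard for standard Borel spaces, but they are genuine machinery). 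Your proof, on the other hand, is more conceptual and meshes well with the eigenvalue framework the paper uses elsewhere (the group $e(\cdot)$ of $L^\infty$-eigenvalues): it isolates the precise obstruction, showing that some power $R^n$ fails to be ergodic if and only if $e(R)$ contains a non-trivial root of unity, which is slightly more than the lemma asserts.
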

\begin{proof}
Suppose that $R^n$ is not ergodic for some $n>1$ and let $n$ be the smallest number with this property. Let $A$ be non-trivial such that $R^nA=A$. Let $J\subset \{0,\dots, n-1\}$ be a maximal subset such that $B:=\bigcap_{j\in J}R^jA$ is of positive measure. Clearly, $R^nB=B$ and we claim that $\{R^iB\}_{i=0}^{n-1}$ is a Rokhlin tower. Indeed, all we need to show is that $\mu(B\cap R^iB)=0$ for $i=1,\ldots,n-1$. In other words, $J\neq J+i \bmod n$. However, if $J=J+i\bmod n$ then $R^iB=B$ which contradicts the ergodicity of $R^i$ by the choice of $n$ (recall that $1\leq i<n$). Let $p$ be a prime dividing $n$. Then $B':=B\cup R^pB\cup\dots \cup R^{n-p}B$ is a non-trivial $R^p$-invariant set which contradicts the ergodicity of $R^p$.
\end{proof}

\begin{Prop}
Condition~\eqref{E5} implies the total ergodicity of $\tfs$ for each ergodic $\cs$.
\end{Prop}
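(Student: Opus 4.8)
The plan is to reduce total ergodicity of $\tfs$ to the ergodicity of its prime powers and then to extract each prime power from \eqref{E5}. First I would recall from \eqref{iteracja} that $(\tfs)^r=(T^r)_{\va^{(r)},\cs}$, and that the AOP property of $T$ forces its total ergodicity, so that $T^r$ is ergodic for every prime $r$. Thus Proposition~\ref{p:ergRE} applies to the Rokhlin extension $(T^r)_{\va^{(r)},\cs}$: it is ergodic if and only if $\sigma_\cs(\Lambda(\va^{(r)}))=0$, where $\Lambda(\va^{(r)})$ is the eigenvalue group of the Mackey action $\cw(\va^{(r)},T^r)$ described in \eqref{warMa}, and $\sigma_\cs$ is the maximal spectral type of $\cs$ on $L^2_0\ycn$.

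The heart of the matter is to show that \eqref{E5} forces $\Lambda(\va^{(r)})=\{\raz\}$ for every prime $r$. Fix a prime $r$, pick any other prime $s\neq r$, and choose an ergodic joining $\eta\in J^e(T^r,T^s)$ (one exists, since $J(T^r,T^s)$ is a non-empty compact convex set whose extreme points are ergodic joinings). By \eqref{E5} the Mackey action $\cw(\va^{(r)}\times\va^{(s)},T^r\times T^s,\eta)$ is weakly mixing, which by \eqref{warMa} means that its eigenvalue group $\Lambda(\va^{(r)}\times\va^{(s)})\subset\widehat{G}\times\widehat{G}$ equals $\{(\raz,\raz)\}$. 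Now suppose $\chi\in\Lambda(\va^{(r)})$, so that $\chi\circ\va^{(r)}=\xi_0\circ T^r/\xi_0$ for some measurable $\xi_0\colon X\to\bs^1$. Setting $\xi(x,x'):=\xi_0(x)$ one checks $(\chi,\raz)\circ(\va^{(r)}\times\va^{(s)})=\xi\circ(T^r\times T^s)/\xi$ on $(X\times X,\eta)$, so $(\chi,\raz)\in\Lambda(\va^{(r)}\times\va^{(s)})=\{(\raz,\raz)\}$, forcing $\chi=\raz$. Hence $\Lambda(\va^{(r)})=\{\raz\}$.

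To finish, I would note that, $\cs$ being ergodic, the trivial character $\raz$ is not an eigenvalue of $\cs$ on $L^2_0\ycn$, so $\sigma_\cs(\{\raz\})=0$; together with $\Lambda(\va^{(r)})=\{\raz\}$ this gives $\sigma_\cs(\Lambda(\va^{(r)}))=0$, and Proposition~\ref{p:ergRE} yields the ergodicity of $(\tfs)^r$ for every prime $r$. Applying the preceding Lemma (a non-singular automorphism all of whose prime powers are ergodic is totally ergodic) with $R=\tfs$ then gives the total ergodicity of $\tfs$ for each ergodic $\cs$. The step deserving the most care—and the genuine content of the argument—is the passage from the weak mixing of the \emph{joint} Mackey action over $T^r\times T^s$ to the triviality of the \emph{single}-power eigenvalue group $\Lambda(\va^{(r)})$: the hypothesis \eqref{E5} only speaks of pairs of distinct primes, and the bridge to a single prime is precisely the observation that an eigenfunction $\xi_0$ witnessing $\chi\in\Lambda(\va^{(r)})$ lifts, via $(x,x')\mapsto\xi_0(x)$, to an eigenfunction of the product cocycle for the character $(\chi,\raz)$, after which the reduction to primes is handled by the preceding Lemma.
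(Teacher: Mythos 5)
Your proof is correct, but it takes a genuinely different route from the paper's. The paper argues by contradiction directly on the eigenvalues of $\tfs$: assuming $e^{2\pi i/k}\in e(\tfs)$, it uses the standing hypothesis that $\va$ is ergodic (hence $\tfs$ is ergodic) together with Proposition~\ref{p:warwlaRE} to produce $\chi\in e(\cs)$ with $\chi\circ\va=e^{2\pi i/k}\cdot\xi\circ T/\xi$; telescoping to the powers $r,s$ and choosing the primes with $r\equiv s\pmod k$ makes the scalar factors cancel, so that $\chi\ot\ov{\chi}$ becomes a nontrivial eigenvalue of the joint Mackey action, contradicting~\eqref{E5} via~\eqref{warMa}. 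You instead prove ergodicity of each prime power $(\tfs)^r=(T^r)_{\va^{(r)},\cs}$ via Proposition~\ref{p:ergRE}, reducing it to the triviality of $\Lambda(\va^{(r)})$ (computed over $T^r$), which you extract from~\eqref{E5} by the lift $(\chi,\raz)$ with eigenfunction $\xi(x,x')=\xi_0(x)$; the prime-power Lemma preceding the Proposition then finishes. Your skeleton is in fact the one the paper uses for the \eqref{E4}-proposition (reduction to prime powers), and it buys two things: it never invokes the ergodicity of $\va$ or of $\tfs$ (only total ergodicity of $T$, available from AOP), and the degenerate case $\chi=\raz$ causes no trouble, since $\chi=\raz$ is exactly the desired conclusion rather than a failed contradiction; in the paper's argument the possibility $\chi=\raz$ must be excluded separately (there it forces $e^{2\pi i/k}\in e(T)$, hence $k=1$ by total ergodicity of $T$, a detail left implicit). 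What the paper's argument buys is brevity: it needs neither Proposition~\ref{p:ergRE}, nor the prime-power Lemma, nor the observation that $\sigma_\cs$ has no atom at $\raz$; its one trick is the congruence choice $r\equiv s\pmod k$.
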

\begin{proof}
Suppose that $e^{2\pi i/k}$ is an eigenvalue of $\tfs$. Take any primes $r\neq s$ such that $r=s$ mod~$k$.  Since $\va$ is ergodic, so is $\tfs$ and by Proposition~\ref{p:warwlaRE}, for some $\chi\in\widehat{G}$, we have $\chi\circ\va=e^{2\pi i/k}\cdot\xi\circ T/\xi$ for some measurable $\xi\colon X\to\bs^1$. Hence
$$
\chi\circ \va^{(r)}=e^{2\pi ir/k}\cdot \xi\circ T^r/\xi\text{ and }
\chi\circ \va^{(s)}=e^{2\pi is/k}\cdot \xi\circ T^s/\xi.
$$
If we fix any $\eta\in J^e(T^r,T^s)$ then we obtain
$$
\chi\circ\ov{\chi}(\va^{(r)}(x),\va^{(s)}(x'))=\xi\ot\ov{\xi}\circ (T^r\times T^s)(x,x')/\xi\ot\ov{\xi}(x,x')$$
for $\eta$-a.e.\ $(x,x')\in X\times X$ which contradicts~\eqref{E5}, cf.~\eqref{warMa}.
\end{proof}

\subsection{More on non-singular actions, cocycles, and Mackey actions}
Throughout, we assume that $G$ is an lcsc Abelian group.

\begin{Lemma} \label{l1}
Assume that  $\cw=(W_g)_{g\in G}$ is a  non-singular action of $G$ on a  probability standard Borel space $(C,{\cal J}, \kappa)$.  Let $H\subset G$ be a closed subgroup and assume that the sub-action $(W_h)_{h\in H}$ is ergodic. Let
$\chi\in\widehat{H}$ be an $L^\infty$-eigenvalue for $(W_h)_{h\in H}$.
Then there is an $L^\infty$-eigenvalue $\widetilde{\chi}\in\widehat{G}$ of $(W_g)_{g\in G}$ such that $\widetilde{\chi}|_{H}=\chi$.\end{Lemma}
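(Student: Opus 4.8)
The plan is to fix an eigenfunction $f$ for $\chi$, normalize it to have modulus one using the ergodicity of the $H$-subaction, and then show that $f$ is automatically an eigenfunction for the whole action $\cw$; the associated collection of eigenvalues will be the character $\widetilde{\chi}$ we are looking for. Concretely, pick $0\neq f\in L^\infty(C,\kappa)$ with $f\circ W_h=\chi(h)\,f$ $\kappa$-a.e.\ for every $h\in H$. Since $|\chi(h)|=1$, the function $|f|$ is $(W_h)_{h\in H}$-invariant, hence constant a.e.\ by ergodicity, and this constant is nonzero as $f\neq0$; dividing $f$ by it, I may assume $|f|=1$ a.e.

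The key step is to feed commutativity into the eigenequation. Fix $g\in G$. Because $G$ is abelian and $\cw$ is an action (so $W_{g+h}=W_gW_h=W_hW_g$), for every $h\in H$ we get
\[
(f\circ W_g)\circ W_h=f\circ W_{h+g}=(f\circ W_h)\circ W_g=\chi(h)\,(f\circ W_g)\quad\text{a.e.},
\]
so $f\circ W_g$ is again a $\chi$-eigenfunction for the $H$-subaction. As $W_g$ is non-singular, $|f\circ W_g|=|f|=1$ a.e., so the ratio $(f\circ W_g)/f$ is a well-defined $H$-invariant function of modulus one; by ergodicity it equals a constant $\widetilde{\chi}(g)\in\bs^1$. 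Thus $f\circ W_g=\widetilde{\chi}(g)\,f$ for every $g\in G$, i.e.\ the single function $f$ diagonalizes the whole action. Comparing $f\circ W_{g+g'}=\widetilde{\chi}(g)(f\circ W_{g'})=\widetilde{\chi}(g)\widetilde{\chi}(g')f$ with $f\circ W_{g+g'}=\widetilde{\chi}(g+g')f$ (and using $f\neq0$ a.e.) shows $\widetilde{\chi}$ is a homomorphism, and evaluating at $h\in H$ gives $\widetilde{\chi}|_H=\chi$.

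The one genuinely delicate point---and the step I expect to be the main obstacle---is to verify that $\widetilde{\chi}$ lies in $\widehat{G}$, i.e.\ that it is a \emph{continuous} character and not merely an abstract homomorphism. For this I would multiply $f\circ W_g=\widetilde{\chi}(g)f$ by $\overline{f}$ and integrate; using $|f|=1$ this yields the closed formula
\[
\widetilde{\chi}(g)=\int_C f(W_gz)\,\overline{f(z)}\,d\kappa(z).
\]
The integrand $(z,g)\mapsto f(W_gz)\overline{f(z)}$ is measurable because the action map $(z,g)\mapsto W_gz$ is measurable by hypothesis, and it is bounded by $1$; hence by Fubini the map $g\mapsto\widetilde{\chi}(g)$ is measurable. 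Finally, a measurable homomorphism from an lcsc group into $\bs^1$ is automatically continuous, so $\widetilde{\chi}\in\widehat{G}$, completing the argument. The only subtleties are thus the normalization $|f|\equiv1$ (which guarantees that $f\circ W_g$ never vanishes, so that the ratio defining $\widetilde{\chi}$ makes sense pointwise) and this automatic-continuity input; everything else is driven purely by ergodicity and the commutativity of the action.
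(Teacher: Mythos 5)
Your proposal is correct and follows essentially the same route as the paper's own proof: commutativity makes $f\circ W_g$ another $\chi$-eigenfunction of the $H$-subaction, ergodicity forces $f\circ W_g=\widetilde{\chi}(g)f$ for a constant, and the resulting measurable homomorphism $g\mapsto\widetilde{\chi}(g)$ is automatically a continuous character restricting to $\chi$ on $H$. The only difference is that you spell out details the paper leaves implicit (the normalization $|f|\equiv1$, the Fubini argument for measurability, and the automatic-continuity step), all of which are handled correctly.
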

\begin{proof} By assumption, there is a measurable function $F$, $|F|=1$, such that $F\circ W_h=\chi(h)\cdot F$ for each $h\in H$. Then, for each $g\in G$,  $F\circ W_g$ is also an eigenfunction corresponding to $\chi$, so by ergodicity, $F\circ W_g=c_g\cdot F$ for a (unique) number $c_g$, $|c_g|=1$. Now, $g\mapsto c_g=F\circ W_g/F$ is a measurable homomorphism, whence there exists $\widetilde{\chi}\in\widehat{G}$ such that $\widetilde{\chi}(g)=c_g$, $g\in G$. Since $c_h=\chi(h)$ for $h\in H$, the result follows.
\end{proof}

Immediately from Lemma~\ref{l1} we obtain the following.

\begin{Lemma}\label{l2}Assume that $\cw=(W_g)_{g\in G}$ is a  weakly mixing non-singular $G$-action. Let $H\subset G$ be a closed subgroup such that the subaction $(W_h)_{h\in G}$ is ergodic. Then $(W_h)_{h\in H}$ is weakly mixing.\end{Lemma}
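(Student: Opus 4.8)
The plan is to reduce Lemma~\ref{l2} directly to Lemma~\ref{l1} by contraposition. Concretely, suppose for contradiction that $(W_h)_{h\in H}$ is \emph{not} weakly mixing. Since by hypothesis $(W_h)_{h\in H}$ is ergodic, non-weak-mixing means that its group of $L^\infty$-eigenvalues $e((W_h)_{h\in H})$ contains some nontrivial character $\chi\in\widehat{H}$, $\chi\neq\raz_H$. This is exactly the input required by Lemma~\ref{l1}: we have a closed subgroup $H\subset G$ whose sub-action is ergodic and a nontrivial $L^\infty$-eigenvalue $\chi$ for that sub-action.

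Next I would simply invoke Lemma~\ref{l1} to produce an extension $\widetilde\chi\in\widehat{G}$ with $\widetilde\chi|_H=\chi$ which is an $L^\infty$-eigenvalue of the full action $\cw=(W_g)_{g\in G}$. The remaining step is to check that $\widetilde\chi$ is nontrivial as a character of $G$. This is immediate: if $\widetilde\chi$ were the trivial character $\raz_G\in\widehat{G}$, then its restriction $\widetilde\chi|_H=\chi$ would be the trivial character of $H$, contradicting $\chi\neq\raz_H$. Hence $\widetilde\chi\neq\raz_G$ is a nontrivial $L^\infty$-eigenvalue of $\cw$, so $e(\cw)$ is not reduced to the trivial character, contradicting the assumed weak mixing of $\cw$. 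Therefore $(W_h)_{h\in H}$ must be weakly mixing.

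I do not expect any genuine obstacle here; the lemma is essentially an ``immediate corollary'' of Lemma~\ref{l1}, as the surrounding text announces, and the only content is the eigenvalue-extension mechanism already supplied. The one point requiring a word of care is the nontriviality of $\widetilde\chi$, which rests on the elementary observation that restriction of characters sends the trivial character to the trivial character (equivalently, if $\chi=\widetilde\chi|_H$ is nontrivial then $\widetilde\chi$ cannot be trivial). I would also note, for cleanliness, that the hypothesis should read that the sub-action $(W_h)_{h\in H}$ is ergodic (the statement as typeset writes $(W_h)_{h\in G}$, evidently a typo for $H$), since ergodicity of the $H$-sub-action is precisely what Lemma~\ref{l1} consumes.
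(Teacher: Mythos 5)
Your proof is correct and is exactly the argument the paper intends: the paper offers no separate proof of Lemma~\ref{l2}, stating only that it follows ``immediately from Lemma~\ref{l1},'' and your contrapositive deduction (extend a hypothetical nontrivial eigenvalue of the $H$-subaction to a nontrivial eigenvalue of $\cw$ via Lemma~\ref{l1}, contradicting weak mixing) is precisely that immediate deduction. You are also right that $(W_h)_{h\in G}$ in the statement is a typo for $(W_h)_{h\in H}$.
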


We will also need the following classical result.

\begin{Lemma}\label{l4}
Assume that $\cs=(S_g)_{g\in G}\subset{\rm Aut}\ycn$ is ergodic. Let $\cw=(W_g)_{g\in G}$ be a  non-singular,  weakly mixing $G$-action on $(C,{\cal J},\kappa)$. Then the $G$-action
$\cs\times\cw:=(S_g\times W_g)_{g\in G}$ on $(Y\times C,\cc\ot{\cal J},\nu\ot\kappa)$ is ergodic.
\end{Lemma}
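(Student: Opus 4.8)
The plan is to prove ergodicity by showing that every bounded $\cs\times\cw$-invariant function is constant, exploiting the fact that $\cs$ is genuinely measure-preserving, so its Koopman representation $U_g h:=h\circ S_g$ is unitary on $L^2\ycn$ and admits an honest spectral decomposition, whereas $\cw$ enters only through its $L^\infty$-eigenvalues. Let $F\in L^\infty(Y\times C,\nu\ot\kappa)$ satisfy $F\circ(S_g\times W_g)=F$ for all $g\in G$. First I would split off the mean: since each $S_g$ preserves $\nu$, the function $c\mapsto\int_Y F(y,c)\,d\nu(y)$ is $\cw$-invariant, hence constant because $\cw$, being weakly mixing, is ergodic. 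Subtracting this constant, I may assume $h_c:=F(\cdot,c)\in L^2_0\ycn$ for $\kappa$-a.e.\ $c$, and the invariance $F(S_gy,W_gc)=F(y,c)$ rewrites as the equivariance relation $h_{W_gc}=U_g^{-1}h_c$ for a.e.\ $c$ and every $g$.

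Next I would form the bounded function $\Phi(c,c'):=\langle h_c,h_{c'}\rangle$ on $(C\times C,\kappa\ot\kappa)$. A direct computation using that $U$ is unitary shows $\Phi(W_gc,W_gc')=\langle U_g^{-1}h_c,U_g^{-1}h_{c'}\rangle=\Phi(c,c')$, so $\Phi$ is invariant under the diagonal action $(W_g\times W_g)_{g\in G}$. Granting that this diagonal self-product is ergodic, $\Phi$ is a.e.\ equal to a constant $\gamma=\|v\|^2$, where $v:=\int_C h_c\,d\kappa(c)$. Setting $k_c:=h_c-v$, one then gets $\langle k_c,k_{c'}\rangle=0$ for a.e.\ $c\neq c'$ while $\|k_c\|^2=\beta-\gamma$ is a constant (the common value $\beta:=\|h_c\|^2$ is itself $\cw$-invariant, hence constant). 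Since $L^2_0\ycn$ is separable it cannot contain uncountably many pairwise orthogonal vectors of a fixed positive length, forcing $\beta=\gamma$ and thus $k_c\equiv 0$, i.e.\ $h_c\equiv v$; the equivariance then makes $v$ a $U$-invariant vector in $L^2_0\ycn$, so $v=0$ and $F$ is constant.

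I expect the main obstacle to be precisely the step where weak mixing is invoked, namely the ergodicity of the diagonal self-product of the \emph{non-singular} action $\cw$. Because $\cw$ is merely non-singular, its Koopman operator is not unitary and the hypothesis $e(\cw)=\{\raz\}$ is a statement about $L^\infty$-eigenfunctions, not about continuity of a spectral type, so the usual convolution-of-spectral-measures proof (as in the measure-preserving setting, cf.\ Proposition~\ref{p:cherg}) is unavailable. The role of $e(\cw)=\{\raz\}$ can be seen concretely: if the diagonal product failed to be ergodic, one could spectrally localize the field $c\mapsto h_c$ over $\cs$ via the spectral projections $E(B)$ of $U$; an atom at a character $\chi_0\neq\raz$ would, using the simplicity of the eigenvalue $\chi_0$ of the ergodic measure-preserving $\cs$, produce a unimodular function $a$ with $a(W_gc)=\overline{\chi_0(g)}\,a(c)$, i.e.\ a nontrivial $L^\infty$-eigenfunction of $\cw$, contradicting weak mixing. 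Controlling the continuous part, however, is exactly the point that rests on the non-singular (Mackey-action) machinery, which is why the statement is quoted as classical; all the remaining steps serve only to transfer the problem onto the genuinely measure-preserving factor $\cs$, where ordinary spectral theory applies.
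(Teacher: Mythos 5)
There is a genuine gap, and it sits exactly where you flagged it: the step ``granting that this diagonal self-product is ergodic'' is not a technical lacuna to be patched but the collapse of the whole approach. For \emph{non-singular} actions, weak mixing in the sense $e(\cw)=\{\raz\}$ does \emph{not} imply ergodicity of the diagonal product $(W_g\times W_g)_{g\in G}$ on $(C\times C,\kappa\ot\kappa)$. That equivalence is a theorem only in the probability-preserving category, where it is proved by precisely the spectral/convolution argument you correctly observe is unavailable here. In infinite and non-singular ergodic theory the two notions genuinely separate: there are ergodic infinite measure-preserving (hence non-singular) transformations with trivial $L^\infty$-eigenvalue group whose Cartesian square is not ergodic, indeed not even conservative (rank-one constructions of Adams, Friedman and Silva are standard examples; this failure is the reason ``weak mixing'' in that theory splits into a strict hierarchy: weak mixing, double ergodicity, ergodic square, power weak mixing, \dots). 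Since the lemma concerns an \emph{arbitrary} weakly mixing non-singular $\cw$, your reduction to ergodicity of the self-product cannot be repaired. The surrounding steps (constancy of the fiber mean, the computation $\Phi\equiv\gamma$, the separability argument for the field $k_c$) are essentially fine, but they all hang on this false premise.

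The paper avoids self-products altogether: it quotes Keane's ergodic multiplier theorem (Theorem 2.7.1 in Aaronson's book), which says that for $\cs$ ergodic probability-preserving and $\cw$ ergodic non-singular, $\cs\times\cw$ is ergodic if and only if $\sigma_\cs(e(\cw))=0$; here $e(\cw)=\{\raz\}$ and $\sigma_\cs$ has no atom at $\raz$ because $\cs$ is ergodic on $L^2_0\ycn$. Worth noting: the mechanism behind that theorem is the one your closing paragraph sketches, and your worry about ``the continuous part'' is unfounded. Decompose $L^2_0\ycn$ into cyclic subspaces $L^2(\widehat{G},\sigma_n)$ for the Koopman representation; the equivariance $h_{W_gc}=U_g^{-1}h_c$ then reads fiberwise as $h(W_gc,\chi)=\overline{\chi(g)}\,h(c,\chi)$ for a.e.\ $(g,c,\chi)$. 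Fix, by Fubini, a $\sigma_n$-typical $\chi$: then $c\mapsto|h(c,\chi)|$ is $\cw$-invariant, hence constant by ergodicity of $\cw$; wherever that constant is nonzero, the normalized fiber $c\mapsto h(c,\chi)/|h(c,\chi)|$ is a unimodular $L^\infty$-eigenfunction of $\cw$ with character $\overline{\chi}$, so $\chi\in e(\cw)$ (a group). No atom of the spectral measure is required, because the eigenfunction lives on $C$, not on $Y$; the argument therefore covers continuous spectrum just as well as atoms. Weak mixing of $\cw$ thus forces $h(\cdot,\chi)=0$ for $\sigma_n$-a.e.\ $\chi\neq\raz$, and $\sigma_n(\{\raz\})=0$ finishes the proof. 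Running this spectral argument from the start, in place of the self-product reduction, gives a complete and self-contained proof --- and it is, in substance, the proof of the criterion the paper cites.
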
\begin{proof}
We use Keane's criterion (see~\cite{Aa}, Theorem 2.7.1) for the ergodicity of the direct product of an ergodic finite measure-preserving action  and an ergodic non-singular action. If by $\sigma_\cs$ we denote the maximal spectral type of $\cs$ on $L^2_0\ycn$ then the product $G$-action $\cs\times \cw$ is ergodic if and only if $\sigma_\cs(e(\cw))=0$. In our case $e(\cw)=\{\raz\}$ and since $\cs$ is ergodic, $\sigma_\cs$ has no atom at the trivial character.
\end{proof}

\begin{Def}[\cite{Da-Le}]
Given a non-singular $G$-action $\cw$ on $(C,{\cal J},\kappa)$ and a $G$-invariant $\sigma$-algebra $\ca\subset{\cal J}$, the corresponding extension
\begin{equation}\label{e:ext}
\pi\colon (C,\cj, \kappa,\cw)\to (C/\ca,\ca,\kappa|_{\ca},\cw|_{C/\ca})
\end{equation}
is called \emph{relatively finite measure-preserving} (rfmp for short) if the Radon-Nikodym derivative
$d\kappa\circ W_g/d\kappa$ is $\ca$-measurable for each $g\in G$.
\end{Def}
\begin{Remark}[\cite{Da-Le}]\label{u:2}
If~\eqref{e:ext} is rfmp then
$$
\frac{d(\kappa\circ W_{g})}{d\kappa}(c)=\frac{d(\kappa|_{\ca}\circ  W_{g})}{d(\kappa|_{\ca})}(\pi(c))
$$
for $\kappa$-a.e.\ $c\in C$ and every $g\in G$.
\end{Remark}
\begin{Remark}\label{E1}
Let $H\subset G$ be a closed subgroup. It follows by Remark~\ref{u:2} that if a $G$-extension is rfmp, then the corresponding $H$-extension is also rfmp.
\end{Remark}
\begin{Remark}[\cite{Da-Le}] \label{u:3}
Let $\kappa=\int_{C/\ca}\kappa_{\ov{c}}\,d\ov{\kappa}(\ov{c})$ (where $\ov{\kappa}=\kappa|_{\ca}$). It follows by Remark~\ref{u:2} that extension \eqref{e:ext} is rfmp if and only if
$
\kappa_{W_g(\pi(c))}\circ W_g=\kappa_{\pi(c)}$ for $\nu$-a.e.\ $c\in C$ and for all $g\in G$.

Note that if the extension \eqref{e:ext} is rfmp and the action $\cw|_{C/\ca}$ preserves the measure $\kappa|_{\ca}$, then
$\cw$ also preserves the measure $\kappa$.
\end{Remark}

\begin{Lemma} [see Lemma 5.3 in \cite{Da-Le}] \label{l5} Assume that $\cw=(W_g)_{g\in G}$ is an ergodic, non-singular $G$-action on $(C,{\cal J},\kappa)$. Let $(Z,\cd)$ be a standard Borel space. Assume that $\gamma$ is a probability measure on $(C\times Z,{\cal J}\ot\cd)$ whose projection on the $C$-coordinate is $\nu$ and which is $(W_g\times Id)_{g\in G}$-quasi-invariant. Assume moreover that the extension
$$
(C\times Z,{\cal J}\ot\cd,\gamma,\cw\times Id)\to (C,{\cal J},\kappa,\cw)$$
is rfmp. Then $\gamma=\kappa\otimes\rho$ for a probability measure $\rho$ on $(Z,\cd)$.\end{Lemma}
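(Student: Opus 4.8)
The plan is to disintegrate $\gamma$ over the base $C$, to show that the resulting family of fibre measures on $Z$ is $\cw$-invariant as a function of the base point, and then to force it to be constant by ergodicity of $\cw$; constancy of the fibres is exactly the assertion $\gamma=\kappa\ot\rho$. First I would note that, since $(Z,\cd)$ is standard Borel and the projection $\pi\colon C\times Z\to C$ maps $\gamma$ to $\kappa$, there is a disintegration
$$
\gamma=\int_C\delta_c\ot\gamma_c\,d\kappa(c),
$$
where $c\mapsto\gamma_c\in\cp(Z)$ is measurable and $\gamma_c$ is a probability measure on the fibre $\{c\}\times Z$, which we identify with $Z$.

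Next I would translate the two hypotheses into a statement about the fibres. Since $\gamma$ is $(W_g\times Id)$-quasi-invariant and the extension is rfmp, Remark~\ref{u:2} gives
$$
\frac{d(\gamma\circ(W_g\times Id))}{d\gamma}(c,z)=\frac{d(\kappa\circ W_g)}{d\kappa}(c)=:\omega_g(c),
$$
a function of the base coordinate alone, so that $(W_g\times Id)_\ast\gamma=\omega_g\cdot\gamma$. Pushing the disintegration forward, the map $W_g\times Id$ carries the fibre over $c$ onto the fibre over $W_gc$ by the identity in the $Z$-coordinate, so $(W_g\times Id)_\ast\gamma$ is disintegrated over the base measure $\kappa\circ W_g=\omega_g\kappa$ with fibres $\{\gamma_{W_g^{-1}c}\}$, whereas $\omega_g\cdot\gamma$ is disintegrated over the same base measure with fibres $\{\gamma_c\}$. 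By uniqueness of the disintegration these agree, giving $\gamma_{W_gc}=\gamma_c$ for $\kappa$-a.e.\ $c$; this is precisely the fibre characterisation of rfmp from Remark~\ref{u:3}, specialised to the trivial action on the $Z$-coordinate. Hence the measurable map $\Phi\colon C\to\cp(Z)$, $\Phi(c):=\gamma_c$, satisfies $\Phi\circ W_g=\Phi$ $\kappa$-a.e.\ for every $g\in G$.

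Finally I would invoke ergodicity. As $\cw$ is an ergodic non-singular $G$-action, a measurable map into a standard Borel space that is invariant under every $W_g$ must agree $\kappa$-a.e.\ with a constant; applying this to $\Phi$ yields a fixed $\rho\in\cp(Z)$ with $\gamma_c=\rho$ for $\kappa$-a.e.\ $c$. The disintegration then collapses to
$$
\gamma=\int_C\delta_c\ot\gamma_c\,d\kappa(c)=\int_C\delta_c\ot\rho\,d\kappa(c)=\kappa\ot\rho,
$$
which is the desired conclusion.

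The main difficulty I anticipate is the passage from ``$\Phi\circ W_g=\Phi$ $\kappa$-a.e.\ \emph{for each individual} $g$'' to invariance of $\Phi$ under \emph{all} $g$ on a single co-null set, which is what the ergodicity of a group action actually requires. For a $\Z$-action this is automatic, but for a general lcsc $G$ one must run the standard joint-measurability argument: using measurability of $(g,c)\mapsto W_gc$ together with Fubini and a probability measure on $G$ equivalent to Haar, one replaces $\Phi$ by an a.e.-equal, genuinely $\cw$-invariant version before invoking ergodicity. It is exactly the fact that the values $\gamma_c$ lie in the standard Borel space $\cp(Z)$ that makes both the initial disintegration and these measurability manipulations legitimate.
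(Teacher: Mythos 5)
Your proposal is correct and takes essentially the same route as the paper's proof: disintegrate $\gamma$ over $C$, use the rfmp hypothesis to get $\gamma_{W_gc}=\gamma_c$ for $\kappa$-a.e.\ $c$ (the paper simply quotes Remark~\ref{u:3} here, which you re-derive via uniqueness of disintegration), and conclude by ergodicity of $\cw$ applied to the measurable map $c\mapsto\gamma_c\in\cp(Z)$. Your closing remark about upgrading per-$g$ almost-everywhere invariance to invariance on a single co-null set via the joint-measurability/Fubini argument is a genuine subtlety that the paper leaves implicit, and you resolve it correctly.
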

\begin{proof}
Writing $\gamma=\int_{C}\gamma_{c}\,d\kappa(c)$ and using Remark~\ref{u:3}, we obtain that $\gamma_{W_gc}=\gamma_c$ for $\kappa$-a.e.\ $c\in C$ (and all $g\in G$). Since the map $c\mapsto \gamma_c$ is measurable, the result follows by the ergodicity of $\cw$.
\end{proof}

Let $(C,{\cal J})$ be a standard Borel space. Denote by $\cp(C,\cj)$ the space of probability measures on $(C,\cj)$.
Given a Borel $G$-action $\cw$ on  $(C,{\cal J})$ and a $G$-invariant $\sigma$-algebra $\ca\subset{\cal J}$ with a quasi-invariant probability measure $\kappa$, we set
\begin{multline*}
\cp(\cw,\ca,\kappa):=\{\tilde{\kappa}\in\cp(C,{\cal J}) : \tilde\kappa\text{ is quasi-invariant, }\tilde{\kappa}|_{\ca}=\kappa\\ \text{ and the extension }(C,{\cal J},\widetilde{\kappa},\cw)\to (C/\ca,\ca,\kappa,\cw)\text{ is rfmp}\}.
\end{multline*}

\begin{Lemma}\label{l6}
Assume that $\cs^{(i)}=(S^{(i)}_g)_{g\in G}\subset{\rm Aut}(Y_i,\cc_i,\nu_i)$ is ergodic for $i=1,2$. Assume moreover that $\cw=(W_{(g_1,g_2)})_{(g_1,g_2)\in G\times G}$ is a non-singular, weakly mixing  $G\times G$-action on a  probability standard Borel space $(\widetilde{C},\widetilde{\cal J},\widetilde{\kappa})$. If the $G$-subaction $\cw|_{\{0\}\times G}$ is ergodic then
$$
\{\rho\in\cp((\cs^{(1)}\ot \cs^{(2)})\times \cw,\widetilde{\cal J},\widetilde{\kappa}): \rho|_{\cc_2\ot\widetilde{\cal J}}=\nu_2\ot\widetilde{\kappa},\;\rho|_{\cc_1}=\nu_1\}=
\{\nu_1\ot\nu_2\ot\widetilde{\kappa}\},
$$
where $\cs^{(1)}\ot \cs^{(2)}$ stands for the $G\times G$-action $(g_1,g_2)\mapsto S^{(1)}_{g_1}\times S^{(2)}_{g_2}$ on the space $(Y_1\times Y_2,\cc_1\ot\cc_2,\nu_1\ot\nu_2)$.
\end{Lemma}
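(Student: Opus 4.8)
The plan is to reduce the statement to Lemma~\ref{l5} by isolating the subaction of $\{0\}\times G$, which leaves the first coordinate $Y_1$ fixed and acts on $Y_2\times\widetilde C$ by $\cs^{(2)}\times(\cw|_{\{0\}\times G})$. The whole point is to group the variables as $(Y_2\times\widetilde C)\times Y_1$, so that with respect to $\{0\}\times G$ the base $Y_2\times\widetilde C$ carries an ergodic action while the fibre $Y_1$ is acted on trivially; then a product disintegration becomes available.

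First I would establish ergodicity of this base action. Since $\cw$ is weakly mixing and the subaction $\cw|_{\{0\}\times G}$ is assumed ergodic, Lemma~\ref{l2} shows that $\cw|_{\{0\}\times G}$ is in fact weakly mixing. As $\cs^{(2)}$ is an ergodic finite measure-preserving $G$-action, Lemma~\ref{l4} then yields that the product $G$-action $\cw':=\cs^{(2)}\times(\cw|_{\{0\}\times G})$ on $(Y_2\times\widetilde C,\nu_2\ot\widetilde\kappa)$ is ergodic.

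Next, fix $\rho$ in the left-hand set and identify the ambient space $Y_1\times Y_2\times\widetilde C$ with $(Y_2\times\widetilde C)\times Y_1$. Under this identification the $\{0\}\times G$-subaction of $(\cs^{(1)}\ot\cs^{(2)})\times\cw$ equals $\cw'\times\mathrm{Id}_{Y_1}$, because $S^{(1)}_0=\mathrm{Id}$. I would then check the hypotheses of Lemma~\ref{l5} with base $(Y_2\times\widetilde C,\cc_2\ot\widetilde{\cj},\nu_2\ot\widetilde\kappa,\cw')$ and fibre $Y_1$: (i) $\rho$ is quasi-invariant under the full $G\times G$-action, hence under $\cw'\times\mathrm{Id}$; (ii) the marginal condition $\rho|_{\cc_2\ot\widetilde{\cj}}=\nu_2\ot\widetilde\kappa$ says precisely that $\rho$ projects to $\nu_2\ot\widetilde\kappa$ on the base; (iii) the extension $(Y_1\times Y_2\times\widetilde C,\rho)\to(Y_2\times\widetilde C,\nu_2\ot\widetilde\kappa)$ is rfmp for $\cw'\times\mathrm{Id}$.

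The only nontrivial point is (iii), and this is where I expect the main obstacle to lie, since the base of the extension changes from $\widetilde C$ to the larger space $Y_2\times\widetilde C$. Here the rfmp property built into membership in $\cp\big((\cs^{(1)}\ot\cs^{(2)})\times\cw,\widetilde{\cj},\widetilde\kappa\big)$ is decisive: by definition the extension over $\widetilde C$ is rfmp, so $d\rho\circ\big((\cs^{(1)}\ot\cs^{(2)})\times\cw\big)_{(g_1,g_2)}/d\rho$ is $\widetilde{\cj}$-measurable for every $(g_1,g_2)$. Specializing to $(0,g_2)$ shows that the Radon-Nikodym cocycle of $\cw'\times\mathrm{Id}$ is $\widetilde{\cj}$-measurable, hence a fortiori $\cc_2\ot\widetilde{\cj}$-measurable; this is exactly the rfmp condition over $Y_2\times\widetilde C$ (the saving grace being that the relevant derivatives do not involve the $Y_2$-coordinate). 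Lemma~\ref{l5} then gives $\rho=(\nu_2\ot\widetilde\kappa)\ot\lambda$ for some probability measure $\lambda$ on $Y_1$. Finally, the remaining constraint $\rho|_{\cc_1}=\nu_1$ forces $\lambda=\nu_1$, so $\rho=\nu_1\ot\nu_2\ot\widetilde\kappa$, as claimed.
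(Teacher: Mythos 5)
Your proposal is correct and follows essentially the same route as the paper's own proof: Lemma~\ref{l2} to upgrade $\cw|_{\{0\}\times G}$ to weakly mixing, Lemma~\ref{l4} to get ergodicity of $\cs^{(2)}\times(\cw|_{\{0\}\times G})$, the rfmp condition from membership in $\cp$ restricted to the subgroup $\{0\}\times G$ (the paper invokes Remark~\ref{E1} where you specialize directly, and you make explicit the implicit step that $\widetilde{\cj}$-measurability of the Radon--Nikodym derivatives gives $\cc_2\ot\widetilde{\cj}$-measurability), then Lemma~\ref{l5} and the marginal condition $\rho|_{\cc_1}=\nu_1$ to conclude.
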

\begin{proof}
Take $\rho\in\cp((\cs^{(1)}\ot \cs^{(2)})\times \cw,\widetilde{\cal J},\widetilde{\kappa})$ such that $\rho|_{\cc_2\ot\widetilde{\cal J}}=\nu_2\ot\widetilde{\kappa},\;\rho|_{\cc_1}=\nu_1$. By Lemma~\ref{l2}, the subaction $\cw|_{\{0\}\times G}$ is weakly mixing. Therefore, since $\cs^{(2)}$ is ergodic, it follows by Lemma~\ref{l4} that the (non-singular) $G$-action $\cs^{(2)}\times \left(\cw|_{\{0\}\times G}\right)$ on $(Y_2\times\widetilde{C},\nu_2\otimes\widetilde{\kappa})=(Y_2\times\widetilde{C},\rho|_{\cc_2\ot\widetilde{\cal J}})$ is ergodic. In view of Remark~\ref{E1}, the $G$-extension
$(Y_1\times Y_2\times\widetilde{C},\rho, Id\ot \cs^{(2)}\times \left(\cw|_{\{0\}\times G}\right))\to (\widetilde{C},\widetilde{\kappa},W|_{\{0\}\times G})$ is rfmp. By Lemma~\ref{l5} (applied to $\cw':=(\cs^{(2)}\times \left(\cw|_{\{0\}\times G}\right),\rho|_{\cc_2\ot\widetilde{\cal J}})$ and $\gamma=\rho$), we obtain that
$\rho=\nu'\ot (\nu_2\ot\widetilde{\kappa})$. The result follows now from the equality $\rho|_{\cc}=\nu_1$.
\end{proof}

Assume that $T\in{\rm Aut}\xbm$ is  ergodic. Let $\theta\colon X\to J$ be a cocycle with values in an lcsc Abelian group $J$.
Let $\cw(\theta)=(W(\theta)_j)_{j\in J}$  denote the associated Mackey $J$-action on the space $(C_\theta,{\cal J}_\theta,\kappa_\theta)$ of ergodic components of $T_\theta$.

Let ${\cal R}=(R_j)_{j\in J}\subset{\rm Aut}\zdr$ be an ergodic $J$-action.
We will need the following.
\begin{Lemma}[Prop.\ 6.1, Remark 6.2 \cite{Da-Le}, Prop.\ 2.1 \cite{Le-Pa1}] \label{dlp}
 There exists an affine isomorphism
 $A\colon \cp(T_{\theta,{\cal R}},\cb,\mu)\to \cp(\cw(\theta)\times {\cal R}, {\cal J}_\theta,\kappa_\theta)$ such that whenever ${\cal E}\subset\cd$ is an ${\cal R}$-invariant sub-$\sigma$-algebra, $\eta\in \cp(T_{\theta,{\cal R}},\cb,\mu)$ satisfies $\eta|_{\cb\ot{\cal E}}=\mu\ot\nu_1$, where $\nu_1$ is ${\cal R}$-invariant, then
 \beq\label{e3}
 A(\eta)|_{{\cal J}_\theta\ot{\cal E}}=\kappa_\theta\ot\nu_1.\eeq
\end{Lemma}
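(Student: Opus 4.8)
The plan is to realize both simplices of measures as quotients of a single auxiliary system and to transport measures between the two quotients. First I would form the space $X\times J\times Z$ and equip it with two commuting actions: the $\Z$-action $\til T:=T_\theta\times{\rm Id}$ and the $J$-action $V:=(\tau_j\times R_{-j})_{j\in J}$, where $\tau$ is the natural translation action on $X\times J$ from~\eqref{act1}. The map $\pi(x,j,z)=(x,R_jz)$ then satisfies $\pi\circ\til T=T_{\theta,{\cal R}}\circ\pi$ (using $R_{a+b}=R_aR_b$) and is constant on $V$-orbits, so it identifies the $V$-quotient of $X\times J\times Z$ with $(X\times Z,T_{\theta,{\cal R}})$; dually, taking $\til T$-ergodic components affects only the $X\times J$ coordinate and identifies the $\til T$-quotient with $C_\theta\times Z$, the residual $V$-action descending (after the canonical reparametrization $j\mapsto -j$ of $J$) to $\cw(\theta)\times{\cal R}$. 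In concrete coordinates, writing $w=R_jz$, the bijection $(x,j,z)\mapsto((x,w),j)$ turns $X\times J\times Z$ into $(X\times Z)\times J$, where $V$ is translation in $J$ and $\til T$ is the $\theta$-skew product over $T_{\theta,{\cal R}}$; this makes the two quotients transparent.

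With this picture, $A$ is the ``lift-and-project'' map. Given $\eta\in\cp(T_{\theta,{\cal R}},\cb,\mu)$, I would lift it to the unique $V$-invariant measure $m$ on $X\times J\times Z$ that is rfmp over its base and projects to $\eta$ under $\pi$, inserting the $J$-coordinate through a fixed probability $\la$ equivalent to Haar measure $\la_J$; then $A(\eta)$ is the push-forward of $m$ to the $\til T$-quotient $C_\theta\times Z$. Affinity of $A$ is immediate since both the lift and the push-forward are affine, and the inverse is constructed symmetrically by lifting along the $\til T$-quotient and projecting along $\pi$. That the two lifts are mutually inverse, and that the rfmp-over-base property is preserved in both directions, is exactly the correspondence established in \cite{Da-Le} (Prop.\ 6.1, Remark 6.2) and \cite{Le-Pa1} (Prop.\ 2.1); the analytic engine is that a quasi-invariant measure which is rfmp over an ergodic base is determined by its conditional measures along the base (Remark~\ref{u:3}, Lemma~\ref{l5}), so the correspondence is a genuine bijection of simplices.

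To obtain~\eqref{e3} I would track an ${\cal R}$-invariant sub-$\sigma$-algebra ${\cal E}\subset\cd$ through this transport. The key structural observation is that the ${\cal E}$-factor lives entirely in the $Z$-coordinate, which is untouched when passing to $\til T$-ergodic components, while the $V$-action moves $Z$ only through $R_{-j}$ and hence leaves ${\cal E}$ invariant; thus the ${\cal E}$-factor is carried identically from $\eta$ to $A(\eta)$. The hypothesis $\eta|_{\cb\ot{\cal E}}=\mu\ot\nu_1$ with $\nu_1$ being ${\cal R}$-invariant expresses independence of the base coordinate and the ${\cal E}$-factor; since the $J$-coordinate is inserted independently through $\la$, this independence survives in $m$, and upon passing to the $\til T$-quotient it becomes independence of $\cj_\theta$ and ${\cal E}$ with the respective marginals $\kappa_\theta$ and $\nu_1$, which is precisely $A(\eta)|_{\cj_\theta\ot{\cal E}}=\kappa_\theta\ot\nu_1$.

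The main obstacle is the last, purely measure-theoretic point: making the lift/quotient correspondence rigorous in the non-singular category. One must construct the two liftings in the quasi-invariant (rather than measure-preserving) setting, verify that they are mutually inverse, and show that rfmp over the base on the $X$-side corresponds to rfmp over the base on the $C_\theta$-side. This requires careful control of the Radon--Nikodym cocycles along $\til T$ and $V$, and it is exactly the content supplied by \cite{Da-Le} and \cite{Le-Pa1}; the compatibility~\eqref{e3} and the affine-isomorphism claim then follow by the bookkeeping sketched above.
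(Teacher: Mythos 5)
Your proposal is correct and is, in substance, the same argument that stands behind the paper's statement: the paper itself gives no proof of Lemma~\ref{dlp}, importing it verbatim from \cite{Da-Le} (Prop.~6.1, Remark~6.2) and \cite{Le-Pa1} (Prop.~2.1), and those references establish it by exactly your construction --- the auxiliary system $X\times J\times Z$ with the commuting actions $T_\theta\times\mathrm{Id}$ and $(\tau_j\times R_{-j})_{j\in J}$, the identification of the two quotients via the coordinate change $(x,j,z)\mapsto((x,R_jz),j)$, the lift-and-project definition of $A$, and the verification of~\eqref{e3} using the $\mathcal{R}$-invariance of both the $\sigma$-algebra $\mathcal{E}$ and the measure $\nu_1$. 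Since you defer the only genuinely delicate point --- that lift-and-project is a well-defined affine bijection between the two simplices of rfmp quasi-invariant measures, with Radon--Nikodym cocycles under control --- to precisely the results the paper cites, your write-up is at the same level of completeness as the paper's own treatment; the residual parametrization issue you flag (the descended action is $(W(\theta)_j\times R_{-j})_{j\in J}$ rather than $(W(\theta)_j\times R_j)_{j\in J}$) is immaterial, because quasi-invariance, the restriction to $\mathcal{J}_\theta\otimes\mathcal{E}$, and the rfmp property depend only on the family of transformations and not on how it is indexed by $J$.
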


\begin{Remark}\label{lele} If $\theta$ above is ergodic then $\mu\ot\rho$ is the only $T_{\theta, {\cal R}}$-invariant measure whose projections on $X$ and $Z$ are $\mu$ and $\rho$, respectively \cite{Le-Le}.\end{Remark}

For $\va,\psi\colon X\to G$, we define $\va\times\psi\colon X\to G\times G$ by
$$
(\va\times \psi)(x):=(\va(x),\psi(x))\text{ for } x\in X.
$$

\begin{Lemma}\label{l3}
Let $\va,\psi\colon X\to G$ be ergodic cocycles. Then:
\begin{enumerate}[(i)]
\item
The $G$-subactions $\cw(\va\times\psi)|_{\{0\}\times G}$ and $\cw(\va\times\psi)|_{G\times\{0\}}$ are ergodic.\footnote{In what follows, see Lemma~\ref{l8} and the proof of Theorem~\ref{jm} below, we will consider the situation in which $T$ is replaced by $(T^r\times T^s,\eta)$ (with $\eta\in J^e(T^r,T^s)$) and then we will consider the group extension $(T^r\times T^s)_{\va^{(r)}\times\va^{(s)}}$. Note that then the assumption~\eqref{E4} will imply the validity of (i) with $T$ replaced by $(T^r\times T^s,\eta)$.}
\item
$\cw(\va\times\psi)$ is weakly mixing if and only if the only character $(\chi,\eta)\in \widehat{G}\times\widehat{G}$ for which there exists
a measurable $\xi\colon X\to\bs^1$ satisfying
$$
\chi(\va(x))\eta(\psi(x))=\xi(Tx)/\xi(x)
$$
is the trivial character ($\chi=\eta=1$).
\item If $\cw(\va\times\psi)$ is weakly mixing, so are the subactions $\cw(\va\times\psi)|_{\{0\}\times G}$ and $\cw(\va\times\psi)|_{G\times\{0\}}$.
\end{enumerate}
\end{Lemma}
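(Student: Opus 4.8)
The plan is to handle the three assertions in the order (ii), (i), (iii), since (ii) is essentially a restatement of the eigenvalue formula \eqref{warMa}, (i) is the only step requiring a genuine argument, and (iii) then follows formally from (i) together with Lemma~\ref{l2}.

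For (ii) I would apply the description \eqref{warMa} of the group of $L^\infty$-eigenvalues of a Mackey action to the single cocycle $\va\times\psi\colon X\to G\times G$ (note that \eqref{warMa} is valid for an arbitrary cocycle, so ergodicity of $\va\times\psi$ is not needed). Under the identification $\widehat{G\times G}\cong\widehat{G}\times\widehat{G}$ given by $(\chi,\eta)(g_1,g_2)=\chi(g_1)\eta(g_2)$, the composition of a character $(\chi,\eta)$ with $\va\times\psi$ is precisely $x\mapsto\chi(\va(x))\eta(\psi(x))$. Hence $e(\cw(\va\times\psi))=\Lambda(\va\times\psi)$ consists exactly of those $(\chi,\eta)$ for which $\chi(\va(\cdot))\eta(\psi(\cdot))=\xi\circ T/\xi$ for some measurable $\xi\colon X\to\bs^1$, and $\cw(\va\times\psi)$ is weakly mixing iff this group reduces to $\{(\raz,\raz)\}$. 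This is the stated equivalence.

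The heart of the lemma is (i), which I would prove directly on the group extension rather than through essential values. By symmetry it suffices to treat $\cw(\va\times\psi)|_{\{0\}\times G}$. Recall that $L^\infty$ of the space of ergodic components of $T_{\va\times\psi}$ is canonically the algebra of bounded $T_{\va\times\psi}$-invariant functions on $X\times G\times G$, and that the subaction by $\{0\}\times G$ is induced by the translations $\tau_{(0,g_2)}$, which commute with $T_{\va\times\psi}$ by \eqref{act2}. Thus a function fixed by $\cw(\va\times\psi)|_{\{0\}\times G}$ lifts to a bounded $F$ on $X\times G\times G$ satisfying $F\circ T_{\va\times\psi}=F$ and $F\circ\tau_{(0,g_2)}=F$ for every $g_2\in G$. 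Since a function invariant under all translations in the last coordinate is (a.e.) independent of that coordinate, we may write $F(x,g_1,g_2)=\widetilde{F}(x,g_1)$; the invariance under $T_{\va\times\psi}$ then collapses to $\widetilde{F}\circ T_\va=\widetilde{F}$. Because $\va$ is ergodic, $\widetilde{F}$ is constant, so $\cw(\va\times\psi)|_{\{0\}\times G}$ is ergodic; the same argument with the roles of the coordinates exchanged, using the ergodicity of $\psi$, gives ergodicity of $\cw(\va\times\psi)|_{G\times\{0\}}$.

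Finally, (iii) is immediate: assuming $\cw(\va\times\psi)$ weakly mixing, part (i) provides that the two subactions $\cw(\va\times\psi)|_{\{0\}\times G}$ and $\cw(\va\times\psi)|_{G\times\{0\}}$ are ergodic, so Lemma~\ref{l2} (applied to the closed subgroups $\{0\}\times G$ and $G\times\{0\}$ of $G\times G$) shows both are weakly mixing. The only non-formal point in the whole argument is the Fubini step in (i) --- justifying that a jointly measurable function invariant under every translation of a single group coordinate drops that coordinate, and that the invariance under $T_{\va\times\psi}$ of the reduced function is literally $T_\va$-invariance --- but this is routine once one keeps track of the identification of invariant functions with functions on the component space.
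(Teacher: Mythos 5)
Your proposal is correct and follows essentially the same route as the paper: part (ii) is read off from \eqref{warMa} applied to the cocycle $\va\times\psi$, part (i) is proved by identifying functions invariant under the subaction with $T_{\va\times\psi}$-invariant functions on $X\times G\times G$ that are independent of the corresponding group coordinate (so that invariance collapses to $T_\va$- resp.\ $T_\psi$-invariance and ergodicity of the single cocycle finishes), and part (iii) follows from (i) together with Lemma~\ref{l2}. The only differences are cosmetic: the order of the parts and the explicit spelling-out of the Fubini/identification step, which the paper leaves implicit with a citation to \cite{Da-Le}.
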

\begin{proof}
\begin{enumerate}[(i)]
\item (see \cite{Da-Le}) If $F\in L^\infty(C_{\va\times\psi},\kappa_{\va\times\psi})$ satisfies $F\circ T_{\va\times\psi}=F$ and $F(x,g_1,g_2+g)=F(x,g_1,g_2)$ for each $g\in G$ then $F=F(x,g_1)$ and $F$ is constant by the ergodicity of $T_{\va}$.
\item Follows directly from~\eqref{warMa} applied to $\theta=\va\times \psi$.
\item This claim follows from (i) and Lemma~\ref{l2}.
\end{enumerate}
\end{proof}

\begin{Lemma} \label{l8}
Assume that $T,T'\in{\rm Aut}\xbm$ are  ergodic. Fix $\eta\in J^e(T,T')$. Let $\va\colon X\to G$ ($\va'\colon X\to G$) be an ergodic cocycle over $T$ (over $T'$) such that $(T\times T',\eta)_\va$~\footnote{$\va$ is treated as if it was defined on $(X\times X,\eta)$: $\va(x,y)=\va(x)$.} and $(T\times T',\eta)_{\va'}$ are also ergodic. Assume, moreover, that the Mackey action $\cw(\va\times\va')=\cw(\va\times\va',T\times T',\eta)$ associated to $\va\times\va'\colon X\times X\to G\times G$ is weakly mixing. Finally, let $\cs=(S_g)_{g\in G}\subset{\rm Aut}\ycn$ be an ergodic $G$-action. Then the  set
$$
J(T_{\va,\cs}, T'_{\va',\cs};\eta):=\{\eta'\in J(T_{\va,\cs},T'_{\va',\cs}) : \eta'|_{\cb\ot\cb}=\eta\}
$$
is a singleton (consisting of the relatively independent extension of $\eta$).
\end{Lemma}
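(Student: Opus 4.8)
The plan is to recognize the product automorphism $T_{\va,\cs}\times T'_{\va',\cs}$ as a single Rokhlin extension of the base joining, so that the abstract uniqueness machinery (Lemmas~\ref{dlp}, \ref{l6}, \ref{l3}) applies. Write $R:=T\times T'$ on $(X\times X,\eta)$, treat $\va,\va'$ as cocycles on $(X\times X,\eta)$ depending on the first, resp.\ second, coordinate, and let $\cs\ot\cs=(S_g\times S_{g'})_{(g,g')\in G\times G}$ act on $(Y\times Y,\nu\ot\nu)$. Reordering coordinates identifies $T_{\va,\cs}\times T'_{\va',\cs}$ with the Rokhlin extension $R_{\va\times\va',\cs\ot\cs}$ on $(X\times X)\times(Y\times Y)$, since both send $(x,x',y,y')$ to $(Tx,T'x',S_{\va(x)}y,S_{\va'(x')}y')$. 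Under this identification an element $\eta'\in J(T_{\va,\cs},T'_{\va',\cs};\eta)$ is exactly an $R_{\va\times\va',\cs\ot\cs}$-invariant probability measure with $\eta'|_{\cb\ot\cb}=\eta$; being measure preserving over a measure-preserving base the extension is automatically rfmp, so $\eta'$ lies in the domain of the affine isomorphism $A$ of Lemma~\ref{dlp}, applied with base $(R,\eta)$, cocycle $\theta=\va\times\va'$ and action ${\cal R}=\cs\ot\cs$. It then suffices to prove that $A(\eta')$ is a single measure independent of $\eta'$, because $A$ is a bijection and the relatively independent extension always belongs to the set.

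Next I would pin down the partial marginal structure of $\eta'$ needed to feed Lemma~\ref{dlp}'s compatibility property \eqref{e3}. Forgetting the first fibre coordinate $y$, the image of $\eta'$ on $(x,x',y')$ is invariant under $(x,x',y')\mapsto(Tx,T'x',S_{\va'(x')}y')$, i.e.\ under $R_{\va',\cs}$, and its projections to $X\times X$ and to $Y$ are $\eta$ and $\nu$. Since $\va'$ is ergodic over $(R,\eta)$ by hypothesis, Remark~\ref{lele} forces this image to be $\eta\ot\nu$, that is $\eta'|_{(\cb\ot\cb)\ot\cc_2}=\eta\ot\nu$. Symmetrically, forgetting $y'$ and using ergodicity of $\va$ over $(R,\eta)$ gives $\eta'|_{(\cb\ot\cb)\ot\cc_1}=\eta\ot\nu$. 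Applying \eqref{e3} to the $\cs\ot\cs$-invariant sub-$\sigma$-algebras $\cc_2$ and $\cc_1$ then yields $A(\eta')|_{{\cal J}_{\va\times\va'}\ot\cc_2}=\kappa_{\va\times\va'}\ot\nu$ and, after a further projection, $A(\eta')|_{\cc_1}=\nu$.

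These are precisely the two marginal hypotheses of Lemma~\ref{l6}, applied with $\cs^{(1)}=\cs^{(2)}=\cs$ and $\cw=\cw(\va\times\va')$ on $(C_{\va\times\va'},{\cal J}_{\va\times\va'},\kappa_{\va\times\va'})$: the Mackey action $\cw(\va\times\va')$ is weakly mixing by assumption, and its subaction $\cw(\va\times\va')|_{\{0\}\times G}$ is ergodic by Lemma~\ref{l3}(i), whose hypotheses are exactly the assumed ergodicity of $\va,\va'$ over $(R,\eta)$ (cf.\ the footnote there). Lemma~\ref{l6} then gives $A(\eta')=\kappa_{\va\times\va'}\ot\nu\ot\nu$, a single measure independent of the chosen $\eta'$. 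By injectivity of $A$ all elements of $J(T_{\va,\cs},T'_{\va',\cs};\eta)$ coincide, and since the relatively independent extension $\widehat{\eta}=\eta\ot\nu\ot\nu$ is $R_{\va\times\va',\cs\ot\cs}$-invariant with base $\eta$, it is the unique element.

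\textbf{Main obstacle.} The conceptual weight rests entirely on Lemmas~\ref{dlp}, \ref{l6} and \ref{l3}; the delicate point is the bookkeeping around \eqref{e3}. One only has the \emph{one-sided} product structure $\eta'|_{(\cb\ot\cb)\ot\cc_i}=\eta\ot\nu$ on each single fibre, not on the whole fibre $\cc_1\ot\cc_2$, which generally fails to factor — and this is exactly what \eqref{e3} converts into the (asymmetric) pair of hypotheses of Lemma~\ref{l6}. Securing this one-sided structure requires correctly recognizing the two forgetful projections of $\eta'$ as Rokhlin extensions by the single ergodic cocycles $\va$ and $\va'$ and invoking Remark~\ref{lele}; it is the weak mixing of $\cw(\va\times\va')$ that then bridges these one-sided facts to full independence of $(y,y')$ over the base.
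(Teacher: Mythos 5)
Your proposal is correct and follows essentially the same route as the paper's own proof: both identify $T_{\va,\cs}\times T'_{\va',\cs}$ with the Rokhlin extension $(T\times T')_{\va\times\va',\cs\ot\cs}$ over $(X\times X,\eta)$, use Remark~\ref{lele} (via the assumed ergodicity of $(T\times T',\eta)_\va$ and $(T\times T',\eta)_{\va'}$) to pin down the one-sided marginals $\eta\ot\nu$, transport these through the affine isomorphism $A$ of Lemma~\ref{dlp} using~\eqref{e3}, and conclude with Lemma~\ref{l6}, whose hypotheses are verified via Lemma~\ref{l3}(i) and the assumed weak mixing of $\cw(\va\times\va')$. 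Your write-up merely makes explicit the steps the paper compresses into ``can be identified'' and ``it is not hard to see,'' including the careful matching of the asymmetric marginal hypotheses of Lemma~\ref{l6}.
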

\begin{proof}
Notice first that $J(T_{\va,\cs}, T'_{\va',\cs};\eta)$ is a subset  of $\cp((T\times T')_{\va\times \va',\cs\ot \cs},\cb\ot\cb,\eta)$.  Note that the coordinate $\sigma$-algebras in $(Y\times Y,\cc\ot\cc)$ (which are $\cs\ot \cs$-invariant) yield two $(T\times T')_{\va\times \va',\cs\ot \cs}$-invariant  $\sigma$-algebras $\cb\ot\cb\ot{\cal E}_i$, $i=1,2$. More precisely, $J(T_{\va,\cs}, T'_{\va',\cs};\eta)$ can be identified (cf.\ Remark~\ref{lele}) with
\begin{equation}\label{P1}
\mathcal{P}_1:=\{\eta'\in \cp((T\times T')_{\va\times \va',\cs\ot \cs},\cb\ot\cb,\eta): \eta'|_{\cb\ot\cb\ot{\cal E}_i}=\eta\ot\nu, i=1,2\}
\end{equation}
and it is enough to show that $\cp_1$ is a singleton. But, in view of Lemma~\ref{dlp}, the affine isomorphism
$$
A\colon\cp(T_{\va\times \va',\cs\ot \cs},\cb\ot\cb,\eta)\to \cp(\cw(\va\times\va')\times (\cs\otimes \cs) ,{\cal J}_{\va\times\va'},\kappa_{\va\times\va'})
$$
satisfies $A(\eta')|_{{\cal J}_{\va\times\va'}\ot{\cal E}_i}=\kappa_{\va\times\va'}\ot \nu$, $i=1,2$, provided that $\eta'\in\cp_1$. Moreover, it is not hard to see that the assumptions of Lemma~\ref{l6} are fulfilled (cf.\ Lemma~\ref{l3} (i)). The result follows.
\end{proof}

\subsection{Proof of Theorem~\ref{jm}}
We want to prove the AOP property of $\tfs$ and to obtain it, we need to show that (see~\eqref{aop2016})
$$
\limsup_{r\neq s,r,s\in\mathscr{P},r,s\to\infty} \sup_{\kappa\in J^e((\tfs)^r,(\tfs)^s)}\left|\int_{X\times Y\times X\times Y}f_1\ot g_1\ot f_2\ot g_2 \,d\kappa\right|=0$$
if $f_i\ot g_i\in L^2_0(X\times Y,\mu\ot\nu)$,  $i=1,2$. In fact, since $T$ is assumed to enjoy the AOP property, we can assume that
at least one of the functions $g_i$ is in $L^2_0\ycn$. But then the assumptions of Theorem~\ref{jm} and Lemma~\ref{l8} applied to $T^r$, $T^s$, $\va^{(r)}$ and $\va^{(s)}$   instead of $T$, $T'$, $\va$ and $\va'$ will tell us that the only members in $J^e((\tfs)^r,(\tfs)^s)$ are the relatively independent extensions of ergodic joinings in $J(T^r,T^s)$. The result follows from~\eqref{rieCALKA}. \qed

\subsection{A special case when $T$ is a totally ergodic rotation}\label{specialcase}
Let us consider now the special case of Theorem~\ref{jm} in which $T$ is a totally ergodic rotation. Then $T$ has the AOP property \cite{Ab-Le-Ru}.
With no loss of generality, we can assume that $X$ is a compact metric monothetic group, $Tx=x+\alpha$, where $\{n\alpha :  n\in \Z\}$ is dense in $X$. The measure $\mu$ is then $\la_X$ the Haar measure on $X$.

We first repeat the argument from \cite{Ku-Le} giving rise to the full description of ergodic joinings between $T^r$ and $T^s$ with $(r,s)=1$. For this aim, choose $a,b\in\Z$ so that $ar+bs=1$. Fix $u\in X$ and consider
$$A_u:=\{(x,y+u)\in X\times X :  sx=ry\}.$$
Then the map $V_u(x,y+u)=ax+by$ settles a topological isomorphism of the action of $T^r\times T^s$ on $A_u$ and of $T$ on $X$ with the latter action being uniquely ergodic. Hence, we described $J^e(T^r,T^s)$.
Moreover,
$$
X\times X \times G\times G= \bigcup_{u\in X} A_{u}\times G\times G
$$
is a decomposition of $X\times X \times G\times G$ into pairwise disjoint closed sets invariant under $(T^r\times T^s)_{\va^{(r)}\times \va^{(s)}}$. Moreover, $(T^r\times T^s)_{\va^{(r)}\times \va^{(s)}}|_{A_u\times G\times G}$ is topologically isomorphic to $T_{\va^{(r)}(r\cdot)\times \va^{(s)}(s\cdot+u)}$.
Indeed, the isomorphism is given by $J=J_u\colon A_u\times G\times G \to X\times G\times G$, $J(x,y+u,g,h)=(ax+by,g,h)$.
Finally notice that if $\eta\in J^e(T^r,T^s)$ then
$(T^r\times T^s,\eta)_{\va^{(r)}}$ has, via the new coordinates given by $J_u$, the form $T_{\va^{(r)}(r\cdot)}$. In view of Theorem~\ref{jm}, we hence obtain the following result.

\begin{Cor}\label{c:jm}
Assume that $T\in{\rm Aut}\xbm$ is a totally ergodic ergodic rotation on a compact metric monothetic group $X$. Let $G$ be an lcsc Abelian  group. Assume that
$\va\colon X\to G$ is a cocycle such that:
\begin{align}
&\va^{(r)}(r\,\cdot)\text{ is ergodic for each }r\in\cp;\label{E40}\\
&\text{the Mackey action }W(\va^{(r)}(r\,\cdot)\times \va^{(s)}(U(s\cdot))\text{ is weakly mixing}\label{E50}
\end{align}
for each $r\neq s$, $r,s\in\cp$ and arbitrary $U\in C(T)$.\footnote{We recall that the centralizer of an ergodic rotation on a compact metric group consists of all other rotations.} Let $\cs=(S_g)_{g\in G}\subset{\rm Aut}\ycn$ be an ergodic $G$-action. Then $\tfs$ has the AOP property.
\end{Cor}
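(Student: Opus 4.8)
The plan is to read \eqref{E40} and \eqref{E50} as nothing but the concrete shape that \eqref{E4} and \eqref{E5} take once one inserts the explicit description of $J^e(T^r,T^s)$ recalled just above, and then to quote Theorem~\ref{jm}. First I record that a totally ergodic rotation has the AOP property by \cite{Ab-Le-Ru}, so the standing assumption of Theorem~\ref{jm} is met; what remains is to check \eqref{E4} and \eqref{E5} for every ordered pair of distinct primes $r\ne s$ and every $\eta\in J^e(T^r,T^s)$.

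Fix $r\ne s$ in $\cp$ and $\eta\in J^e(T^r,T^s)$. Because $(r,s)=1$, the discussion preceding the statement shows that $\eta=\eta_u$ is the unique invariant (hence ergodic) measure carried by $A_u$ for some $u\in X$, and that $J_u(x,y+u,g,h)=(ax+by,g,h)$, where $ar+bs=1$, is an isomorphism carrying $(T^r\times T^s,\eta_u)_{\va^{(r)}\times\va^{(s)}}$ onto $T_{\va^{(r)}(r\cdot)\times\va^{(s)}(s\cdot+u)}$. Under $J_u$ the cocycle sitting on the first coordinate, $\va^{(r)}$, turns into $\va^{(r)}(r\cdot)$ (with no dependence on $u$, since the first coordinate is the free one), while the cocycle on the second coordinate, $\va^{(s)}$, turns into $\va^{(s)}(s\cdot+u)$. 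This dictionary is the whole point of the coordinate change, and both Mackey actions and ergodicity are invariants of it.

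With the dictionary at hand the verification is routine. The group extension $(T_\va)^r\times T^s$ over $(T^r\times T^s,\eta_u)$ is exactly the $G$-extension of $\eta_u$ by the first coordinate cocycle $\va^{(r)}$, so $J_u$ sends it to $T_{\va^{(r)}(r\cdot)}$, which is ergodic by \eqref{E40}; this is \eqref{E4}. Likewise $\cw(\va^{(r)}\times\va^{(s)},T^r\times T^s,\eta_u)$ is, through $J_u$, the Mackey action $\cw(\va^{(r)}(r\cdot)\times\va^{(s)}(s\cdot+u))$ over $T$. Taking $U\in C(T)$ to be the rotation $z\mapsto z+u$, which is legitimate because the centralizer of an ergodic monothetic rotation consists precisely of all rotations, we have $U(s\cdot)=s\cdot+u$, so the weak mixing of this action is exactly \eqref{E50}; this is \eqref{E5}. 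As $r\ne s$ and $\eta$ were arbitrary (and \eqref{E40}, \eqref{E50} are assumed for every ordered pair and every $U$), both \eqref{E4} and \eqref{E5} hold in full, and Theorem~\ref{jm} delivers the AOP property of $\tfs$.

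I expect no deep obstacle here: the content is the bookkeeping of how the two coordinate cocycles transform under $J_u$ and the identification of the translation parameter $u$ with a centralizer element $U$. The only place that invites a second look is the asymmetry between the two coordinates, namely that $\va^{(r)}$ loses the parameter $u$ while $\va^{(s)}$ keeps it; one must therefore make sure that running over all ordered pairs $\{r,s\}$ and all $U\in C(T)$ really recovers every instance of \eqref{E4} and \eqref{E5}. The ergodicity of the translated extensions $\va^{(s)}(s\cdot+u)$ that is demanded internally by Theorem~\ref{jm} is then automatically supplied: applying \eqref{E4} to the pair $(s,r)$ at the flipped joining and comparing the two coordinate descriptions of the same system forces $T_{\va^{(s)}(s\cdot+u)}$ to be isomorphic to $T_{\va^{(s)}(s\cdot)}$, so \eqref{E40} for the prime $s$ already handles it.
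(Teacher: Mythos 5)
Your proof is correct and follows essentially the same route as the paper: the paper's Subsection on the totally ergodic rotation case sets up exactly this dictionary (the decomposition of $J^e(T^r,T^s)$ into the uniquely ergodic pieces $A_u$, the isomorphism $J_u(x,y+u,g,h)=(ax+by,g,h)$ carrying $(T^r\times T^s,\eta_u)_{\va^{(r)}\times\va^{(s)}}$ to $T_{\va^{(r)}(r\cdot)\times\va^{(s)}(s\cdot+u)}$ and $(T^r\times T^s,\eta_u)_{\va^{(r)}}$ to $T_{\va^{(r)}(r\cdot)}$) and then invokes Theorem~\ref{jm}, just as you do. Your closing observation, that running over ordered pairs of primes and all $U\in C(T)$ supplies the ergodicity of the translated cocycles $\va^{(s)}(s\cdot+u)$ needed inside the proof of Theorem~\ref{jm}, is a sound and worthwhile clarification of a point the paper leaves implicit.
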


\section{Smooth cocycles over a generic irrational rotation}
\subsection{Smooth Anzai skew products having the AOP property}
In this section our aim will be to prove the following.
\begin{Prop}\label{wn:zalozenia}
 Assume that $f\in C^{1+\delta}(\T)$, $\int_{\T}f\,d\la_{\T}=0$, for some $\delta>0$ and it is not a trigonometric polynomial. Then, there exists a dense $G_\delta$ set of  $\alpha$ such that, for $Tx=x+\alpha$, we have
\begin{align}
&\mbox{$f^{(r)}{(r\cdot)}$ is ergodic for each $r\in\mathscr{P}$;}\label{eq:jo1}\\
& \text{W($f^{(r)}(r\cdot)\times f^{(s)}(s\cdot+h)$ is weakly mixing for $r<s$, $r,s\in\mathscr{P}$, $h\in\T$.}\label{eq:jo2}
\end{align}
 \end{Prop}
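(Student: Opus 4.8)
The plan is to route both conditions through the single family of real-valued cocycles
$$
\Phi_{\theta,h}(x):=\theta_1\,f^{(r)}(rx)+\theta_2\,f^{(s)}(sx+h),\qquad (\theta_1,\theta_2)\in\R^2\setminus\{0\},\ h\in\T,
$$
and to prove that for a suitable dense $G_\delta$ set of $\alpha$ \emph{every} such $\Phi_{\theta,h}$ is ergodic over $Tx=x+\alpha$, i.e. $E(\Phi_{\theta,h})=\R$ by Proposition~\ref{p:essv}. This suffices: taking $\theta=(1,0)$ gives~\eqref{eq:jo1}, while for~\eqref{eq:jo2} I note that ergodicity of $\Phi_{\theta,h}$ forces $E(e^{2\pi i\Phi_{\theta,h}})=\bs^1$, since for the character $\chi(t)=e^{2\pi it}$ of $\R$ one has $\ov{\chi(E(\Phi_{\theta,h}))}=\ov{\chi(\R)}=\bs^1\subset E(\chi\circ\Phi_{\theta,h})$ (the inclusion used in the proof of Lemma~\ref{l:wanih}). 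Hence $e^{2\pi i\Phi_{\theta,h}}$ is a coboundary for no $(\theta_1,\theta_2)\neq0$, which by Lemma~\ref{l3}(ii) is exactly weak mixing of $W(f^{(r)}(r\cdot)\times f^{(s)}(s\cdot+h))$ (the cases $\theta=(1,0),(0,1)$ also give ergodicity of the two marginals, as required to invoke Lemma~\ref{l3}). The decisive gain is that this reformulation replaces the uncountable collection of requirements (over $\theta$, $h$ and all prime pairs) by a single property of $\alpha$.

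To produce essential values I would exploit rigidity along the continued-fraction denominators $q_n$ of $\alpha$, where $T^{q_n}\to\mathrm{Id}$, together with Proposition~\ref{RigEssVal}. A direct telescoping, using $f^{(rq_n)}(y)=\sum_{l=0}^{r-1}f^{(q_n)}(y+l(q_n\alpha-p_n))$ and $\big(f^{(r)}(r\cdot)\big)^{(q_n)}(x)=f^{(rq_n)}(rx)$, yields
$$
\Phi_{\theta,h}^{(q_n)}(x)=\theta_1\,f^{(rq_n)}(rx)+\theta_2\,f^{(sq_n)}(sx+h).
$$
The heart of the matter is to choose $\alpha$ so that along $(q_n)$ the Birkhoff sum is dominated by a single Fourier mode: there are frequencies $k_n\to\infty$ with $\widehat f(k_n)\neq0$ (available because $f$ is not a trigonometric polynomial) and amplitudes $A_n\to A>0$ with $f^{(q_n)}(y)=A_n\cos(2\pi k_ny+\phi_n)+E_n(y)$, where $\|E_n\|_{L^2}\to0$ and $k_n\|q_n\alpha\|\to0$. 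Under this domination $f^{(rq_n)}(y)=rA_n\cos(2\pi k_ny+\phi_n)+\widetilde E_n(y)$ with $\|\widetilde E_n\|_{L^2}\to0$, so after passing to a subsequence realizing $A_n\to A$, $\phi_n\to\phi_\ast$ and $k_nh\to c_\ast\pmod1$, the distributions $(\Phi_{\theta,h}^{(q_n)})_\ast\la_\T$ converge weakly to the law of
$$
g_\ast(t)=\theta_1\,rA\cos(2\pi rt+\phi_\ast)+\theta_2\,sA\cos(2\pi st+\phi_\ast+2\pi c_\ast),\qquad t\sim\la_\T .
$$

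Because $r\neq s$ are coprime and $A\neq0$, the function $g_\ast$ is a genuine trigonometric polynomial whose frequency-$r$ and frequency-$s$ coefficients are nonzero exactly when $\theta_1\neq0$, respectively $\theta_2\neq0$; hence $g_\ast$ is non-constant for every $(\theta_1,\theta_2)\neq0$. Its law is therefore supported on the nondegenerate interval $[\min g_\ast,\max g_\ast]$, which contains $0$ in its interior (as $\int g_\ast\,d\la_\T=0$). By Proposition~\ref{RigEssVal} this interval lies in $E(\Phi_{\theta,h})$, and an interval generates $\R$ as a closed subgroup, so $E(\Phi_{\theta,h})=\R$. It is here, and only here, that primality and coprimality of $r,s$ are used, and the argument is manifestly uniform in $\theta$ and in $h$ (the limiting phase $c_\ast$ merely translates $g_\ast$). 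This is why the single-mode property of $\alpha$ simultaneously yields~\eqref{eq:jo1}--\eqref{eq:jo2} for all $r<s$ in $\mathscr P$ and all $h$.

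The main obstacle is the construction of $\alpha$ realizing the single-mode domination, and the verification that the admissible $\alpha$ form a dense $G_\delta$. I would enumerate $\{k:\widehat f(k)\neq0\}$ and build $\alpha$ by prescribing its partial quotients inductively: at step $n$ choose the next partial quotient so large that $q_{n+1}\gg k_n$ (forcing $k_n\|q_n\alpha\|\to0$), while tuning the approximation $\|k_n\alpha\|$ so that $c^{(n)}_{k_n}=\widehat f(k_n)\,\dfrac{e^{2\pi ik_nq_n\alpha}-1}{e^{2\pi ik_n\alpha}-1}$ stays bounded away from $0$ and the residual $L^2$-mass $\sum_{k\neq\pm k_n}|c^{(n)}_k|^2\to0$; the Denjoy--Koksma bound $\|f^{(q_n)}\|_\infty\le\Var(f)$ keeps the amplitudes bounded and the distributions tight. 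Since the finitely many constraints active at each stage are open and the denominators depend locally constantly on $\alpha$, the set of $\alpha$ for which domination holds along \emph{some} subsequence is a countable intersection of open sets; density follows because the inductive construction can be started from the partial quotients of any prescribed irrational. Intersecting with the residual set of irrationals then gives the required dense $G_\delta$. I expect the delicate estimate — controlling the residual mass $\sum_{k\neq\pm k_n}|c^{(n)}_k|^2$ uniformly while pinning one mode from below — to be the genuinely technical step.
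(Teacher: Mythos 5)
Your reduction of \eqref{eq:jo1} and \eqref{eq:jo2} to the single statement ``$E(\Phi_{\theta,h})=\R$ for all $(\theta_1,\theta_2)\neq(0,0)$'' is logically sound (the inclusion $\ov{\chi(E(\Phi))}\subset E(\chi\circ\Phi)$ together with Lemma~\ref{l3}(ii) does convert it into weak mixing of the Mackey action), though it targets something strictly stronger than what is needed: the paper only proves the non-coboundary property of the circle-valued cocycles $e^{2\pi i(Af^{(r)}(r\cdot)+Bf^{(s)}(s\cdot+h))}$ (Theorem~\ref{tw:slabemieszanie}, quoted from \cite{Ku-Le}) plus ergodicity of each $f^{(r)}(r\cdot)$ (Theorem~\ref{pr:katka1}). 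The fatal gap is the ``single-mode domination'' on which your entire construction rests: it is impossible, for \emph{every} irrational $\alpha$ and every zero-mean $f\in C^{1+\delta}(\T)$. Indeed, the $k$-th Fourier coefficient of $f^{(q_n)}$ is $\widehat{f}(k)D_n(k)$ with $D_n(k)=\frac{e^{2\pi ikq_n\alpha}-1}{e^{2\pi ik\alpha}-1}$, and your hypothesis $f^{(q_n)}=A_n\cos(2\pi k_n\cdot+\phi_n)+E_n$, $\|E_n\|_{L^2}\to0$, forces $|\widehat{f}(k_n)D_n(k_n)|\geq A/2+o(1)$. But $f\in C^{1+\delta}$ gives $|\widehat{f}(k)|\leq Ck^{-1-\delta}$, and one has: (a) for each fixed $k$, $|D_n(k)|\leq \pi\|kq_n\alpha\|/(2\|k\alpha\|)\to0$, since $\|kq_n\alpha\|\leq k\|q_n\alpha\|\to0$ while $\|k\alpha\|$ is a fixed positive number; (b) for $0<k<q_n$, the best-approximation property $\|k\alpha\|\geq\|q_{n-1}\alpha\|\geq 1/(2q_n)$ and $\|kq_n\alpha\|\leq k/q_{n+1}$ yield $|\widehat{f}(k)D_n(k)|\leq C\pi k^{-\delta}q_n/q_{n+1}\leq C\pi k^{-\delta}$; (c) for $k\geq q_n$, the trivial bound $|D_n(k)|\leq q_n$ yields $|\widehat{f}(k)D_n(k)|\leq Cq_n^{-\delta}$. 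Combining (a)--(c), $\sup_{k\neq0}|\widehat{f}(k)D_n(k)|\to0$ as $n\to\infty$; no Fourier mode of $f^{(q_n)}$ can retain an amplitude bounded away from zero, so necessarily $A=0$. Your proposed ``tuning of $\|k_n\alpha\|$'' cannot circumvent this: keeping the amplitude $\geq A$ forces $q_n|\widehat{f}(k_n)|\geq A$, hence $k_n=o(q_n)$, and then (a) or (b) applies. The same obstruction shows that the set of $\alpha$ you want to build is empty, not merely that the construction is delicate.

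This impossibility explains why the paper (following \cite{Aa-Le-Ma-Na} and \cite{Ku-Le}) never works along the rigidity times $q_n$ themselves, but along \emph{amplified} times $k_nq_{l_n}$: because $f$ is not a trigonometric polynomial, the periodizations satisfy $0<\|f_{q_n}\|_{L^2}\to0$, and integer multipliers $k_n$ are chosen so that $k_n\|\widetilde{f}_{l_n}\|_{L^2}$ sits in a prescribed window; one then exploits $f^{(k_nq_{l_n})}\approx k_nf^{(q_{l_n})}$. The dominant object $k_n\widetilde{f}_{l_n}$ is supported on \emph{all} multiples of $q_{l_n}$, not on a single frequency, so its limit law is never identified explicitly; instead Theorem~\ref{pr:katka1} controls it softly, via the $L^4$-versus-$L^2$ Lemma~\ref{twlm1} (producing a set of measure $\geq M$ on which $|k_n\widetilde{f}_{l_n}|$ lies in a fixed window, with constant sign along the $r$ translated blocks) and a contradiction with Proposition~\ref{p:disjess}, rather than via Proposition~\ref{RigEssVal} applied to an explicit limiting distribution. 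Condition \eqref{eq:jo2} is then obtained not from ergodicity of real linear combinations but from the weaker coboundary-free statement of Theorem~\ref{tw:slabemieszanie} combined with Lemma~\ref{l3}(ii). To salvage your scheme you would have to replace the single cosine by the periodization and redo the joint analysis of the sums under $x\mapsto rx$ and $x\mapsto sx+h$ --- which is essentially the content of \cite{Ku-Le} that the paper quotes.
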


As a consequence of Proposition~\ref{wn:zalozenia} and Corollary~\ref{c:jm}, we obtain the following.

\begin{Cor}\label{c:aopAnzai}
Under the assumptions of Proposition~\ref{wn:zalozenia}, for each ergodic flow $\cs=(S_t)_{t\in\R}\subset{\rm Aut}\ycn$, the automorphism $\tfs$ has the AOP property.
\end{Cor}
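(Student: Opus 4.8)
The plan is to obtain Corollary~\ref{c:aopAnzai} as a direct instance of Corollary~\ref{c:jm}, feeding in the two conclusions of Proposition~\ref{wn:zalozenia} as the hypotheses \eqref{E40} and \eqref{E50}. There is no fresh dynamical content: the work consists entirely of matching the data of the two statements, the only genuine point being the passage from the ordering $r<s$ in \eqref{eq:jo2} to the full range $r\neq s$ demanded by \eqref{E50}.

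First I would set up the identifications. Take $G=\R$, so that the ergodic flow $\cs=(S_t)_{t\in\R}$ is precisely an ergodic $G$-action in ${\rm Aut}\ycn$, as Corollary~\ref{c:jm} requires. Take $X=\T$, $Tx=x+\alpha$ with $\alpha$ chosen in the dense $G_\delta$ set furnished by Proposition~\ref{wn:zalozenia}; since $\alpha$ is irrational, each $T^k$ is again an irrational rotation and hence ergodic, so $T$ is a totally ergodic rotation of the compact metric monothetic group $\T$ (with $\mu=\la_\T$). Put $\va:=f\colon\T\to\R$. With these choices, condition \eqref{E40} (``$\va^{(r)}(r\cdot)$ ergodic for each prime $r$'') is verbatim \eqref{eq:jo1}.

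It remains to deduce \eqref{E50} from \eqref{eq:jo2}. Using the footnoted fact that the centralizer $C(T)$ of an ergodic rotation consists of all rotations, every $U\in C(T)$ has the form $Uy=y+h$, so $\va^{(s)}(U(s\cdot))$ evaluated at $x$ equals $f^{(s)}(sx+h)$, and \eqref{E50} for the ordered pair $(r,s)$ becomes the weak mixing of $W(f^{(r)}(r\cdot)\times f^{(s)}(s\cdot+h))$. For $r<s$ this is exactly \eqref{eq:jo2}. The pairs with $r>s$ are where the one substantive step lies, and I would handle them in two moves. By Lemma~\ref{l3}(ii) the characterization of weak mixing of $W(\phi\times\psi)$ is symmetric under interchanging the two coordinates, so $W(f^{(r)}(r\cdot)\times f^{(s)}(s\cdot+h))$ is weakly mixing iff $W(f^{(s)}(s\cdot+h)\times f^{(r)}(r\cdot))$ is. Next, conjugating the defining cocycle by the rotation $V_c\in C(T)$, $V_cx=x+c$ with $c=-h/s\in\T$, yields an isomorphic (hence simultaneously weakly mixing) Mackey action and transports the pair to $W(f^{(s)}(s\cdot)\times f^{(r)}(r\cdot+h'))$ with $h'=-rh/s$; since $s<r$ and $h'$ ranges over all of $\T$ as $h$ does, this is again covered by \eqref{eq:jo2}. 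Thus \eqref{E50} holds for all ordered pairs $r\neq s$, and Corollary~\ref{c:jm} applied to $T$, $G=\R$, $\va=f$ and the given flow $\cs$ yields the AOP property of $\tfs$. The only step requiring care---and the nearest thing to an obstacle---is this reduction of the $r>s$ case, where one must combine the coordinate-swap symmetry of weak mixing with the freedom to shift the cocycle by an element of $C(T)$.
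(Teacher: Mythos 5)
Your proposal is correct and takes essentially the same route as the paper, whose entire proof is to combine Proposition~\ref{wn:zalozenia} with Corollary~\ref{c:jm} (taking $G=\R$, $\va=f$, and $\alpha$ in the dense $G_\delta$ set). Your explicit reduction of the case $r>s$ in~\eqref{E50} to the case $r<s$ of~\eqref{eq:jo2} --- via the coordinate-swap symmetry of Lemma~\ref{l3}~(ii) together with conjugation by a rotation in $C(T)$ --- correctly fills in a bookkeeping detail that the paper leaves implicit.
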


Combining Corollary~\ref{c:aopAnzai}  with Proposition~\ref{p:rwm}, we obtain the following.

\begin{Cor}\label{c:rwmAOP}
There are relatively weakly mixing extensions $\tfs$ of an irrational rotation $T$ which have the AOP property and are disjoint from all weakly mixing transformations.\end{Cor}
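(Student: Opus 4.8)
The plan is to read off Corollary~\ref{c:rwmAOP} as an assembly of Corollary~\ref{c:aopAnzai} and Proposition~\ref{p:rwm}, supplying only the right choice of fibre dynamics $\cs$ and verifying the hypotheses of the ``Moreover'' part of Proposition~\ref{p:rwm}. First I would fix the data of Proposition~\ref{wn:zalozenia}: take $f\in C^{1+\delta}(\T)$ with $\int_\T f\,d\la_\T=0$, not a trigonometric polynomial, and pick $\alpha$ from the dense $G_\delta$ set it provides, so that with $Tx=x+\alpha$ and $\va:=f$ both \eqref{eq:jo1} and \eqref{eq:jo2} hold. Then I would choose $\cs=(S_t)_{t\in\R}$ to be an ergodic flow that is \emph{mixing} (e.g.\ a mixing Gaussian flow on a probability standard Borel space); mixing gives at once weak mixing and mild mixing, which are exactly the two fibre hypotheses needed below.

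With these choices Corollary~\ref{c:aopAnzai} applies verbatim and yields that $\tfs$ has the AOP property. Since the AOP property implies total ergodicity (Subsection~\ref{SectAOP}), in particular $\tfs$ is ergodic. The relative weak mixing is then immediate from the first assertion of Proposition~\ref{p:rwm}: $\cs$ is weakly mixing and $\tfs$ is ergodic, hence $\tfs$ is relatively weakly mixing over $T$.

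For disjointness from all weakly mixing automorphisms I would invoke the ``Moreover'' clause of Proposition~\ref{p:rwm}, whose three hypotheses I check as follows. First, $T$ is an ergodic rotation, hence has discrete spectrum and is therefore disjoint from every weakly mixing automorphism \cite{Fu}. Second, $\cs$ is mildly mixing by the choice of a mixing flow. Third, $\va=f$ must be ergodic, and this is the one point that deserves an argument. If $\tf$ were not ergodic, then $E(f)$ would be a proper closed subgroup of $\R$, i.e.\ either $\{0\}$ or $c\Z$ for some $c>0$; in both cases $f$ is regular (a coboundary, respectively a cocycle with cocompact group of essential values), so $f=\psi+g\circ T-g$ with $\psi$ taking values in $E(f)\subseteq c\Z$. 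Using the factor map $U_r\colon x\mapsto rx$, which satisfies $U_r\circ T=T^r\circ U_r$ and preserves $\la_\T$, one gets $f^{(r)}(r\,\cdot)=\psi^{(r)}\circ U_r+H\circ T-H$ for a suitable $H$, where $\psi^{(r)}\circ U_r$ still takes values in $c\Z$. Since essential values are cohomology invariant and a cocycle valued in the closed subgroup $c\Z$ has $E\subseteq c\Z\subsetneq\R$, this would force $E(f^{(r)}(r\,\cdot))\neq\R$, i.e.\ $f^{(r)}(r\,\cdot)$ non-ergodic (Proposition~\ref{p:essv}), contradicting \eqref{eq:jo1}. Hence \eqref{eq:jo1} already guarantees that $f$ is ergodic.

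Having verified the three hypotheses, Proposition~\ref{p:rwm} gives that $\tfs$ is disjoint from all weakly mixing transformations, which together with the relative weak mixing and the AOP property established above proves the corollary. I expect no deep obstacle here: all the real work is packaged in Proposition~\ref{wn:zalozenia} and Proposition~\ref{p:rwm}, and the only step requiring genuine (if short) care is deducing ergodicity of the single $\R$-valued cocycle $f$ from the prime-indexed condition \eqref{eq:jo1}, which the essential-values computation above handles.
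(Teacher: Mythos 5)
Your overall skeleton is exactly the paper's proof: Corollary~\ref{c:rwmAOP} is obtained there by combining Corollary~\ref{c:aopAnzai} with Proposition~\ref{p:rwm}, taking for $\cs$ an ergodic flow that is both weakly mixing and mildly mixing. Your choice of a mixing flow, and your verifications that $T$ (discrete spectrum) is disjoint from all weakly mixing automorphisms and that mixing implies mild mixing, are all fine, as is the deduction of relative weak mixing from the first part of Proposition~\ref{p:rwm}.

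There is, however, a genuine flaw in the one step you rightly single out as needing an argument: deducing ergodicity of the cocycle $f$ from \eqref{eq:jo1}. You assert that if $T_f$ is not ergodic then $E(f)$ is $\{0\}$ or $c\Z$ and that ``in both cases $f$ is regular.'' The case $E(f)=c\Z$ is fine (a cocompact group of essential values forces regularity), but $E(f)=\{0\}$ does \emph{not} imply that $f$ is a coboundary: non-regular cocycles, i.e.\ cocycles with $E(f)=\{0\}$ which are not coboundaries, exist, and in fact any non-ergodic cocycle whose Mackey action is weakly mixing is of exactly this type (if $E$ were cocompact the cocycle would be regular and its Mackey action would be the translation action on $\R/E$, which has a full group of $L^\infty$-eigenvalues). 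So the very objects this paper manufactures via conditions like \eqref{E5} and \eqref{eq:jo2} are counterexamples to your dichotomy, and your cohomology computation cannot be run in the missing case, since non-regularity means precisely that $f$ is \emph{not} cohomologous to a cocycle with values in $E(f)$. The repair is immediate with a tool you already introduced: since $U_r(x)=rx$ satisfies $U_r\circ T=T^r\circ U_r$, the map $(x,t)\mapsto(rx,t)$ is a measure-preserving factor map from $T_{f^{(r)}(r\cdot)}$ onto $(T^r)_{f^{(r)}}=(T_f)^r$. Hence \eqref{eq:jo1} for a single prime $r$ gives: $T_{f^{(r)}(r\cdot)}$ ergodic, so its factor $(T_f)^r$ is ergodic, so $T_f$ is ergodic --- no regularity or essential-value analysis is needed. (The paper leaves this point implicit in its one-line proof, so your instinct to verify it was sound; only the execution fails.)
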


\subsection{Proof of Proposition~\ref{wn:zalozenia}}
Let $f\colon \R\to\R$ (periodic of period $1$) be in $L^2(\T)$, $f(x)=\sum_{n=-\infty}^{\infty}\widehat{f}(n)e^{2\pi i n x}$. Assume that $f$ has zero mean and its Fourier transform is absolutely summable. Let
\begin{multline*}
f_m(x):=f(x)+f\left(x+\frac{1}{m}\right)+\dots+f\left(x+\frac{m-1}{m}\right)\\
=m\sum_{l=-\infty}^{\infty}\widehat{f}(lm)e^{2\pi i lmx},\text{ for }m\geq 1.
\end{multline*}
Recall the following ergodicity criterion:
\begin{Th}[Theorem 5.1 in \cite{Aa-Le-Ma-Na}]\label{tw:aa}
Suppose that there exist a sequence $(q_n)\subset \N$ and a constant $C>0$ such that
\begin{itemize}
\item
$q_n\sum_{l=-\infty}^{\infty}|\widehat{f}(lq_n)|\leq C\|f_{q_n}\|_{L^2}$ for $n\geq 1$,
\item
$0<\|f_{q_n}\|_{L^2}\to 0$.
\end{itemize}
Then there exists a dense $G_\delta$ set of irrational numbers $\alpha$ such that the corresponding group extension $T_f\colon \T\times\R\to\T\times\R$, where $Tx=x+\alpha$, is ergodic.
\end{Th}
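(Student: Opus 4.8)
Both \eqref{eq:jo1} and \eqref{eq:jo2} are assertions about essential values, so the plan is to reduce them to essential-value computations and then feed those to the rigidity criterion. By Proposition~\ref{p:essv}, \eqref{eq:jo1} says exactly that $E(\varphi_r)=\R$ for the cocycle $\varphi_r(x):=f^{(r)}(rx)$. For \eqref{eq:jo2}, put $\psi_{s,h}(x):=f^{(s)}(sx+h)$ and $\Phi:=\varphi_r\times\psi_{s,h}\colon\T\to\R^2$; both components are cocycles over the same $Tx=x+\alpha$, so Lemma~\ref{l3}(ii) applies. If I can show $E(\Phi)=\R^2$, then by Proposition~\ref{p:essv} the cocycle $\Phi$ is ergodic, hence its Mackey action $W(\varphi_r\times\psi_{s,h})$ is the action on a one-point space and is trivially weakly mixing; equivalently, by Lemma~\ref{l:wanih} together with \eqref{warMa}, $\Lambda(\Phi)\subseteq E(\Phi)^\perp=\{0\}$, which is weak mixing by Lemma~\ref{l3}(ii). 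Thus everything reduces to showing that the relevant essential-value groups fill $\R$ and $\R^2$.

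To compute essential values I will use Proposition~\ref{RigEssVal}: if $T^{q_n}\to Id$ and $(\varphi_r^{(q_n)})_\ast\la_{\T}$ converges weakly to a probability measure $P$, then $\mathrm{supp}\,P\subseteq E(\varphi_r)$. The computation is made transparent by the reindexing identities
\[
\varphi_r^{(q)}(x)=f^{(rq)}(rx),\qquad \psi_{s,h}^{(q)}(x)=f^{(sq)}(sx+h),
\]
which follow from $q(r\alpha)=(rq)\alpha$. Since $x\mapsto rx$ and $x\mapsto sx$ preserve $\la_{\T}$, the law of $\varphi_r^{(q_n)}$ equals that of $f^{(rq_n)}$, and the law of $\Phi^{(q_n)}$ equals that of $\big(f^{(rq_n)}(r\cdot),\,f^{(sq_n)}(s\cdot+h)\big)$. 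Hence the essential values of all the rescaled and joint cocycles are governed by the distributions of the Birkhoff sums of the single function $f$, read along the lengths $rq_n$ and $sq_n$.

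I then build, following the method underlying Theorem~\ref{tw:aa}, a single rigidity sequence $q_n\to\infty$ and a dense $G_\delta$ set $\mathcal{A}$ of irrationals so that for every $\alpha\in\mathcal{A}$: (i) $\|q_n\alpha\|\to0$, so $T^{q_n}\to Id$; and (ii) for each fixed $r$ (respectively each pair $r<s$ and each $h$) the laws $(f^{(rq_n)})_\ast\la_{\T}$ (respectively the joint laws above) converge weakly to measures with full support on $\R$ (respectively $\R^2$). Here the hypotheses on $f$ enter: $f\in C^{1+\delta}$ gives $|\widehat{f}(n)|=O(|n|^{-1-\delta})$, which is precisely what the first bullet of Theorem~\ref{tw:aa} needs in order to control the Fourier tails of $f_{q_n}$, while ``$f$ is not a trigonometric polynomial'' forces infinitely many nonzero $\widehat{f}(n)$, hence infinitely many admissible $q_n$ with $f_{q_n}\neq0$, so the limiting laws are non-degenerate. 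Granting (i)--(ii), Proposition~\ref{RigEssVal} yields $E(\varphi_r)=\R$ and $E(\Phi)=\R^2$, i.e.\ \eqref{eq:jo1} and \eqref{eq:jo2}. As there are only countably many $r$ and pairs $(r,s)$, and the full-support conclusion in (ii) is uniform in $h$, one dense $G_\delta$ set suffices for all parameters: it is a countable intersection obtained by diagonalizing the approximation at stage $n$ over all primes $\leq n$.

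The main obstacle is the \emph{joint} non-degeneracy in (ii): proving that the two-dimensional limit is supported on all of $\R^2$ and not merely on a line, uniformly in $h$. The two coordinates evaluate Birkhoff sums of $f$ at the correlated points $rx$ and $sx$, whose joint law is Haar measure on the closed one-dimensional subgroup $\{(ru,su):u\in\T\}$ of $\T\times\T$, so a priori the correlation could collapse the limit onto a line. What rescues the argument is $\gcd(r,s)=1$: the dominant Fourier frequencies of $f^{(rq_n)}(r\cdot)$ lie in $rq_n\Z$ and those of $f^{(sq_n)}(s\cdot+h)$ lie in $sq_n\Z$, and these meet only along $rsq_n\Z$, so the oscillations of the two coordinates decorrelate in the limit and the limiting spread is genuinely two-dimensional, independently of the phase $h$. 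Making this decorrelation quantitative through the Fourier analysis behind Theorem~\ref{tw:aa}, and arranging it along one rigidity sequence simultaneously for all primes and uniformly in $h$, is the technical heart; the one-dimensional case \eqref{eq:jo1} is the same analysis with the decorrelation step deleted.
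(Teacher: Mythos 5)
Your proposal proves the wrong statement. The statement in question is Theorem~\ref{tw:aa} itself: the Fourier-analytic criterion (the two hypotheses on $q_n\sum_{l}|\widehat f(lq_n)|$ and on $\|f_{q_n}\|_{L^2}$) under which there is a dense $G_\delta$ set of irrational $\alpha$ making the single group extension $T_f\colon\T\times\R\to\T\times\R$ ergodic. What you sketch instead is Proposition~\ref{wn:zalozenia}, i.e.\ conditions \eqref{eq:jo1} and \eqref{eq:jo2} for a smooth non-polynomial $f$ --- and you do so by invoking ``the method underlying Theorem~\ref{tw:aa}'' as a black box to produce the rigidity sequence, the dense $G_\delta$ set, and the non-degenerate limit laws. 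That method is precisely what was to be proven. Nowhere in your proposal are the two stated hypotheses actually used in an argument; there is no construction of the residual set of well-approximable $\alpha$ (cf.~\eqref{eq:rezydualny}), no use of Lemma~\ref{twlm1} to produce a set of definite measure on which $|\widetilde f_n|$ is comparable to $\|f_{q_n}\|_{L^2}$, no modulus-of-continuity estimate comparing $f^{(k_nq_{l_n})}$ with $k_nf_{q_{l_n}}$, and no contradiction argument via Proposition~\ref{p:disjess} ruling out $E(f)=\lambda\Z$. These steps are the actual content of the theorem; the paper itself does not reprove it (it cites \cite{Aa-Le-Ma-Na}) but reproduces and extends exactly this machinery in the proof of Theorem~\ref{pr:katka1}, which is what a blind proof of Theorem~\ref{tw:aa} should look like (it is the case $r=1$).

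Even read as an attempt at Proposition~\ref{wn:zalozenia}, the proposal has a genuine gap exactly where you flag ``the technical heart'': the claim that the joint laws of $\big(f^{(rq_n)}(r\cdot),f^{(sq_n)}(s\cdot+h)\big)$ converge to a measure with support all of $\R^2$, i.e.\ that $E(\Phi)=\R^2$, is never proved, and it is strictly stronger than what is needed or what the paper establishes. The paper obtains \eqref{eq:jo2} from Theorem~\ref{tw:slabemieszanie} (Cor.~2.5.6 in \cite{Ku-Le}) together with Lemma~\ref{l3}(ii): weak mixing of the Mackey action only requires that no nontrivial character $e^{2\pi i(Af^{(r)}(r\cdot)+Bf^{(s)}(s\cdot+h))}$ be a multiple of a coboundary, which does not require ergodicity of the joint cocycle. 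The affine case of Section~\ref{s:AOPaf} is a cautionary parallel: there the joint cocycle $(\va^{(r)}(r\cdot),\va^{(s)}(s\cdot))$ is provably \emph{not} ergodic (Proposition~\ref{p:regularityRS}, its essential-value group is $P_{r,s}+(d\Z,0)$), so an argument whose endpoint is $E(\Phi)=\R^2$ aims at a conclusion that fails in the closest analogous situation; your heuristic about decorrelation of frequencies in $rq_n\Z$ versus $sq_n\Z$ would need substantial quantitative justification before it could carry that weight.
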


We will now prove a modified version of Theorem~\ref{tw:aa} (the proof follows the lines of the proof of Theorem 5.1 in~\cite{Aa-Le-Ma-Na}):
\begin{Th}\label{pr:katka1}
Under the assumptions of Theorem~\ref{tw:aa}, there exists a dense $G_\delta$ set of irrational numbers $\alpha$ such that the group extensions $T_{f^{(r)}(r\cdot)}$, where $Tx=x+\alpha$, are ergodic for all $r\geq 1$.
\end{Th}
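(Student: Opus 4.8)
The plan is to reduce everything to a single cocycle identity expressing the Birkhoff sums of $f^{(r)}(r\cdot)$ in terms of those of $f$, then to run the construction behind Theorem~\ref{tw:aa} with an extra multiplicative parameter $r$, and finally to assemble the countably many resulting $G_\delta$ sets by Baire's theorem. Writing $T=R_\alpha$ for the rotation $x\mapsto x+\alpha$ and $g_r:=f^{(r)}(r\cdot)$, a direct reindexing of the double Birkhoff sum gives, for every $Q\ge1$,
\[
g_r^{(Q)}(x)=\sum_{i=0}^{Q-1}\sum_{j=0}^{r-1}f\bigl(rx+(ri+j)\alpha\bigr)=f^{(rQ)}(rx),
\]
since $\{ri+j:0\le i<Q,\ 0\le j<r\}=\{0,1,\dots,rQ-1\}$. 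As the map $x\mapsto rx$ preserves $\mu=\la_{\T}$, this yields the distributional equality $(g_r^{(Q)})_\ast\mu=(f^{(rQ)})_\ast\mu$. Thus the essential values of the $\R$-valued cocycle $g_r$ over $T$ can be read off from the asymptotic distribution of the ordinary Birkhoff sums $f^{(rQ)}$ of $f$ sampled along multiples of $r$.

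Fix $r\ge1$ and consider $\mathcal{E}_r:=\{\alpha:\ T_{g_r}\ \text{is ergodic}\}$. I would show it is a dense $G_\delta$ subset of the irrationals. The $G_\delta$ property is obtained exactly as in \cite{Aa-Le-Ma-Na}: by Proposition~\ref{p:essv}, ergodicity of $T_{g_r}$ is equivalent to $E(g_r)=\R$, and the latter is a countable intersection, over sets $C$ in a fixed countable algebra and rational intervals $V\ni g$, of the open-in-$\alpha$ conditions ``$\exists N:\ \mu(C\cap T^{-N}C\cap[g_r^{(N)}\in V])>0$''; here one uses that $\alpha\mapsto g_r$ is continuous, so these integrals depend continuously on $\alpha$.

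For density I would adapt the construction in the proof of Theorem~\ref{tw:aa}, keeping the multiplier $r$ fixed. Given a target irrational and $\varepsilon>0$, approximate it by rationals $p_n/q_n$ (with $q_n$ the sequence furnished by the hypotheses and $(p_n,q_n)=1$) and choose multipliers $M_n\to\infty$ together with a residual $\alpha-p_n/q_n$ so small that $\|M_nq_n\alpha\|\to0$, i.e. $T^{Q_n}\to Id$ along $Q_n:=M_nq_n$, while the law of $f^{(rQ_n)}$ — which by the two bullet hypotheses and $(p_n,q_n)=1$ is, up to a vanishing error, the law of $\sum_{i=0}^{rM_n-1}f_{q_n}(\cdot+iq_n\alpha)$ — converges to a genuine probability measure $P_r$ on $\R$ whose support generates $\R$. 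Since $r$ is a fixed integer it merely rescales the number of summands from $M_n$ to $rM_n$, so the distributional analysis of \cite{Aa-Le-Ma-Na} applies after adjusting $M_n$ by the constant factor $r$. Now Proposition~\ref{RigEssVal} applied to $g_r$ and the rigidity times $Q_n$, combined with $(g_r^{(Q_n)})_\ast\mu=(f^{(rQ_n)})_\ast\mu\to P_r$, gives ${\rm supp}\,P_r\subset E(g_r)$; as ${\rm supp}\,P_r$ generates $\R$ and $E(g_r)$ is a closed subgroup of $\R$, this forces $E(g_r)=\R$. The constructed $\alpha$ lies within $\varepsilon$ of the target, proving density.

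Finally, $\bigcap_{r\ge1}\mathcal{E}_r$ is a dense $G_\delta$ set by Baire's theorem, and for every $\alpha$ in it the extensions $T_{f^{(r)}(r\cdot)}$ are ergodic for all $r\ge1$ simultaneously. I expect the main obstacle to be the density step — precisely, verifying that the limiting law of $f^{(rQ_n)}$ is genuinely nondegenerate with support generating $\R$ (rather than collapsing to $\delta_0$ or losing mass to infinity) for the fixed multiplier $r$; this is where the quantitative hypotheses $q_n\sum_l|\widehat f(lq_n)|\le C\|f_{q_n}\|_{L^2}$ and $0<\|f_{q_n}\|_{L^2}\to0$ enter, exactly as in \cite{Aa-Le-Ma-Na}, and one must check they are insensitive to replacing $M_n$ by $rM_n$. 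A pleasant feature of routing the simultaneity through Baire's theorem is that no single rigidity sequence needs to serve all $r$ at once: each $\mathcal{E}_r$ is certified separately, and the intersection then produces one $\alpha$ that works for every $r$.
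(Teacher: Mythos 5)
Your skeleton coincides with the paper's own proof: the identity $g_r^{(Q)}(x)=f^{(rQ)}(rx)$ is exactly the paper's equation~\eqref{eq:gie}, the density input is the same adaptation of the construction behind Theorem~\ref{tw:aa} (with speeds of approximation chosen via~\eqref{eq:rezydualny}), and simultaneity in $r$ is a countable intersection in both arguments. The one genuinely different ingredient is how you extract essential values: you push the laws $(g_r^{(Q_n)})_\ast\mu=(f^{(rQ_n)})_\ast\mu$ through Proposition~\ref{RigEssVal}, whereas the paper works pointwise --- it builds into the residual set the condition that $|f^{(k_nq_{l_n})}(T^{ik_nq_{l_n}}y)|$ lies in a window $D_n'$ \emph{with the same sign for all} $0\le i<r$ on a proportion $\ge M$ of every interval of length $t/q_{l_n}$, and then contradicts $E(g_r)=\lambda\Z$ via Proposition~\ref{p:disjess} and a density-point argument. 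Your distributional route is viable (the non-cancellation of the $r$ blocks, which the paper encodes in the sign condition, is absorbed in your formulation into the requirement that the approximation speed handle $rM_n$ shifts), and it even spares you the paper's localization to intervals of length $t/q_{l_n}$. However, two steps of your write-up have genuine gaps.

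First, the $G_\delta$ step. Reducing ``for every Borel $C$ of positive measure'' in the definition of essential value to ``for every $C$ in a fixed countable algebra'' is invalid: if $\mu(C\triangle C')<\varepsilon$, positivity of $\mu(C'\cap T^{-N}C'\cap[g_r^{(N)}\in V])$ does not transfer to $C$, since that positive measure may be smaller than $2\varepsilon$ and sit entirely inside $C'\setminus C$. This is precisely why neither the paper nor \cite{Aa-Le-Ma-Na} claims that the set of \emph{all} good $\alpha$ is $G_\delta$; they instead exhibit a residual set of good $\alpha$'s built from approximation-speed conditions. Your plan survives with the same repair: the conditions in your density construction are of the form ``infinitely many convergents approximate $\alpha$ at a prescribed speed'', hence by~\eqref{eq:rezydualny} they already define residual sets $Y(r)\subset\mathcal{E}_r$, and you should intersect these $Y(r)$ rather than the sets $\mathcal{E}_r$ themselves.

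Second, and more seriously, the density step asks for a single weak limit $P_r$ of the laws of $f^{(rQ_n)}$ ``whose support generates $\R$''. You flag this as the main obstacle, and rightly so: in this form it is not obtainable from the hypotheses. What the analysis of \cite{Aa-Le-Ma-Na} (via Lemma~\ref{twlm1}) yields at a fixed scale is only that the law of $f^{(rQ_n)}$ puts mass $\ge M$ on the annulus $r(D\cup(-D))$ for one window $D$; a weak limit at a single scale may perfectly well be purely atomic, say supported in $\{-a,0,a\}$, which gives merely $a\Z\subset E(g_r)$, while mixing scales in one sequence gives limits whose support near the small scales reduces to $\{0\}$, i.e.\ no information. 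The mechanism that actually forces $E(g_r)=\R$ is multi-scale: the paper fixes a family of windows $D_R=[c_R,d_R]$ with $d_R/c_R$ constant and $d_R\to0$, arranges \emph{infinitely many} $n$ for \emph{each} $R$ (the sets $\mathcal{N}_R$), extracts at every scale $R$ a nonzero essential value in $r(D_R\cup(-D_R))$, and then uses that $E(g_r)$ is a closed subgroup of $\R$ containing nonzero elements of modulus $\le rd_R\to0$. Your outline must incorporate this: apply Proposition~\ref{RigEssVal} along a subsequence of each $\mathcal{N}_R$ separately (weak limits exist there because the laws are uniformly compactly supported at each fixed scale, by the hypothesis $q_n\sum_l|\widehat f(lq_n)|\le C\|f_{q_n}\|_{L^2}$), and only then conclude $E(g_r)=\R$. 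With these two repairs your argument closes, and is then essentially the paper's proof in distributional rather than pointwise clothing.
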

We will need the following lemma:
\begin{Lemma}[\cite{Aa-Le-Ma-Na}]\label{twlm1}
Given $C>0$, there exist numbers $K,L,M$ such that $0<K<L<1$, $0<M<1$ and for each $h\in L^4(\T)$ if $\|h\|_{L^4}\leq C\|h\|_{L^2}$ then
$$
\mu\big(\{x\in\T\colon K\|h\|_{L^2}\leq |h(x)| \leq L\|h\|_{L^2}\}\big)>M.
$$
\end{Lemma}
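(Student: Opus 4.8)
The plan is to reduce to the normalized case and then play off two complementary second--moment estimates. By homogeneity I may assume $\|h\|_{L^2}=1$, so that the hypothesis becomes $\int_{\T}|h|^4\,d\mu\le C^4$ (and note $C\ge 1$ by Jensen's inequality). The target set becomes $\{x\colon K\le |h(x)|\le L\}$, and it suffices to bound its measure below by a constant $M=M(C)>0$. The point is that the reverse H\"older bound $\|h\|_{L^4}\le C\|h\|_{L^2}$ \emph{forbids $h$ from concentrating}: it cannot place too much $L^2$--mass on very large values (by Markov in $L^4$), and a second--moment argument shows it cannot place all its mass arbitrarily close to $0$ either.

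First I would bound below the measure of the set where $|h|$ is not small. Splitting $1=\int|h|^2=\int_{|h|\le K}|h|^2+\int_{|h|>K}|h|^2$, estimating the first integral by $K^2$ and the second by Cauchy--Schwarz via $\int_{|h|>K}|h|^2\le \big(\int|h|^4\big)^{1/2}\mu(|h|>K)^{1/2}\le C^2\mu(|h|>K)^{1/2}$, I get $1-K^2\le C^2\mu(|h|>K)^{1/2}$, whence $\mu(|h|>K)\ge (1-K^2)^2/C^4$ (this is simply the Paley--Zygmund inequality for $Z=|h|^2$). Taking, say, $K=1/\sqrt2$ gives $\mu(|h|>K)\ge 1/(4C^4)$.

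Second, I would bound above the measure of the set where $|h|$ is large by Markov's inequality in $L^4$: $\mu(|h|>L)\le \int|h|^4/L^4\le C^4/L^4$, which I can force to be at most $1/(8C^4)$ by choosing $L=2^{3/4}C^2$. Since $\{|h|>K\}=\{K<|h|\le L\}\sqcup\{|h|>L\}$, subtraction yields $\mu(K\le |h|\le L)\ge \mu(|h|>K)-\mu(|h|>L)\ge 1/(4C^4)-1/(8C^4)=1/(8C^4)=:M>0$, and $M\le 1/8<1$. Rescaling by $\|h\|_{L^2}$ gives the assertion with $K=1/\sqrt2$, $L=2^{3/4}C^2$, $M=1/(8C^4)$.

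The only genuine subtlety is the \emph{placement} of the band, and this is where I expect the reader to pause: since $C\ge1$ forces $L=2^{3/4}C^2>1$, the interval $[K\|h\|_{L^2},L\|h\|_{L^2}]$ must straddle $\|h\|_{L^2}$ rather than lie below it. Indeed the constant function $h\equiv1$, for which $\|h\|_{L^4}=\|h\|_{L^2}$ while all of its mass sits exactly at $\|h\|_{L^2}$, shows that no band with $L<1$ can ever capture positive measure. Hence the constraint ``$0<K<L<1$'' in the statement should read ``$0<K<1<L$''; with that correction the argument above is complete, the entire content being the two one--line moment estimates.
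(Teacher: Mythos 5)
Your proof is correct, and there is in fact nothing in the paper to compare it with: Lemma~\ref{twlm1} is imported from \cite{Aa-Le-Ma-Na} without proof, so your argument serves as a self-contained substitute. The two moment estimates are exactly what is needed: the Paley--Zygmund step $1-K^2\le C^2\mu(|h|>K)^{1/2}$ (Cauchy--Schwarz applied to $\int_{|h|>K}|h|^2$) gives $\mu(|h|>K)\ge (1-K^2)^2/C^4$, and Markov in $L^4$ gives $\mu(|h|>L)\le C^4/L^4$; subtracting yields a band of measure at least $1/(8C^4)$ for $K=1/\sqrt2$, $L=2^{3/4}C^2$. Two cosmetic repairs: the lemma asserts the strict inequality $>M$, so take e.g.\ $M=1/(16C^4)$ rather than $1/(8C^4)$; and the remark ``$C\ge1$ by Jensen'' should be phrased as a harmless reduction, since for $C<1$ the hypothesis $\|h\|_{L^4}\le C\|h\|_{L^2}$ forces $h=0$ (on a probability space $\|h\|_{L^2}\le\|h\|_{L^4}$), for which the conclusion holds trivially.

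Your closing observation is also correct and worth stressing: as printed, the requirement $0<K<L<1$ makes the statement false --- the function $h\equiv1$ satisfies the hypothesis for every $C\ge1$ and puts all of its mass exactly at $\|h\|_{L^2}$, so no band lying strictly below $\|h\|_{L^2}$ can capture positive measure. The constraint must read $0<K<1<L$, with $L$ allowed to depend on $C$. This typo is harmless for the way the lemma is used in the proof of Theorem~\ref{pr:katka1}: there only the ratio $L/K$ (entering through the choice $d_R/c_R=100\,L/K$ of the intervals $D_R$) and the positivity of $M$ are ever used, never the position of $L$ relative to~$1$.
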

Recall also (see e.g.~\cite{Kw-Le-Ru}) that given an infinite set $\{q_n\}_{n\in\N}\subset \N$ and a positive real valued function $R=R(q_n)$ the set
\begin{multline}\label{eq:rezydualny}
\mathcal{A}=\left\{\alpha\in [0,1) : \text{ for infinitely many }n\text{ we have }\left|\alpha-{p_n}/{q_n} \right|<R(q_n),\right.\\
\left.\text{where }{p_n}/{q_n}\text{ are convergents of }\alpha\right\}\text{ is residual.}
\end{multline}

\begin{proof}[Proof of Theorem~\ref{pr:katka1}]
For $n\geq 1$, let
$$
\widetilde{f}_n(x):=q_n\sum_{l=-\infty}^{\infty}\widehat{f}(lq_n)e^{2\pi i lx}.
$$
Fix $r\geq 1$ and let $g(x):=f^{(r)}(rx)$. Then for all $m$,
\begin{equation}\label{eq:gie}
g^{(m)}(x)=f^{(rm)}(rx).
\end{equation}
Moreover, in view of the assumptions of the theorem:
\begin{itemize}
\item
$\widetilde{f}_n(q_nx)=f_{q_n}(x)$, whence $\|\widetilde{f}_n\|_{L^2}=\|f_{q_n}\|_{L^2}\to 0$,
\item
$\|\widetilde{f}_n\|_{L^4}\leq \|\widetilde{f}_n\|_{L^\infty}\leq q_n\sum_{l=-\infty}^{\infty}|\widehat{f}(lq_n)|\leq C\|f_{q_n} \|_{L^2}=C\|\widetilde{f}_n\|_{L^2}$.
\end{itemize}
Therefore, by Lemma~\ref{twlm1}, for all $n\geq 1$,
$$
\mu\left(\left\{x\in\T\colon K\|\widetilde{f}_n\|_{L^2}\leq |\widetilde{f}_n(x)|\leq L\|\widetilde{f}_n\|_{L^2}\right\}\right)>M.
$$

Let $\{D_R\colon R\geq 1\}$ be a family of disjoint, closed intervals of the form $D_R=[c_R,d_R]$, where $\frac{d_R}{c_R}=100\frac{L}{K}$ and $0<d_R\to 0$. Let $(D'_n)_{n\geq 1}\subset \{D_R\colon R\geq 1\}$ be a sequence such that for all $R\geq 1$, we have $\# \mathcal{N}_R=\infty$, where
$$
\mathcal{N}_R=\{n\in\N\colon D_n'=D_R\}.
$$
Fix $n\in\N$. Choose $k_n,l_n\in\N$ so that
$$
\left[k_n K\|\widetilde{f}_{l_n}\|_{L^2},k_nL \|\widetilde{f}_{l_n}\|_{L^2}\right]\subset \widetilde{D}_n'\subsetneq D'_n,
$$
where $\widetilde{D}'_n$ is a strict closed subinterval of $D'_n$. This gives us two sequences $(k_n)_{n\geq 1}$ and $(l_n)_{n\geq 1}$, such that
$$
\mu\left(\left\{x\in\T\colon |k_n\widetilde{f}_{l_n}(x)|\in \widetilde{D}'_n\right\}\right)\geq M.
$$

We fix now $R\geq 1$. We claim that the set of numbers $\alpha$ such that there exists an infinite subset $\mathcal{N}^\alpha_R\subset \mathcal{N}_R$ satisfying the following two conditions:
\begin{enumerate}[(i)]
\item\label{EQ:i}
$\|k_nq_{l_n}\alpha\| \xrightarrow[n\in \mathcal{N}_R^\alpha]{}0$,
\item\label{EQ:ii}
for $n\in \mathcal{N}_R^\alpha$, we have
\begin{multline*}
\mu\left(\left\{y\in I\colon \left|f^{(k_nq_{l_n})}(T^{ik_nq_{l_n}}y)\right|\in D_n' \text{ and } \right.\right.\\
\left.\left.\text{sgn} (f^{(k_nq_{l_n})}(T^{ik_nq_{l_n}}y))\text{ is the same for }0\leq i<r\right\}\right)\geq M|I|
\end{multline*}
for every interval $I$ such that $|I|=\frac{t}{q_{l_n}}$, $t=1,\dots,q_{l_n}$,
\end{enumerate}
is a dense $G_\delta$ subset of $\T$. In view of~\eqref{eq:rezydualny}, it suffices to show that the above conditions describe a speed of approximation of $\alpha$ by rational numbers. This is clearly the case for~\eqref{EQ:i}. We will now deal with~\eqref{EQ:ii}. For $0\leq i<r$, we have
\begin{align*}
\left|f^{(kq)}(T^{ikq}y)-kf_q(y) \right|&\leq \sum_{j=0}^{k-1}\left|f^{(q)}(y+ikq\alpha+jq\alpha)-f_q(y) \right|\\
&=\sum_{j=0}^{k-1}\left|\sum_{w=0}^{q-1}f(y+ikq\alpha+jq\alpha+w\alpha) - \sum_{w'=0}^{q-1}f\left(y+\frac{w'}{q}\right) \right|\\
&=\sum_{j=0}^{k-1}\left|\sum_{w=0}^{q-1}\left(f(y+ikq\alpha+jq\alpha+w\alpha)-f\left(y+w\frac{p}{q}\right) \right) \right|\\
&\leq \sum_{j=0}^{k-1}\sum_{w=0}^{q-1}\omega\left(f, ikq\alpha+jq\alpha+w\left(\alpha-\frac{p}{q}\right)\right),
\end{align*}
where $p=p_{l_n}$, $q=q_{l_n}$ (recall that $p_{l_n}$ and $q_{l_n}$ are relatively prime) and $\omega(f,\cdot)$ is the modulus of continuity of $f$. With $k$ and $q$ fixed, the above quantity depends only on the distance between $\alpha$ and $\frac{p}{q}$.

We denote the obtained dense $G_\delta$ set of numbers $\alpha$ by $Y_R$ and take $Y=\bigcap_{R=1}^{\infty}Y_R$. This set is again a dense $G_\delta$. Pick $\alpha\in Y$ and fix $R\geq 1$. Let $J$ be an interval such that $|J|=\frac{t}{rq_{l_n}}$ for some $t=1,\dots,q_{l_n}$. Then, for $I:=rJ$, we have $|I|=\frac{t}{q_{l_n}}$, i.e.\ condition~\eqref{EQ:ii} holds. Therefore
\begin{equation}
\begin{split}\label{spli}
\mu\big(\big\{x\in J\colon &  \left|f^{(rk_nq_{l_n})}(rx) \right|\in rD'_n \big\}\big)\\
\geq & \mu\left(\left\{x\in J\colon \left| f^{(k_nq_{l_n})}(T^{ik_nq_{l_n}}rx) \right|\in D'_n\text{ and }\right.\right.\\
&\left.\left.\text{sgn} (f^{(k_nq_{l_n})}(T^{ik_nq_{l_n}}rx))\text{ is the same for }0\leq i<r \right\}\right)\\
=&\frac{1}{r}\cdot\mu\left(\left\{y\in I\colon \left| f^{(k_nq_{l_n})}(T^{ik_nq_{l_n}}y) \right|\in D'_n\text{ and }\right.\right.\\
&\left.\left.\text{sgn} (f^{(k_nq_{l_n})}(T^{ik_nq_{l_n}}y))\text{ is the same for }0\leq i<r \right\}\right)\\
\geq & \frac{1}{r}\cdot M |I|=M|J|.
\end{split}
\end{equation}

Suppose now that $E(g)=\lambda\Z$ for some $\lambda\in\R$. Let $R\geq 1$ be large enough, so that the set $K_R:=r(D_R\cup(-D_R))$ is disjoint from $\lambda\Z$.  By Proposition~\ref{p:disjess}, there exists a Borel set $B$ with $\mu(B)>0$ and such that for all $m\geq 1$, we have
$$
\mu\left(B\cap T^{-m}B\cap\left\{x\in \T\colon g^{(m)}(x)\in K_R\right\}\right)=0.
$$
Therefore, taking $m=k_nq_{l_n}$, in view of~\eqref{eq:gie}, we obtain
\begin{equation}\label{con}
\mu\left( B\cap T^{-k_nq_{l_n}}B\cap \left\{x\in\T\colon f^{(rk_nq_{l_n})}(rx)\in K_R\right\}\right)=0.
\end{equation}
If now $n\in \mathcal{N}_R^\alpha$ then $\mu(B\triangle T^{k_nq_{l_n}}B)\to 0$ by condition~\eqref{EQ:i} ($k_nq_{l_n}$ is a rigidity sequence for $T$). If $x$ is a density point of $B$ then for an interval $J$, containing $x$, with $|J|=\frac{t}{rq_{l_n}}$, for some $t=1,\dots,q_{l_n}$, we have $\mu(B\cap J)>(1-\frac{M}{2})|J|$. In view of~\eqref{spli}, it follows that there exists a measurable subset $A_n\subset B$ with $\mu(A_n)\geq \frac{M}{2}\mu(B)$ such that
$$
\left|f^{(rk_nq_{l_n})}(rx) \right|\in rD'_n \text{ for }x\in A_n.
$$
Let $\widetilde{A}_n:=A_n\cap T^{-k_nq_{l_n}}B$. Then, by condition (i), for $n$ large enough, $\mu(\widetilde{A}_n)\geq \frac{M}{4}\mu(B)$. Hence
$$
\mu\left(B\cap T^{-k_nq_{l_n}}B\cap \{x\in \T\colon f^{(rk_nq_{l_n})}(rx)\in K_R\} \right)\geq\mu(\widetilde{A}_n)\geq\frac{M}{4}\mu(B).
$$
This contradicts~\eqref{con} and completes the proof.
\end{proof}

\begin{Remark}\label{uw:brakujaca}
It was shown in~\cite{Ku-Le} that the assumptions of Theorem~\ref{tw:aa} are satisfied for each zero mean function $f\in C^{1+\delta}(\T)$, $\delta>0$, which is not a trigonometric polynomial.
\end{Remark}

\begin{Th}[Cor.\ 2.5.6 in \cite{Ku-Le}]\label{tw:slabemieszanie} Assume that $f\in C^{1+\delta}(\T)$ for some $\delta>0$ and it is not a trigonometric polynomial. Then, for a dense $G_\delta$ set of $\alpha\in\T$, we have: for arbitrary $A,B\in\R$, $|A|+|B|>0$, any relatively prime numbers $r,s\in\N$, any $|c|=1$ and $h\in\R$, the cocycle
$$
ce^{2\pi i(Af^{(r)}(r\cdot)+Bf^{(s)}(s\cdot+h))}$$
considered over $Tx=x+\alpha$, is not a $T$-coboundary.\end{Th}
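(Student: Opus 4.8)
The plan is to pass to the $\bs^1$-valued cocycle $\Theta:=c\,e^{2\pi i\Phi}$, where $\Phi(x):=Af^{(r)}(rx)+Bf^{(s)}(sx+h)$, and to show it is not a $T$-coboundary by exhibiting a nontrivial essential value. Since $\bs^1$ is compact, $E(\Theta)$ is cocompact and $\Theta$ is regular, so (cf.\ the discussion of regular cocycles following Proposition~\ref{p:disjess}) $\Theta$ is a coboundary if and only if $E(\Theta)=\{1\}$; thus it suffices to produce one essential value $\neq 1$. First I would fix, for the given coprime pair $(r,s)$, the rigidity sequence $Q_n:=k_nq_{l_n}$ coming from the proof of Theorem~\ref{pr:katka1}; by condition~\eqref{EQ:i} there, $\|Q_n\alpha\|\to0$, so $T^{Q_n}\to\mathrm{Id}$ and Proposition~\ref{RigEssVal} applies to $\Theta$ along $(Q_n)$. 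Writing $\Theta^{(Q_n)}(x)=c^{Q_n}e^{2\pi i\Phi^{(Q_n)}(x)}$ and passing to a subsequence along which $c^{Q_n}\to c_0$ and $(e^{2\pi i\Phi^{(Q_n)}})_\ast\mu\to\rho_1$ weakly, the limit measure is $P=c_0\cdot\rho_1$ (rotation by $c_0$), and $\mathrm{supp}\,P\subset E(\Theta)$. The key point is that the constant $c$ merely rotates the support: if $\rho_1$ is \emph{not} a point mass then $\mathrm{supp}\,P$ has at least two points, hence $E(\Theta)\neq\{1\}$, and this works simultaneously for \emph{all} $c$. So the whole statement reduces to showing that $\rho_1$ is not a Dirac mass.

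Next I would identify $\Phi^{(Q_n)}$ explicitly. Grouping the Birkhoff sum into blocks exactly as in~\eqref{eq:gie} gives $\Phi^{(Q_n)}(x)=A\,f^{(rQ_n)}(rx)+B\,f^{(sQ_n)}(sx+h)$. Using that $Q_n=k_nq_{l_n}$ and that $\alpha$ is extremely well approximated by $p_{l_n}/q_{l_n}$ (the speed of approximation that cuts out the dense $G_\delta$, via the modulus-of-continuity estimate in the proof of Theorem~\ref{pr:katka1}), each block sum equidistributes over the $q_{l_n}$ points $j/q_{l_n}$, so uniformly in $x$,
$$\Phi^{(Q_n)}(x)=Ar\,g_n(q_{l_n}rx)+Bs\,g_n(q_{l_n}sx+q_{l_n}h)+o(1),\qquad g_n:=k_n\widetilde f_{l_n},$$
where $\widetilde f_{l_n}$ is as in the proof of Theorem~\ref{pr:katka1}, and $\|g_n\|_{L^2}=k_n\|\widetilde f_{l_n}\|_{L^2}$ lies in a fixed interval $D_R\subset(0,\infty)$ bounded away from $0$ and $\infty$; moreover $\|g_n\|_{L^4}\le C'\|g_n\|_{L^2}$ and $\|g_n\|_{L^\infty}\le C\|g_n\|_{L^2}$.

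Then I would analyse the law of the right-hand side. Since $x\mapsto q_{l_n}rx$ and $x\mapsto q_{l_n}sx+q_{l_n}h$ preserve $\la_\T$, each of the two summands has the same law as $g_n$. Two facts drive the conclusion. First, because $\gcd(r,s)=1$, the Fourier spectra of $g_n(q_{l_n}r\,\cdot)$ and $g_n(q_{l_n}s\,\cdot+\cdot)$ are carried by $q_{l_n}r\Z$ and $q_{l_n}s\Z$, which meet only in the sparse set $q_{l_n}rs\Z$; a standard correlation computation then shows that along the subsequence the pair $\bigl(g_n(q_{l_n}r\,\cdot),g_n(q_{l_n}s\,\cdot+q_{l_n}h)\bigr)$ becomes asymptotically independent, with each marginal converging to a common law $\nu$. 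Second, Lemma~\ref{twlm1} together with the $L^4/L^2$ and $L^\infty$ bounds forces $\nu$ to be nondegenerate (it charges $\{|t|\ge K\inf_n\|g_n\|_{L^2}\}$ with mass $\ge M$) and, crucially, non-atomic. Granting this, $\rho_1$ is the law of $Ar\,U+Bs\,V \pmod 1$ with $U,V$ independent copies of $\nu$; since $\nu$ is non-atomic and $|A|+|B|>0$, this convolution is non-atomic, hence $\rho_1$ is not a Dirac mass, completing the reduction.

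The main obstacle is exactly the non-atomicity of $\nu$, and it is genuinely needed: if $\nu$ were the two-point law $\tfrac12(\delta_v+\delta_{-v})$ then for the special parameters with $2Arv,2Bsv\in\Z$ the measure $\rho_1$ would collapse to a point and the statement would fail. To rule this out I would exploit that $f$ is not a trigonometric polynomial (so $\widehat f(m)\neq0$ for infinitely many $m$) together with the $C^{1+\delta}$ decay $\widehat f(m)=O(|m|^{-1-\delta})$: choosing the sequence $q_{l_n}$ along frequencies at which $\widehat f(\pm q_{l_n})$ dominates the tail $\sum_{|l|\ge2}|\widehat f(lq_{l_n})|$, the normalised function $g_n$ is asymptotically a single cosine $a_n\cos(2\pi(\cdot)+\theta_n)$, whose law converges to an absolutely continuous (arcsine-type) measure on an interval of positive length; this supplies the required non-atomic, non-degenerate $\nu$. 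Finally, the dense $G_\delta$ set of $\alpha$ is obtained by intersecting, over the countably many coprime pairs $(r,s)$, the $G_\delta$ sets produced by the approximation conditions~\eqref{EQ:i}--\eqref{EQ:ii} (as in Theorem~\ref{pr:katka1}); all the remaining parameters $A,B,c,h$ enter only after $\alpha$ is fixed and limits are taken, so this single set works for all of them at once.
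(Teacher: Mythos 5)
You should know at the outset that the paper contains no proof of Theorem~\ref{tw:slabemieszanie}: it is imported wholesale as Cor.~2.5.6 of \cite{Ku-Le}, so your attempt can only be judged on its own merits. Its skeleton is sound and close in spirit to how such results are proved: since every $\bs^1$-valued cocycle is regular (its group of essential values is automatically cocompact), ``not a coboundary'' is indeed equivalent to $E(\Theta)\neq\{1\}$; Proposition~\ref{RigEssVal} along the rigidity times $Q_n=k_nq_{l_n}$ from the proof of Theorem~\ref{pr:katka1} is the right tool; your observation that the constant $c$ merely rotates the limit distribution, so that a single non-Dirac limit handles all $c$ at once, is correct; and the identification $\Phi^{(Q_n)}(x)=Af^{(rQ_n)}(rx)+Bf^{(sQ_n)}(sx+h)$ together with its approximation by $Ar\,g_n(q_{l_n}rx)+Bs\,g_n(q_{l_n}sx+q_{l_n}h)$ is legitimate once the $G_\delta$ set is built with a fast enough approximation rate. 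The problem is that the two claims which actually carry the conclusion both fail.

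First, the asymptotic independence claim is false. After the measure-preserving substitution $y=q_{l_n}x \bmod 1$, your pair becomes $\bigl(g_n(ry),\,g_n(sy+c_n)\bigr)$ with $c_n=q_{l_n}h$: two functions of the \emph{same} variable evaluated at the \emph{fixed} frequencies $r,s$, i.e.\ a pushforward of Haar measure on a coset of the one-dimensional subgroup $\{(ru,su):u\in\T\}\subset\T\times\T$. Coprimality gives at most decorrelation (the covariance only involves frequencies in $q_{l_n}rs\Z$), never independence: for $g_n=\cos(2\pi\,\cdot)$ and $(r,s)=(1,2)$, the pair $(U,V)=(\cos 2\pi y,\cos 4\pi y)$ satisfies $\int_\T U^2V\,dy=\tfrac14\neq 0=\int_\T U^2\,dy\int_\T V\,dy$. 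In particular, in the single-cosine regime --- the only one your non-atomicity argument covers --- the joint law is supported on a fixed closed analytic curve, and a product of two non-atomic measures assigns zero mass to any such curve; so independence fails exactly where you need it. (Ironically it is also unnecessary there: for $r\neq s$, $Ar\,a\cos(2\pi ry+\phi)+Bs\,a\cos(2\pi sy+\psi)$ is a nonconstant trigonometric polynomial, whose law, and hence its reduction mod $1$, is automatically non-atomic.) Second, the single-cosine reduction is unavailable for some admissible $f$, so the crux --- non-atomicity of $\nu$, which you correctly flag as ``genuinely needed'' --- remains unproved in general. Take $\widehat f$ supported on $\{\pm 2^k\}$ with $\widehat f(\pm 2^k)=2^{-k(1+2\delta)}$ and $\delta=\tfrac14$: then $f\in C^{1+\delta}(\T)$ (lacunary Weierstrass-type series), $f$ is not a trigonometric polynomial, the only $q$ with $f_q\not\equiv0$ are $q=2^j$, and for every such $q$ one computes $\sum_{|l|\geq 2}|\widehat f(lq)|=\frac{2}{2^{1+2\delta}-1}\,|\widehat f(q)|>|\widehat f(q)|$, the ratio being independent of $j$; hence no subsequence of frequencies is ``dominating'', and $g_n$ stays a fixed lacunary function rather than approaching a cosine. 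The actual proof of Cor.~2.5.6 in \cite{Ku-Le} avoids both difficulties: it never decouples the two summands, but instead estimates the mass that the distributions of the full combination place on suitable sets bounded away from the suspected group of essential values, in the style of Lemma~\ref{twlm1} and the proof of Theorem~\ref{pr:katka1}.
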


Proposition~\ref{wn:zalozenia} follows immediately by Theorem~\ref{pr:katka1}, Theorem~\ref{tw:slabemieszanie} and Lemma~\ref{l3}~(ii).

\section{Applications -- average orthogonality on short intervals}
\subsection{Universal sequences $(a_n)$ for the strong MOMO property along $(a_n)$}
Assume that $T$ is a uniquely ergodic homeomorphism of a compact metric space $X$ with the unique $T$-invariant measure $\mu$ (hence $T\in{\rm Aut}\xbm$ is ergodic). Assume that $G$ is an lcsc Abelian  group and let $\va\colon X\to G$ be continuous. Assume moreover that
$\cs=(S_g)_{g\in G}$ is a continuous\footnote{I.e.\ the map $(g,x)\to S_gx$ is continuous.} uniquely ergodic $G$-action on a compact metric space $Y$ (with the unique invariant measure $\nu$). It follows that $\tfs$ is a homeomorphism of $X\times Y$. According to Lemma~\ref{dlp}, the simplex of invariant measures for $\tfs$ is affinely isomorphic to $\cp(\cw(\va)\times\cs,\cj_\va,\kappa_\va)$. It follows that if $\cw(\va)$ is trivial then $\tfs$ will be uniquely ergodic. We hence proved the following.

\begin{Prop}[cf.\ \cite{Le-Le}] \label{p:ueRE} If $T$, $\cs$ are uniquely ergodic, $\va\colon X\to G$ is continuous and ergodic then
$\tfs$ is a uniquely ergodic homeomorphism of $X\times Y$.
\end{Prop}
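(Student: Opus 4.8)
The plan is to verify two things: that $\tfs$ is a homeomorphism, and that it carries a unique invariant Borel probability measure. The first is immediate: $T$ is a homeomorphism of $X$, $\va$ is continuous, and the action $\cs$ is continuous, so both $(x,y)\mapsto(Tx,S_{\va(x)}y)$ and its inverse $(x,y)\mapsto(T^{-1}x,S_{-\va(T^{-1}x)}y)$ are continuous. The substance is therefore to pin down the simplex of $\tfs$-invariant measures.

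First I would observe that every $\tfs$-invariant probability measure $\widetilde\kappa$ on $X\times Y$ pushes forward under the first-coordinate projection to a $T$-invariant measure (since that projection intertwines $\tfs$ with $T$), which by unique ergodicity of $T$ must equal $\mu$; hence $\widetilde\kappa|_{\cb}=\mu$. Because $\tfs$ preserves $\widetilde\kappa$, the Radon--Nikodym cocycle $d(\widetilde\kappa\circ\tfs)/d\widetilde\kappa$ is identically $1$, in particular $\cb$-measurable, so the extension over $(X,\cb,\mu,T)$ is (trivially) relatively finite measure-preserving. Consequently the full set of $\tfs$-invariant measures coincides with $\cp(\tfs,\cb,\mu)$, and no invariant measure is lost.

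Now I would apply Lemma~\ref{dlp} with $\theta=\va$ and ${\cal R}=\cs$, which furnishes an affine isomorphism
$$
A\colon\cp(\tfs,\cb,\mu)\to\cp(\cw(\va)\times\cs,\cj_\va,\kappa_\va).
$$
Since $\va$ is ergodic, $\tf$ is ergodic, so $E(\va)=G$ by Proposition~\ref{p:essv} and the Mackey action $\cw(\va)$ is trivial, acting on the one-point space of ergodic components $(C_\va,\cj_\va,\kappa_\va)$. Thus $\cw(\va)\times\cs$ reduces to the action $\cs$ on $Y$ over a point, and $\cp(\cw(\va)\times\cs,\cj_\va,\kappa_\va)$ is nothing but the set of $\cs$-invariant probability measures on $Y$, which equals $\{\nu\}$ by unique ergodicity of $\cs$. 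As the right-hand simplex is a singleton and $A$ is an affine isomorphism, $\cp(\tfs,\cb,\mu)$ is a singleton as well. Since $\mu\ot\nu$ is readily checked to be $\tfs$-invariant (each $S_g$ preserves $\nu$ and $T$ preserves $\mu$), it must be this unique element, whence $\tfs$ is uniquely ergodic with unique invariant measure $\mu\ot\nu$.

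I expect the only real friction to be the bookkeeping surrounding Lemma~\ref{dlp}: one must confirm that \emph{every} $\tfs$-invariant measure satisfies the defining hypotheses of $\cp(\tfs,\cb,\mu)$ (projection to $\mu$ and the rfmp condition), so that the affine isomorphism genuinely sees all invariant measures. Once that identification is secure, the structural inputs---triviality of the Mackey action of the ergodic cocycle $\va$ together with unique ergodicity of $\cs$---collapse the target simplex to a single point, and unique ergodicity of $\tfs$ follows at once.
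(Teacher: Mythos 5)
Your proof is correct and follows essentially the same route as the paper: the paper also deduces the statement directly from Lemma~\ref{dlp} (with $\theta=\va$, ${\cal R}=\cs$) together with the observation that the Mackey action of an ergodic cocycle is trivial, so that the simplex of $\tfs$-invariant measures collapses to a point. Your write-up merely makes explicit the bookkeeping the paper leaves implicit (that invariant measures project to $\mu$ and are automatically rfmp, so they all lie in $\cp(\tfs,\cb,\mu)$), which is a faithful elaboration rather than a different argument.
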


Using Proposition~\ref{AOPtoMOMO}, we will now obtain the existence of universal sequences $(a_n)\subset G$ for which the strong MOMO property along $(a_n)$ holds for all continuous uniquely ergodic $G$-actions. Such sequences will be ergodic by Proposition~\ref{ergseq}.

\begin{Th}\label{t:universalMOMO} Assume that $T$ is uniquely ergodic, $\va\colon X\to G$ is continuous and ergodic, and the other assumptions in Theorem~\ref{jm} are also satisfied. Let $x\in X$ and set $a_n:=\va^{(n)}(x)$, $n\geq0$. Then each  
continuous uniquely ergodic $G$-action $\cs=(S_g)_{g\in G}$ on a compact metric space $Y$ satisfies the  strong MOMO property along $(a_n)$.

In particular, each sequence $(f(S_{a_n}x))$ (with $f\in C(Y)\cap L^2_0\ycn$) is orthogonal to an arbitrary multiplicative function $\bfu$, $|\bfu|\leq1$.
\end{Th}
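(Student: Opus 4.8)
The plan is to reduce the strong MOMO property along $(a_n)$ to the AOP property of the Rokhlin extension $\tfs$ together with the transfer result Proposition~\ref{AOPtoMOMO}. First I would verify that under the stated hypotheses $\tfs$ is a uniquely ergodic homeomorphism of $X\times Y$: since $T$ and $\cs$ are uniquely ergodic, $\va$ is continuous and ergodic, this is exactly Proposition~\ref{p:ueRE}. Next, the ``other assumptions in Theorem~\ref{jm}'' (namely \eqref{E4} and \eqref{E5}) guarantee, via Theorem~\ref{jm}, that $\tfs$ enjoys the AOP property. Thus $(X\times Y,\mu\ot\nu,\tfs)$ is a uniquely ergodic model of an automorphism with AOP, which places us precisely in the setting of Proposition~\ref{AOPtoMOMO}.

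The key computational step is to recognize that the orbit of $\tfs$ through a point $(x,y)$ reproduces the sequence $(S_{a_n}y)$ along the second coordinate. Indeed, by~\eqref{iteracja} we have $(\tfs)^n(x,y)=(T^nx,S_{\va^{(n)}(x)}(y))=(T^nx,S_{a_n}y)$ once we set $a_n=\va^{(n)}(x)$. Hence, given $f\in C(Y)$ with $\int_Y f\,d\nu=0$, I would define the observable $F\in C(X\times Y)$ by $F(x',y')=f(y')$; this $F$ satisfies $\int_{X\times Y}F\,d(\mu\ot\nu)=\int_Y f\,d\nu=0$, so $F\in C(X\times Y)\cap L^2_0(X\times Y,\mu\ot\nu)$. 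With this choice,
$$
F\big((\tfs)^n(x,y)\big)=f(S_{a_n}y)
$$
for every $n\ge0$, so the strong MOMO sums for $\cs$ along $(a_n)$ coincide verbatim with the strong MOMO sums for the homeomorphism $\tfs$ applied to the observable $F$ and the starting points $(x,y_k)$.

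Now I would invoke Proposition~\ref{AOPtoMOMO} directly: since $\tfs$ is a uniquely ergodic homeomorphism with the AOP property, for every multiplicative $\bfu$ with $|\bfu|\le1$, every sequence $(b_k)$ of natural numbers with $b_{k+1}-b_k\to\infty$, every choice of base points (here $(x,y_k)\in X\times Y$), and every $F\in C(X\times Y)$ with zero integral, the relevant averages tend to $0$. Translating back through the identity above yields exactly
$$
\frac1{b_{K+1}}\sum_{k\le K}\Bigl|\sum_{b_k\le n<b_{k+1}}f(S_{a_n}y_k)\bfu(n)\Bigr|\to0,
$$
which is the strong $\bfu$-MOMO property along $(a_n)$ from Definition~\ref{d:momoG}. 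The ``in particular'' clause then follows by the standard passage from strong MOMO to orthogonality: taking $b_k=k$ (constant gaps do not satisfy $b_{k+1}-b_k\to\infty$, so instead one takes $(b_k)$ with gaps tending slowly to infinity and a single base point $y_k\equiv y$) reduces the strong MOMO bound to $\frac1N\sum_{n\le N}f(S_{a_n}y)\bfu(n)\to0$, giving orthogonality of $(f(S_{a_n}y))$ to every multiplicative $\bfu$, $|\bfu|\le1$; that $(a_n)$ is moreover ergodic follows from Proposition~\ref{ergseq}.

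The only genuine obstacle is bookkeeping rather than conceptual: one must check that the hypotheses of Proposition~\ref{AOPtoMOMO} are met with the correct measure-zero observable and that the reduction from strong MOMO to ordinary orthogonality is carried out with an admissible sequence $(b_k)$. Everything else is a direct concatenation of Proposition~\ref{p:ueRE}, Theorem~\ref{jm}, \eqref{iteracja}, and Proposition~\ref{AOPtoMOMO}.
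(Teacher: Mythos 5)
Your proposal is correct and follows essentially the same route as the paper, which derives Theorem~\ref{t:universalMOMO} exactly by concatenating Proposition~\ref{p:ueRE} (unique ergodicity of $\tfs$), Theorem~\ref{jm} (AOP for $\tfs$), the identity~\eqref{iteracja}, and Proposition~\ref{AOPtoMOMO} applied to the observable $F(x',y')=f(y')$ with base points $(x,y_k)$. Your handling of the ``in particular'' clause (choosing $(b_k)$ with gaps tending to infinity but $b_{k+1}/b_k\to1$, and a constant sequence of base points) is the standard reduction and is fine.
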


\subsection{From universal sequences for flows to universal sequences for automorphisms}

So far the only cocycles satisfying the assumptions of Theorem~\ref{jm} (and the more the assumptions of Theorem~\ref{t:universalMOMO}) are smooth cocycles over some irrational rotations. We will now show how to use them to obtain integer-valued sequences universal for the strong MOMO property for uniquely ergodic homeomorphisms.

\begin{Remark}\label{r:ueSF} Note that if $R$ is a uniquely ergodic homeomorphism of a compact metric space $Z$, then the suspension $\widetilde{R}$ is  a continuous flow on the compact metric space $\widetilde{Z}$ and the flow is also uniquely ergodic.\footnote{The metric on $Z\times[0,1]/\sim$ is the quotient metric of the natural product metric on $Z\times [0,1]$. With this metric, the map $((x,s),t)\mapsto (R^{[t]}x,\{s+t\})$ becomes continuous.}\end{Remark}

Let $T$ be a uniquely ergodic homeomorphism of a compact metric space $X$ and let $\va\colon X\to\R$ be continuous and ergodic. Fix $x\in X$, set $a_n=\va^{(n)}(x)$. Assume that we have the strong MOMO property along $(a_n)$ for the suspension flow $\widetilde{R}$. If we fix $f\in C(X)$, $\int f\,d\mu=0$, and some $\delta>0$ then we can find $F\in C(\widetilde{Z})$, $\int F\,d(\mu\ot\la_{[0,1]})=0$, $\|F\|_\infty=\|f\|_\infty$,
and
\beq\label{tasama}
\mbox{$F(z,s)=f(z)$ whenever $\delta\leq s\leq1-\delta$.}
\eeq
Indeed, it is enough to set
$$F(z,s)=\left\{\begin{array}{lll}
\frac{s}\delta f(z)&\text{if} &0\leq s\leq\delta\\
f(z)& \text{if}& \delta\leq s\leq1-\delta\\
\frac1\delta(1-s)f(z)& \text{if}& 1-\delta\leq s\leq1.\end{array}\right.
$$
Choose $(z_k)\subset Z$ arbitrarily.
Now, we have
$$
\frac1{b_{K+1}}\sum_{k\leq K}\left|\sum_{b_k\leq n<b_{k+1}}F(\widetilde{R}_{a_n}(z_k,0))\bfu(n)\right|\to0.$$
That is
\beq\label{dozera}
\frac1{b_{K+1}}\sum_{k\leq K}\left|\sum_{b_k\leq n<b_{k+1}}F(R^{[a_n]}(z_k,\{a_n\}))\bfu(n)\right|\to0.\eeq
However, the sequence $(a_n)$ is uniformly distributed mod~1 (see Remark~\ref{r:ude}), hence
$$
\frac1{b_{K+1}}|\{0\leq n< b_{K+1} :  \{a_n\}\in[0,\delta]\cup[1-\delta,1)\}|<3\delta$$
if $K$ is large enough. Hence, taking into account additionally~\eqref{tasama} and~\eqref{dozera}, we obtain that
\beq\label{dozeraA}
\frac1{b_{K+1}}\sum_{k\leq K}\left|\sum_{b_k\leq n<b_{k+1}}f(R^{[a_n]}z_k)\bfu(n)\right|\to0.\eeq

We have proved the following.

\begin{Cor}\label{c:momoA}
Assume that $T$ is uniquely ergodic, $\va\colon X\to \R$ is continuous and ergodic, and the other assumptions in Theorem~\ref{jm} are also satisfied. Let $x\in X$ and set $a_n=\va^{(n)}(x)$, $n\geq0$. Then  each uniquely ergodic homeomorphism $R$ of a compact metric space $Z$  satisfies the strong  MOMO property along $([a_n])$.

In particular, each sequence $(f(R^{[a_n]}z))$ (with $f\in C(Z)\cap L^2_0\zdr$) is orthogonal to an arbitrary multiplicative function $\bfu$, $|\bfu|\leq1$.\end{Cor}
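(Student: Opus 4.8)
The plan is to deduce this integer-valued (automorphism) statement from the real-valued (flow) statement already packaged in Theorem~\ref{t:universalMOMO}, using the suspension construction of Subsection~\ref{SecSusp} as the bridge. Given a uniquely ergodic homeomorphism $R$ of a compact metric space $Z$, I would first form its suspension flow $\widetilde{R}=(\widetilde{R}_t)_{t\in\R}$ on $\widetilde{Z}=Z\times[0,1]/\!\sim$. By Remark~\ref{r:ueSF}, $\widetilde{R}$ is a continuous uniquely ergodic $\R$-action on a compact metric space, hence exactly the kind of object to which Theorem~\ref{t:universalMOMO} applies (with $G=\R$). Since $T,\va$ satisfy the hypotheses of that theorem, I immediately obtain that $\widetilde{R}$ satisfies the strong $\bfu$-MOMO property along $(a_n)$, where $a_n=\va^{(n)}(x)$.

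The core of the argument is to descend this property from $\widetilde{R}$ on $\widetilde{Z}$ to $R$ on $Z$. Fix $f\in C(Z)\cap L^2_0\zdr$ and $\delta>0$. I would choose a continuous $F$ on $\widetilde{Z}$ of the product form $F(z,s)=\chi_\delta(s)f(z)$, where $\chi_\delta$ is the piecewise-linear cut-off equal to $1$ on $[\delta,1-\delta]$, equal to $s/\delta$ on $[0,\delta]$ and to $(1-s)/\delta$ on $[1-\delta,1]$; then $\|F\|_\infty=\|f\|_\infty$, $F$ is well defined across the identification $(z,1)\sim(Rz,0)$ because $\chi_\delta$ vanishes at the endpoints, and $\int F\,d\widetilde{\mu}=0$ since $\int f\,d\rho=0$. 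Applying the strong MOMO property of $\widetilde{R}$ to $F$, for any block sequence $(b_k)$ with $b_{k+1}-b_k\to\infty$ and any points $z_k\in Z$, and evaluating the flow at time $a_n$ via~\eqref{zaw1}, i.e.\ $\widetilde{R}_{a_n}(z_k,0)=(R^{[a_n]}z_k,\{a_n\})$, yields
$$\frac1{b_{K+1}}\sum_{k\le K}\Big|\sum_{b_k\le n<b_{k+1}}F\big(R^{[a_n]}z_k,\{a_n\}\big)\bfu(n)\Big|\to0.$$

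It then remains to replace $F(R^{[a_n]}z_k,\{a_n\})$ by $f(R^{[a_n]}z_k)$. These agree whenever $\{a_n\}\in[\delta,1-\delta]$ by the defining property of $\chi_\delta$, and differ by at most $\|f\|_\infty$ otherwise, so the total replacement error is bounded by $\|f\|_\infty$ times the density of indices $n<b_{K+1}$ with $\{a_n\}\in[0,\delta)\cup(1-\delta,1)$. Here I would invoke that $(a_n)$ is ergodic, hence uniformly distributed modulo $1$ by Remark~\ref{r:ude}, so this density is $<3\delta$ for $K$ large. Taking $\limsup_K$ therefore gives a bound of $3\delta\|f\|_\infty$, and letting $\delta\to0$ forces the limit to be $0$, which is precisely the strong $\bfu$-MOMO property of $R$ along $([a_n])$. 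The ``in particular'' clause follows by the standard reduction of plain orthogonality to strong MOMO: taking $z_k\equiv z$ and a slowly increasing block sequence such as $b_k=[k^{1+\gamma}]$ (for which $b_{K+1}/b_K\to1$ and the block lengths are $o(b_{K})$) turns the MOMO estimate, after dropping the inner absolute values and filling the gaps between consecutive $b_k$ using $|f|\le\|f\|_\infty$, into $\frac1N\sum_{n\le N}f(R^{[a_n]}z)\bfu(n)\to0$.

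I expect the only genuinely delicate point to be the boundary bookkeeping in the third paragraph: the suspension faithfully encodes the iterate $R^{[a_n]}$, but the continuous observable $F$ must be deformed near the identification $s\in\{0,1\}$, and this deformation is invisible only for those $n$ whose fractional part $\{a_n\}$ stays away from the endpoints. Controlling the exceptional indices is exactly what the equidistribution of $(\{a_n\})$ provides, and the quantitative $O(\delta)$ bound together with the freedom to send $\delta\to0$ is what closes the gap.
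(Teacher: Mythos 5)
Your proposal is correct and follows essentially the same route as the paper: suspend $R$, apply Theorem~\ref{t:universalMOMO} (with $G=\R$) to the uniquely ergodic suspension flow, transfer the estimate via the same piecewise-linear cut-off observable $F(z,s)=\chi_\delta(s)f(z)$, and control the exceptional indices using equidistribution of $(\{a_n\})$ from Remark~\ref{r:ude}. The only additions are cosmetic: you make explicit the $\delta\to0$ limit and the standard block-sequence deduction of the ``in particular'' clause, both of which the paper leaves implicit.
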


\section{Affine cocycles over irrational rotations. AOP and the strong MOMO property}\label{s:AOPaf}
We continue to study the AOP property for automorphisms of the form $\tfs$, where $T$ is an irrational rotation. From now on, we assume that $\va(x)=x-\frac12=\{x\}-\frac12$. This map is considered as a cocycle over an irrational rotation by $\alpha$. (Our analysis is true for a general (zero mean) affine cocycle $\va(x)=mx+c$ with $0\neq m\in\Z$  but for simplicity of notation we will only consider the case $m=1$.)

The cocycle $\va\colon X\to\R$ is not continuous, so the first task will be to show how to bypass this inconvenience. In fact, $\va$ is a continuous cocycle by a slight extension of the base -- in Subsection~\ref{sturmianmodel} we will show that studying $\va$ over this extension still yields the results we are interested in.

\subsection{Lifting generic points in the Cartesian square}\label{sturmianmodel}
Assume that $T$ and $\widehat{T}$ are uniquely ergodic homeomorphisms of compact metric spaces $X$ and $\widehat{X}$, with the unique invariant measures $\mu$ and $\widehat{\mu}$, respectively. Assume moreover, that $\pi\colon \widehat{X}\to X$ is continuous and $T\circ \pi = \pi \circ \widehat{T}$, in particular, $(X,T)$ is a topological factor of $(\widehat{X},\widehat{T})$.

\begin{Prop}\label{p:d11}
Assume that $(\widehat{X},\widehat{\mu},\widehat{T})$ and $(X,\mu,T)$ are measure-theoretically isomorphic. Assume that $T$ is measure-theoretically coalescent.\footnote{$T$ is called coalescent if each measure-preserving map $W$ on $(X,\mu)$ commuting with $T$ is invertible. Equivalently, for each factor $\ca\subset\cb$ if $T|_{\ca}$ is isomorphic to $T$ then $\ca=\cb$.} Finally, assume that each pair $(x,y)\in X\times X$ is generic for an ergodic $T\times T$-invariant measure. Then each pair $(\widehat{x},\widehat{y})\in \widehat{X}\times\widehat{X}$ is generic for some $\widehat{T}\times\widehat{T}$-invariant measure. Moreover, if $(\widehat{x},\widehat{y})\in \widehat{X}\times\widehat{X}$ is generic for $\widehat{\rho}$ then $(\widehat{X}\times\widehat{X},\widehat{\rho}, \widehat{T}\times\widehat{T})$ is isomorphic to $(X\times X,\rho,T\times T)$, where $\rho=(\pi\times\pi)_\ast (\widehat{\rho})$.
\end{Prop}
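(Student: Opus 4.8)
The plan is to first upgrade the three hypotheses into the single structural fact that the factor map $\pi$ is itself a measure-theoretic isomorphism, and then to push everything through $\pi\times\pi$. To begin, since $\pi$ intertwines $\widehat{T}$ and $T$, the pushforward $\pi_\ast\widehat{\mu}$ is $T$-invariant, so $\pi_\ast\widehat{\mu}=\mu$ by unique ergodicity of $T$; thus $\pi\colon(\widehat{X},\widehat{\mu},\widehat{T})\to(X,\mu,T)$ is a measure-theoretic factor map. Fix a measure-theoretic isomorphism $\Phi\colon(\widehat{X},\widehat{\mu},\widehat{T})\to(X,\mu,T)$ provided by the isomorphism hypothesis. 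Then $\pi\circ\Phi^{-1}$ is a measure-preserving self-map of $(X,\mu)$ commuting with $T$, so by coalescence of $T$ it is invertible, hence an automorphism of $(X,\mu,T)$. Consequently $\pi=(\pi\circ\Phi^{-1})\circ\Phi$ is a composition of two isomorphisms, and $\pi$ is a measure-theoretic isomorphism. Concretely, I fix Borel sets $\widehat{X}_0\subseteq\widehat{X}$ and $X_0\subseteq X$ with $\widehat{\mu}(\widehat{X}_0)=\mu(X_0)=1$ such that $\pi|_{\widehat{X}_0}\colon\widehat{X}_0\to X_0$ is a Borel bijection.

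The next step is the crucial measure-transfer observation. Let $(\widehat{x},\widehat{y})\in\widehat{X}\times\widehat{X}$ be arbitrary and put $(x,y)=(\pi\widehat{x},\pi\widehat{y})$. Any weak-$\ast$ accumulation point $\widehat{\rho}$ of the empirical measures $\frac1N\sum_{n\le N}\delta_{(\widehat{T}\times\widehat{T})^n(\widehat{x},\widehat{y})}$ is $\widehat{T}\times\widehat{T}$-invariant, so each of its one-dimensional marginals is $\widehat{T}$-invariant and hence equals $\widehat{\mu}$ by unique ergodicity. Therefore $\widehat{\rho}(\widehat{X}_0\times\widehat{X})=\widehat{\rho}(\widehat{X}\times\widehat{X}_0)=1$, whence $\widehat{\rho}(\widehat{X}_0\times\widehat{X}_0)=1$; in other words every such accumulation point is concentrated on the set where $\pi\times\pi$ restricts to the Borel bijection $\widehat{X}_0\times\widehat{X}_0\to X_0\times X_0$. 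This is precisely where the three hypotheses combine: coalescence together with the isomorphism yields genuine injectivity of $\pi$ on a full-$\widehat{\mu}$-measure set, while unique ergodicity of $\widehat{T}$ promotes this to full $\widehat{\rho}$-measure of the injectivity locus of $\pi\times\pi$, even though $\widehat{\rho}$ is typically far from a product measure.

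With this in hand both conclusions follow. For genericity, $(x,y)$ is by assumption generic for an ergodic $T\times T$-invariant measure $\rho$, and $\rho$ has marginals $\mu$ by unique ergodicity, so $\rho(X_0\times X_0)=1$. Since $\pi\times\pi$ is continuous and intertwines the two product systems, pushing the empirical measures forward gives $(\pi\times\pi)_\ast\widehat{\rho}=\rho$ for every accumulation point $\widehat{\rho}$. But a measure concentrated on $\widehat{X}_0\times\widehat{X}_0$ with prescribed image $\rho$ under the injective Borel map $\pi\times\pi$ is uniquely determined: for Borel $A\subseteq\widehat{X}_0\times\widehat{X}_0$ one has $\widehat{\rho}(A)=\rho\big((\pi\times\pi)(A)\big)$, a formula independent of the chosen subsequence. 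Hence all accumulation points coincide, the full sequence of empirical measures converges, and $(\widehat{x},\widehat{y})$ is generic for this common limit $\widehat{\rho}$.

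Finally, for the isomorphism statement, the map $\pi\times\pi$ is $\widehat{\rho}$-a.e.\ injective, being a Borel bijection on the full-measure set $\widehat{X}_0\times\widehat{X}_0$, satisfies $(\pi\times\pi)_\ast\widehat{\rho}=\rho$ with $\rho=(\pi\times\pi)_\ast(\widehat{\rho})$ as required, and conjugates $\widehat{T}\times\widehat{T}$ to $T\times T$; by the Lusin--Souslin theorem it is therefore a measure-theoretic isomorphism of $(\widehat{X}\times\widehat{X},\widehat{\rho},\widehat{T}\times\widehat{T})$ onto $(X\times X,\rho,T\times T)$. I expect the main obstacle to be the second paragraph: recognizing that the hypotheses force $\widehat{\rho}$ to live on the injectivity locus of $\pi\times\pi$, by feeding coalescence and the isomorphism hypothesis into ``$\pi$ is an isomorphism'' and then using unique ergodicity of $\widehat{T}$ to control the marginals of the nonproduct measure $\widehat{\rho}$. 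Once that is in place, the remainder is routine descriptive-set-theoretic bookkeeping.
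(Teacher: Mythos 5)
Your proof is correct and takes essentially the same route as the paper's: both arguments reduce the problem to showing that $\pi$ is essentially injective (you do this by composing with the isomorphism $\Phi^{-1}$ and invoking the map form of coalescence; the paper uses the equivalent factor-algebra form to get $\pi^{-1}(\cb(X))=\cb(\widehat{X})$ mod $\widehat{\mu}$), then use unique ergodicity of $\widehat{T}$ to force every accumulation point $\widehat{\rho}$ onto the injectivity locus $\widehat{X}_0\times\widehat{X}_0$, and finally recover $\widehat{\rho}$ from $\rho$ via the injective map $\pi\times\pi$, which yields both subsequence-independence (genericity) and the isomorphism. Since the two formulations of coalescence are equivalent (as the paper's own footnote states), the arguments are the same in substance.
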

\begin{proof}
Take $(\widehat{x},\widehat{y})\in \widehat{X}\times \widehat{X}$ and let $(x,y):=(\pi(\widehat{x}),\pi(\widehat{y}))$. Assume that
\begin{equation}\label{e1}
\frac1{N_k}\sum_{n\leq N_k}\delta_{(\widehat{T}\times \widehat{T})^n(\widehat{x},\widehat{y})}\to \widehat{\rho},\text{ when }k\to\infty.
\end{equation}
Since $(x,y)$ is generic, for some measure $\rho$, we have
$$
\frac1N\sum_{n\leq N}\delta_{(T\times T)^n(x,y)}\to \rho, \text{ when }N\to\infty.
$$
By the continuity of $\pi$, for $f,g\in C(X)$, we obtain
\begin{align*}
\int_{X\times X}f\ot g\ d((\pi\times\pi)_\ast(\widehat{\rho}))&=\int_{\widehat{X}\times \widehat{X}}(f\circ\pi)\ot(g\circ\pi)\,d\,\widehat{\rho}\\
&=\lim_{k\to\infty}\frac1{N_k}\sum_{n\leq N_k}f\circ \pi(\widehat{T}^n\widehat{x})\cdot g\circ \pi(\widehat{T}^n\widehat{y})\\
&=\lim_{k\to\infty}\frac1{N_k}\sum_{n\leq N_k}f(T^nx)\cdot g(T^ny)=\int_{X\times X}f\ot g\,d\rho.
\end{align*}
Hence $\rho=(\pi\times \pi)_\ast(\widehat{\rho})$.
By coalescence, $\pi^{-1}(\cb(X))=\cb(\widehat{X})$ (modulo $\widehat{\mu}$), so there exists a $T$-invariant set $\widehat{X}_0\subset \widehat{X}$ such that $\widehat{\mu}(\widehat{X}_0)=1$ and $\pi|_{\widehat{X}_0}$ is 1-1.
It follows that
$$
\widehat{\rho}(\widehat{X}_0\times \widehat{X}_0)=\widehat{\rho}((\widehat{X}\times \widehat{X}_0)\cap(\widehat{X}_0\times \widehat{X}))=1
$$
since $\widehat{\rho}(\widehat{X}\times \widehat{X}_0)=\widehat{\rho}(\widehat{X}_0\times \widehat{X})=\widehat{\mu}(\widehat{X}_0)=1$ ($(\widehat{X},\widehat{T})$ is uniquely ergodic, so the projections of $\widehat{\rho}$ on both coordinates are equal $\widehat{\mu}$). Since  $\pi$ is 1-1 on $\widehat{X}_0$,
$\pi\times \pi$ is 1-1 on $\widehat{X}_0\times \widehat{X}_0$.
Moreover, for Borel sets $\widehat{A},\widehat{B}\subset \widehat{X}$, we have
\begin{multline*}
\widehat{\rho}(\widehat{A}\times \widehat{B})=\widehat{\rho}((\widehat{A}\cap \widehat{X}_0)\times (\widehat{B}\cap \widehat{X}_0))\\
=\widehat{\rho}(\pi^{-1}(\pi(\widehat{A}\cap \widehat{X}_0))\times \pi^{-1}(\pi(\widehat{B}\cap \widehat{X}_0)))=\rho(\pi(\widehat{A}\cap \widehat{X}_0)\times \pi(\widehat{B}\cap \widehat{X}_0))
\end{multline*}
As $\widehat{X}_0$ depends only on $\widehat{\mu}$, it follows that $\widehat{\rho}$ does not depend on the choice of $(N_k)$ in~\eqref{e1}, i.e.\ $(\widehat{x},\widehat{y})$ is generic. Moreover, $(X\times X,\rho, T\times T)$ is isomorphic to $(\widehat{X}\times\widehat{X},\widehat{\rho}, \widehat{T}\times\widehat{T})$. In particular, $\widehat{\rho}$ is ergodic.
\end{proof}
\begin{Remark}
Proposition~\ref{p:d11} remains true for Cartesian products of two different systems (the proof is the same). More precisely, we consider uniquely ergodic extensions $\widehat T_i$ of $T_i$, $i=1,2$, where both $T_1$ and $T_2$ are coalescent and isomorphic to $\widehat T_1$ and $\widehat T_2$, respectively. In particular, we can apply this to powers $T^r,T^s$ knowing that each each measure-preserving map commuting with $T^k$ commutes with $T$,  for each $k\in\Z\setminus\{0\}$.
\end{Remark}

Coming back to our affine situation, we consider $X=\T$, $Tx=x+\alpha$, an irrational rotation.
Then we make $\varphi\colon\T\to\R$, $\varphi(x)=x-\frac12$ continuous by considering $\varphi$ in the coordinates given by a Sturmian system $\widehat X\subset\{0,1\}^{\Z}$ considered with the shift $\widehat T$. More precisely, $\widehat X$ is given by the closure of the names of $0$ for the partition $[0,1/2)$, $[1/2,1)$ of the circle, see \cite{Arnoux} for details. Now, the Sturmian system
$(\widehat{X},\widehat T)$ is uniquely ergodic (with the unique invariant measure $\widehat{\mu}$).  Set $\widehat{\va}(\widehat{x}):=\va(\pi(\widehat{x}))$ which is a continuous cocycle on $\widehat{X}$. Note that:
\begin{itemize}
\item the map $(\widehat{x},y)\mapsto (\pi(\widehat{x}),y)$  settles a natural factor map from  $\widehat T_{\widehat\va,\cs}$ to $\tfs$, for each $\cs$,
\item      $\widehat T_{\widehat\va,\cs}$ is uniquely ergodic whenever $\cs$ is uniquely ergodic (in view of Proposition~\ref{p:uniqueE}), and the map $(\widehat{x},y)\mapsto (\pi(\widehat{x}),y)$ becomes a measure-theoretic isomorphism.
\end{itemize}

Notice that
\beq\label{sumy} \widehat{\va}^{(n)}(\widehat{x})=\va^{(n)}(\pi(\widehat{x})).\eeq

Using Proposition~\ref{p:d11}, we can also see that if $(x,x)$ is a generic point for an ergodic  $T^r\times T^s$-invariant measure $\rho$ then so is $(\widehat x,\widehat x)$ for $\widehat T^r\times\widehat T^s$ ($\pi(\widehat x)=x$). Moreover,  if $((x,y),(x,y))$ is generic for $\left(\tfs\right)^r\times\left(\tfs\right)^s$ for a unique measure $\widetilde{\rho}$
satisfying $\widetilde{\rho}|_{X\times X}=\rho$, also
$((\widehat x,y),(\widehat x,y))$ is generic for $\left(\widehat T_{\widehat\va,\cs}\right)^r\times \left(\widehat T_{\widehat\va,\cs}\right)^s$.

\subsection{Ergodicity of $\va^{(r)}(r\cdot)$}
We assume that $\va(x)=\{x\}-\frac12$. Then for each $r\geq1$, we have
\begin{align*}
\va^{(r)}(rx)&=\sum_{j=0}^{r-1}\left(\{rx+j\alpha\}-\frac12\right)\\
&=\sum_{j=0}^{r-1}\left(rx+j\alpha-[rx-j\alpha]-\frac12\right)=r^2x-\sum_{j=0}^{r-1}[rx+j\alpha]+\frac{r(r-1)}2\alpha.
\end{align*}
It follows that
\beq\label{er1}
\va^{(r)}(rx)=\Theta_r(x)-\Psi_r(x),\eeq
where
\beq\label{er2}
\Theta_rx=r^2x-\frac{r^2}2,\;\Psi_r(x)=\sum_{j=1}^{r-1}[rx+j\alpha]-
\frac{r(r-1)}2(\alpha+1).\eeq
Note that the first cocycle is real valued, while the second one takes values in $\frac{r(r-1)}2\alpha+\Z$. Each of them is of bounded variation and has zero mean.

\begin{Remark} \label{rozwazania} Our main idea is now to consider $\Theta_r^{(q_n)}$ and $\Psi_r^{(q_n)}$, $n\geq1$, where $(q_n)$ is the sequence of denominators of $\alpha$. Then, by the Denjoy-Koksma inequality, the distributions of the above random variables are contained in a bounded subset of $\R$. It is well known, e.g.\ \cite{Le-Me-Na,Le-Pa-Vo}, that any limit point of $\left(\Theta_r^{(q_n)}\right)_\ast$ is an absolutely continuous measure. The values of $\Psi_r^{(q_n)}$, by passing to a subsequence if necessary, are taken in a fixed finite set. In other words, we are in the situation of Lemma~\ref{suma}.
\end{Remark}

Consider the set $\{\{j\alpha\} : j=0,1,\ldots,r-1\}$. Then, order this set so that:
$$
1-\{j_0\alpha\}<1-\{j_2\alpha\}<\ldots<1-\{j_{r-1}\alpha\}.$$
It follows that $\Psi_r$ is a step cocycle and the discontinuities of $\Psi_r$ are  the points
$$
\frac{k-\{j_0\alpha\}}r<\frac{k-\{j_1\alpha\}}r<\ldots<
\frac{k-\{j_{r-1}\alpha\}}r,\;k=1,\ldots,r.$$
At each of the above point the jump is equal to~1 or to $-(r-1)$.

\begin{Remark}\label{rozwazania1} Assume that $\alpha$ has bounded partial quotients. The crucial observation is that in this case the
points $\frac{k-\{j_\ell\alpha\}}r$, $k=1,\ldots,r$, $\ell=0,1,\ldots,r-1$ are badly approximated by $\alpha$, e.g.\ \cite{Co-Pi,Fr-Le-Le}. In other words, the intervals whose endpoints are the consecutive discontinuities of the cocycle $\Psi^{(q_n)}_r$ are all of length ``comparable'' with $\frac1{q_n}$. It easily follows that if $\kappa$ is a limit point of the set $\{\left(\Psi_r^{(q_n)}\right)_\ast : n\geq1\}$, then it has an atom at a  (even non-zero) member of the set~$F$ (see Remark~\ref{rozwazania}). \end{Remark}

By Remark~\ref{rozwazania} and Remark~\ref{rozwazania1}, we obtain that the topological support
of a limit point of  $\left\{\left(\Theta_r^{(q_n)}-\Psi_r^{(q_n)}\right)_\ast : n\geq1\right\}$ contains a translation of an uncountable subset of $\R$.
In this way, we have proved the following.

\begin{Prop}\label{p:ergRaff}
If $\alpha$ has bounded partial quotients, then $\va^{(r)}(r\cdot)$ is ergodic for each $r\geq1$.\end{Prop}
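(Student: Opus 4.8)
The plan is to prove ergodicity by showing that the group of essential values $E(\va^{(r)}(r\cdot))=\R$ and then invoking Proposition~\ref{p:essv}. Since $E(\va^{(r)}(r\cdot))$ is a closed subgroup of $\R$, and the only closed subgroups of $\R$ are $\{0\}$, the discrete groups $\lambda\Z$ ($\lambda>0$), and $\R$ itself, it is enough to exhibit an \emph{uncountable} subset of essential values. To manufacture essential values I would exploit the rigidity of the rotation $T$ along the sequence $(q_n)$ of denominators of $\alpha$: since $\|q_n\alpha\|\to0$ we have $T^{q_n}\to\mathrm{Id}$, so by Proposition~\ref{RigEssVal} every point of the topological support of any weak limit $\rho$ of $\big((\va^{(r)}(r\cdot))^{(q_n)}\big)_\ast(\mu)$ lies in $E(\va^{(r)}(r\cdot))$. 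Thus the whole task reduces to producing, along some subsequence of $(q_n)$, such a limit $\rho$ with uncountable support.

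First I would use the decomposition~\eqref{er1}--\eqref{er2}, which gives $(\va^{(r)}(r\cdot))^{(q_n)}=\Theta_r^{(q_n)}-\Psi_r^{(q_n)}$, and set $\varphi_n:=\Theta_r^{(q_n)}$ and $\psi_n:=-\Psi_r^{(q_n)}$, placing us precisely in the setting of Lemma~\ref{suma}. By the Denjoy-Koksma inequality both cocycles, being of bounded variation and of zero mean, have Birkhoff sums over $q_n$ confined to a fixed compact set, so after passing to a subsequence I may assume $(\varphi_n)_\ast\to\nu$, $(\psi_n)_\ast\to\kappa$ and $(\varphi_n+\psi_n)_\ast\to\rho$. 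Two structural facts are then decisive. As recalled in Remark~\ref{rozwazania}, $\Theta_r$ is (a multiple of) an affine cocycle, so any limit $\nu$ of $(\Theta_r^{(q_n)})_\ast$ is absolutely continuous and in particular has uncountable support. As recalled in Remark~\ref{rozwazania1}, the bounded partial quotients assumption forces the discontinuities of the step cocycle $\Psi_r$ to be badly approximable by $\alpha$; hence the level intervals of $\Psi_r^{(q_n)}$ have length comparable to $1/q_n$, the values of $\psi_n$ lie in a fixed finite set $F$, and the limit $\kappa$ is supported on $F$ (indeed with an atom at a possibly non-zero point).

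With these inputs the conclusion follows from Lemma~\ref{suma}(b) together with a pigeonhole argument. That part of the lemma provides, for every $g_0\in{\rm supp}\,\nu$, some $h_0\in{\rm supp}\,\kappa$ with $h_0+g_0\in{\rm supp}\,\rho$. Since ${\rm supp}\,\kappa\subset F$ is finite while ${\rm supp}\,\nu$ is uncountable, one value $h_0$ must be selected for an uncountable subset $S\subset{\rm supp}\,\nu$, and then $h_0+S\subset{\rm supp}\,\rho$ is uncountable. (Alternatively one fixes the atom $h_0$ of $\kappa$ furnished by Remark~\ref{rozwazania1} and applies part~(a) of Lemma~\ref{suma} to the conditional limit of $\Theta_r^{(q_n)}$ on $\{\Psi_r^{(q_n)}=-h_0\}$.) Either way ${\rm supp}\,\rho$ is uncountable, so $E(\va^{(r)}(r\cdot))\supset{\rm supp}\,\rho$ is an uncountable closed subgroup of $\R$, forcing $E(\va^{(r)}(r\cdot))=\R$; ergodicity then follows from Proposition~\ref{p:essv}.

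The hard part is not the combinatorial last step but the two measure-theoretic inputs about the limits $\nu$ and $\kappa$. The absolute continuity of any weak limit of $(\Theta_r^{(q_n)})_\ast$ is classical for affine cocycles but genuinely uses the arithmetic of $\alpha$; and, most of all, establishing that $\kappa$ lives on a fixed finite set and does not degenerate is where the hypothesis of bounded partial quotients is indispensable, since it is exactly this hypothesis that keeps the jumps of $\Psi_r$ away from the $\alpha$-orbit at scale $1/q_n$ and so prevents the mass of $(\Psi_r^{(q_n)})_\ast$ from spreading out. Lemma~\ref{suma} then does the work of transferring the uncountable support of $\nu$ into ${\rm supp}\,\rho$ after a single translation.
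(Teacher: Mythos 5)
Your proposal is correct and follows essentially the same route as the paper: the decomposition $\va^{(r)}(r\cdot)=\Theta_r-\Psi_r$ from~\eqref{er1}--\eqref{er2}, Denjoy--Koksma boundedness along the denominators $(q_n)$, the absolute continuity of limits of $\big(\Theta_r^{(q_n)}\big)_\ast$ together with the atom/finite-value structure of $\big(\Psi_r^{(q_n)}\big)_\ast$ under the bounded partial quotients hypothesis (Remarks~\ref{rozwazania} and~\ref{rozwazania1}), then Lemma~\ref{suma} to get an uncountable set in the support of the limit, and finally Proposition~\ref{RigEssVal}, the classification of closed subgroups of $\R$, and Proposition~\ref{p:essv}. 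Your explicit pigeonhole argument via part~(b) of Lemma~\ref{suma} merely spells out what the paper leaves implicit; there is no substantive difference.
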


It follows immediately that all cocycles of the form $\va^{(s)}(s\cdot+c)$ are also ergodic.

\subsection{Regularity of $(\va^{(r)}(r\cdot),\va^{(s)}(s\cdot))$}
Our aim will be now to study the cocycles
\beq\label{cocRS}
(\va^{(r)}(r\cdot),\va^{(s)}(s\cdot))=(\Theta_r,\Theta_s)
-(\Psi_r,\Psi_s)\eeq taking values in $\R^2$, $r,s\in\N$.
We will constantly assume that $\alpha$ has bounded partial quotients.

Note that $s^2x-\frac{s^2}2=\frac{s^2}{r^2}(r^2x-\frac{r^2}2)$, so $(\Theta_r,\Theta_s)(\T)\subset P_{r,s}$, where
$$
P_{r,s}:=\left\{\left(t,\frac{s^2}{r^2}t\right) : t\in\R\right\}
$$
is a closed subgroup of $\R^2$.
By examining the sequence $\left((\Theta_r,\Theta_s)^{(q_n)}\right)_\ast$, $n\geq1$, it is not hard to see that each limit point of such distributions is an absolutely continuous measure on the line $P_{r,s}$. It easily follows that $E((\Theta_r,\Theta_s))=P_{r,s}$. In order to study $(\Psi_r,\Psi_s)$, we need first the following observation.

\begin{Lemma}\label{rozklaRS} Assume that $(r,s)=1$ Then the non-zero discontinuity points:
$$
\frac{k-\{j_\ell\alpha\}}r, \frac{k'-\{j'_\ell\alpha\}}s,\;\ell=0,1,\ldots, r-1,$$
for $\Psi_r^{(q_n)}$ and $\Psi_s^{(q_n)}$, respectively, are pairwise different.
\end{Lemma}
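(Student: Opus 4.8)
The plan is to reduce the claim to a short Diophantine computation whose only inputs are the irrationality of $\alpha$ and the coprimality $(r,s)=1$; the bounded partial quotients hypothesis plays no role here. Recall that the listed points are exactly the discontinuities of the step cocycles $\Psi_r$ and $\Psi_s$, of the form $\frac{k-\{j\alpha\}}{r}$ with $1\le k\le r$, $0\le j\le r-1$ (and analogously with $s$), so "pairwise different'' means these two families, once we discard the point $0$, are disjoint and have no internal repetitions.

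First I would suppose, towards a contradiction, that a discontinuity of $\Psi_r$ meets one of $\Psi_s$ modulo $1$:
$$\frac{k-\{j\alpha\}}{r}\equiv\frac{k'-\{j'\alpha\}}{s}\pmod 1,\qquad 1\le k\le r,\ 1\le k'\le s,\ 0\le j\le r-1,\ 0\le j'\le s-1.$$
Multiplying by $rs$ shows that $s(k-\{j\alpha\})-r(k'-\{j'\alpha\})$ is an integer multiple of $rs$, and since $sk-rk'\in\Z$ this yields $s\{j\alpha\}-r\{j'\alpha\}\in\Z$. Substituting $\{j\alpha\}=j\alpha-[j\alpha]$ and $\{j'\alpha\}=j'\alpha-[j'\alpha]$ and discarding the integer $s[j\alpha]-r[j'\alpha]$, this says precisely that $(sj-rj')\alpha\in\Z$. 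As $\alpha$ is irrational, this forces $sj=rj'$.

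Next I would invoke $(r,s)=1$: from $sj=rj'$ one gets $r\mid j$ and $s\mid j'$, and the ranges $0\le j\le r-1$, $0\le j'\le s-1$ then leave only $j=j'=0$. Feeding this back, the coincidence becomes $\frac{k}{r}\equiv\frac{k'}{s}\pmod 1$, i.e.\ $sk\equiv rk'\pmod{rs}$; reducing modulo $r$ and using $(r,s)=1$ gives $r\mid k$, so $k=r$ (and $k'=s$), whence the common point is $\frac{r}{r}\equiv 0\pmod 1$. Thus the only possible coincidence sits at $0$, which is exactly the point excluded by "non-zero''. The same computation applied to two discontinuities coming from a single cocycle, together with the fact that $\{j\alpha\}\ne\{j'\alpha\}$ for $0\le j\ne j'\le r-1$ (again by irrationality), shows that the points of each family are themselves pairwise distinct away from $0$; hence all the nonzero discontinuity points are pairwise different.

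I do not anticipate a genuine obstacle here: the argument is elementary. The only places demanding care are the bookkeeping modulo $1$ — keeping explicit track of the integer produced when clearing the denominators $r$ and $s$, so that one correctly isolates $(sj-rj')\alpha\in\Z$ — and the range analysis $0\le j<r$, $0\le j'<s$ which, combined with coprimality, pins down $j=j'=0$ and thereby confines the unique coincidence to the origin.
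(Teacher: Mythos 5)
Your proof is correct and follows essentially the same route as the paper's: clear denominators by multiplying by $rs$, use irrationality of $\alpha$ to force $sj=rj'$, invoke $(r,s)=1$ together with the ranges $0\le j<r$, $0\le j'<s$ to pin down $j=j'=0$, and then observe that the only remaining coincidence is the excluded point $0$. Your treatment of the $j=j'=0$ case and of within-family repetitions is just a more explicit version of what the paper dismisses with ``similar argument works.''
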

\begin{proof}
Suppose that $\frac{m-\{j\alpha\}}r=\frac{u-\{k\alpha\}}s$ with $m,u\in\Z$. It follows that
$$
s(j\alpha-[j\alpha])-r(k\alpha-[k\alpha])=sm-ru,$$
whence $sj\alpha-rk\alpha\in\Z$. If $j\neq 0$ or $k\neq0$, this is possible only if $sj=rk$. But $(r,s)=1$ implies $s\divides k$ and $r\divides j$ and we seek solution for
$0\leq j<r$ and $0\leq k<s$, a contradiction. Similar argument works  in case $j=k=0$ and either $m\neq r$ or $u\neq s$. \end{proof}

It now again will follow from the analysis of limit points of the distributions $\left((\Psi_r,\Psi_s)^{(q_n)}\right)_\ast$, $n\geq1$, that there is a non-zero point in the topological support of a limit of such distributions. Taking into account the form of closed subgroups of $\R^2$, Lemma~\ref{suma} and setting
\beq\label{grupaHrs}
\ch_{r,s}:=E((\va^{(r)}(r\cdot),\va^{(s)}(s\cdot)))\eeq
we obtain the following.

\begin{Prop}\label{p:regularityRS}
Assume that $\alpha$ has bounded partial quotients and $(r,s)=1$.
Then there exists $0\neq d\in\R$ such that
$$
\ch_{r,s}=P_{r,s}+(d\Z,0).$$
In particular, $(\va^{(r)}(r\cdot),\va^{(s)}(s\cdot))$ is regular.
\end{Prop}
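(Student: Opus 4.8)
The plan is to compute $\ch_{r,s}=E(\Phi)$, where I abbreviate $\Phi:=(\va^{(r)}(r\cdot),\va^{(s)}(s\cdot))=(\Theta_r,\Theta_s)-(\Psi_r,\Psi_s)$, by exploiting the rigidity of $T$ along the denominators $(q_n)$ of $\alpha$ and combining Proposition~\ref{RigEssVal} with Lemma~\ref{suma}. Passing to a subsequence of $(q_n)$, I would arrange that the three relevant distributions converge simultaneously: $\big((\Theta_r,\Theta_s)^{(q_n)}\big)_\ast\to\nu$, $\big(-(\Psi_r,\Psi_s)^{(q_n)}\big)_\ast\to\kappa$ and $\big(\Phi^{(q_n)}\big)_\ast\to\rho$. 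By Remark~\ref{rozwazania}, $\nu$ is absolutely continuous and supported on the line $P_{r,s}$, while $\kappa$ is supported on a finite set $F$; and by Proposition~\ref{RigEssVal}, $\operatorname{supp}\rho\subseteq\ch_{r,s}$. Since $\ch_{r,s}$ is automatically a closed subgroup of $\R^2$, the whole problem reduces to locating it among the (few) closed subgroups of $\R^2$ that contain the line $P_{r,s}$, using the constraints that Lemma~\ref{suma} places on $\operatorname{supp}\rho$ in terms of $\operatorname{supp}\nu$ and $F$.

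First I would show $P_{r,s}\subseteq\ch_{r,s}$. Applying part~(b) of Lemma~\ref{suma}, every $g_0\in\operatorname{supp}\nu$ satisfies $g_0+h_0\in\operatorname{supp}\rho\subseteq\ch_{r,s}$ for some $h_0\in F$. As $F$ is finite and $\operatorname{supp}\nu$ has positive one–dimensional Lebesgue measure on $P_{r,s}$ (being the support of an absolutely continuous measure), a single $h_0\in F$ works on a positive–measure subset $S\subseteq\operatorname{supp}\nu$; then $S-S\subseteq\ch_{r,s}$, and by the Steinhaus theorem $S-S$ contains a neighbourhood of $0$ in $P_{r,s}$. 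A closed subgroup containing a segment of the line $P_{r,s}$ contains all of it, whence $P_{r,s}\subseteq\ch_{r,s}$.

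The main obstacle is the second step: producing an essential value off the line, i.e.\ showing $\ch_{r,s}\supsetneq P_{r,s}$. By part~(a) of Lemma~\ref{suma} together with $P_{r,s}\subseteq\ch_{r,s}$, any $h_0\in\operatorname{supp}\kappa$ already lies in $\ch_{r,s}$; conversely, if $\operatorname{supp}\kappa\subseteq P_{r,s}$ then part~(a) forces $\operatorname{supp}\rho\subseteq P_{r,s}$ and hence $\ch_{r,s}=P_{r,s}$. Thus I must exhibit a point of $\operatorname{supp}\kappa$ lying \emph{off} $P_{r,s}$, and this is exactly where Lemma~\ref{rozklaRS} is indispensable: because the discontinuities of $\Psi_r^{(q_n)}$ and of $\Psi_s^{(q_n)}$ are pairwise distinct, the two coordinates of $(\Psi_r,\Psi_s)^{(q_n)}$ jump independently, so on a set of positive measure one coordinate can sit at a nonzero atom of its marginal (which exists by Remark~\ref{rozwazania1}) while the other takes an uncorrelated value; the resulting joint atom need not satisfy the rational-slope relation defining $P_{r,s}$. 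Making this decoupling quantitative — tracking which level sets between consecutive merged discontinuities survive in the limit, and checking that the surviving atom is genuinely off $P_{r,s}$ — is the technical heart of the argument.

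Finally I would pin down the transverse part exactly and deduce regularity. Consider the homomorphism $\pi\colon\R^2\to\R$, $\pi(x,y)=s^2x-r^2y$, whose kernel is $P_{r,s}$. The $\Theta$-terms cancel, so $\pi\circ\Phi=r^2\Psi_s-s^2\Psi_r$, which takes values in $\Z+c$ with $c=\tfrac{rs(r-s)}2\,\alpha$; since $rs(r-s)$ is always even, $N:=\tfrac{rs(r-s)}2\in\Z$, and reducing modulo $\Z$ the constant $c$ becomes $N\alpha=\theta\circ T-\theta$ with $\theta(x)=Nx$, a $\T$-coboundary. Using the inclusion $\overline{\pi(E(\Phi))}\subseteq E(\pi\circ\Phi)$ (as in the proof of Lemma~\ref{l:wanih}) and then reducing mod~$1$, I get $\pi(\ch_{r,s})\subseteq E(\pi\circ\Phi)\subseteq\Z$, a discrete group. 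Because $P_{r,s}=\ker\pi\subseteq\ch_{r,s}$, we have $\ch_{r,s}=\pi^{-1}(\pi(\ch_{r,s}))$, and by Step~B the group $\pi(\ch_{r,s})$ is a \emph{nontrivial} subgroup of $\Z$, say $s^2d\Z$ with $d\neq0$. Hence $\ch_{r,s}=\pi^{-1}(s^2d\Z)=P_{r,s}+(d\Z,0)$, as claimed. In particular $\R^2/\ch_{r,s}\cong\R/s^2d\Z$ is compact, so $\ch_{r,s}$ is cocompact and $\Phi$ is therefore regular.
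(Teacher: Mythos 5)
Your overall architecture coincides with the paper's: a lower bound for $\ch_{r,s}$ obtained from limit distributions along $(q_n)$ via Proposition~\ref{RigEssVal} and Lemma~\ref{suma}, an upper bound killing the direction transverse to $P_{r,s}$, and then the classification of closed subgroups of $\R^2$ containing a line. Your Step~A (the Steinhaus argument giving $P_{r,s}\subseteq\ch_{r,s}$) is correct and in fact more detailed than what the paper writes, and your Step~C is correct and is essentially the paper's own upper bound in additive disguise: the paper exhibits the character $\widetilde\chi(u,v)=e^{2\pi i(2s^2u-2r^2v)}$ lying in $\Lambda_{(\va^{(r)}(r\cdot),\va^{(s)}(s\cdot))}$, so that $\ch_{r,s}\subseteq\ker\widetilde\chi\neq\R^2$ by Lemma~\ref{l:wanih}; this rests on exactly your cancellation $s^2\Theta_r-r^2\Theta_s=0$ and on $r^2\Psi_s-s^2\Psi_r$ taking values in $\frac{rs(r-s)}{2}\alpha+\Z$.

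The genuine gap is your Step~B, and you flag it yourself (``the technical heart''). Asserting that the joint atom ``need not satisfy the rational-slope relation defining $P_{r,s}$'' is not a proof that some atom violates it; moreover, a single non-zero atom of $\kappa$ could a priori lie on $P_{r,s}$ (integer points of the form $(r^2k,s^2k)$ are not excluded by the Denjoy--Koksma bound on the values of $(\Psi_r,\Psi_s)^{(q_n)}$). The step is actually completed --- and this is what Lemma~\ref{rozklaRS} together with bounded partial quotients buys --- via \emph{differences of adjacent atoms} rather than a single atom: the discontinuities of the pair cocycle $(\Psi_r,\Psi_s)^{(q_n)}$ are the $T^{-j}$-preimages, $0\le j<q_n$, of the discontinuities of $\Psi_r$ and of $\Psi_s$; by Lemma~\ref{rozklaRS} those coming from $\Psi_r$ and from $\Psi_s$ are distinct, and since all of them lie in $\frac{1}{rs}(\Z+\Z\alpha)$ and $\alpha$ has bounded partial quotients, any two of them are at distance at least $c/q_n$ (this is the two-dimensional upgrade of Remark~\ref{rozwazania1}, which as stated concerns $\Psi_r$ alone). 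Hence every level interval of $(\Psi_r,\Psi_s)^{(q_n)}$ has length at least $c/q_n$, so the two values taken on the intervals adjacent to a discontinuity contributed by $\Psi_r$ alone both survive as atoms of the limit $\kappa$ with mass at least $c$, and these two values differ by $(1,0)$ or $(-(r-1),0)$ --- a non-zero horizontal vector, which never lies on $P_{r,s}$. Applying Lemma~\ref{suma}(a) to each of these two atoms, subtracting, and using that $\ch_{r,s}$ is a group containing $P_{r,s}$ (your Step~A), this horizontal vector lies in $\ch_{r,s}$, which is precisely the element off the line that your Step~C needs. Without carrying this out, your final assertion ``by Step~B the group $\pi(\ch_{r,s})$ is a nontrivial subgroup of $\Z$'' is unsupported, and the proof is incomplete. (A side remark: your claim that $\mathrm{supp}\,\kappa\subseteq P_{r,s}$ would force $\ch_{r,s}=P_{r,s}$ does not follow from Lemma~\ref{suma}(a) either, but it is only motivational and nothing in the proof depends on it.)
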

\begin{proof} We have already shown that for some $d'\neq0$, the group $P_{r,s}+(d'\Z,0)$ is contained in the group of essential values of the cocycle under consideration. Hence, we only need to notice that $(\va^{(r)}(r\cdot),\va^{(s)}(s\cdot))$ is not ergodic. Indeed, let $\widetilde{\chi}(u,v)=e^{2\pi i(2s^2u-2r^2v)}$. It is not hard to see that
\beq\label{nieerg}
\raz\neq\widetilde\chi\in \Lambda_{(\va^{(r)}(r\cdot),\va^{(s)}(s\cdot))}.\eeq
The result follows.
\end{proof}

\begin{Prop}\label{p:WLanih}
Under the above assumptions on $\alpha$, $r$ and $s$, we have
$$
\ch_{r,s}^\perp\subset \Q\times \Q.$$
\end{Prop}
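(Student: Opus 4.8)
The plan is to exploit Pontryagin duality in $\widehat{\R^2}\cong\R^2$ together with the explicit description of $\ch_{r,s}$ coming from Proposition~\ref{p:regularityRS}. First I would compute the annihilator of the ``continuous part''. Since $P_{r,s}=\{(t,\tfrac{s^2}{r^2}t):t\in\R\}$, a character $(a,b)\in\widehat{\R^2}$, i.e.\ $(u,v)\mapsto e^{2\pi i(au+bv)}$, annihilates $P_{r,s}$ if and only if $at+b\tfrac{s^2}{r^2}t\in\Z$ for every $t\in\R$; as the left-hand side is linear in $t$, this forces $ar^2+bs^2=0$, so that
\[
P_{r,s}^\perp=\{(a,b)\in\R^2:ar^2+bs^2=0\}=\R\cdot(s^2,-r^2),
\]
a one-dimensional subspace (a line) in $\R^2$. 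By \eqref{grupaHrs} and Proposition~\ref{p:regularityRS} we have $P_{r,s}\subset\ch_{r,s}$, and both groups are closed ($P_{r,s}$ is a line and $\ch_{r,s}=E(\cdots)$ is closed). Taking annihilators reverses inclusions, hence $\ch_{r,s}^\perp\subset P_{r,s}^\perp=\R\cdot(s^2,-r^2)$, and since the annihilator of a subgroup is always closed, $\ch_{r,s}^\perp$ is a closed subgroup of the line $\R\cdot(s^2,-r^2)\cong\R$.

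Next I would pin down which closed subgroup of this line it is, and for that I need two facts. On the one hand $\ch_{r,s}^\perp\neq\{(0,0)\}$: by \eqref{nieerg} we have $\raz\neq\widetilde\chi\in\Lambda_{(\va^{(r)}(r\cdot),\va^{(s)}(s\cdot))}$ with $\widetilde\chi=(2s^2,-2r^2)$, and Lemma~\ref{l:wanih} (eigenvalues of the Mackey action annihilate the essential values) gives $\widetilde\chi\in E((\va^{(r)}(r\cdot),\va^{(s)}(s\cdot)))^\perp=\ch_{r,s}^\perp$; note that $(2s^2,-2r^2)=2\cdot(s^2,-r^2)$, i.e.\ the parameter value $\lambda=2$ on our line. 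On the other hand $\ch_{r,s}^\perp\neq P_{r,s}^\perp$: Proposition~\ref{p:regularityRS} provides $0\neq d\in\R$ with $(d,0)\in\ch_{r,s}$, and $(d,0)\notin P_{r,s}$ (as $P_{r,s}$ meets the first axis only at the origin), so $P_{r,s}\subsetneq\ch_{r,s}$, which dualizes (both groups being closed) to the strict inclusion $\ch_{r,s}^\perp\subsetneq P_{r,s}^\perp$. Thus $\ch_{r,s}^\perp$ is a nontrivial \emph{proper} closed subgroup of a line.

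Finally, under the identification $\lambda\mapsto\lambda\cdot(s^2,-r^2)$ of the line with $\R$, the group $\ch_{r,s}^\perp$ corresponds to a nontrivial proper closed subgroup of $\R$; since every such subgroup is discrete, it equals $c\Z$ for some $c>0$. As the parameter $\lambda=2$ lies in it, $2\in c\Z$, so $c=2/m$ for some $m\in\N$ and the admissible parameters form $\tfrac2m\Z\subset\Q$. Consequently every element of $\ch_{r,s}^\perp$ is of the form $\lambda\cdot(s^2,-r^2)$ with $\lambda\in\Q$, hence lies in $\Q\times\Q$, which is the assertion. (In passing this also shows $d\in\Q$: the explicit intersection $\ch_{r,s}^\perp=P_{r,s}^\perp\cap(d\Z,0)^\perp$ contains $(2s^2,-2r^2)$, forcing $2s^2=n/d$ for some integer $n$.)

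The computation is essentially routine once the structure is in place; the genuine inputs are the annihilator of the line $P_{r,s}^\perp$ and the two facts that make $\ch_{r,s}^\perp$ nontrivial and proper, namely the eigenvalue $\widetilde\chi$ from \eqref{nieerg} via Lemma~\ref{l:wanih}, and the extra essential value $(d,0)$ from Proposition~\ref{p:regularityRS}. The main point to watch is the direction of the inclusions under duality and the fact that a proper closed subgroup of $\R$ must be discrete, which is exactly what collapses the continuum of the line onto the rational lattice $\tfrac2m\Z\cdot(s^2,-r^2)$.
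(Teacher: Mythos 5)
Your proof is correct and takes essentially the same route as the paper: both arguments rest on exactly the two inputs you identify, namely $\widetilde\chi\in\ch_{r,s}^\perp$ (via \eqref{nieerg} and Lemma~\ref{l:wanih}) and the description $\ch_{r,s}=P_{r,s}+(d\Z,0)$ from Proposition~\ref{p:regularityRS}, which confine $\ch_{r,s}^\perp$ to the line $P_{r,s}^\perp$. The only divergence is the endgame: the paper computes directly (from $\widetilde\chi(d,0)=1$ it deduces $d\in\Q$, and then $\chi(d,0)=1$ forces $ud\in\Z$, hence $u\in\Q$, for every $\chi\in\ch_{r,s}^\perp$), while you reach the same conclusion by observing that $\ch_{r,s}^\perp$ is a nontrivial proper closed subgroup of a line, hence a discrete lattice containing the parameter $2$ --- an equivalent finish which in addition identifies $\ch_{r,s}^\perp$ as $\tfrac2m\Z\cdot(s^2,-r^2)$ for some $m\in\N$.
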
\begin{proof}
In view of Lemma~\ref{l:wanih}, it follows that (cf.~\eqref{nieerg} in the proof of Proposition~\ref{p:regularityRS})
\beq\label{dzi1}
\widetilde\chi\in\ch_{r,s}^\perp.
\eeq
Now,
$$\ch_{r,s}^\perp=\left(P_{r,s}+(d\Z,0)\right)^\perp\subset P_{r,s}^\perp=
\left\{e^{2\pi i(u,-\frac{r^2}{s^2}u)(\cdot,\cdot)} :  u\in\R\right\}.$$
By \eqref{dzi1}, $\widetilde{\chi}(d,0)=1$, whence
$e^{2\pi i 2s^2d}=1$ and hence $d\in\Q$. Now, $(d,0)\in \ch_{r,s}$ and if
$$
e^{2\pi i(du-\frac{r^2}{s^2}u\cdot 0)}=1$$
then $ud\in\Z$ and it follows that $u\in\Q$. Hence $(u,-\frac{r^2}{s^2}u)\in\Q\times\Q$ and the result follows.
\end{proof}

\subsection{Regularity of $(\va^{(r)}(r\cdot),\va^{(s)}(s\cdot+c))$}
\begin{Prop}\label{p:erg+reg}
Assume that $\alpha$ has bounded partial quotients and $(r,s)=1$.
Then:
\begin{enumerate}[(i)]
\item
if $c\notin \Q\alpha+\Q$ then the cocycle $(\va^{(r)}(r\cdot),\va^{(s)}(s\cdot+c))$ is ergodic,
\item
if $c\in \Q\alpha+\Q$ then the cocycle $(\va^{(r)}(r\cdot),\va^{(s)}(s\cdot+c))$ is  regular, its group
of essential values $\ch_{r,s,c}$ is cocompact of the form $P_{r,s}+(d\Z,0)$ and $\ch_{r,s,c}^\perp\subset\Q\times\Q$.
\end{enumerate}
\end{Prop}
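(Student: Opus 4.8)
The plan is to copy the structure of Propositions~\ref{p:regularityRS} and~\ref{p:WLanih}, reducing everything to a single real cocycle transverse to the line $P_{r,s}$, and only then to let the arithmetic of $c$ decide between ergodicity and cocompact regularity. Writing $\va^{(s)}(s\cdot+c)=\Theta_s-\Psi_s^{c}$, one checks that the smooth part $\Theta_s x=s^2x-\tfrac{s^2}2$ does \emph{not} depend on $c$ (shifting by $c$ only moves the jumps of the step part $\Psi_s^{c}$), so that
$$(\va^{(r)}(r\cdot),\va^{(s)}(s\cdot+c))=(\Theta_r,\Theta_s)-(\Psi_r,\Psi_s^{c}),$$
with $(\Theta_r,\Theta_s)(\T)\subseteq P_{r,s}$. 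Exactly as in Proposition~\ref{p:regularityRS}, the absolute continuity of the limits of $((\Theta_r,\Theta_s)^{(q_n)})_\ast$ together with Lemma~\ref{suma} puts the whole line $P_{r,s}$ into $\ch_{r,s,c}$. Hence $P_{r,s}\subseteq\ch_{r,s,c}$ in both cases, so $\ch_{r,s,c}$ is one of $P_{r,s}$, $P_{r,s}+v\Z$, or $\R^2$; equivalently, since $P_{r,s}\subseteq\ch_{r,s,c}$, the essential values of the transverse cocycle
$$\Xi_c:=s^2\Psi_r-r^2\Psi_s^{c}=-s^2\va^{(r)}(r\cdot)+r^2\va^{(s)}(s\cdot+c)$$
(the smooth parts cancel because $s^2\Theta_r=r^2\Theta_s$) are exactly $\ch_{r,s,c}/P_{r,s}\subseteq\R\cong\R^2/P_{r,s}$. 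Thus the full cocycle is ergodic iff $\Xi_c$ is, and — as $\Xi_c$ is always regular (its group of essential values is $\{0\}$, some $d\Z$, or $\R$) — this happens iff no $\lambda\neq0$ makes $e^{2\pi i\lambda\Xi_c}$ a $T$-coboundary.

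The computation driving both parts is the following: using $\{y\}=y-[y]$ and $\tfrac{rs(s-r)}2\in\Z$,
$$\Xi_c=n(x)+C_0,\qquad n(x):=s^2\sum_{i=0}^{r-1}[rx+i\alpha]-r^2\sum_{i=0}^{s-1}[sx+c+i\alpha]\in\Z,$$
with $C_0=\tfrac{rs(s-r)}2(\alpha+1)+r^2sc$. So $\Xi_c$ is integer-valued up to the constant $C_0$; its jumps are integers, and since $(r,s)=1$ the jump sizes $s^2$ and $-r^2$ already generate $\Z$.

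For part (ii), $c\in\Q\alpha+\Q$ keeps all discontinuities of $\Psi_r$ and $\Psi_s^{c}$ badly approximable, so the atom analysis of Remark~\ref{rozwazania1} produces a non-zero atom and hence $P_{r,s}+(d'\Z,0)\subseteq\ch_{r,s,c}$. Non-ergodicity is witnessed by a power of $\widetilde\chi(u,v)=e^{2\pi i(2s^2u-2r^2v)}$: for $\lambda\in\Z$ one has $e^{2\pi i\lambda\Xi_c}=e^{2\pi i\lambda C_0}$, and because $c\in\Q\alpha+\Q$ a suitable integer $\lambda$ makes $\lambda r^2sc\in\Z+\Z\alpha$, turning $e^{2\pi i\lambda C_0}$ into a $T$-eigenvalue; thus $\widetilde\chi^{M}\in\Lambda_{(\va^{(r)}(r\cdot),\va^{(s)}(s\cdot+c))}$. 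With the subgroup structure of $\R^2$ this forces $\ch_{r,s,c}=P_{r,s}+(d\Z,0)$, which is cocompact, and then $\ch_{r,s,c}^\perp\subseteq\Q\times\Q$ follows verbatim as in Proposition~\ref{p:WLanih} (from $\widetilde\chi\in\ch_{r,s,c}^\perp\subseteq P_{r,s}^\perp$).

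For part (i), $c\notin\Q\alpha+\Q$, I must rule out every $\lambda\neq0$. If $\lambda\in\Z\setminus\{0\}$ then $e^{2\pi i\lambda n}=1$, so $e^{2\pi i\lambda\Xi_c}=e^{2\pi i\lambda C_0}$ is a constant, which is a coboundary only if it is a $T$-eigenvalue $e^{2\pi im\alpha}$; this forces $\lambda r^2sc\in\Z+\Z\alpha$, i.e.\ $c\in\Q\alpha+\Q$, a contradiction. If $\lambda\notin\Z$ and $e^{2\pi i\lambda\Xi_c}=\zeta\circ T/\zeta$, then along the denominators $q_n$ one has $e^{2\pi i\lambda\Xi_c^{(q_n)}}\to1$ in measure while Denjoy--Koksma keeps $\Xi_c^{(q_n)}$ bounded; any weak limit $\rho$ of its distributions is therefore supported in $\tfrac1\lambda\Z$. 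On the other hand $\Xi_c^{(q_n)}\in q_nC_0+\Z$, and the bad-approximation analysis (Remark~\ref{rozwazania1}) — legitimate because $c\notin\Q\alpha+\Q$ makes the two families of discontinuities of $\Psi_r$ and $\Psi_s^{c}$ distinct (an analogue of Lemma~\ref{rozklaRS}) — shows that $\rho$ carries atoms whose mutual differences realize the jumps of $\Xi_c$, and hence generate $\Z$. As these differences lie in $\tfrac1\lambda\Z$ we obtain $\Z\subseteq\tfrac1\lambda\Z$, i.e.\ $\lambda\in\Z$, again a contradiction. Hence $\Xi_c$, and with it the full cocycle, is ergodic. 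The main obstacle is exactly this last point: showing that the limiting distribution of $\Xi_c^{(q_n)}$ genuinely exhibits atoms whose spacings exhaust all of $\Z$. This is where the three-distance/Ostrowski estimates for bounded partial quotients do the real work, and it plays the role that the smoothness hypothesis plays in Theorem~\ref{tw:slabemieszanie}.
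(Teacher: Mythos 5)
Your part (ii) is essentially the paper's own argument (the paper reduces it to the case $c=0$, i.e.\ to Propositions~\ref{p:regularityRS} and~\ref{p:WLanih}), and your treatment of integer $\lambda$ in part (i) is correct: the identity $\Xi_c=n(x)+C_0$ and the arithmetic forcing $\lambda r^2sc\in\Z\alpha+\Z$ are exactly right. The genuine gap is in part (i), and it is twofold. First, the bridge from ``no $\lambda\neq0$ makes $e^{2\pi i\lambda\Xi_c}$ a $T$-coboundary'' to ``$\Xi_c$ is ergodic'' is invalid as justified: triviality of $\Lambda(\Xi_c)$ says only that the Mackey action of $\Xi_c$ is \emph{weakly mixing}, not trivial, and this implies ergodicity only for \emph{regular} cocycles. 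Your parenthetical justification of regularity (``its group of essential values is $\{0\}$, some $d\Z$, or $\R$'') proves nothing: non-regular real cocycles with $E=\{0\}$ and weakly mixing Mackey action exist, so knowing the list of closed subgroups of $\R$ is irrelevant. Regularity here would have to come from cocompactness of $E(\Xi_c)$, i.e.\ from first exhibiting a non-zero essential value -- which in case (i) you never do unconditionally. Second, the step you yourself flag as the ``main obstacle'' -- that weak limits of $(\Xi_c^{(q_n)})_\ast\mu$ carry atoms whose differences generate $\Z$ -- cannot be obtained from Remark~\ref{rozwazania1}: the bad approximability invoked there (from \cite{Co-Pi,Fr-Le-Le}) concerns points of $\Q\alpha+\Q$, whereas the discontinuities of $\Psi_s^{c}$ are the points $(k-\{c+j\alpha\})/s$, which for an arbitrary $c\notin\Q\alpha+\Q$ may be approximated arbitrarily fast by the $\alpha$-orbit of the discontinuities of $\Psi_r$. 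For such $c$ the two families of jumps can merge asymptotically (a jump $s^2$ followed by $-r^2$ within an interval of length $o(1/q_n)$), so a limit distribution may only register the combined jump $s^2-r^2$; mere \emph{distinctness} of the discontinuities (your analogue of Lemma~\ref{rozklaRS}) does not prevent this.

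The paper's proof of (i) is designed precisely to avoid any jump analysis: composing with integer characters $(a,b)\in\Z\times\Z$ kills the step parts outright, and the hypothesis $c\notin\Q\alpha+\Q$ enters only through the \emph{drift}. In your notation: $\Xi_c^{(q_n)}\in q_nC_0+\Z$ with $q_nC_0\equiv q_nr^2sc+o(1)\pmod 1$, and for $\alpha$ with bounded partial quotients and $r^2sc\notin\Q\alpha+\Q$ this sequence has infinitely many accumulation points mod~$1$ (Lemma~3 of \cite{Le-Me-Na}). Your own identity plus Denjoy--Koksma then closes the argument with no atoms and no regularity: along a subsequence with $q_nC_0\to d_0\pmod 1$, any weak limit of $(\Xi_c^{(q_n)})_\ast\mu$ is a probability measure supported in $(d_0+\Z)\cap[-V,V]$, where $V={\rm Var}\,\Xi_c$, so Proposition~\ref{RigEssVal} yields an essential value of $\Xi_c$ in $(d_0+\Z)\cap[-V,V]$ for every accumulation point $d_0$; since a proper closed subgroup $\{0\}$ or $d\Z$ of $\R$ meets $[-V,V]$ in a finite set, infinitely many distinct $d_0$ force $E(\Xi_c)=\R$ (equivalently, in the paper's two-dimensional formulation, essential values appear on infinitely many distinct translates of $P_{r,s}$, forcing $\ch_{r,s,c}=\R^2$). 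This gives ergodicity directly and is the route you should take; the ``three-distance/Ostrowski'' work you defer to is not needed anywhere, and in case (ii) it is the membership $c\in\Q\alpha+\Q$ that legitimizes Remark~\ref{rozwazania1}, exactly as in the $c=0$ case.
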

\begin{proof} Let us prove (i) first. For this aim, notice that
\beq\label{dzi2}
\va^{(s)}(sx+c)=\Theta_s(x)-(\Psi_s(x+c)-sc).\eeq
As before, we would like to study the limit points of distributions
$$\left((\Theta_r,\Theta_s)^{(q_n)}-
(\Psi_r,\Psi_s(\cdot+c/s))^{(q_n)}-(0,q_nsc)\right)_\ast,\;n\geq1.$$
However, we can also consider distributions on the circle, namely
$$
\left(e^{2\pi i((a,b)\cdot((\Theta_r,\Theta_s)^{(q_n)}-
(\Psi_r,\Psi(\cdot+c/s))^{(q_n)}+(0,q_nsc)))}\right)_\ast,$$
with $a,b\in\Z\setminus\{0\}$. Then
$$e^{2\pi i((a,b)\cdot((\Theta_r,\Theta_s)^{(q_n)}-
(\Psi_r,\Psi_s(\cdot+c))^{(q_n)}+(0,q_nsc)))}=
e^{2\pi i((a,b)\cdot((\Theta_r,\Theta_s)^{(q_n)}+
(0,q_nsc)))}.$$
Now, since $\alpha$ has bounded partial quotients and $bsc\notin\Q\alpha+\Q$, the sequence $(e^{2\pi iq_nbsc})$, $n\geq1$, has infinitely many accumulation points. If $d\in\T$ is any of these accumulation points, this means that there is a line $P_{r,s}+(dm,0)$ with some $m\neq\Z\setminus\{0\}$ meeting $\ch_{r,s,c}$ (cf.\ the proof of Lemma~3 in \cite{Le-Me-Na}). Since we have infinitely many $d$ at our disposal, $\ch_{r,s,c}=\R\times\R$.

The proof of (ii) is much the same as the proof in case $c=0$.
\end{proof}

\subsection{AOP property for Rokhlin extensions given by affine cocycles}
Using Corollary~\ref{c:jedynosc}, Proposition~\ref{p:erg+reg} and the fact that the group of eigenvalues of $\cs\ot\cs$ is the Cartesian square of the group of eigenvalues of $\cs$, we obtain the following result.

\begin{Th}\label{t:aopAFF}
Assume that $Tx=x+\alpha$ with $\alpha$ irrational of bounded partial quotients. Let $\va(x)=x-\frac12$. Let $\cs$ be any ergodic
flow acting on a  probability standard Borel space $\ycn$.
If the group of eigenvalues of $\cs$ on $L^2_0\ycn$ does not meet $\Q$ then $\tfs$ has the AOP property.
\end{Th}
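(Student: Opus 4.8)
The plan is to follow the proof of Theorem~\ref{jm} and Corollary~\ref{c:jm}, but since the fibre Mackey actions occurring here need not be weakly mixing, I would replace the appeal to Lemma~\ref{l8} by the finer joining classification furnished by Theorem~\ref{t:isoLambda} and Corollary~\ref{c:jedynosc}, feeding in the hypothesis that $\cs$ has no rational eigenvalue on $L^2_0\ycn$. As in the proof of Theorem~\ref{jm}, using $(\tfs)^k=(T^k)_{\va^{(k)},\cs}$ (see~\eqref{iteracja}) and the fact that the totally ergodic rotation $T$ already enjoys AOP, it suffices to verify~\eqref{aop2016} on test functions $f_1\ot g_1$, $f_2\ot g_2$ for which at least one $g_i\in L^2_0\ycn$, i.e.\ to show that $\int f_1\ot g_1\ot f_2\ot g_2\,d\kappa=0$ for every $\kappa\in J^e((\tfs)^r,(\tfs)^s)$ and all distinct primes $r\neq s$.

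For distinct primes we have $(r,s)=1$, so I would invoke the description of $J^e(T^r,T^s)$ from Subsection~\ref{specialcase}: every ergodic joining of $T^r$ and $T^s$ is a measure $\eta_c$ carried by the set $A_c$, and the coordinate change $J_c$ turns $(T^r\times T^s,\eta_c)_{\va^{(r)}\times\va^{(s)}}$ into $T_{\va^{(r)}(r\cdot)\times\va^{(s)}(s\cdot+c)}$ over the single base $(X,T)$ taken with the (ergodic) diagonal self-joining. Consequently any $\kappa\in J^e((\tfs)^r,(\tfs)^s)$ restricts over $X\times X$ to some $\eta_c$ and lies in $J(T^r_{\va,\cs},T^s_{\va,\cs};\eta_c)$. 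Since $\va^{(r)}(r\cdot)$ and $\va^{(s)}(s\cdot+c)$ are ergodic (Proposition~\ref{p:ergRaff}) and the pair $(\va^{(r)}(r\cdot),\va^{(s)}(s\cdot+c))$ is regular (Proposition~\ref{p:erg+reg}), Theorem~\ref{t:isoLambda} identifies this set with the simplex $\cm(Y\times Y;\ch_{r,s,c})$, where $\ch_{r,s,c}=E((\va^{(r)}(r\cdot),\va^{(s)}(s\cdot+c)))$.

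The crux is to show that each such simplex is the single point $\{\nu\ot\nu\}$, and here I would split on $c$. If $c\notin\Q\alpha+\Q$, then Proposition~\ref{p:erg+reg}(i) gives ergodicity, so $\ch_{r,s,c}=\R\times\R$ and the simplex is trivially a singleton. If $c\in\Q\alpha+\Q$, then Proposition~\ref{p:erg+reg}(ii) gives that $\ch_{r,s,c}$ is cocompact and $\ch_{r,s,c}^\perp\subset\Q\times\Q$. Since the group of eigenvalues of $\cs\ot\cs$ is the Cartesian square of that of $\cs$, and $\cs$ has no eigenvalue in $\Q$ on $L^2_0\ycn$, any pair of eigenvalues of $\cs$ lying in $\ch_{r,s,c}^\perp\subset\Q\times\Q$ must be trivial; hence condition~(3) of Proposition~\ref{p:cherg} holds and $\left(\sigma_\cs\ot\sigma_\cs+\sigma_\cs\ot\delta_\raz+\delta_\raz\ot\sigma_\cs\right)(\ch_{r,s,c}^\perp)=0$. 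Cocompactness then lets me apply Corollary~\ref{c:jedynosc}, yielding $J(T^r_{\va,\cs},T^s_{\va,\cs};\eta_c)=\{\eta_c\ot\nu\ot\nu\}$. In both cases $\kappa$ is the relatively independent extension $\eta_c\ot\nu\ot\nu$, so its integral against $f_1\ot g_1\ot f_2\ot g_2$ factorizes and vanishes as soon as some $g_i\in L^2_0\ycn$ (cf.~\eqref{rieCALKA}); combined with the AOP of the base $T$ this gives the AOP property of $\tfs$, the fibre contribution in fact being zero for all distinct primes and the asymptotics entering only through $T$.

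The main obstacle is exactly the regular but non-ergodic case $c\in\Q\alpha+\Q$: there the fibre Mackey action carries nontrivial $L^\infty$-eigenvalues, so Lemma~\ref{l8} (hence the direct route of Theorem~\ref{jm}) is unavailable, and one genuinely needs the arithmetic input $\ch_{r,s,c}^\perp\subset\Q\times\Q$ (Propositions~\ref{p:WLanih} and~\ref{p:erg+reg}) matched against the ``no rational eigenvalue'' hypothesis on $\cs$ through Proposition~\ref{p:cherg}. A secondary point to keep straight is that Theorem~\ref{t:isoLambda} is applied over the single base $(X,T)$ with the diagonal self-joining after the coordinate change $J_c$, so that the group $\ch_\la$ appearing there really coincides with the $\ch_{r,s,c}$ computed in Subsection~\ref{s:AOPaf}.
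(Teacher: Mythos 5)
Your proposal is correct and follows essentially the same route as the paper: the paper's (one-sentence) proof of Theorem~\ref{t:aopAFF} is precisely the combination of Corollary~\ref{c:jedynosc}, Proposition~\ref{p:erg+reg}, and the fact that $e(\cs\ot\cs)=e(\cs)\times e(\cs)$, matched against the no-rational-eigenvalue hypothesis, which is exactly what you carry out. Your write-up merely makes explicit the steps the paper leaves implicit (the reduction via AOP of the base, the parametrization of $J^e(T^r,T^s)$ by $\eta_c$ from Subsection~\ref{specialcase}, and the case split on $c\in\Q\alpha+\Q$ versus $c\notin\Q\alpha+\Q$).
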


By replacing the base rotation $T$ by its Sturmian model $\widetilde{T}$, we can view $\va(x)=x-\frac12$ as a continuous function (see Subsection~\ref{sturmianmodel}). Then for each ergodic $\cs=(S_t)$ which has no non-trivial rational eigenvalues, we obtain $\tfs$ uniquely ergodic, enjoying the strong MOMO property.

\begin{Cor}\label{c:affine1} For each  $(y_k)\subset Y$, $f\in C(Y)$ and each sequence $(b_k)\subset\N$ with $b_{k+1}-b_k\to\infty$, we have
$$
\frac1{b_{K+1}}\sum_{k\leq K}\left|
\sum_{b_k\leq n<b_{k+1}}f(S_{\va^{(n)}(0)}y_k)\bfu(n)\right|\to0, \text{ when}\;K\to\infty,
$$
for each multiplicative $\bfu\colon\N\to\C$, $|\bfu|\leq1$. Here, $$\va^{(n)}(0)=\frac{n(n-1)}2\alpha-\frac n2-\sum_{j=1}^{n-1}[j\alpha].
$$
\end{Cor}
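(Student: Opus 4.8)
The statement is exactly the strong $\bfu$-MOMO property (Definition~\ref{d:momoG}) along $a_n=\va^{(n)}(0)$ for the flow $\cs$, so the plan is to obtain it from the AOP property through Proposition~\ref{AOPtoMOMO}, once $\va$ has been rendered continuous. First I would invoke Theorem~\ref{t:aopAFF}: since $\alpha$ has bounded partial quotients, $\va(x)=x-\tfrac12$, and $\cs$ satisfies the eigenvalue hypothesis (no non-trivial rational eigenvalue on $L^2_0\ycn$), the Rokhlin extension $\tfs$ has the AOP property. The obstruction to applying Proposition~\ref{AOPtoMOMO} directly is that $\va$ is discontinuous at $0$, so $\tfs$ is not a homeomorphism of a compact metric space.

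To bypass this I would pass to the Sturmian model (Subsection~\ref{sturmianmodel}): replace $T$ by the uniquely ergodic homeomorphism $\widehat T$ of $\widehat X$ with factor map $\pi\colon\widehat X\to\T$, and put $\widehat\va:=\va\circ\pi$, which is continuous. By Proposition~\ref{p:ergRaff} (taking $r=1$) the cocycle $\va$ is ergodic, hence so is $\widehat\va$, and Proposition~\ref{p:ueRE} shows that $\widehat T_{\widehat\va,\cs}$ is a uniquely ergodic homeomorphism of $\widehat X\times Y$ (using that $\cs$ is uniquely ergodic); moreover $(\widehat x,y)\mapsto(\pi(\widehat x),y)$ is a measure-theoretic isomorphism onto $\tfs$. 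As the AOP property is a measure-theoretic invariant, $\widehat T_{\widehat\va,\cs}$ inherits it. Proposition~\ref{AOPtoMOMO} then yields the strong $\bfu$-MOMO property for $\widehat T_{\widehat\va,\cs}$: for each $F\in C(\widehat X\times Y)$ with $\int F\,d(\widehat\mu\ot\nu)=0$, each $(b_k)$ with $b_{k+1}-b_k\to\infty$, and each choice of base points, the associated MOMO averages converge to $0$.

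To conclude I would specialize this to $F(\widehat x,y):=f(y)$ with $\int_Y f\,d\nu=0$ (so that $\int F\,d(\widehat\mu\ot\nu)=0$) and to the base points $(\widehat x_0,y_k)$, where $\widehat x_0$ is any $\pi$-preimage of $0\in\T$. Since $(\widehat T_{\widehat\va,\cs})^n(\widehat x_0,y_k)=(\widehat T^n\widehat x_0,S_{\widehat\va^{(n)}(\widehat x_0)}y_k)$ and, by~\eqref{sumy}, $\widehat\va^{(n)}(\widehat x_0)=\va^{(n)}(\pi(\widehat x_0))=\va^{(n)}(0)=a_n$, the MOMO average becomes precisely
\[
\frac1{b_{K+1}}\sum_{k\leq K}\left|\sum_{b_k\leq n<b_{k+1}}f(S_{a_n}y_k)\bfu(n)\right|,
\]
which therefore tends to $0$. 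The explicit expression for $a_n$ follows from the direct summation $\va^{(n)}(0)=\sum_{j=0}^{n-1}(\{j\alpha\}-\tfrac12)$. The main obstacle is precisely the discontinuity of $\va$ at the starting point $0$: one must make the cocycle continuous without altering the measure-theoretic system (so that AOP is preserved) while simultaneously ensuring that the Birkhoff sums along the chosen lift $\widehat x_0$ coincide with $\va^{(n)}(0)$. Both are delivered by the commuting factor map $\pi$ and the identity~\eqref{sumy}; the remainder is a formal specialization of Proposition~\ref{AOPtoMOMO}.
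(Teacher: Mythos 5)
Your proposal is correct and takes essentially the same route as the paper: invoke Theorem~\ref{t:aopAFF} for the AOP property, pass to the Sturmian model of Subsection~\ref{sturmianmodel} so that $\va$ becomes continuous and $\widehat T_{\widehat\va,\cs}$ is a uniquely ergodic homeomorphism (measure-theoretically isomorphic to $\tfs$, hence still AOP), and then apply Proposition~\ref{AOPtoMOMO} together with the identity~\eqref{sumy} to the function $F(\widehat x,y)=f(y)$ and base points over a lift of $0$. Your explicit insertion of the zero-mean hypothesis $\int_Y f\,d\nu=0$ (as in Definition~\ref{d:momoG} and Proposition~\ref{AOPtoMOMO}) is the correct reading of the corollary's statement.
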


Assume that $R$ is an ergodic automorphism of $\zdr$. Consider its suspension $\tilde{R}=(\tilde{R}_t)_{t\in\R}$, and change time in it:
$$
\ov{R}_t:=\widetilde R_{\beta t}.$$
The (additive) group of eigenvalues of $\ov{R}$ is the multiplication by $\beta$ of the group of eigenvalues of the suspension  (for the suspension the group of eigenvalues is the group for $R$ meant as a subgroup of $[0,1)$ and then we add $\Z$ to obtain an additive subgroup of $\R$).  We can choose $\beta$ so that the group of eigenvalues of $\ov R$ is hence disjoint from $\Q$ (mod the common element $0$).

By considering suspensions (and their change of time as above), we obtain:

\begin{Cor}\label{c:affine2} If $R$ is an ergodic automorphism of $\zdr$ and $\beta\in\R$ is such that
$\beta e(R)\subset\Q^c\cup\{0\}$ then
for each  $(z_k)\subset Z$, $f\in C(Z)$ and each sequence $(b_k)\subset\N$ with $b_{k+1}-b_k\to\infty$, we have
$$
\frac1{b_{K+1}}\sum_{k\leq K}\left|
\sum_{b_k\leq n<b_{k+1}}f(R^{[\beta\va^{(n)}(0)]}y_k)\bfu(n)\right|\to0, \;\text{when }K\to\infty,
$$
for each multiplicative $\bfu\colon\N\to\C$, $|\bfu|\leq1$.
\end{Cor}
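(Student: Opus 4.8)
The plan is to realise $\overline R$ as a continuous uniquely ergodic flow whose group of eigenvalues avoids $\Q$, apply Corollary~\ref{c:affine1} to it, and then push the resulting flow MOMO property down to the integer iterates of $R$ exactly as in the passage preceding Corollary~\ref{c:momoA}. First I would fix the dynamical set-up: take $R$ to be a uniquely ergodic homeomorphism of the compact metric space $Z$ (as is implicit in writing $f\in C(Z)$, $z_k\in Z$), form its suspension $\widetilde R=(\widetilde R_t)_{t\in\R}$ on $\widetilde Z$ and the time change $\overline R_t:=\widetilde R_{\beta t}$. By Remark~\ref{r:ueSF} the suspension is a continuous uniquely ergodic flow, and a constant time change by $\beta\neq0$ alters neither the phase space, nor the invariant measure $\widetilde\mu$, nor the unique ergodicity; hence $\overline R$ is again a continuous uniquely ergodic flow on the compact metric space $\widetilde Z$.

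Next I would verify the spectral hypothesis needed to invoke Corollary~\ref{c:affine1}. As recalled just before the statement, the additive eigenvalue group of the suspension is $e(R)+\Z$ (with $e(R)$ read as the frequencies of $R$ inside $[0,1)$), so the time change multiplies every frequency by $\beta$ and gives $e(\overline R)=\beta(e(R)+\Z)$. The assumption $\beta e(R)\subset\Q^c\cup\{0\}$, together with $\beta$ irrational, is precisely what guarantees that the non-zero part of $\beta(e(R)+\Z)$ misses $\Q$, i.e.\ that $\overline R$ has no non-trivial rational eigenvalue on $L^2_0$. Thus $\overline R$ is an ergodic flow of the type allowed in Theorem~\ref{t:aopAFF}, and Corollary~\ref{c:affine1} applies with $\cs=\overline R$: for every $F\in C(\widetilde Z)$ with $\int F\,d\widetilde\mu=0$, every $(\zeta_k)\subset\widetilde Z$ and every $(b_k)$ with $b_{k+1}-b_k\to\infty$,
$$\frac1{b_{K+1}}\sum_{k\leq K}\left|\sum_{b_k\leq n<b_{k+1}}F\bigl(\overline R_{\va^{(n)}(0)}\zeta_k\bigr)\bfu(n)\right|\to0.$$

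Finally I would transfer this to $R$ by repeating the suspension argument leading to Corollary~\ref{c:momoA}. Fix $f\in C(Z)$ with $\int f\,d\rho=0$ and $\delta>0$, and take the tent function $F\in C(\widetilde Z)$ with $\int F\,d\widetilde\mu=0$, $\|F\|_\infty=\|f\|_\infty$ and $F(z,s)=f(z)$ for $\delta\leq s\leq1-\delta$. Applying the displayed convergence with $\zeta_k=(z_k,0)$ and using $\overline R_{a_n}(z_k,0)=\bigl(R^{[\beta a_n]}z_k,\{\beta a_n\}\bigr)$ for $a_n:=\va^{(n)}(0)$, it remains to replace $F$ by $f$. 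This costs only the indices $n$ with $\{\beta a_n\}\in[0,\delta)\cup(1-\delta,1)$, whose density is below $3\delta$ once $K$ is large: here I would use that $(a_n)=(\va^{(n)}(0))$ is an ergodic sequence (Proposition~\ref{ergseq}, since $\va$ is ergodic by Proposition~\ref{p:ergRaff} with $r=1$), so that $\frac1N\sum_{n\leq N}e^{2\pi i\theta a_n}\to0$ for all $\theta\neq0$ and in particular $(\beta a_n)$ is uniformly distributed mod $1$. Letting $\delta\to0$ then yields
$$\frac1{b_{K+1}}\sum_{k\leq K}\left|\sum_{b_k\leq n<b_{k+1}}f\bigl(R^{[\beta\va^{(n)}(0)]}z_k\bigr)\bfu(n)\right|\to0,$$
which is the assertion.

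The hard part will be the second step. Since the affine cocycle does \emph{not} give AOP for all flows, Theorem~\ref{t:aopAFF} and the MOMO machinery are available only for flows whose eigenvalues on $L^2_0$ avoid $\Q$; the whole point of the time change is to move the non-trivial eigenvalue group $\beta(e(R)+\Z)$ off $\Q$, and one must check that the hypothesis on $\beta$ really does force $e(\overline R)\cap\Q=\{0\}$ on $L^2_0$. Once this is in place, the final transfer is routine, resting only on the uniform distribution of $(\beta a_n)$ mod $1$.
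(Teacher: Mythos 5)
Your proposal follows exactly the route the paper intends (its entire proof is the sentence ``by considering suspensions and their change of time as above''): suspend $R$, rescale time by $\beta$, apply Corollary~\ref{c:affine1} to the flow $\ov{R}$, and push the strong MOMO property down to the integer iterates of $R$ by the tent-function argument of Corollary~\ref{c:momoA}. The transfer step is handled correctly, including $\ov{R}_{a_n}(z_k,0)=(R^{[\beta a_n]}z_k,\{\beta a_n\})$ and the equidistribution of $(\beta a_n)$ mod~$1$ coming from ergodicity of the sequence $(\va^{(n)}(0))$ (strictly speaking, Proposition~\ref{ergseq} must be quoted in the Sturmian model of Subsection~\ref{sturmianmodel}, where $\va$ becomes continuous; this is the same detour the paper itself relies on).

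However, the spectral verification --- the step you yourself single out as the hard part --- is not correct as you state it. With $e(R)$ read as the frequency group inside $[0,1)$, the hypothesis $\beta e(R)\subset\Q^c\cup\{0\}$ together with $\beta\notin\Q$ does \emph{not} imply $\beta(e(R)+\Z)\cap\Q=\{0\}$. Counterexample: let $R$ be the circle rotation by $\theta=\sqrt2-1$ and $\beta=\sqrt2$. Then $k\theta\bmod 1=k\sqrt2-[k\sqrt2\,]$, so $\beta(k\theta\bmod 1)=2k-[k\sqrt2\,]\sqrt2$ is irrational for every $k\neq0$, and $\beta$ is irrational; yet $\beta(\theta+1)=2$, so $\ov{R}$ has the non-trivial rational eigenvalue $2$, and Theorem~\ref{t:aopAFF} and Corollary~\ref{c:affine1} cannot be invoked. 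The implication you need holds only if $e(R)$ in the hypothesis is read as the full additive group of frequencies in $\R$, i.e.\ already containing $\Z$ (this is how the paper's preceding paragraph constructs $e(\widetilde R)$ by ``adding $\Z$''); under that reading the hypothesis says verbatim that $e(\ov{R})=\beta e(R)$ misses $\Q\setminus\{0\}$, and irrationality of $\beta$ is automatic rather than an extra assumption. So the architecture of your proof coincides with the paper's, but the one non-routine point is settled by interpreting the hypothesis this way, not by the implication you assert; as written, that step fails for some systems satisfying your reading of the hypothesis.
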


Finally, by taking suspensions over the rotation by~1 on $\Z/\ell\Z$, we obtain the following.

\begin{Cor}\label{c:affine3} Let $\beta\in\Q^c$ be such that
$\beta e(R)\subset\Q^c\cup\{0\}$. Then,
for each sequence $(b_k)\subset\N$ with $b_{k+1}-b_k\to\infty$, we have
$$
\frac1{b_{K+1}}\sum_{k\leq K}\left|
\sum_{b_k\leq n<b_{k+1}}e^{2\pi i[\beta\va^{(n)}(0)]/\ell}\bfu(n)\right|\to0, \;\text{when }K\to\infty,
$$
for each multiplicative $\bfu\colon\N\to\C$, $|\bfu|\leq1$.
\end{Cor}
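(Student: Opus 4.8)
The plan is to obtain Corollary~\ref{c:affine3} as the special case of Corollary~\ref{c:affine2} in which the ergodic automorphism $R$ is taken to be the rotation by $1$ on the finite cyclic group $\Z/\ell\Z$, with a single eigenfunction playing the role of the observable $f$. Concretely, I would set $Z=\Z/\ell\Z=\{0,1,\dots,\ell-1\}$, a finite (hence compact metric) space carrying the normalized counting measure $\rho$, and let $Ri=i+1\bmod\ell$. This $R$ is an ergodic (indeed minimal and uniquely ergodic) automorphism, so it is a legitimate input to Corollary~\ref{c:affine2}.

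First I would identify the group of eigenvalues. The functions $f_k(i)=e^{2\pi i k i/\ell}$, $k=0,\dots,\ell-1$, are eigenfunctions of $R$ with eigenvalues $e^{2\pi ik/\ell}$, so, viewed inside $\R/\Z$ and lifted to $\R$ exactly as in the time-changed suspension construction preceding Corollary~\ref{c:affine2}, one has $e(R)=\tfrac1\ell\Z$. Since $\beta$ is irrational, $\beta\cdot\tfrac k\ell$ is irrational for every $k\neq0$, so $\beta e(R)\subset\Q^c\cup\{0\}$ holds automatically; the hypothesis imposed in the statement is therefore verified (in fact redundant for this particular $R$), which is precisely what is needed to invoke Corollary~\ref{c:affine2}.

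Next I would choose the observable and the base point. Take $f:=f_1\in C(Z)$, i.e.\ $f(i)=e^{2\pi i\,i/\ell}$; every function on the finite discrete space $Z$ is continuous, and $\int_Z f\,d\rho=\tfrac1\ell\sum_{i=0}^{\ell-1}e^{2\pi i\,i/\ell}=0$, so $f$ is an admissible zero-mean observable. Fixing $z_k=0$ for all $k$, and using $R^m0=m\bmod\ell$ for $m\in\Z$ together with the fact that $e^{2\pi im/\ell}$ depends only on $m\bmod\ell$, one obtains
$$
f\bigl(R^{[\beta\va^{(n)}(0)]}0\bigr)=e^{2\pi i[\beta\va^{(n)}(0)]/\ell}.
$$
Substituting this identity, together with the above choices of $R$, $f$ and $(z_k)$, into the conclusion of Corollary~\ref{c:affine2} reproduces verbatim the asserted convergence.

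The main point — rather than an obstacle — is simply to recognize that $e^{2\pi i[\beta\va^{(n)}(0)]/\ell}$ is literally the value of an eigenfunction of a finite rotation evaluated along the integer orbit $([\beta\va^{(n)}(0)])$, so that no further analytic content is required beyond the already-established Corollary~\ref{c:affine2}. The only place demanding a little care is the bookkeeping of how the eigenvalues of the finite rotation lift into $\R$ through the time-changed suspension used in Corollary~\ref{c:affine2}, so that the irrationality of $\beta$ translates correctly into the condition $\beta e(R)\subset\Q^c\cup\{0\}$.
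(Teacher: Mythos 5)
Your proposal is correct and is essentially identical to the paper's own (one-line) proof: the paper obtains Corollary~\ref{c:affine3} from Corollary~\ref{c:affine2} precisely ``by taking suspensions over the rotation by~$1$ on $\Z/\ell\Z$'', which is exactly your specialization. The details you fill in --- the lifted eigenvalue group $e(R)=\frac1\ell\Z$ making the hypothesis $\beta e(R)\subset\Q^c\cup\{0\}$ automatic for irrational $\beta$, the zero-mean character observable $f(j)=e^{2\pi i j/\ell}$, the base point $z_k=0$, and the identity $f(R^{[\beta\va^{(n)}(0)]}0)=e^{2\pi i[\beta\va^{(n)}(0)]/\ell}$ --- are the same ingredients the paper uses (compare the choice of $R$ and $\chi$ in the proof of Corollary~\ref{c:uonshort11}).
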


\subsection{The strong MOMO property for affine case}
In Subsection~\ref{sturmianmodel} we have considered $\tfs$ as a uniquely ergodic
automorphism (whenever $\cs$ is a uniquely ergodic flow). However, we have proved the AOP property of $\tfs$ only for those flows $\cs$ that have no non-trivial rational spectrum.

Consider a sequence of the form $(x,y_k)\subset X\times Y$.  We have
$$
\frac1{b_{K+1}}\sum_{k\leq K}\left(
\sum_{b_k\leq n<b_{k+1}}\delta_{\left(\tfs\right)^{rn}\times \left(\tfs\right)^{sn}((x,y_k),(x,y_k))}\right)\to\widetilde{\rho},$$
when $K\to\infty$. It follows from the above that
$$
\frac1{b_{K+1}}\sum_{k\leq K}\left(
\sum_{b_k\leq n<b_{k+1}}\delta_{\left(T^{rn}\times T^{sn}\right)(x,x)}\right)\to\rho,
$$
that is, $\widetilde{\rho}$ is a lift of $\rho$. But $(x,x)$ is a generic point for an ergodic measure (formally, we should have considered $\widehat{T}$ and $\widehat{\va}$), see Proposition~\ref{p:d11}), that is, $\rho\in J^e(T^r,T^s)$, so if we know that it has a unique extension to a joining of $\left(\tfs\right)^{r}$ and $\left(\tfs\right)^s$, the joining must be the relatively independent extension of $\rho$ (in other words, up to a permutation of coordinates, $\widetilde{\rho}=\rho\ot\nu\ot\nu$).

\begin{Lemma} If $\beta\notin\Q\alpha+\Q$ then $(\beta,\beta)$ is generic for an ergodic measure $\rho\in J(T^r,T^s)$, for which:
for an arbitrary uniquely ergodic $\cs$ acting on $\ycn$ the only $\widetilde{\rho}\in J(\tfs^r,\tfs^s)$, $\widetilde{\rho}|_{X\times X}=\rho$ is the measure $\rho\ot\nu\ot\nu$.
\end{Lemma}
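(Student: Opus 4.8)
The plan is to prove the two assertions separately: first that $(\beta,\beta)$ is generic for an ergodic self-joining $\rho\in J^e(T^r,T^s)$, and then that this $\rho$ admits a unique lift to a joining of $\tfs^r$ and $\tfs^s$. The second assertion will follow from Lemma~\ref{l8} applied to $T^r,T^s,\va^{(r)},\va^{(s)},\rho$, and the whole argument will be reduced, through the coordinate change $J_u$ of Subsection~\ref{specialcase}, to a single ergodicity statement about a shifted cocycle over $T$, which is exactly Proposition~\ref{p:erg+reg}(i). Throughout, $r\neq s$ are primes, hence coprime.

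For the first assertion, I would compute the $T^r\times T^s$-orbit closure of $(\beta,\beta)$, namely $\overline{\{(\beta+nr\alpha,\beta+ns\alpha):n\in\Z\}}$. A character $(x,z)\mapsto e^{2\pi i(kx+lz)}$ is constant on this orbit iff $(kr+ls)\alpha\in\Z$, i.e.\ $kr+ls=0$; since $(r,s)=1$ the annihilator of the closure is $\Z(s,-r)$, so the orbit closure is the coset $(\beta,\beta)+H_0$ with $H_0=\{(x,z):sx\equiv rz\ (\mathrm{mod}\ 1)\}$. This coset is exactly one of the minimal sets $A_u$ of Subsection~\ref{specialcase}, with $u$ determined by $ru\equiv(r-s)\beta\ (\mathrm{mod}\ 1)$. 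On $A_u$ the action $T^r\times T^s$ is topologically conjugate via $V_u$ to the uniquely ergodic rotation $(X,T)$, so every point of $A_u$ — in particular $(\beta,\beta)$ — is generic for the unique invariant measure $\rho$; this $\rho$ is ergodic, and its marginals are $\mu$ because $T^r$ and $T^s$ are uniquely ergodic. Hence $\rho\in J^e(T^r,T^s)$.

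For the second assertion, I would verify the hypotheses of Lemma~\ref{l8}. Through the isomorphism $J_u$ the cocycle $\va^{(r)}\times\va^{(s)}$ over $(T^r\times T^s,\rho)$ becomes the $\R^2$-valued cocycle $(\va^{(r)}(r\cdot),\va^{(s)}(s\cdot+u))$ over $T$ (Subsection~\ref{specialcase}). If this cocycle is ergodic, then all the cocycle hypotheses of Lemma~\ref{l8} hold simultaneously: its Mackey action is trivial, hence weakly mixing, and the two coordinate sub-cocycles $\va^{(r)}(r\cdot)$ and $\va^{(s)}(s\cdot+u)$ are ergodic as factors, which is precisely the ergodicity of $(T^r\times T^s,\rho)_{\va^{(r)}}$ and $(T^r\times T^s,\rho)_{\va^{(s)}}$ (and, as further factors, of $(T^r)_{\va^{(r)}}$ and $(T^s)_{\va^{(s)}}$). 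Since $\cs$ is ergodic, Lemma~\ref{l8} then gives that $J(\tfs^r,\tfs^s;\rho)$ is the single relatively independent extension of $\rho$, which after the obvious permutation of coordinates is $\rho\ot\nu\ot\nu$.

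The main obstacle is therefore establishing the ergodicity of $(\va^{(r)}(r\cdot),\va^{(s)}(s\cdot+u))$, and this is where the hypothesis $\beta\notin\Q\alpha+\Q$ enters. From $ru\equiv(r-s)\beta\ (\mathrm{mod}\ 1)$ one has $u=\tfrac{r-s}{r}\beta+\tfrac{m}{r}$ for some $m\in\Z$; since $\tfrac{r-s}{r}\in\Q\setminus\{0\}$ and $\tfrac{m}{r}\in\Q$, the assumption $\beta\notin\Q\alpha+\Q$ forces $u\notin\Q\alpha+\Q$. Proposition~\ref{p:erg+reg}(i) then yields the required ergodicity of the pair cocycle (the ergodicity of the single cocycles $\va^{(r)}(r\cdot)$ and $\va^{(s)}(s\cdot+u)$ being already guaranteed by Proposition~\ref{p:ergRaff} and the remark following it). Beyond this, I expect only bookkeeping: confirming that the shift produced by $J_u$ coincides with the $u$ of the orbit-closure computation, and identifying the relatively independent extension in Lemma~\ref{l8} with $\rho\ot\nu\ot\nu$.
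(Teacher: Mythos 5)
Your proposal is correct, and its core coincides with the paper's own argument: both identify the ergodic component of $T^r\times T^s$ containing $(\beta,\beta)$ as a shifted set $A_u$ (in the paper's notation $I_c$) with shift $u\equiv\frac{r-s}{r}\beta \pmod{\frac1r\Z}$, observe that $\beta\notin\Q\alpha+\Q$ forces $u\notin\Q\alpha+\Q$, and then feed this into Proposition~\ref{p:erg+reg}(i) to get ergodicity of the pair cocycle $(\va^{(r)}(r\cdot),\va^{(s)}(s\cdot+u))$. Where you diverge is the final uniqueness-of-lift step: the paper concludes via the Le--Me--Na joining machinery of Subsection~2.6, i.e.\ Theorem~\ref{t:isoLambda} together with Corollary~\ref{c:cherg1} (when the pair cocycle is ergodic, $\ch_{r,s,u}=\R^2$ is trivially cocompact with trivial annihilator, so $\cm(Y\times Y;\R^2)=\{\nu\ot\nu\}$), whereas you invoke Lemma~\ref{l8}, the rfmp/Mackey-action tool that the paper uses to prove Theorem~\ref{jm}. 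Both routes are legitimate and both become essentially hypothesis-free once the pair cocycle is ergodic: for Lemma~\ref{l8} you correctly note that ergodicity of $(\va^{(r)}\times\va^{(s)},\rho)$ makes the Mackey action trivial (hence weakly mixing) and makes the coordinate cocycles ergodic as factors, which is all that is needed. Your route has the minor advantage of staying within machinery the paper actually proves (Lemma~\ref{l8} is established from Lemmas~\ref{l5}, \ref{l6} and \ref{dlp}), rather than relying on Theorem~\ref{t:isoLambda}, which is imported from \cite{Le-Me-Na} and requires the regularity language; on the other hand, the paper's route is the one it uses uniformly throughout Section~6, e.g.\ for case (ii) of Proposition~\ref{p:erg+reg}, where the cocycle is regular but not ergodic and Lemma~\ref{l8} would not apply. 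You are also more explicit than the paper on two small points that it leaves implicit: the genericity of $(\beta,\beta)$ (which you derive from the orbit-closure computation and unique ergodicity of $T^r\times T^s$ on $A_u$ via the conjugation $V_u$ of Subsection~\ref{specialcase}) and the mod-$\frac1r\Z$ ambiguity in the shift, which does not affect the conclusion $u\notin\Q\alpha+\Q$.
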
 \begin{proof}Recall that the ergodic components of $T^r\times T^s$ are of the form $I_c:=\{(x,y+c) : sx=ry\}$. If $(\beta,\beta)\in I_c$ then
$s\beta=r(\beta-c)$   and $c=\frac{r-s}r\beta$, and therefore $c\notin\Q\alpha+\Q$. The claim follows from Proposition~\ref{p:erg+reg} and Corollary~\ref{c:cherg1}.\end{proof}

It follows that we can now prove the counterparts of Corollaries~\ref{c:affine1}-\ref{c:affine3} for the sequence
$$
[\va^{(n)}(\beta)]=\left[n\beta+\frac{n(n-1)}2\alpha-\frac n2-\sum_{j=0}^{n-1}[\beta+j\alpha ]\right],\;n\geq1,$$
but with the absolute values replaced by parentheses. We have however, the following result.

\begin{Cor}\label{c:uonshort11}
Let $\bfu\colon\N\to\C$ be a multiplicative function satisfying $|\bfu|\leq 1$ and $\ell\geq 2$. Then
$$
\frac1M\sum_{M\leq m<2M}\left|\frac1H\sum_{m\leq h<m+H}e^{2\pi i[\va^{(h)}(\beta)] /\ell}\bfu(h)\right|\to0
$$
when $H\to\infty$, $H/M\to 0$. \end{Cor}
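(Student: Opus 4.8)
The plan is to realise the finite observable $e^{2\pi i[\va^{(h)}(\beta)]/\ell}$ inside a uniquely ergodic flow and then transfer the short-interval orthogonality from the flow level down to the integer sequence $([\va^{(h)}(\beta)])$. Concretely, let $R$ be the rotation by $1$ on $\Z/\ell\Z$ and put $\chi(i)=e^{2\pi i i/\ell}$, so that $\sum_{i=0}^{\ell-1}\chi(i)=0$ because $\ell\geq2$. Let $\cs=\widetilde R$ be the suspension flow of $R$, a uniquely ergodic flow on the compact metric space $\widetilde Z$ (Remark~\ref{r:ueSF}). By~\eqref{zaw1}, $\widetilde R_{\va^{(h)}(\beta)}(0,0)=([\va^{(h)}(\beta)]\bmod\ell,\{\va^{(h)}(\beta)\})$, so $\chi$ of the base coordinate equals $e^{2\pi i[\va^{(h)}(\beta)]/\ell}$. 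Since $\chi$ does not extend continuously across the suspension seam, I would replace it, exactly as in the passage from the flow MOMO to Corollary~\ref{c:momoA}, by a continuous $F\in C(\widetilde Z)$ with $\int F\,d\widetilde\mu=0$ agreeing with $\chi$ off a $\delta$-collar of the seam; then $F(\widetilde R_{\va^{(h)}(\beta)}(0,0))$ differs from $e^{2\pi i[\va^{(h)}(\beta)]/\ell}$ by at most $2$, and only when $\{\va^{(h)}(\beta)\}\in[0,\delta)\cup(1-\delta,1]$. As $(\va^{(h)}(\beta))$ is uniformly distributed modulo $1$ (Remark~\ref{r:ude}), this set of $h$ has upper density $<3\delta$, so letting $\delta\to0$ reduces the corollary to the short-interval MOMO for the continuous $F$ along $(\va^{(h)}(\beta))$ for the flow $\cs=\widetilde R$.

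The difficulty is that $\cs=\widetilde R$ has purely rational eigenvalues, $e(\widetilde R)=\frac{1}{\ell}\Z$, so $\cs$ meets the spectrum excluded by Theorem~\ref{t:aopAFF} and $\tfs$ does \emph{not} enjoy the global AOP property for this $\cs$. I would bypass this exactly as in the Lemma immediately preceding the corollary, by working along the single orbit of the base point $\beta\notin\Q\alpha+\Q$ (formally in the Sturmian model $\widehat T,\widehat\va$ where $\va$ becomes continuous, see Subsection~\ref{sturmianmodel}). For distinct primes $r,s$ the diagonal point $(\beta,\beta)$ lies in the ergodic component $I_c$ of $T^r\times T^s$ with $c=\frac{r-s}{r}\beta\notin\Q\alpha+\Q$; by Proposition~\ref{p:erg+reg}(i) the cocycle $(\va^{(r)}(r\cdot),\va^{(s)}(s\cdot+c))$ is then \emph{ergodic}, so $\ch_{r,s,c}=\R\times\R$ and $\ch_{r,s,c}^\perp$ is trivial. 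Consequently Corollary~\ref{c:cherg1} and Lemma~\ref{l8} force the only joining of $\tfs^r$ and $\tfs^s$ lying over the ergodic $\rho\in J^e(T^r,T^s)$ carried by this orbit to be the relatively independent extension $\rho\ot\nu\ot\nu$ — a conclusion insensitive to the rational eigenvalues of $\cs$. By the lifting argument of Proposition~\ref{p:d11}, $((\beta,y),(\beta,y))$ is generic for $\tfs^r\times\tfs^s$ with respect to $\rho\ot\nu\ot\nu$, whence by~\eqref{rieCALKA} the empirical off-diagonal correlations $\frac1N\sum_{n\leq N}F(\tfs^{rn}(\beta,y))\overline{F(\tfs^{sn}(\beta,y))}$ tend to $0$ for every pair of large primes $r\neq s$.

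This orbit-wise vanishing of correlations is precisely the input that the method of \cite{Ab-Ku-Le-Ru1} behind Proposition~\ref{AOPtoMOMO} converts into the short-interval estimate~\eqref{short1}: one writes the short-interval average over windows $[m,m+H)$, applies the KBSZ device (Proposition~\ref{kbszcrit}) inside the absolute value, and controls the resulting diagonal sums $\sum F(\tfs^{rh}(\beta,y))\overline{F(\tfs^{sh}(\beta,y))}$ by the correlation bounds just obtained. Carrying this out along the fixed orbit of $(\beta,y)$ yields the short-interval MOMO for $F$, and combined with the reduction of the first paragraph it proves Corollary~\ref{c:uonshort11}.

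The main obstacle is this last step. Proposition~\ref{AOPtoMOMO} is stated and proved from the \emph{global} AOP property, whereas here we have relative independence of the empirical joinings only along the \emph{single} orbit of $(\beta,y)$. I expect the real work to be in upgrading the full-Cesàro genericity of this orbit — which by itself, through Proposition~\ref{kbszcrit}, yields only the parenthesised statement without absolute values — to the short-interval \emph{uniformity} demanded by~\eqref{short1}: one must show that the empirical measures of $\tfs^r\times\tfs^s$ taken over the windows $[m,m+H)$ (with $H\to\infty$, $H/M\to0$), and not merely over $[1,N]$, stay close to $\rho\ot\nu\ot\nu$. This is exactly the quantitative content that the argument of \cite{Ab-Ku-Le-Ru1} supplies in the global AOP setting and that must be re-established here in the present orbit-wise form.
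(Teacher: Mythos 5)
Your first two paragraphs track the paper's own route: the suspension-plus-collar device to handle the integer part, and the use of the Lemma preceding the corollary (via Proposition~\ref{p:erg+reg}(i), since $c=\frac{r-s}{r}\beta\notin\Q\alpha+\Q$) to force the unique lifting of the orbit joinings for an \emph{arbitrary} uniquely ergodic $\cs$, thereby bypassing the rational spectrum of $\widetilde R$. The genuine gap is exactly at the step you flag yourself: you never obtain the absolute values. The program you propose for them --- re-establishing the short-interval empirical-measure control of \cite{Ab-Ku-Le-Ru1} ``in orbit-wise form'' --- is left unexecuted, and it is also not what the paper does; no short-window genericity is needed anywhere.

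The paper's resolution is an algebraic trick your plan misses. The unique-lifting argument gives the parenthesized MOMO statement for \emph{every} choice of per-block starting points $(y_k)\subset\Z/\ell\Z$: the block empirical measures
$$
\frac1{b_{K+1}}\sum_{k\leq K}\sum_{b_k\leq n<b_{k+1}}\delta_{(\tfs)^{rn}\times(\tfs)^{sn}((x,y_k),(x,y_k))}
$$
have $X\times X$-marginal independent of the $y_k$'s, hence converge to $\rho\ot\nu\ot\nu$ however the $y_k$ are chosen. Now for the fiber system $Ry=y+1$ on $\Z/\ell\Z$ and $\chi(y)=e^{2\pi iy/\ell}$ one has $\chi(R^my_k)=\chi(y_k)e^{2\pi im/\ell}$, so varying $y_k$ multiplies the $k$-th block sum $\sum_{b_k\leq n<b_{k+1}}e^{2\pi i[\va^{(n)}(\beta)]/\ell}\bfu(n)$ by an \emph{arbitrary} $\ell$-th root of unity. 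For $\ell\geq3$ choose $y_k$ so that each rotated block sum lies in the cone $\{0\}\cup\{z\in\C : {\rm arg}(z)\in[-\pi/3,\pi/3]\}$; there ${\rm Re}(z)\geq|z|/2$, so the vanishing of the sum of the rotated block sums forces the vanishing of the sum of their moduli. For $\ell=2$ one has arbitrary signs $\pm1$ per block, and applying them separately to real and imaginary parts gives the same conclusion. This converts the parenthesized convergence into the strong MOMO property with absolute values along $([\va^{(n)}(\beta)])$, and since $(b_k)$ was arbitrary, the short-interval statement follows by the standard deduction recalled after Proposition~\ref{AOPtoMOMO}. So the missing idea is precisely the exploitation of the per-block freedom in $y_k$ combined with the finiteness of the fiber rotation; without it, your argument stops where you admit it stops.
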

\begin{proof} Consider $Ry:=y+1$ on $\Z/\ell\Z$ and $\chi(y):=e^{2\pi iy/\ell}$. Then, by the above, for each $(b_k)\subset\N$, $b_{k+1}-b_k\to\infty$, and each choice of $(y_k)\subset \Z/\ell\Z$, we have
$$
\frac1{b_{k+1}}\sum_{k\leq K}\left(\sum_{b_k\leq n<b_{k+1}}\chi(R^{[\va^{(n)}(\beta)]}y_k)\bfu(n)\right)\to 0$$
when $K\to\infty$, that is,
$$
\frac1{b_{k+1}}\sum_{k\leq K}\chi(y_k)\left(\sum_{b_k\leq n<b_{k+1}}e^{2\pi i[\va^{(n)}(\beta)]/\ell}\bfu(n)\right)\to 0$$
when $K\to\infty$. If $\ell\geq3$, by choosing $y_k$ so that $\chi(y_k)\left(\sum_{b_k\leq n<b_{k+1}}e^{2\pi i[\va^{(n)}(\beta)]/\ell}\bfu(n)\right)$ belongs to the cone
$\{0\}\cup\{z\in\C :  {\rm arg}(z)\in [-\pi/3,\pi/3]\}$ (or, when $\ell=2$, obtaining $(\chi(y_k))$ and arbitrary sequence of $\pm1$), we obtain
$$
\frac1{b_{k+1}}\sum_{k\leq K}\left|\sum_{b_k\leq n<b_{k+1}}e^{2\pi i[\va^{(n)}(\beta)]/\ell}\bfu(n)\right|\to 0$$
when $K\to\infty$ (cf.\ \cite{Ab-Ku-Le-Ru}). Since $(b_k)$ was arbitrary, as in \cite{Ab-Le-Ru}, we obtain our assertion. 
\end{proof}

\small
\bibliographystyle{siam}
\bibliography{randomSarnakBib.bib}

\bigskip
\footnotesize

\noindent
Joanna Ku\l aga-Przymus\\
\textsc{Institute of Mathematics, Polish Acadamy of Sciences, \'{S}niadeckich 8, 00-956 Warszawa, Poland}\\
\textsc{Faculty of Mathematics and Computer Science, Nicolaus Copernicus University, Chopina 12/18, 87-100 Toru\'{n}, Poland}\par\nopagebreak
\noindent
\href{mailto:joanna.kulaga@gmail.com}
{\texttt{joanna.kulaga@gmail.com}}

\medskip

\noindent
Mariusz Lema\'nczyk\\
\textsc{Faculty of Mathematics and Computer Science, Nicolaus Copernicus University, Chopina 12/18, 87-100 Toru\'{n}, Poland}\par\nopagebreak
\noindent
\href{mailto:mlem@mat.umk.pl}
{\texttt{mlem@mat.umk.pl}}

\end{document}